\documentclass[12pt,a4paper]{amsart}

\usepackage[top=3cm, bottom=3cm, left=2.5cm, right=2.5cm]{geometry}

\usepackage{fullpage}




\usepackage{etex}
\usepackage{amssymb}
\usepackage{amsmath}
\usepackage{mathabx}
\usepackage{amscd}
\usepackage[latin1]{inputenc}
\usepackage{wrapfig}
\usepackage{psfrag}
\usepackage{epsfig}
\usepackage{epic}
\usepackage{eepic}
\usepackage{pifont}
\usepackage[usenames,dvipsnames]{color}
 \usepackage[abs]{overpic}
 \usepackage{bbold}
\usepackage{pdflscape}   
\usepackage[normalem]{ulem}

\usepackage{wasysym}

\usepackage[all]{xy}

\usepackage{hyperref}

\usepackage{caption}

\usepackage{setspace}

\usepackage{enumitem}

\usepackage{mathrsfs}

\usepackage{subcaption}

\setlength{\belowcaptionskip}{-0.2in}

\usepackage{rotating}
\usepackage{mathtools}

\usepackage{setspace}

\usepackage[section]{algorithm}
\usepackage{algorithmicx,algpseudocode}
\usepackage{tikz}
\usetikzlibrary{shapes, quotes}

\usepackage[all]{xy}

\def \R  {{\mathbb R}}

\def \Z  {{\mathbb Z}}
\def \Q  {{\mathbb Q}}
\def \C  {{\mathbb C}}
\def \N  {{\mathbb N}}
\def \CP {{{\mathbb C}{\mathbb P}}}

\def \CP {{\mathbb C}{\mathbb P}}
\def \t  {{\mathfrak t}}

\def \O {{\mathcal G}}

\def    \FS  {\text{FS}}


\newcommand{\algt}{\ensuremath{\mathfrak{t}}}

\newcommand{\Id}{\mathrm{Id}}
\renewcommand{\emph}{\textbf}
\renewcommand{\min}{\mathrm{min}}
\renewcommand{\max}{\mathrm{max}}

\DeclareMathOperator{\grad}{grad}
\DeclareMathOperator{\prr}{pr}
\def \pr {{I^*}}
\DeclareMathOperator{\last}{last}
\DeclareMathOperator{\iso}{\sharp iso}
\DeclareMathOperator{\fat}{\sharp fat}
\DeclareMathOperator{\rank}{rank}
\DeclareMathOperator{\Ann}{Ann}
\DeclareMathOperator{\sspan}{span}

\def \bl {{\mathbb{B}\hskip -0.015in \mathbb{L}}}

\DeclareMathOperator \Hirz {Hirz}

\DeclareMathOperator \id {id}
\DeclareMathOperator \inc {inc}

\DeclareMathOperator \pt {pt}

\newcommand{\acts}{\mathbin{\raisebox{-.5pt}{\reflectbox{\begin{sideways}$\circlearrowleft$\end{sideways}}}}}

\numberwithin{figure}{section}
\numberwithin{table}{section}
\numberwithin{equation}{section}
\swapnumbers


\makeatletter
\let\c@equation\c@figure
\makeatother

\makeatletter
\let\c@table\c@figure
\makeatother

\makeatletter
\let\c@algorithm\c@figure
\makeatother

\newtheorem{Lemma}[equation]{Lemma}
\newtheorem{Theorem}[equation]{Theorem}
\newtheorem*{thm*}{Theorem}

\newtheorem{Question*}{Question}

\newtheorem{Proposition}[equation]{Proposition}
\newtheorem*{Proposition*}{Proposition}
\newtheorem{Corollary}[equation]{Corollary}

\newtheorem*{Lemma*}{Lemma}
\newtheorem*{Corollary*}{Corollary}

\theoremstyle{definition}
\newtheorem{Example}[equation]{Example}

\newtheorem{Notation}[equation]{Notation}
\newtheorem*{Notation*}{Notation}
\newtheorem{Fact}[equation]{Fact}
\newtheorem{Facts}[equation]{Facts}
\newtheorem{Definition}[equation]{Definition}
\newtheorem{Remark}[equation]{Remark}
\newtheorem*{Remark*}{Remark}

\newtheorem{noTitle}[equation]{}

%
%

\setlength{\marginparwidth}{0.7in}

\newif\ifdebug                                                      %
\newcommand{\printname}[1]
   {\smash{\makebox[0pt]{\hspace{-1.0in}\raisebox{8pt}{\tiny #1}}}}

\newcommand{\labell}[1] {\label{#1}{\ifdebug{\printname{#1}}\fi}}

%
\newcommand{\mute}[1] {}
%

\begin{document}

\title[Equivariant cohomology of $S^1\acts (M^4,\omega)$]{Equivariant cohomology of a complexity-one four-manifold is determined by combinatorial data 
}

\author{Tara S. Holm}
\address{Department of Mathematics, Cornell University, Ithaca, NY  14853-4201, USA}
\email{tsh@math.cornell.edu}

\author{Liat Kessler}
\address{Department of Mathematics, Physics, and Computer Science, University of Haifa,
at Oranim, Tivon 36006, Israel}
\email{lkessler@math.haifa.ac.il}

\begin{abstract}
 For Hamiltonian circle actions on compact, connected 
 four-dimensional manifolds, 
we give a generators and relations description 
for the even part of the equivariant cohomology, 
as an algebra over 
the equivariant cohomology of a point.
 This description depends on 
combinatorial data encoded in the decorated graph of the manifold.
We then give an explicit combinatorial description of all weak algebra isomorphisms. 
We use this description to prove that the even parts of the equivariant cohomology algebras are weakly isomorphic 
and the odd groups have the same ranks if and only if the labeled graphs obtained from the decorated graphs 
by forgetting the height and area labels are isomorphic.

As a consequence, we give an example of an isomorphism of equivariant cohomology algebras 
that cannot be induced by an equivariant diffeomorphism of manifolds preserving a compatible almost complex structure. We also provide a soft proof that there are finitely many maximal Hamiltonian circle actions 
on a fixed compact, connected, four-dimensional symplectic manifold.
 \end{abstract}

\subjclass[2020]{53D35 (55N91, 53D20, 57S15, 57S25)}

\keywords{Symplectic geometry, Hamiltonian torus action, moment map, complexity one, equivariant cohomology}

\maketitle

\setcounter{secnumdepth}{1}
\setcounter{tocdepth}{1}
\tableofcontents

\section{Introduction}

Beginning with work of Masuda \cite{masuda}, there have been a number of questions posed, and some answered,
probing the extent to which equivariant cohomology is a complete invariant \cite{CMS,Masuda-Suh}.  
For  toric manifolds (in other words, smooth compact toric varieties),
Masuda proved that when the equivariant cohomology algebras of two toric manifolds are isomorphic,
the manifolds must be equivariantly diffeomorphic \cite{masuda}, \cite[Remark~2.5(1)]{Ma2}.
Moreover, if the equivariant cohomology algebra isomorphism preserves the first equivariant Chern class, then
the spaces are isomorphic as varieties \cite[Remark~2.5(3)]{Ma2}.
A special case of toric manifolds are \emph{toric symplectic manifolds}: compact, connected symplectic manifolds with a Hamiltonian action of a torus of half the dimension.

 In this paper we look at the equivariant cohomology of a four-dimensional \emph{Hamiltonian $S^1$-manifold}:
 a compact, connected symplectic manifold equipped with a Hamiltonian $S^1$-action. 
 Building on work of Audin \cite{audin} and Ahara and Hattori \cite{ah},
 Karshon \cite{karshon} showed that 
a four-dimensional Hamiltonian $S^1$-manifold is determined by its \emph{decorated graph}: a labeled graph indicating the isolated fixed points as thin vertices and the fixed surfaces as fat vertices; the vertices are labeled by the moment map value, a fat vertex is also labeled by its symplectic area and genus; for a natural number $n>1$, an edge labeled $n$ between vertices indicates that the fixed points are connected by an invariant sphere whose stabilizer is the cyclic subgroup of $S^1$ of order $n$; see \S \ref{decorated}. 
We call the labeled graph obtained from the decorated graph by forgetting the moment map value  and area labels, and adding a vertex label indicating when an isolated vertex is extremal and a fat vertex label indicating its self intersection, the \emph{dull graph} of the Hamiltonian $S^1$-manifold. We give a complete definition
and analyze isomorphisms of dull graphs in Section~\ref{se:dull}.

Equivariant cohomology in the sense of Borel is a generalized cohomology theory in the equivariant category.
 For a torus $T^k = (S^1)^k$,
the equivariant cohomology (over $\Z$) is defined to be
$$
H_{T^k}^*(M;\Z ): = H^*((M\times (S^{\infty})^k)/T^k; \Z),
$$ 
where 
 $S^{\infty}$ is the unit sphere in $\C^{\infty}$, the circle acts freely
$S^1 \acts S^{\infty}$ by coordinate multiplication, and
$T^k \acts (M\times (S^{\infty})^k)$ diagonally. 
In particular,
$$
H^*_{T^k}(\pt) =H^*((S^{\infty})^k/T^k; \Z) =H^*((\CP^\infty)^k;\Z) = \Z[u_1,\dots,u_k], \,\,\, \deg(u_i)=2.
$$
The constant map $\pi \colon M \to \pt$ induces a map $\pi^{*} \colon H^{*}_{T^k}(\pt)  \to H_{T^k}^*(M)$ which endows $H_{T^k}^*(M)$ with an $H^*_{T^k}(\pt)$-algebra structure. 
We let $\cup$ denote the cup product in equivariant cohomology.  
We say that $H_{T^k}^{*}(M)$ and $ H_{T^k}^{*}(N)$ are
 \emph{weakly isomorphic as algebras} if there is a ring isomorphism $f \colon H_{T^k}^{*}(M) \to H_{T^k}^{*}(N)$ and an automorphism $\gamma$ of $T^k$ such that $f(\pi^*(u)\cup w)=\pi^*(\gamma^{*}(u))\cup f(w)$ for any $u \in H_{T^k}^{*}(\pt)$ and $w \in H_{T^k}^{*}(M)$.
 If $\gamma$ is the identity automorphism then $f$ is an \emph{isomorphism of algebras}.

 First, we obtain a generators and relations description of $H_{S^1}^{2*}(M)$ 
 from the decorated graph. 
 The generators are the equivariant Poincar\'e dual classes to $S^1$-invariant submanifolds
 that correspond to edges and fat vertices in the decorated graph.
 Moreover we express the algebra structure over $H_{S^1}^*(\pt) = \Z[t]$ in terms of these generators. 
 See Theorem \ref{ThNew} for the explicit statement. 
In the proof of Theorem \ref{ThNew}, we apply our previous results in \cite[Theorem 1.1]{KesslerHolm2} that 
 the inclusion of the fixed points set $i:M^{S^1}\hookrightarrow M$ induces an injection in
integral equivariant cohomology
$$
i^*:H^{*}_{S^1}(M;\Z) \hookrightarrow H^{*}_{S^1}\left(M^{S^1};\Z\right)
$$
and our characterization of the image of  $i^*$ in equivariant cohomology with 
rational coefficients. 
 We use the generators and relations description to relate the 
 algebraic and combinatorial structures of the Hamiltonian $S^1$-action.

\begin{Theorem}\labell{thm:unique}
Let  $S^1 \acts (M,\omega_M)$ and $S^1 \acts (N,\omega_N)$ be compact, connected, four-dim\-en\-sion\-al 
Hamiltonian $S^1$-manifolds.
The following are equivalent.
\begin{enumerate}
\item The dull graphs of $M$ and $N$ are isomorphic as labeled graphs.
\item  $H_{S^1}^{2*}(M)$ and $H_{S^1}^{2*}(N)$ are isomorphic as algebras over $H_{S^1}^{*}(\pt)$ \newline and 
 $\rank H_{S^1}^{i}(M)=\rank H_{S^1}^{i}(N)$ for all odd $i$. 
 
\item  $H_{S^1}^{2*}(M)$ and $H_{S^1}^{2*}(N)$ are weakly  isomorphic as algebras over $H_{S^1}^{*}(\pt)$ \newline and 
 $\rank H_{S^1}^{i}(M)=\rank H_{S^1}^{i}(N)$ for all odd $i$. 
  \end{enumerate}
\end{Theorem}

In fact, more is true.  We say that an isomorphism of algebras
$$\Lambda \colon H_{S^1}^{2*}(M;\Z) \to H_{S^1}^{2*}(\widetilde{M};\Z)$$
is {\bf orientation-preserving} if the induced isomorphism 
in ordinary cohomology is or\-ien\-ta\-tion-preserving, where the orientations on $M$ and $N$ are the ones induced by the symplectic forms.  
Otherwise, it is {\bf orientation-reversing}. 
In Theorem~\ref{thm:strong}, we prove that an abstract
orientation-preserving isomorphism of cohomology algebras must arise from an 
isomorphism of the dull graphs, which induces the given abstract isomorphism by 
way of the generators-and-relations presentation from Theorem~\ref{ThNew}.

We will prove Theorem \ref{thm:unique} in Section \ref{sec5b} and explore its consequences.
We will list the isomorphisms of the even-dimensional equivariant cohomology as 
$H_{S^1}^*(pt)$-algebras.  
We will check which of these algebra isomorphisms send the first equivariant 
Chern class $c_1^{S^1}(TM)$ to $c_1^{S^1}(TN)$.
The isomorphisms with this property are  induced by  equivariant biholomorphisms of the $S^1$-manifolds, equipped with invariant complex structures that are compatible with the symplectic forms; 
see Corollary~\ref{cor:uniquec2} and Remark \ref{rem:orient-rev}.

More generally, there are orientation-preserving isomorphisms of equivariant cohomology rings, as $H_{S^1}^*(pt)$-algebras, 
that do not send  $c_1^{S^1}(TM)$ to $c_1^{S^1}(TN)$ or to $-c_1^{S^1}(TN)$, namely 
the \emph{chain flips}, defined in Section \ref{sec5b}. 
We discovered the
chain flip when trying to emulate Masuda's work on toric manifolds \cite{masuda}.
In that case, there are 
equivariant cohomology
generators that are supported on $T$-invariant codimension $2$ submanifolds.  These generators in the toric context are, 
in a certain sense, unique.  
Trying to establish similar uniqueness properties 
for our generators has led us to some alternative generators, which are linear combinations 
of the originals 
and which are not Poincar\'e dual to $S^1$-invariant submanifolds.
This led us to discover the chain flip isomorphism.

\begin{Example} \labell{first pass}
Consider the two Hamiltonian $S^1$-manifolds $S^1 \acts (M,\omega_M)$ and $S^1 \acts (N,\omega_N)$ with extended decorated graphs shown in Figure~\ref{fig:chain flip}. 
\begin{center}
\begin{figure}[h]
\includegraphics[height=5cm]{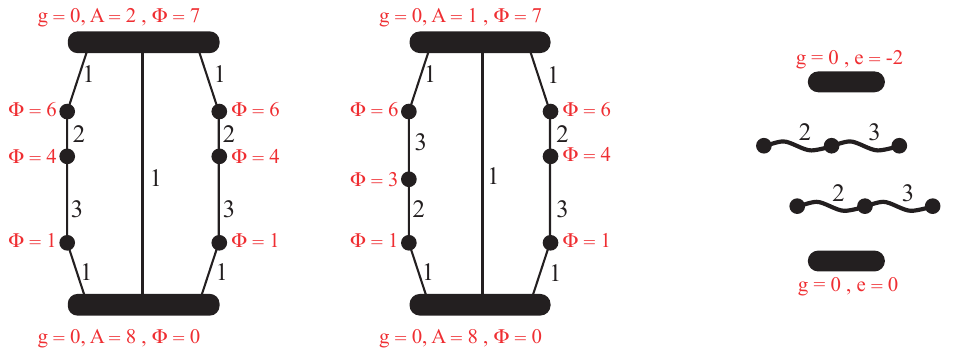} 
\caption[.]{On the left are two extended decorated graphs that differ by a chain flip. 
On the right is the dull graph
that is the dull graph of each.
}
\label{fig:chain flip}
\end{figure}
\end{center}
The two $S^1$-manifolds do have the same dull graph, thus, by Theorem \ref{thm:unique},  there is an algebra-isomorphism between
their equivariant cohomology rings.  We will see that 
this
isomorphism, explicitly, is a chain flip.

On the other hand, the decorated graphs are not isomorphic, so the two manifolds 
are not $S^1$-equivariantly symplectomorphic. 
Nevertheless, they are diffeomorphic: they are both 
$7$-fold blowups of $\CP^2$.  We will see that the blowup forms determining the symplectic structures 
are not equal, so there is no symplectic diffeomorphism \cite[Thm.~1.8]{KK:blowups}.  
Furthermore, by calculating the equivariant first Chern classes, we will show that
there can be no equivariant diffeomorphism preserving
an $S^1$-invariant, compatible almost complex structure.  We will work through this example
in full detail in Section \ref{sec5b}.
It is true that there is an orientation-preserving, equivariant diffeomorphism
between the two manifolds.  This
assertion follows from ongoing joint work with Susan Tolman  \cite{hkt}, where we construct
an explicit equivariant diffeomorphism inducing a chain flip.
 \hfill $\diamondsuit$
\end{Example}

Our analysis has several consequences to questions other than the rigidity of equivariant cohomology. 
As a byproduct of our characterization of isomorphisms of dull graphs in Section \ref{se:dull}, 
we deduce in  Corollary~\ref{cor:rei} that every compact, connected, simply connected four-dimensional 
Hamiltonian $S^1$-manifold
is equivariantly diffeomorphic to one that admits a toric action extending the circle action.

As a further application of our generators and relations description, we deduce
that there is a finite number of inequivalent maximal Hamiltonian circle actions on a fixed 
compact, connected, four-dimensional symplectic manifold $(M,\omega)$. 
A Hamiltonian torus action is \emph{maximal}  if it does not extend to a Hamiltonian action of a strictly larger torus on $(M,\omega)$. 
Karshon gives necessary and sufficient conditions for a Hamiltonian circle action on a four-dimensional symplectic manifold to extend to a toric one
\cite[Prop.~5.21]{karshon}. In Figure~\ref{fig:no extension}, we show the extended decorated
graph and dull graph for a maximal Hamiltonian
circle action.
We call two torus actions  \emph{equivalent} if and only if they differ by an equivariant symplectomorphism composed 
with a reparametrization of the group ${(S^1)}^k$.

\begin{center}
\begin{figure}[h]
\includegraphics[height=3.5cm]{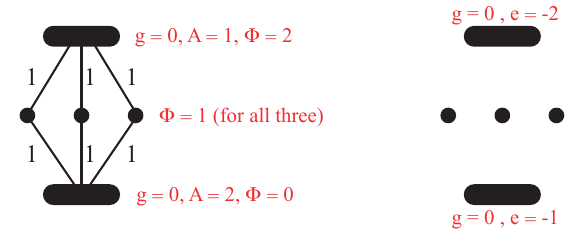} 
\caption[.]{The extended decorated graph and dull graph for a Hamiltonian circle action on a 
four-dimensional manifold
that does not extend to a toric action.  The manifold is diffeomorphic to a $4$-point blowup of
$\CP^2$. According to Corollary ~\ref{cor:rei}, the dull graph indicates that the manifold 
is equivariantly diffeomorphic to one that does
admit a toric action.  To wit, if the symplectic size of one of the blowups is decreased by $\varepsilon$,
then the manifold will admit a toric action.
}
\label{fig:no extension}
\end{figure}
\end{center}

\begin{Theorem} \labell{finite}
Let $(M,\omega)$ be a compact, connected, four-dimensional symplectic manifold. 
The number of inequivalent 
maximal Hamiltonian torus actions on $(M,\omega)$  is finite. 
\end{Theorem}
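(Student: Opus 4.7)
The plan is to combine Karshon's classification of Hamiltonian $S^1$-actions on $4$-manifolds by decorated graphs with Theorem~\ref{thm:unique} to recast the counting problem in terms of invariants of $(M,\omega)$. Since $\dim M=4$, every Hamiltonian torus acting on $(M,\omega)$ has dimension at most two, so every maximal Hamiltonian torus action is either a toric $T^2$-action or a maximal $S^1$-action. For the toric case, Delzant's theorem identifies such actions with Delzant polygons in $\R^2$; the polygon's area equals the fixed symplectic volume of $(M,\omega)$ and its combinatorial type is constrained by the diffeomorphism type of $M$, so only finitely many polygons can occur, giving only finitely many toric actions up to equivalence.

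The main content is therefore the $S^1$ case. By Karshon's classification, Hamiltonian $S^1$-actions on $(M,\omega)$ are determined up to equivariant symplectomorphism by their decorated graphs, so I would bound the number of such graphs in two steps. First, I would show that only finitely many \emph{dull graphs} can arise. By Theorem~\ref{thm:unique}, this reduces to bounding the number of weak-isomorphism classes of equivariant cohomology modules arising from the fixed $(M,\omega)$. Equivariant formality of Hamiltonian $S^1$-manifolds gives that $H^{2*}_{S^1}(M;\Z)$ is a free $\Z[t]$-module of rank $\sum_k b_{2k}(M)$, which is fixed, and the number of vertices of the graph is bounded by $\chi(M)$, with vertex types (isolated versus fat, extremal versus interior) bounded by the Betti numbers and the intersection form of $M$. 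The essential step is bounding the edge labels: using the generators-and-relations description of Theorem~\ref{ThNew}, the edge labels appear as structure constants in the $\Z[t]$-algebra $H^{2*}_{S^1}(M;\Z)$, and they satisfy a finite system of Diophantine identities coming from Atiyah-Bott-Berline-Vergne localization applied to $c_1^{S^1}(TM)$, $c_2^{S^1}(TM)$, and an equivariant extension of $[\omega]$; the associated equivariant characteristic numbers ($c_1^2$, $\chi$, $c_1\cdot[\omega]$, $[\omega]^2$) are all fixed by $(M,\omega)$.

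Second, for each dull graph, I would bound the decorations. A decoration consists of height labels on vertices and area labels on fat vertices, lying in a polytope cut out by positivity and integrality constraints. Requiring that $(M_\Gamma,\omega_\Gamma)\cong(M,\omega)$ matches the cohomology class $[\omega]$ and the symplectic volume, and pins the decoration down to a discrete, hence finite, subset of this polytope. Summing over the finitely many dull graphs from the first step, and adding the toric contribution, would then yield the theorem. The main obstacle I expect is the bound on edge labels in the first step: while Theorem~\ref{ThNew} writes these labels explicitly into the algebraic presentation of the equivariant ring, extracting a genuine integer bound from the localization identities, rather than merely a rational or asymptotic one, is the delicate point and is where the explicit generators-and-relations description is indispensable.
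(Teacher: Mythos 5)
There is a genuine gap at the step you yourself flag as delicate, and it is not merely delicate but impossible as stated: the edge labels of the dull graph \emph{cannot} be bounded by Diophantine identities extracted from localization against $c_1^{S^1}$, $c_2^{S^1}$ and an equivariant extension of $[\omega]$, because the characteristic numbers $c_1^2$, $\chi$, $c_1\cdot[\omega]$, $[\omega]^2$ you propose to feed into them are identical for an infinite family of circle actions on a single $(M,\omega)$. On $(\CP^2,\omega_{\FS})$ there is, for every coprime pair $(m,n)$, a Hamiltonian circle action whose graph has edge labels $m,n,m+n$; all of these satisfy every ABBV identity with the same ordinary characteristic numbers, so no finite system of such identities can bound the labels. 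These actions are non-maximal, but your argument never invokes maximality in the label bound, so it proves too much and hence fails. The paper makes exactly this point in the remark following Lemma~\ref{xhbounded}. Where maximality actually enters in the paper is geometric, not localization-theoretic: by Karshon's extension criterion \cite[Prop.~5.21]{karshon}, a maximal circle action has at least one fixed surface (and at least three chains when $g=0$), and Lemma~\ref{xhbounded} then bounds $\int_M\bigl((2g+k-2)x_h-\sum_{i\geq 3} z_i\bigr)\omega$ by a constant $C_h(M,\omega)$, via equivariant blowdowns to a ruled minimal model and the fiber-class characterization of Lemma~\ref{lemf}. Combining this with the formulas for $c_1(TM)$ and $c_1^2-2c_2$ of Lemma~\ref{c1c2} and the Hodge Index Theorem (available since the action admits a compatible invariant integrable complex structure), the paper shows that the Poincar\'e duals $x_0,x_\infty,x_{i,j}$ of all generators lie in a \emph{bounded} subset of $H^2(M;\R)$; being integral, they form a finite set, and Proposition~\ref{lemunique} (via Proposition~\ref{orderetc2}) shows the assignment of these classes to the action is injective on maximal actions. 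Note the paper's route never counts dull graphs or weak-isomorphism classes at all; Theorem~\ref{thm:unique} plays no role in its proof of Theorem~\ref{finite}.

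Your second step has a related, independent gap: the set of decorations compatible with $(M,\omega)$ on a fixed dull graph is not ``discrete, hence finite'' in the polytope of positivity constraints. The heights and areas are values $\langle[\omega],A\rangle$ for integral classes $A\in H_2(M;\Z)$, and when $[\omega]$ is irrational these values form a dense subgroup of $\R$; finiteness again requires bounding the homology classes themselves, which is once more the Hodge-index argument, not an integrality observation. Finally, your toric step is also understated: finiteness of Delzant polygons with fixed area and constrained combinatorics is not immediate (polygons deform continuously), and is precisely the McDuff--Borisov theorem \cite[Proposition~3.1]{mcduff:finite}, which the paper cites rather than reproves --- and whose Hodge-index method is what the paper adapts to the circle case.
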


We prove Theorem \ref{finite} in Section \ref{sec8}.
The proof is analogous to the proof of McDuff and Borisov \cite[Proposition 3.1]{mcduff:finite} 
 establishing that there are finitely many toric actions on a given symplectic manifold.
The key application of the Hodge index theorem is similar. We use the fact that a compact, connected, four-dimensional Hamiltonian $S^1$-manifold admits an invariant integrable complex structure that is compatible with the symplectic form, with respect to which the $S^1$-action is holomorphic.
However some of the steps 
require extra work. In particular, in Lemma \ref{c1c2}, we give a formula for the classes $c_{1}(TM)$ and $c_{1}(TM)^2-2c_2({TM})$, 
using our generators.
Our proof of Theorem~\ref{finite} is ``soft": it does not use ``hard" pseudo-holomorphic tools. 
By contrast, 
pseudo-holomorphic
curves play a key role in
the proof of the finiteness of maximal Hamiltonian torus actions in \cite[Theorem 1.1]{pinso-max}, and in
the  algorithm to list all homologically trivial actions of a given compact Lie group $G$ 
on a given symplectic $k$-blowup of a symplectic ruled surface in \cite[Theorem 1.2]{kkp} and \cite[Theorem 2.13]{KesslerHolm1}.

\subsection*{Acknowledgements}
We thank Susan Tolman for sustained discussion about Hamiltonian $S^1$-manifolds.
We are grateful to Yael Karshon, Allen Knutson, Jason Liu, and Daniele Sepe for helpful conversations;
to Mikiya Masuda for clarifying remarks; and to
Rei Henigman for observing that Corollary \ref{cor:rei} follows from Proposition \ref{scale1} and Karshon's work \cite{karshon}.
We are indebted to the anonymous referee for illuminating observations and corrections, and
to the editors for their patience as we implemented them.
The first author was supported in part
by the National Science Foundation under Grants DMS--1711317 and DMS--2204360. 
The second author was supported in part by the Israel Science Foundation, Grant  570/20, and
an NSF-BSF Grant 2021730. 
Any opinions, 
findings, and conclusions or recommendations expressed in this material are 
those of the authors and do
not necessarily reflect the views of the National Science Foundation.

\section{Classification of Hamiltonian circle actions\\ on symplectic four-manifolds} \labell{sec2}

We record here the details we will need resulting from the classification of Hamiltonian circle actions on
symplectic four-manifolds \cite{ah,audin,karshon}.
An \emph{effective} 
action of a torus $T={(S^1)}^{k}$ on a symplectic manifold $(M,\omega)$ is \emph{Hamiltonian} if there exists a \emph{moment map}, that is, a smooth map 
$\Phi \colon M \to \t^{*}\cong {\R}^k$  that satisfies Hamilton's equation
$$d \Phi_j = - \iota(\xi_j) \omega $$
for all $j=1,\dots,k$, where $\xi_1,\ldots,\xi_k$ are the vector fields
that generate the torus action.

In this paper we consider compact, connected, four-dimensional 
Hamiltonian $S^1$-man\-i\-folds
$S^1\acts(M,\omega)$.
Hamilton's equation guarantees that the set of fixed points of the $S^1$-action coincides 
with the critical set of the moment map $\Phi:M\to \R$, which is a Morse-Bott function. 
Moreover, the indices and dimensions of its critical submanifolds are all even, hence they	
can only consist
of isolated points (with index $0$ or $2$ or $4$) and two-dimensional submanifolds (with index $0$ or $2$).  
The latter
can only occur at the extrema of $\Phi$.  By Morse-Bott theory (and since the manifold is connected), the maximum and minimum of the moment map is each attained on exactly one component of the fixed point set.

\begin{noTitle}{\bf Gradient spheres.} \label{gradient}
An $S^1$-invariant Riemannian metric $\langle \cdot, \cdot \rangle$ on $(M, \omega, \Phi)$ is called \emph{compatible} 
with $\omega$ if the automorphism $J \colon TM \to TM$ defined by $\langle \cdot, \cdot \rangle=\omega(\cdot,J\cdot)$ 
is an almost complex structure, i.e., $J^2 = -\Id$. Such a $J$ is $S^1$-invariant.
With respect to a compatible metric, 
the gradient vector field of the moment map, characterized by $\langle v,\grad \Phi \rangle=d \Phi(v)$, is
\begin{equation*} 
\grad \Phi = -J \xi_M,
\end{equation*}
where $J$ is the corresponding almost complex structure and $\xi_M$ is the vector field that generates the $S^1$ action.
The vector fields $\xi_M$ and $J\xi_M$ generate a $\C^\times = (S^1)^\C$ action.
The closure of a non-trivial $\C^\times$ orbit is a topological sphere, called a \emph{gradient sphere}; it need not be a smooth submanifold. Gradient spheres are collections
of gradient flow lines for $\Phi$.  On a gradient sphere, $S^1$ acts by rotation with two fixed points at the north and south poles; 
all other points on the sphere have the same stabilizer. 
We say that a gradient sphere is \emph{free} if the stabilizer of a generic point in the sphere is trivial; otherwise it is \emph{non-free}. 
 For a smooth gradient sphere $S$ we have $J TS=TS$.
 Hence, since in dimension $2$ any almost complex structure is integrable, i.e., arises from an underlying complex atlas on the manifold, $S$ is a $J$-holomorphic sphere.

In dimension four, the existence of non-free gradient spheres does not depend on the compatible metric.
This is because the non-free gradient spheres are precisely the isotropy  $\Z_n$-spheres (with various $n>1$), 
i.e.\ connected components of the closure of the set of points in $M$ whose stabilizer is equal to the 
(non-trivial) cyclic subgroup of $S^1$ of order $n$ \cite[Lemma 3.5]{karshon}.

 All but a finite number of gradient spheres are free gradient spheres whose north and south poles are 
 maximum and minimum of the moment map; we call the latter spheres \emph{trivial}.
A {\bf generic} metric is one for which there exists no free gradient sphere whose north and south poles are both interior fixed points.
For a generic compatible metric, the arrangement of gradient spheres is determined by the decorated graph defined below \cite[Lemma 3.9]{karshon}.
 \end{noTitle}

\begin{noTitle} {\bf The decorated graph: conventions.}  \labell{decorated}
 To each connected four-dimensional Hamiltonian $S^1$-manifold, 
 we associate a \textbf{decorated graph} as in \cite{karshon}.
We translate the moment map by a constant, if necessary, to fix the minimum value of the moment map to be $0$.  
For each isolated fixed point, there is a vertex
labeled by its moment map value.
For each fixed surface $\Sigma$, there is
a \emph{fat} vertex labeled by its moment map value,
its symplectic area $\frac{1}{2\pi} \int_\Sigma \omega$,
and its genus $g$. If there are two fat vertices, they necessarily have the same genus.
If there is one fat vertex, it must have genus $0$ and the manifold $M$ is simply connected.
If there are no fat vertices, the manifold is again simply connected and we say the genus is $0$.
In this way, the genus is an invariant associated to the manifold $M$. 

In our figures depicting decorated graphs, the moment map value determines the 
vertical placement of a (fat or isolated) vertex. 
The horizontal placement is for convenience and does not carry any significance.
For each $\Z_n$-sphere, $n>1$, there is an edge connecting the vertices corresponding to its fixed points and labeled by the integer $n$. The  \emph{edge-length} is the difference of the moment map values of its vertices.
\end{noTitle}

We recall the following facts.
\begin{itemize}
\item The sphere $S$ corresponding to an edge of label $n$ is a symplectic sphere whose size, $\frac{1}{2\pi} \int_S \omega$, 
is $\frac1n$  times the {edge-length}.
\item For $n>1$, a fixed point has an isotropy weight $-n$ exactly when it is the north pole of a $\Z_n$-sphere, corresponding to a downward edge labeled $n$, and a weight $n$
exactly when it's the south pole of a $\Z_n$-sphere.
\item In particular, two edges incident to the same vertex have relatively prime edge labels, since the action is effective.
\item An interior fixed point has one positive weight and one negative
weight, a maximal fixed point has both weights non-positive, a
minimal fixed point has both weights non-negative.
\item A fixed point has an isotropy weight $0$ if and only if it lies on a fixed surface.
\end{itemize}
We denote 
$$
\fat:=\# \text{  fixed surfaces}; \, \,
\iso:=\# \text{  isolated fixed points}; \,\,
k:=\# \text{ chains of edges}.
$$

\begin{noTitle} {\bf The extended decorated graph: conventions.} \labell{decorated1}
The \emph{extended decorated graph} with respect to a compatible metric  is the graph 
obtained from the decorated graph as follows.
We add edges labeled $1$ for each non-trivial free gradient sphere. 
We also add an edge labeled $1$ for a trivial gradient sphere when $\fat \geq 1$ and as long as $k<2$.
 In the resulting graph, every interior vertex is attached to one edge from above and one edge from below; 
 the moment map labels remain monotone along each chain of edges; 
 and there are at least two chains of edges.
The \emph{length} of one of the new edges is the difference of the moment map values of its vertices.
In what follows, when we say 
extended decorated graph, we mean with respect to a generic compatible K\"ahler metric.
When the ``generic" property is crucial, we will reiterate it.
\end{noTitle}

\begin{Proposition}\labell{label1}
In an extended decorated graph with respect to a generic metric,
we have the following.
\begin{enumerate}
\item If an edge has label $1$ then it is either the first or the last in a chain from $\min$ to $\max$ (or both).
\item For every interior fixed point that is not connected to top or bottom, there is exactly one edge from above and 
one edge from below, both with label $>1$.
\item Only edges of label $1$ can emanate from a fat vertex.
\end{enumerate}
\end{Proposition}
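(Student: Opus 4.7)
The plan is to analyze the isotropy weights at each type of fixed point in turn---points of a fixed surface, interior isolated fixed points, and extrema---and to invoke the genericity of the compatible K\"ahler metric. I will use repeatedly that a gradient sphere is the closure of a non-trivial $\C^\times$-orbit, hence is $J$-holomorphic, so its tangent at a pole $p$ is a weight space of the $S^1$-action on $T_pM$; and that by the recalled bullet-list facts each non-zero isotropy weight $\pm n$ at a fixed point contributes at most one edge of label $n$ to that vertex in the extended graph.

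For part (3), consider a point $p$ of a fixed surface $\Sigma$. Then $T_pM = T_p\Sigma \oplus N_p\Sigma$, with $S^1$ acting trivially on $T_p\Sigma$ and with some weight $k$ on $N_p\Sigma$. Effectivity forces $|k|=1$, for otherwise $\Z_{|k|}\subset S^1$ would fix a neighborhood of $p$ and hence all of $M$ by connectedness. A gradient sphere not contained in $\Sigma$ with $p$ as a pole must be tangent to a non-zero weight direction at $p$, necessarily $N_p\Sigma$, so its isotropy weight at $p$ is $\pm 1$. Hence any edge of the extended graph incident to a fat vertex has label $1$.

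For part (1), every edge of label $1$ in the extended graph corresponds to a free gradient sphere---either a non-trivial one, or the redundant edge for a trivial (minimum-to-maximum) gradient sphere. The redundant edge is both the first and the last edge of a one-edge chain from minimum to maximum. For a non-trivial free gradient sphere, the generic metric assumption forbids both poles being interior fixed points, so at least one pole is the minimum or the maximum; consequently the edge is either the first edge of a chain starting at the minimum, or the last edge of a chain ending at the maximum.

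For part (2), let $p$ be an interior fixed point not connected by an edge to the minimum or the maximum. Since fixed surfaces occur only at the extrema, $p$ is isolated, so its isotropy weights $(a,b)$ are both non-zero (else $p$ would lie on a fixed surface) and of opposite signs (else $p$ would be an extremum, contradicting interiority). Writing $a<0<b$, the recalled facts place $p$ as the north pole of a unique sphere going downward, giving one edge from above of label $|a|$, and as the south pole of a unique sphere going upward, giving one edge from below of label $b$. If either label equalled $1$, the corresponding edge would be a free gradient sphere, and by part (1) its other pole would have to be the minimum or the maximum, contradicting the hypothesis; hence both labels are $>1$. The main point of care across all three parts is cleanly matching weights to edges in the extended graph; beyond this bookkeeping, there is no serious obstacle.
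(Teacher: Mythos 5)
Your proof is correct and follows essentially the same route as the paper's: genericity (no free gradient sphere with both poles interior) gives part (1), part (2) follows from part (1) together with the construction of the extended graph and the sign/nonvanishing analysis of isotropy weights, and part (3) is the effectiveness argument forcing the normal weight along a fixed surface to be $\pm 1$. The only blemish is cosmetic: in part (2) you attach the label $|a|$ (where $p$ is the \emph{north} pole of a downward sphere) to the ``edge from above'' and $b$ to the ``edge from below,'' which swaps the natural reading of those phrases, but this does not affect the validity of the conclusion.
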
 

\begin{proof}
The first item is a consequence of having a {generic} compatible metric on $(M,\omega, \Phi)$.  This
implies there exists no free gradient sphere whose north and south poles are both interior fixed points \cite[Corollary 3.8]{karshon}.  
The second item then follows from the first item and the construction of the extended
decorated graph.
The third item is a consequence of the action being effective.
\end{proof}

\begin{noTitle} {\bf Topological invariants.} \labell{factsi}
Let $F_{\min}$ and $F_{\max}$ be the extremal
critical sets of the moment map $\Phi$.  
For $*=\min,\max$, we define
$$
a_{*} = \int_{F_{*}} \omega \mbox{ ; } y_{*} = \Phi(F_{*}) \mbox{ ; and }
e_* =\left\{ \begin{array}{ll}
F_* \cdot F_* & \mbox{when } \dim(F_*)=2\\
-\frac{1}{mn} & \mbox{when } \dim(F_*)=0
\end{array}\right.,
$$
where $m$ and $n$ are the isotropy weights at $F_*$ when it is an isolated fixed point. 
In this case, $|m|$ and $|n|$ are the two largest labels emanating from the vertex corresponding 
to the point in an extended decorated graph. If $F_{*}$ is of $\dim 2$ we denote it by $\Sigma_{*}$.
For an interior isolated fixed point $p$, we define
$
y_p = \Phi(p),
$
and let  $m_p$ and $n_p$ be the absolute values of the isotropy weights at $p$; these are the labels of the edges emanating from the vertex corresponding to $p$ in an extended decorated graph.
We let $e_p=\frac{1}{m_pn_p}$.

These parameters are related by the following formul\ae.  
\begin{equation}\labell{eq:emin}
e_{\min} = \frac{\left(\sum_p y_pe_p\right)+a_{\min}-\left(\sum_p e_p\right)\cdot y_{\max}-a_{\max}}{y_{\max}-y_{\min}}
\end{equation}
and
\begin{equation}\labell{eq:emax}
e_{\max} = \frac{\left(\sum_p e_p\right)\cdot y_{\min}+a_{\max}-\left(\sum_p y_pe_p\right)-a_{\min}}{y_{\max}-y_{\min}},
\end{equation}
where $p$ runs over the interior fixed points.
 Formul\ae\ \eqref{eq:emin} and \eqref{eq:emax} can
be deduced from \cite[Proof of Lemma~2.18]{karshon}, which has a missing term
that we have restored (the missing term is the $a_{\max}$; its absence
does not affect the validity of Karshon's proof). 
\end{noTitle}

The proofs in Sections~\ref{se:dull}, \ref{sec3} and \ref{sec4} require recalling the details of the 
characterization of compact, connected, four-dimensional Hamiltonian $S^1$-manifolds.  We recall these here.

\begin{noTitle} {\bf Circle actions that extend to toric actions.} \labell{projection}
Consider a compact, connected, toric symplectic four-manifold $T \acts (M,\omega)$.
An inclusion $$\inc \colon S^1 \hookrightarrow T={(S^1)}^{2}; \,\, s \mapsto (s^{m},s^{n})$$ induces a projection on the duals of the Lie algebras ${\R}^{2} \to \R$ defined by $(m,n) \in {\Z}^{2}$, explicitly ${\R}^2 \ni (x,y) \mapsto m x+n y$.
Composing this projection on the moment map of the torus action yields the moment map of the circle action.
\end{noTitle}

\begin{Notation}\label{gcd}
For the $S^1$-action to be effective, the pair $(m,n)$ is either  $\pm(1,0)$, 
$\pm (0,1)$, or satisfies $\gcd(m,n)=1$.  In what follows, we will frequently need to use this fact.  We fix
$a,b \in \Z$ are such that 
\begin{equation}\label{eq:gcd}
am-bn=1.
\end{equation}
When $(m,n)=\pm (1,0)$, we take $a=\pm 1$ and $b=0$; when $(m,n)=\pm (0,1)$, we take $a=0$ and $b=\mp 1$; and
when $(m,n)=\pm (1,1)$, we take $a=0$ and $b=\mp 1$.   These then still satisfy \eqref{eq:gcd}.
\end{Notation}

The fixed surfaces are the preimages, under the $T$-moment map, of the edges of the Delzant polytope 
parallel to $(-n,m)$.
Such a surface has genus zero and its normalized symplectic 
area equals the affine length of the corresponding edge. 
The isolated fixed points are the preimages of the vertices of the polygon that do not lie on such edges. 
To determine the $S^1$ isotropy of a $T$-invariant sphere, if the image under the $T$-moment map
is parallel to the primitive vector $e=(\alpha,\beta)$, then relative to the circle action, the sphere
is a $\Z_\ell$ sphere for $\ell=|m\alpha+n\beta|$.  For further details, see \cite[\S 2.2]{karshon}.
An example is shown in Figure~\ref{fig:toric projection0}.

\begin{center}
\begin{figure}[h]
\includegraphics[height=5cm]{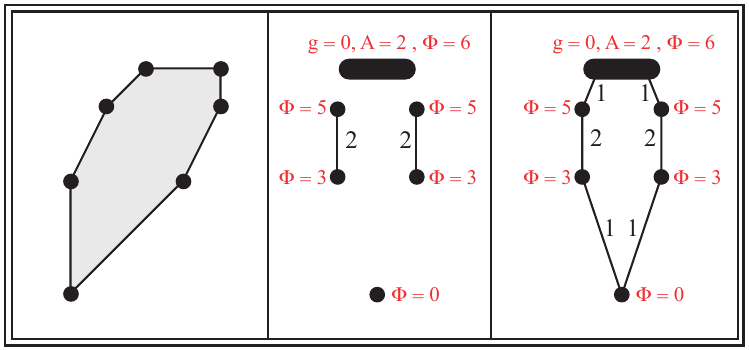} 
\caption[.]{A moment polytope for a four-dimensional toric manifold on the left, together with the decorated graph and
extended decorated graph for the second coordinate circle action.
}
\label{fig:toric projection0}
\end{figure}
\end{center}

\begin{Example}\labell{e:cp2}
The complex projective plane
$\CP^2$ with  a multiplication of the Fubini-Study form $\omega_{\FS}$ by $\lambda>0$
admits the toric action
$$(t_1,t_2) \cdot [z_0;z_1;z_2]=[z_0;t_1 z_1; t_2 z_2]$$
whose moment map is the Delzant triangle of edge-length $\lambda$.
Denote by $L$ the homology class of a line $\CP^1$ in $\CP^2$.
For each of the edges of the Delzant triangle, its preimage is an invariant embedded holomorphic and symplectic sphere in $L$.
For $(m,n)\in {\Z}^2$ with $\gcd(m,n)=1$ the inclusion $s \mapsto (s^{m},s^{n})$
induces the circle action
 $$s \cdot  [z_0;z_1;z_2]= [z_0;s^m z_1;s^n z_2].$$
For the $T$-moment map, the edges of the Delzant triangle lie on the lines $x=0$, $y=0$, and  $x+y =1$, as
shown in Figure~\ref{fig:circle on cp2}(a).
Thus there is an $S^1$-fixed sphere exactly when $(m,n)$ is $\pm (1,0)$, $\pm (0,1)$ and $\pm (1,1)$.
Otherwise, there are a $\Z_{|m-n|}$-sphere, a ${\Z}_{|n|}$-sphere, and a ${\Z}_{|m|}$-sphere: $[0;z_1;z_2], \, \, [z_0;0;z_2], \, \, [z_0,z_1,0]$, respectively. See Figure \ref{fig:circle on cp2} for the corresponding labeled graphs. 
In all cases, the K\"ahler metric is generic.

\begin{center}
\begin{figure}[h]
\includegraphics[height=5cm]{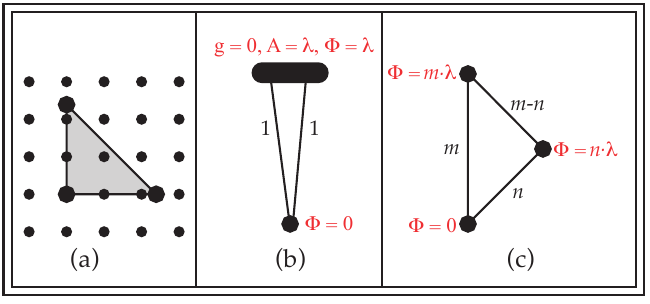} 
\caption{The $T^2$ moment map image and  extended decorated graphs for a Hamiltonian circle action on $\CP^2$. In
(c), we are assuming $m>n>0$.}
\label{fig:circle on cp2}
\end{figure}
\end{center}
 \hfill $\diamondsuit$
\end{Example}

\begin{Example}\labell{e:hirz}
We denote by $\Hirz_{N}$ the Hirzebruch surface that is the algebraic submanifold of
$\CP^1 \times \CP^2$ defined in homogeneous coordinates by
\[
\{([w_1;w_2], [z_0;z_1;z_2]) \in \CP^1 \times \CP^2 \mid w_{1}^{N}z_2=w_{2}^{N}z_1\}.
\]
For $\beta,f > 0$, we denote by $\omega_{\beta,f}$ the sum of the Fubini-Study form on $\CP^1$ multiplied by $\beta+\frac{N}{2}f$ and the Fubini-Study form on $\CP^2$ multiplied by $f$. 
We shall use the same notation for its restriction to the Hirzebruch surface.
The {\bf zero section} $S_0$ is the sphere   $\{([w_1;w_2], [1;0;0])\}$, the {\bf section at infinity} $S_{\infty}$ is the sphere $\{([w_1;w_2], [0, w_1^N,;w_2^N])\}$, and the {\bf fiber at zero} $F_0$ is the sphere $\{([1; 0], [z_0;z_1;0])\}$.

The Hirzebruch surface $(\Hirz_N, \omega_{\beta,f})$ admits the toric action
\begin{equation*} 
(s,t)\cdot ([w_1;w_2],[z_0;z_1;z_2]) = ([w_1;sw_2],[tz_0;z_1;s^N z_2])
\end{equation*}
whose moment map image is the trapezoid in Figure \ref{fig:hirzebruch}.
The parameter $\beta$ is the length of the top edge of the trapezoid, $f$ is the height, $\frac{1}{N}$ is the slope of the right edge if $N>0$, and the right edge is vertical if $N=0$. 
For $(m,n)\in {\Z}^2$ with $\gcd(m,n)=1$ the inclusion $s \mapsto (s^{m},s^{n})$
induces the circle action
\[
\xi \cdot ([w_1;w_2], [z_0;z_1,;z_2]) = ([w_1; \xi^m w_2], [\xi^n z_0; z_1;\xi^{Nm} z_2]).
\]
There are two fixed spheres if $(m,n)=\pm (0,1)$.
The circle action has exactly one fixed sphere and one ${\Z}_{N}$-sphere if $(m,n)=\pm(1,0)$.
Otherwise, there are two ${\Z}_{|m|}$-spheres, a $\Z_{|mN-n|}$-sphere, and a ${\Z}_{|n|}$-sphere. 
The decorated graphs for these actions are shown in Figure \ref{fig:circle on hirzebruch}. 

In these examples, the K\"ahler metric is generic except for graph (d) with $m=1$. 
However, these non-generic Hamiltonian $S^1$-manifolds are  each isomorphic to a manifold whose graph is 
of type (c) with $N'=N-2|n|$ (and $m,|n|$ as before); the K\"ahler metric on that isomorphic manifold is generic.
For further details, see \cite[Remark 6.12]{karshon}.  \hfill $\diamondsuit$

\begin{center}
\begin{figure}[h]
\includegraphics[height=2.5cm]{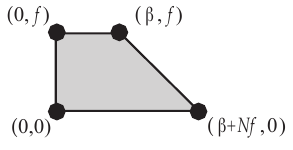} 
\caption{The standard Hirzebruch trapezoid}\labell{trapezoid}
\label{fig:hirzebruch}
\end{figure}
\end{center}

\begin{center}
\begin{figure}[h]
\includegraphics[height=5cm]{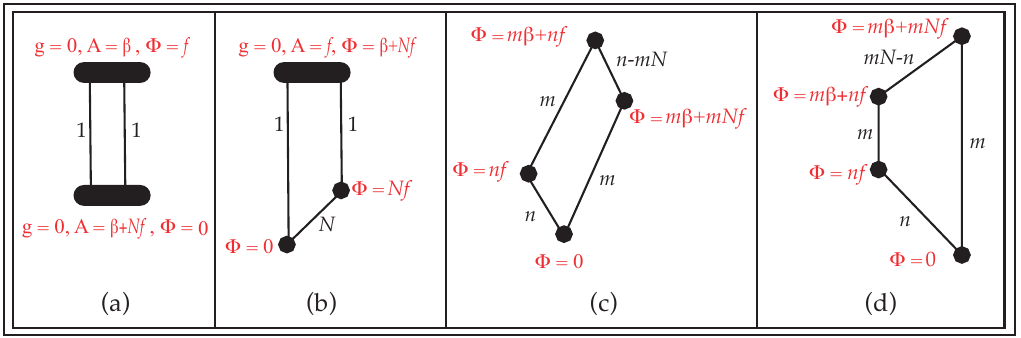} 
\caption{Extended decorated graphs for a Hamiltonian circle action on a Hirzebruch surface.  In (c) and (d), we are
assuming $m,n>0$.  In (c), we have $n-mN>0$ and in (d), $mN-n>0$. The graph in (a) with genus label $g \geq 0$ is the extended decorated graph of a symplectic $S^1$-ruled surface.
}
\label{fig:circle on hirzebruch}
\end{figure}
\end{center}

\end{Example}

\newpage

\begin{Notation}\label{basic notions}
Consider an  $S^2$-bundle over a compact Riemann surface $\Sigma$.
We fix base points $\ast\in S^2$ and $\ast\in \Sigma$.
For the trivial $S^2$-bundle $\Sigma \times S^2$ over $\Sigma$, we denote 
$F:=[\ast \times S^2], \,\,\, B^{*}:=B=[\Sigma \times \ast],$
 classes in the homology group  $H_2(\Sigma \times S^2;\Z)$. 
When we consider the non-trivial $S^2$-bundle $M_{\Sigma}\stackrel{\pi}{\to}\Sigma$, denote the homology class of the fiber by 
$F=[\pi^{-1}(\ast)]\in H_2(M_\Sigma;\Z)$.
For each $\ell$, the trivial bundle admits a section ${\sigma}_{2\ell}: \Sigma\to  \Sigma \times S^2$ 
whose image $\sigma_{2\ell}(\Sigma)$ has 
even self intersection number $2\ell$. Similarly, for each $\ell$, 
the non-trivial bundle admits a section $\sigma_{2\ell+1}:\Sigma\to M_{\Sigma}$ 
whose image $\sigma_{2\ell+1}(\Sigma)$ has odd self intersection number $2\ell+1$.  
We denote $B_N:=[\sigma_N(\Sigma)] \in H_2(M_\Sigma;\Z)$.
For every $N \in \Z$, we have  $B_{N}=B_{-N}+N\cdot F.$ 
When we consider the non-trivial $S^2$-bundle denote $B^*:=\frac{1}{2}(B_1+B_{-1})$ in $H_2(M_\Sigma;\Q)$. Note that 
\begin{equation}\label{eq:b*}
B^{*}=\frac{1}{2}(B_{N}+B_{-N})
\end{equation} 
for all even $N$ in the trivial case, and for all odd $N$ in the non-trivial case.
A Hirzebruch surface is an $S^2$-bundle over $S^2$. In $\Hirz_{N}$, we have $S_0=B_{-N}$, $S_{\infty}=B_{N}$ and $F_0=F$.

An $S^2$-bundle over $\Sigma$  with a circle action that fixes the basis and rotates each fiber, an invariant symplectic form $\omega$ and a moment map is called a 
 \emph{symplectic $S^1$-ruled surface}. 
 It admits a \emph{ruled} complex analytic structure as an $S^2$-bundle over $(\Sigma,j)$, that is compatible with the $S^1$-action and with $\omega$, such that the K\"ahler metric is generic.
 Its extended decorated graph is as in Figure~\ref{fig:circle on hirzebruch}(a), with genus labels $g \geq 0$. The parameter $\beta$ is $\frac{1}{2\pi}$ times the symplectic area of $B_{-N}$ and $f$ is $\frac{1}{2\pi}$ times the symplectic area of $F$.
 \end{Notation}

\begin{noTitle} \labell{nbup} {\bf  The effect of a blowup on the decorated graph.} 
Let $J$ be  an  integrable $\omega$-compatible complex structure on a Hamiltonian $S^1 \acts (M^4,\omega)$ 
with respect to which the $S^1$-action is holomorphic. 
Let $p$ be a fixed point in $M$.
Let $U \subset M$ be an invariant open ball centered around the fixed point $p$, 
small enough such that the $S^1$-action on $U$ is linear (in holomorphic coordinates). In particular it induces an $S^1$-action on the manifold $\widetilde{U}=\{(z,l) \colon z \in l\} \subset U \times \CP^{1}$.
The \emph{equivariant complex blowup} $(\widetilde{M},\widetilde{J})$ of $(M,{J})$ at $p$ is the complex $S^1$-manifold 
\begin{equation}\labell{eq:bupman} 
\widetilde{M}=M \smallsetminus \{p\}\cup \widetilde{U},
\end{equation}
obtained by adjoining $M \smallsetminus \{p\}$ with $\widetilde{U}$ via the 
equivariant isomorphism $\widetilde{U} \smallsetminus (z=0) \cong U \smallsetminus \{p\}$ given by $(z,l) \mapsto z$.  There is a natural equivariant projection 
\begin{equation} \labell{eq:bupmap}
\bl \colon \widetilde{M} \to M
\end{equation}
extending the identity on $M  \smallsetminus \{p\}$. The inverse image ${\bl}^{-1}(p)$ is naturally isomorphic to $\CP^{1}$ and is called the \emph{exceptional divisor} of the blowup; it is $S^1$-invariant. Let $E$ be the homology class in $H_{2}(\widetilde{M};\Z)$ of the exceptional divisor. In $k$-fold complex blowup we denote the classes of the exceptional divisors by $E_1,\ldots,E_k$.

 For $\varepsilon >0$, define an $\varepsilon$-blowup of the decorated graph, 
 according to the location of the blowup, as in 
 Figure \ref{fig:blowup effect}. 
 We say that the obtained graph is \emph{valid} 
 if the (fat or not) vertices created 
 in the $\varepsilon$-blowup do not surpass the other pre-existing (fat or not) vertices 
 in the same chain of edges, 
 and the fat vertices after the $\varepsilon$-blowup have positive size labels.
By \cite[Theorem 7.1 and its proof]{karshon},
if the $\varepsilon$-blowup of the decorated graph is valid, 
 then there exists an invariant K\"ahler form $\widetilde{\omega}$  on the equivariant complex 
 blowup in the cohomology class $$\bl^{*}[\omega]-\varepsilon \Xi,$$ where  $\Xi$ is the Poincar\'e 
 dual of the exceptional divisor class $E$, and the graph of the blowup $S^1 \acts (\widetilde{M},\widetilde{\omega}, \widetilde{J})$ 
 is this $\varepsilon$-blown up graph.
  Moreover,
if the K\"ahler metric on $S^1 \acts (M,\omega)$ is generic, then so is 
 the resulting K\"ahler metric on $S^1 \acts (\widetilde{M},\widetilde{\omega})$.

\begin{center}
\begin{figure}[h]
\includegraphics[height=3cm]{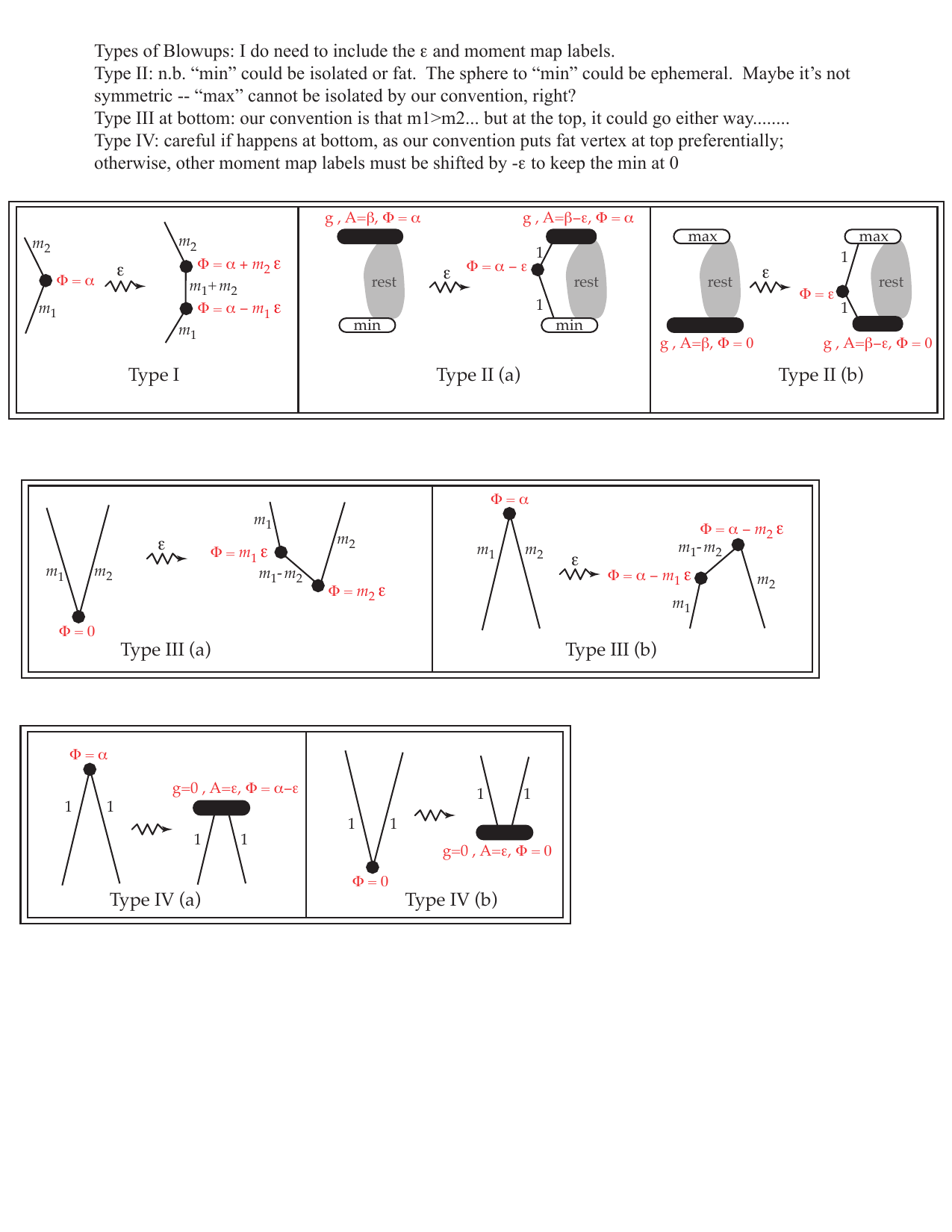} 

\includegraphics[height=3cm]{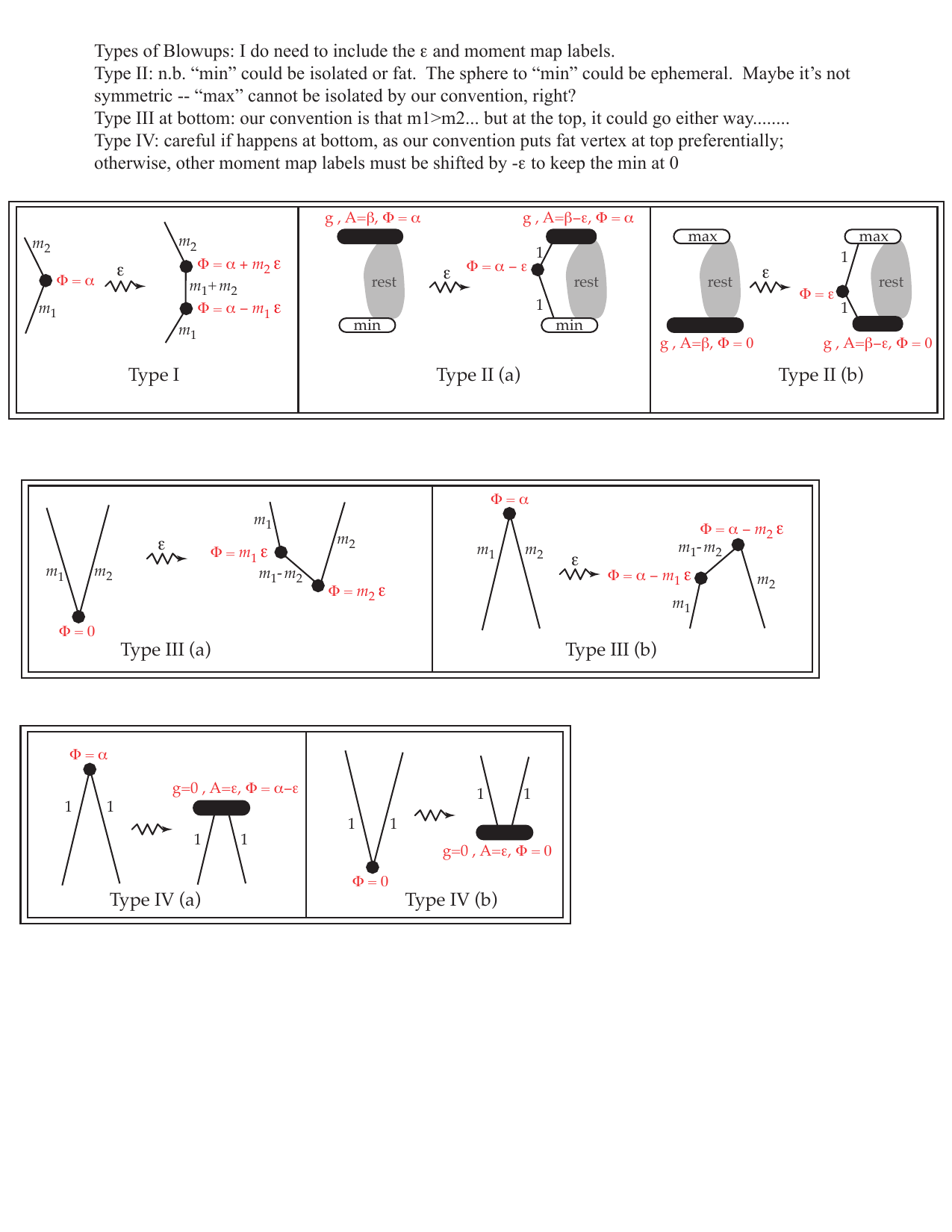} 

\includegraphics[height=3cm]{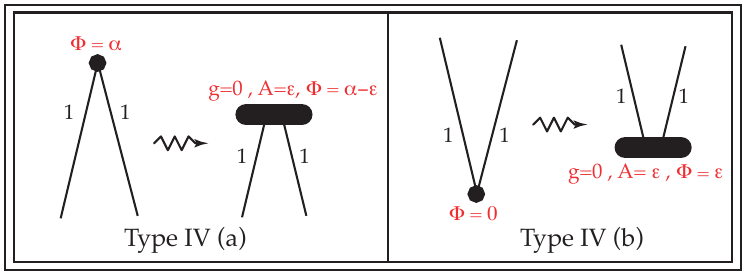} 

\caption[.]{
The effect on the decorated graph of Types I--IV blowups.
Note that in Type III(a) and IV(b), the final moment map should be corrected with a vertical translation
to comply with the convention that the minimum moment map value is $\Phi=0$.
}
\label{fig:blowup effect}
\end{figure}
\end{center}
\end{noTitle}

\begin{noTitle}\labell{nbd}{\bf Equivariant blow down}.
By the Castelnuovo-Enriques criterion \cite[p.\ 476]{GH}, if a holomorphic sphere $S \cong \CP^1$ is embedded in a complex manifold $M$ of dimension two and 
$[S] \cdot [S]=-1$, then 
one can blow down along $S$, replacing it with a point $p$,  to get a complex manifold  $\overline{M}$. If $S^1 \acts M$ and $S$ is invariant then $\overline{M}$ admits an $S^1$-action with the point $p$ fixed.
If $M$ admits a K\"ahler form $\omega$ such that $S^1 \acts (M,\omega)$ is Hamiltonian, then 
by the equivariant tubular neighbourhood theorem, a neighbourhood of $S$ is equivariantly symplectomorphic to a neighbourhood of the exceptional divisor in an $\varepsilon$-blowup of $C^2$ with a linear $S^1$-action. 
By removing a neighbourhood of $S$ and gluing in a standard ball, we get 
an invariant K\"ahler form $\overline{\omega}$  on the equivariant complex blow down $\overline{M}$ such that $S^1 \acts (\overline{M},\overline{\omega})$ is Hamiltonian.
 Its equivariant K\"ahler $\varepsilon$-blowup is isomorphic to $(M,\omega,J)$ with the given $S^1$-action. 
The effect on the decorated graph is the reverse of the effect of the blowup.
\end{noTitle}

\begin{noTitle} \labell{preface} {\bf Characterization of Hamiltonian $S^1$-manifolds.}
By \cite[Theorem 6.3, Lemma 6.15, and proof of Theorem 7.1]{karshon}, every 
compact, connected, four-dimensional Hamiltonian $S^1$-manifold is, up to an equivariant symplectomorphism, obtained by finitely many $S^1$-equivariant K\"ahler blowups starting from one of the following {minimal models}:
\begin{itemize}
\item  The complex projective plane $\CP^2$ with the Fubini-Study form with a
 circle action 
 $$s \cdot  [z_0;z_1;z_2]=[z_0;s^m z_1;s^n z_2].$$ 
 This is the projection of a toric action, as in Example \ref{e:cp2}.
\item The Hirzebruch surface $\Hirz_N$ with the form $ \omega_{\beta,f}$ with a circle action
$$
\xi \cdot ([w_1;w_2], [z_0;z_1,;z_2]) = ([w_1; \xi^m w_2], [\xi^n z_0; z_1;\xi^{Nm} z_2]).
$$
This example is the projection of a toric action,
as in Example \ref{e:hirz}.

\item A {symplectic $S^1$-ruled surface}, with a ruled compatible integrable complex structure, as in Notation~\ref{basic notions}.
 \end{itemize}
\end{noTitle}

\begin{Remark} \labell{rem:spheres}
  For $S^1 \acts (M^4,\omega)$, let $J$ be an 
integrable complex structure on $(M,\omega)$ such that the $S^1$-action is holomorphic and $\omega(\cdot,J\cdot)$ is a generic Riemannian metric. For the existence of such $J$ see \cite[Theorem 7.1]{karshon}.
By  \cite[Lemma~4.9]{ah}, a gradient sphere is smooth at its poles except 
when the gradient sphere is free 
and the pole in question is an isolated minimum (or maximum) of $\Phi$ with both isotropy weights $> 1$ (or $< -1$).
In particular, a non-free gradient sphere is an embedded $J$-holomorphic sphere.
By \cite[Lemma 2.4]{counter}, the preimages of fat vertices and the non-free gradient spheres whose moment-map images are
edges of label $>1$ are
embedded complex (hence symplectic) curves.

If $M$ is an $S^2$-bundle over $\Sigma$,
the fiber class $F$ is represented by an embedded complex sphere. 
Therefore the edge labeled $N$ in Figure \ref{fig:circle on hirzebruch}(b); 
the edges labeled $n$ and $n-mN$ in Figure \ref{fig:circle on hirzebruch}(c);
and the edges labeled $n$ and $mN-n$ in Figure \ref{fig:circle on hirzebruch}(d) are 
each the image of an embedded complex sphere, even if $N=1$, $n=1$, $n-mN=1$, or
$mN-n=1$, respectively.
The same is true for a trivial edge with label $1$ in an extended decorated graph with two fat vertices.

Similarly, the classes $B_N$ and $B_{-N}$ are represented by embedded complex spheres, 
hence the edges labeled $1$ in Figure \ref{fig:circle on hirzebruch}(b) and the edges labeled 
$m$ in the Figure \ref{fig:circle on hirzebruch}(c) and (d) are each the image of an 
embedded complex sphere, even if $m=1$.
An edge labeled one in Figure \ref{fig:circle on cp2}(a) is the image of an embedded 
complex sphere in the class $L$ of a line in $\CP^2$.
Other labeled one edges
that are represented by embedded complex spheres are edges that are the images of an 
exceptional divisor or of the proper transform of a complex sphere in an equivariant K\"ahler blowup.
\end{Remark}

\begin{Definition}
In the extended decorated graph of $S^1 \acts (M,\omega)$ with respect to a generic metric $\omega(\cdot,J \cdot)$, we call an edge \emph{robust} if there is an embedded $J$-holomorphic  sphere 
corresponding to the edge by the moment map. 
Otherwise, we call the edge \emph{ephemeral}.
\end{Definition}

Note that by \cite[Lemma~4.9]{ah}, robustness does not depend on the choice of generic metric. Thus, if an edge is robust, there is an integrable complex structure on $(M,\omega)$ such that the edge corresponds to an embedded holomorphic sphere with respect to this structure.
It follows from  \S \ref{preface} and Remark \ref{rem:spheres}  that
if there are ephemeral edges in the extended decorated graph for a generic metric,  
then the number of 
fat vertices is exactly one. 
By our convention, it is the maximal vertex. Moreover, 
the edges in such a graph that are possibly
ephemeral are the first edges in the chains.
However, for $i=1,2$, the first edge in the  $i^{\mathrm{th}}$ 
chain is never ephemeral. 
The following characterization of ephemeral edges is thus a consequence of \S \ref{preface} and 
Remark \ref{rem:spheres}.

\begin{Proposition} \labell{claim:eph}
Let $S^1 \acts (M,\omega)$ be a compact, connected, four-dimensional
Hamiltonian $S^1$-manifold. 
In the extended decorated graph with respect to a generic metric, we
order the chains
so that the labels satisfy $m_{1,1} \geq m_{2,1} \geq 1=m_{3,1}=\cdots=m_{k,1}$, where $m_{i,1}$ is the label of the first edge 
from the minimum in the $i^{\mathrm{th}}$ chain.
An edge is ephemeral if and only if the following hold.
\begin{itemize}
\item The graph has exactly
one fat vertex, which by our conventions is the maximum
under the momentum map;
\item the edge is  the first edge in the 
 $i^{\mathrm{th}}$ chain; and
 \item $i \geq 3$ and $m_{2,1}\geq 2$.
\end{itemize}
\end{Proposition}

\section{Dull graphs and their isomorphisms}\label{se:dull}

We now turn from decorated graphs to a combinatorial structure
with less information.  We will show that it retains enough to recover equivariant cohomology.

\begin{Definition}\label{def:dull}
The {\bf dull graph} of a compact, connected, four-dimensional Hamiltonian $S^1$-manifold $S^1 \acts (M,\omega)$ 
is the labeled graph $\O$ obtained from the decorated graph by 
\begin{itemize}
\item forgetting the height and area labels;
\item adding a vertex label to an extremal isolated vertex to indicate it is extremal; and 
\item adding a vertex label to each fat vertex to indicate its self intersection. 
\end{itemize}
\end{Definition}


\begin{center}
\begin{figure}[h]
\includegraphics[height=4cm]{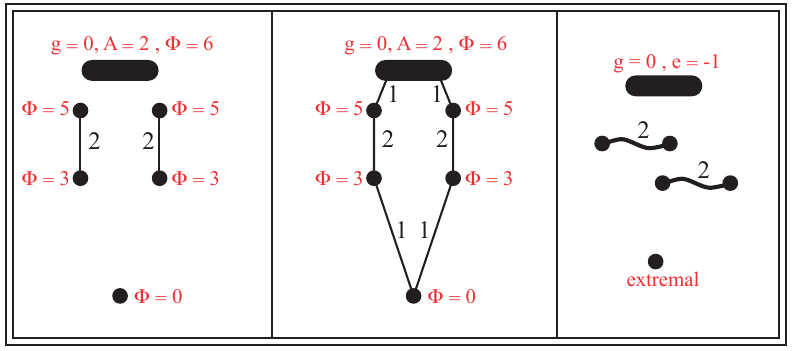} 
\caption[.]{The decorated graph, extended decorated graph, and dull graph for a Hamiltonian circle action on a four-dimensional manifold.
}
\label{fig:extended and dull}
\end{figure}
\end{center}

We first investigate when two decorated
graphs can have the same dull graph.  
For the blowup and blow down procedures
described in Section~\ref{sec2}, the dull graph
does not record the symplectic size $\varepsilon$
of the blowup or blow down.  This leads us to
the following definition.

\begin{Definition}
A {\bf multi-rescaling} is a map between two decorated graphs
that can change moment map labels and area labels while preserving
\begin{itemize}
\item the $\min$ and $\max$ vertices,
    \item the edge labels, 
     \item the order by height of the vertices in each chain, and
    \item for fat vertices, the genus labels and the $e_{\min}$ and $e_{\max}$ values
    in \eqref{eq:emin} and \eqref{eq:emax}.
    
\end{itemize}
We use the same term for such a map between extended decorated graphs.
\end{Definition}

\begin{Lemma} \labell{equiv}
For two compact, connected, four-dimensional Hamiltonian $S^1$-manifolds,  
$$S^1 \acts (M,\omega) \mbox{ and } S^1 \acts (M',\omega'),$$
the following are equivalent.
\begin{enumerate}
\item The dull graphs of $M$ and $M'$ are isomorphic as labeled graphs. 
\item The extended decorated graphs of $M$ and $M'$ with respect to a generic compatible metric differ by a composition of finitely many of the following maps:
\begin{itemize}
\item[(a)] a flip of the whole graph; 
\item[(b)] a multi-rescaling; and 
\item[(c)] a flip of a chain 
 that begins and ends with an edge of label $1$.
\end{itemize}
\end{enumerate}
Moreover, any isomorphism of the dull graphs
of $M$ and $M'$ is induced by a map on the
extended decorated graphs that is the
composition of finitely many of the 
maps (a)-(c).
\end{Lemma}

\begin{Remark}
Note that the maps of type (a) and (c) flip at least some edges upside down.  The
map of type (b), a multi-rescaling, is the only one that keeps all edges
right-side up.
\end{Remark}

We call a map of type (c) is a {\bf chain flip}, as discussed in the Introduction.  
Note that the scaling factors in a multi-rescaling (b) can differ on each fat vertex area label and edge length.  
The effect of the
maps in (a)-(c) are shown in 
Figure~\ref{fig:chain + flip + rescale}.  


\begin{center}
\begin{figure}[h]
\includegraphics[height=5cm]{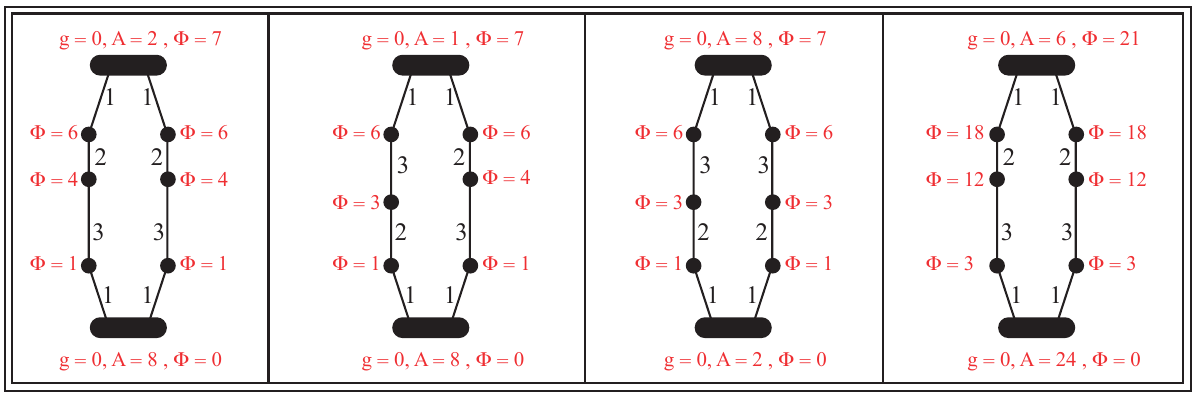} 
\caption[.]{An extended decorated graph on the left; a chain flip of the extended decorated graph; a flip
of the whole extended decorated graph; and a multi-rescaling of the extended decorated graph.
}
\label{fig:chain + flip + rescale}
\end{figure}
\end{center}

\begin{proof}[Proof of Lemma \ref{equiv}]
The implication (2) $\Rightarrow$ (1) is straight forward because the dull graph simply forgets some of the information
from the extended decorated graph.
So the main content is to show (1) $\Rightarrow$ (2),
keeping track that the isomorphism of dull graphs
is induced by the map of extended decorated graphs.
Consider dull graphs that are obtained from the decorated graphs associated to four-dimensional Hamiltonian $S^1$-manifolds.

Recall that there are exactly two vertices that are either fat or labeled as extremal. 
By Proposition~\ref{label1}, the connected components of a dull graph are: the 
component of one extremal/fat vertex; the component of the other; and, if they exist, 
chains of edges in which all the vertices are interior.          
Note that such a chain might consist of a single isolated vertex, otherwise each endpoint
of a chain is adjacent to exactly one edge. In the extended decorated graph there might 
be an edge 
labeled one between an end-point of a chain and the $\min$ vertex and an edge labeled 
one between the other end-point and the $\max$ vertex.

An isomorphism of dull graphs preserves the labels on extremal and fat vertices. 
So it sends 
the set of vertices that are $\max$ and $\min$ in the decorated graph of 
$S^1 \acts (M,\omega)$ to the set of vertices that are $\max$ and $\min$ in the 
decorated graph of $S^1 \acts (M',\omega')$. Assume first that it sends the vertex that is $\max$  in $M$ to the 
vertex that is $\max$ in $M'$ and $\min$ to $\min$. Then, on the vertices and edges of the
connected components of an extremal vertex it coincides with 
the map induced from 
permutations of identical chains.
Such a permutation only affects the chains in the connected component of the extremal vertex.
This can be seen  by induction on the number of edges in the shortest path to an extremal vertex.
Similarly, if it sends $\max$ to $\min$ and $\min$ to $\max$, then on the vertices and edges in the connected 
components of an extremal vertex, it coincides with the 
 map induced from a flip of the extended decorated graph, 
 possibly followed by permutations of identical chains.
  In both cases, since the adjacency relation and the edge labels are preserved, all other connected components are
  chains not containing an extremal vertex.
  On these remaining components, 
  the dull graph isomorphism coincides either with the 
map induced from the identity;  a permutation of identical chains; or  with the map induced from a  flip, possibly followed by a permutation of identical chains.

Now assume that the isomorphism of the dull graphs is the identity map.
Then the two extended 
graphs must agree on the 
type of the maximal (minimal) vertex (i.e., fat or isolated), and on its genus and self intersection 
when it is a fat vertex, and 
on the arrangement of the edges and their labels in each of the chains. 
Thus, up to chain flips of type (c), the extended decorated graphs must differ by rescaling the heights of the moment map values 
and scaling the areas of
the fat vertices, precisely a multi-rescaling.
Finally, we must verify that
 $e_{\min}$ and $e_{\max}$ are preserved. 
If the maximal vertex is fat then $e_{\max}$ is one of its labels in the dull graph hence does not change.  
If the maximal vertex is isolated then 
the number $e_{\max}$ in both decorated graphs is the reciprocal of the product of the 
labels adjacent to the vertex, hence it does not change.
By the same argument, $e_{\min}$ does not change.  Moreover, in this case, the identity map on the dull graphs is in fact induced by the composition of maps of type (b) and (c) just described on the extended dull graphs.

Thus, we have established that (2) holds, together with the fact that the isomorphism of dull graphs
is induced by a map on the extended decorated graphs that is a composition of maps of type (a),(b), and (c).
This completes the proof.
\end{proof}

\begin{Remark} 
Let $S^1 \acts (M,\omega)$ and $S^1 \acts (M',\omega')$ be
compact, connected, four-dimensional Hamiltonian $S^1$-manifolds.
An isomorphism of extended decorated graphs of $M$ and $M'$ restricts to an isomorphism of the
decorated graphs.
Moreover, by \cite[Proof of Lemma 3.9]{karshon}, an isomorphism of extended decorated graphs with respect to generic compatible metrics
 is determined by an isomorphism of the
decorated graphs. 
The maps (a), (b), and (c) between extended decorated graphs in the lemma above translate naturally to maps between decorated graphs.
\end{Remark}

\begin{Remark}\labell{flipandscale}
If the decorated graphs of  $(S^1 \acts M',\omega',\Phi')$ and $(S^1 \acts M,\omega,\Phi)$ 
differ by a map of type (a), {\em i.e.} a flip of the whole graph, 
then there is an equivariant diffeomorphism from $S^1 \acts M'$ to $S^1 \acts M$ that 
induces the map between the graphs.
To prove this, we first note that equipping $M$ with $-\omega$ and the given $S^1$-action on $M$, 
we get a Hamiltonian $S^1$-manifold  $(S^1 \acts M,-\omega,-\Phi)$ whose decorated graph also 
differs from the decorated graph of $(S^1 \acts M,\omega,\Phi)$ by a map of type (a)  
and a vertical translation.  Thus, the decorated graphs of $(S^1 \acts M,-\omega,-\Phi)$ and $(S^1 \acts M',\omega',\Phi')$ are isomorphic, up to a vertical translation.  A vertical translation corresponds to adding a global constant to the moment map. By the uniqueness of the decorated graph \cite[Theorem 4.1]{karshon}, there is an equivariant symplectomorphism from $(S^1 \acts M,-\omega,-\Phi)$ to $(S^1 \acts M',\omega',\Phi')$ that induces the isomorphism between their decorated graphs. This map is as an equivariant diffeomorphism $h \colon S^1 \acts M \to S^1 \acts M'$ that is orientation-preserving with respect to the orientations induced by the symplectic forms. Moreover, for any $S^1$-invariant, $\omega$-compatible almost complex structure $J$, the almost complex structure $-h_{*}J$ on $M'$ is $S^1$-invariant and $\omega'$-compatible.

We note that the decorated graph of   $(S^1 \acts M',\omega',\Phi')$ also coincides with the decorated graph of the $S^1$-manifold obtained from  $S^1 \acts (M,\omega_M)$ by precomposing with the non-trivial automorphism of the circle. Hence the map (a) also indicates a strictly weakly equivariant diffeomorphism.
\end{Remark}

A map of type (b) corresponds to an equivariant diffeomorphism as well. 
 We will show that, geometrically, it corresponds
to changing the sizes of the symplectic blowups, which changes the symplectic form on the resulting
manifold but not the equivariant diffeomorphism type.

Let ${M_k}$ be the smooth manifold underlying a complex 
$k$-blowup at $k$ distinct points of 
$M_0$ that is
either the complex projective plane $\CP^2$, or the Hirzebruch surface $\Hirz_N$, or a ruled surface, i.e., an $S^2$-bundle over $(\Sigma,j)$ with a ruled  integrable complex structure, with $g(\Sigma)>0$.
A \emph{blowup form} on $M_k$ is 
 a symplectic form for which there exist disjoint 
embedded symplectic spheres (oriented by the symplectic form)  in the 
homology classes 
\begin{itemize}
\item $L,E_1,\ldots,E_k$ if $M_0=\CP^2$;
\item $S_0,F_0,E_1,\ldots,E_k$ if $M_0$ is $\Hirz_N$;
\item $F,E_1,\ldots,E_k$ if $M_0$ is a ruled surface with $g(\Sigma)>0$.
\end{itemize}
See Example \ref{e:cp2}, Example \ref{e:hirz}, Notation \ref{basic notions} and \S \ref{nbup} for the notation of the homology classes.
We say that the blowup form is \emph{compatible} with an $S^1$-action on $M_k$ if the embedded symplectic spheres in the above classes can be chosen to be invariant.

Given $M_k$, we say that a blowup form $\omega'$ is a \emph{positive rescaling} of a blowup form $\omega$ if $\omega'$ differs form $\omega$ by \begin{itemize}
\item rescaling the sizes $\frac{1}{2\pi}\langle \omega,E_i \rangle$ for $i=1,\ldots,k$ and  $\frac{1}{2\pi}\langle \omega,F \rangle$, $\frac{1}{2\pi}\langle \omega,B^{*} \rangle$,
 if $(M_k,\omega)$ is a $k$-fold blowup of a ruled surface and $B^*$ is as in \eqref{eq:b*};
\item rescaling the sizes $\frac{1}{2\pi}\langle \omega,E_i \rangle$ for $i=1,\ldots,k$ and  $\frac{1}{2\pi}\langle \omega,S_0 \rangle$, $\frac{1}{2\pi}\langle \omega,F_0 \rangle$,
  if $(M_k,\omega)$ is a $k$-fold blowup of a Hirzebruch surface;
or
\item  rescaling the sizes 
$\frac{1}{2\pi}\langle \omega,E_i \rangle$ for $i=1,\ldots,n$ and  $\frac{1}{2\pi}\langle \omega,L \rangle$, 
if $(M_k,\omega)$ is a $k$-fold blowup of  $\CP^2$. 
\end{itemize}

For two actions $S^1 \acts (M_k,\omega)$ and $S^1 \acts (M'_k,\omega')$, we say that an $S^1$-equivariant diffeomorphism $S^1 \acts M_k \to S^1 \acts M'_k$ is  an \emph{$S^1$-compatible positive rescaling} of the $S^1$-compatible blowup form 
$\omega$ if it 
pulls back $\omega'$ to  an $S^1$-compatible blowup form that is a positive rescaling of $\omega$.

\begin{Proposition} \labell{scale1}
Let $S^1 \acts (M,\omega)$ and $S^1 \acts (M',\omega')$ be compact, connected, four-di\-men\-sion\-al Hamiltonian $S^1$-manifolds, equipped with the orientations induced by the symplectic forms.
Given a map $\psi$ from the extended decorated graph of $S^1 \acts (M,\omega)$
to that of $S^1 \acts (M',\omega')$  (with respect to generic compatible metrics) that is of type (b), 
i.e.\ a multi-rescaling, then $\psi$ is induced by 
an equivariant diffeomorphism $h \colon S^1 \acts M \to S^1 \acts M'$
that is orientation-preserving.  Moreover, for any $S^1$-invariant, $\omega$-compatible almost complex structure $J$, the structure $h_{*}J$ is $S^1$-invariant and $\omega'$-compatible.
\end{Proposition}

\begin{proof}
By  \cite[Theorem 6.3 and Lemma 6.15]{karshon} every compact, connected Hamiltonian $S^1$-manifold of dimension four is obtained by a sequence of equivariant symplectic blowups from either $S^1 \acts (\CP^2,\lambda \omega_{\FS})$, or $S^1 \acts (\Hirz_{N},\omega_{\beta,f})$, or a symplectic $S^1$-ruled surface. So the symplectic form is an $S^1$-compatible blowup form.   We will prove that $\psi$ is induced by an $S^1$-compatible positive rescaling of the symplectic blowup form,  composed with an $S^1$-equivariant symplectomorphism.
 The proof is by induction on the sum 
$\ell$ of the number of fat vertices and the number of edges in the extended decorated graph.  (Note that it is the same number in both graphs.)
Our base cases are the minimal models, which have $\ell=3$ and $4$ and are exhibited in Figures~\ref{fig:circle on cp2} and
\ref{fig:circle on hirzebruch}.

In the case $\ell=3$, $(M,\omega)$ and $(M',\omega')$ are $(\CP^2,\lambda \omega_{\FS})$ and $(\CP^2,\lambda' \omega_{\FS})$, and the circle actions come from the toric action on $\CP^2$. 
Thus each of  the extended decorated graphs is (up to a flip)
 either Figure \ref{fig:circle on cp2}  (b) or (c).
 In (b) and its flip, 
the area label of the fat vertex and the lengths of the edges all equal the same positive number.
In (c), dividing the edge-length by the edge-label gives  the same number for all three edges. 
Since the graphs of $S^1 \acts (M,\omega)$ and of $S^1 \acts (M',\omega')$ differ by a multi-rescaling, they must be of the same type, with the same edge-labels. So the $S^1$-action on $\CP^2$ is the same.
The value $\lambda$ is $\frac{1}{2\pi}$ times the area of the line $\CP^1$ in the homology class $L$ in $(\CP^2,\lambda \omega_{\FS})$, and $\lambda'$ is $\frac{1}{2\pi}$ times the area of the line $\CP^1$ in $L$ in $(\CP^2,\lambda' \omega_{\FS})$. Hence, $\lambda$ ($\lambda'$) equals the area label of the fat vertex if the  graph is of type (b) (or its flip), and the edge-length over the edge-label for each of the edges if the  graph is of type (c). 
Thus $\psi$ is induced from the identity diffeomorphism of $\CP^2$, which is an $S^1$-compatible positive rescaling $\lambda \omega_{\FS} \mapsto \lambda' \omega_{\FS}$.

If $\ell=4$, then 
$(M,\omega)$ is a symplectic ruled surface, 
either rational or 
irrational. In the first case $(M,\omega)$ is a Hirzebruch surface 
$(\Hirz_N,\omega_{\beta,f})$.
If $N=1$, it is also a blowup of $\CP^2$ at one point. The extended decorated graph 
for the circle action is thus (up to a flip) one of the graphs in Figure \ref{fig:circle on hirzebruch}. 
In the second case,  $S^1 \acts (M,\omega)$ is a symplectic $S^1$-ruled surface 
of positive genus and its extended decorated graph is as in (a) in 
Figure \ref{fig:circle on hirzebruch} with the genus label $g>0$.
By Example \ref{e:hirz} and Notation \ref{basic notions},
 the symplectic size $f$ of the fiber at zero $F_0$ (respectively, the fiber $F$) is $2 \pi$ times
 $$\begin{cases} 
\text{the length of each edge} & \text{ in  (a) }\\
\left[ \text{\begin{tabular}{l}the area label of the fat vertex =\\ 
$\frac{1}{N}$(the length of the $N$-labeled edge)\end{tabular}}\right.& \text{ in   (b) }\\
 \frac{1}{n} (  \text{the length of the $n$-labeled edge} )& \text{ in  (c) }\\
   \frac{1}{n} ( \text{the length of the $n$-labeled edge}) & \text{ in      (d) }
 \end{cases},$$  
 and the symplectic size $\beta$ of the zero section $S_0$ (respectively, the section $B_{-N}$)
 is $2 \pi$  times
  $$\begin{cases} 
 \text{the area label of the max fat vertex } & \text{ in    (a) }\\
 \text{the length of the right edge emanating from max}
& \text{ in   (b) }\\
\frac{1}{m} (  \text{the length of the left $m$ edge}) 
& \text{ in  (c) }\\
\frac{1}{m}  (  \text{the length of the left $m$ edge}) 
& \text{ in  (d) }
 \end{cases}.$$  
 Note that, if $M$ is rational, $N$ is determined by the edge labels in the decorated graph and the labels $e_{\max}$ and $e_{\min}$: it is $e_{\min}$ in (a),  the label of the edge between isolated vertices in (b), $ \frac{1}{2m^2}\left(\frac{1}{e_{\max}}-\frac{1}{e_{\min}}\right)$ 
 (where $m$ is the duplicate label) in (d) and (c). So $N$ is not affected by a multi-rescaling of the graph.
 Also note that the edge-lengths of the remaining edges and the area label of the remaining fat vertex 
 are determined by the above sizes and by equations \eqref{eq:emin} and \eqref{eq:emax}.
 Therefore, the multi-rescaling $\psi$ of the graph only replaces $\beta,f$ with positive $\beta',f'$, respectively. By \cite[Theorem 6.3 and Lemma 6.15]{karshon}, $(M',\omega')$ is also a symplectic ruled surface. Moreover, it has the same genus and parameter $N$ (if rational) as $M$, so $M=M'$. Moreover, since the graphs of $S^1 \acts (M,\omega)$ and $S^1 \acts (M,\omega')$ differ only by the values of the parameters $\beta,f$, the $S^1$-actions are the same and the identity map is an $S^1$-compatible positive rescaling of the form. This map induces $\psi$.

For the induction step, let $S^1\acts M$ with $\ell \geq 5$. 
By \cite[Theorem 7.1 and its proof]{karshon},
there is an equivariant symplectomorphism $\rho$ from $S^1 \acts (M,\omega)$ to an $S^1$-manifold $S^1 \acts (\widetilde{M},\widetilde{\omega})$ on which
there is a compatible integrable complex structure $\widetilde{J}$ such that $S^1 \acts (\widetilde{M},\widetilde{\omega},\widetilde{J})$  is obtained by a single, 
equivariant, K\"ahler blowup from a manifold $S^1 \acts (\overline{M},\overline{\omega},\overline{J})$, with $\ell$ now decreased by one,  and the metrics determined by $(\widetilde{\omega},J)$ and by $(\overline{\omega},\overline{J})$ are generic.
The image of the exceptional divisor under the moment map is either a fat $\min$-vertex, a fat $\max$-vertex, or a
robust edge in the extended  decorated graph for a generic metric. See Figure~\ref{fig:blowup effect} for the effect of the blowup on the decorated graph.
In the first two cases, the label $e_{\min/\max}$ is $-1$ and the genus label $g$ is $0$.
In the third case,
if the edge is the $j^{\mathrm{th}}$ from the bottom on the $i^{\mathrm{th}}$ chain, then by \eqref{rem:int}, the \emph{combinatorial intersection number} $\frac{m_{i,j-1}-m_{i,j+1}}{m_{i,j}}$ equals $-1$.
Recall that here $m_{i,k}$ is the label of the $k^{\mathrm{th}}$ edge in the $i^{\mathrm{th}}$ 
chain for $1 \leq k \leq \ell_i$, and $m_{i,0}$ and $m_{i,\ell_{i+1}}$ are as in \eqref{eq:label0} and \eqref{eq:label+1}.

Denote by $\rho$ also the induced isomorphism on the extended decorated graphs. Then $\psi \circ \rho^{-1}$ is a multi-rescaling of the extended decorated graph of $S^1 \acts (\widetilde{M},\widetilde{\omega},J)$. The resulting graph 
is the extended decorated graph of $S^1 \acts (\widetilde{M}',\widetilde{\omega}')$ with respect to  a generic metric. It
is isomorphic to the extended decorate graph of $S^1 \acts (M',\omega')$ with respect to  a generic metric. By Karshon's uniqueness Theorem \cite[Theorem 4.1]{karshon}, there is an isomorphism $\rho' \colon S^1 \acts (M',\omega') \to S^1 \acts (\widetilde{M}',\widetilde{\omega}')$ that induces this isomorphism; we call the graph isomorphism also $\rho'$.
The multi-rescaling $\psi \circ \rho^{-1}$ does not change $e_{\min/\max}$, $g$, the $m_{i,k}$ labels,
the adjacency relation, or the thickness of the extremal vertices. 
So the resulting extended decorate graph also contains a fat vertex or edge, respectively, 
with the same $e_{\min/\max}$ and $g$ labels and combinatorial intersection numbers. 
Moreover, 
Proposition \ref{claim:eph} guarantees that if the exceptional divisor corresponds to an edge, it is 
robust.  
It is the image 
under the moment map of an embedded invariant complex (and symplectic) sphere, complex with respect
to a compatible integrable complex structure $\widetilde{J}'$ on $S^1 \acts (\widetilde{M}',\widetilde{\omega}')$ such that the metric $\widetilde{\omega}'(\cdot,\widetilde{J}'\cdot)$ is generic. The preimage of a fat 
vertex with genus label $0$ is also such a sphere; see Remark \ref{rem:spheres}. Note that the symplectic areas of 
the corresponding complex spheres in $(\widetilde{M},\widetilde{\omega},\widetilde{J})$ and in $(\widetilde{M}',\widetilde{\omega}',\widetilde{J}')$ might differ by a positive factor.
Blowing down equivariantly, along the corresponding embedded invariant complex  (hence symplectic) spheres in $S^1 \acts (\widetilde{M},\widetilde{\omega},\widetilde{J})$ and in $S^1 \acts (\widetilde{M}',\widetilde{\omega}',\widetilde{J}')$ yields $S^1 \acts (\overline{M},\overline{\omega})$ and $S^1 \acts (\overline{M'},\overline{\omega'})$  with extended decorated graphs  (with respect to generic metrics).
The map $\rho' \circ \psi \circ \rho^{-1}$ between the graphs of the blown down manifolds is a composition of an isomorphism and a multi-rescaling. It induces a composition of an isomorphism $\bar{\rho}$ and a multi-rescaling $\overline{\psi}$ between extended decorated graphs (with respect to a generic metric) of Hamiltonian $S^1$-manifolds.
By the induction hypothesis, $\overline{\psi}$ is induced by a map between the blown down manifolds that is an $S^1$-compatible positive rescaling of the symplectic blowup form, composed with an equivariant symplectomorphism.
Therefore,
 $\rho' \circ \psi \circ \rho^{-1}$ is induced by an $S^1$-compatible positive rescaling of the symplectic blowup form, composed with an equivariant symplectomorphism. 
 Hence so is $\psi$. 
This completes the proof.
\end{proof}

\begin{Remark}
We have thus shown that the first two combinatorial maps (a) and (b) in Lemma \ref{equiv} are induced by orientation-preserving equivariant diffeomorphisms.
The third is as well, but the proof is much more subtle and will be addressed in 
\cite{hkt}.
\end{Remark}

The multi-rescaling (b) is powerful: we may use it to prove that a simply connected Hamiltonian $S^1$-manifold is
equivariantly diffeomorphic to a toric one.
This is reminiscent of the example explored
in \cite[Theorem~3.2]{HauKn}: equilateral pentagon
space admits no periodic Hamiltonian function, but it
is diffeomorphic to a toric four-manifold.

\begin{Corollary}\label{cor:rei}
Every compact, connected, simply connected four-dimensional Hamiltonian $S^1$-manifold 
is equivariantly diffeomorphic to one that extends to a toric action. 
Moreover, the equivariant diffeomorphism can be chosen to be orientation-preserving, where the orientations are the ones induced by the symplectic forms. 
\end{Corollary}

\begin{proof}
By \cite[Proposition 5.21]{karshon}, a Hamiltonian circle action on a compact, connected symplectic four-manifold extends to a toric action if and only if 
\begin{itemize}
\item [(i)] each fixed surface has genus $0$, and 
\item [(ii)] each non-extremal level set for
the moment map contains at most two non-free orbits. 
\end{itemize}
Let $S^1 \acts (M,\omega)$ be a compact, connected, 
simply-connected, four-dimensional Hamiltonian 
$S^1$-manifold.
The ``simply connected'' hypothesis guarantees condition 
(i) is satisfied.
Moreover, by \cite[Proposition 5.13]{karshon}, if $M$
has only isolated fixed points, 
then condition (ii) is also satisfied.

Now assume that there is at least one fixed sphere. 
We consider the chains in the extended decorated graph with 
respect to a generic metric.
Assume, without loss of generality, 
that the maximal vertex is fat.
The minimal vertex could correspond to a fixed surface or to an isolated fixed point.
We fix an order of the chains so that the labels on edges 
emanating from the minimum 
are non-decreasing  $m_{1,1} \geq m_{2,1} \geq 1=m_{3,1}=\cdots=m_{k,1}$,
where $k\geq 2$.
When $k=2$, item (ii) is satisfied automatically.
For $k>2$, we will perform a multi-rescaling to adjust the heights
of the vertices in the $3^{\mathrm{rd}}$ through $k^{\mathrm{th}}$.  Let $a$ be
the maximum value of the moment map labels for the isolated vertices in first and
second chains, and let $b$ be moment map label for the maximal fixed
surface.  We perform a multi-rescaling so that 
\begin{itemize}
\item the spheres with labels
$m_{3,2},\dots,m_{3,\ell_3-1}$ in the $3^{\mathrm{rd}}$ chain have moment image
in the interval $\left(\ a\ , \ b-\frac{b-a}{2} \right)$;

\item the spheres with labels
$m_{4,2},\dots,m_{4,\ell_4-1}$ in the $4^{\mathrm{th}}$ chain have moment image
in the interval $\left(  \ b-\frac{b-a}{2} \ , \ b-\frac{b-a}{2^2} \ \right)$; 
and so forth, with finally

\item the spheres with labels
$m_{k,2},\dots,m_{k,\ell_k-1}$ in the $k^{\mathrm{th}}$ chain have moment image
in the interval $\left(  \ b-\frac{b-a}{2^{k-3}} \ , \ b-\frac{b-a}{2^{k-2}} \ \right)$.
\end{itemize}
The impact of this multi-rescaling on the
extended decorated graph is indicated 
in Figure~\ref{fig:rescale to toric}.
By \cite[\S5--7]{karshon}, there exists an invariant symplectic
form $\widetilde{\omega}$ on $M$ so that the associated 
moment map $\widetilde{\Phi}$ has this new image.We have constructed this $(S^1 \acts M,\widetilde{\omega},\widetilde{\Phi})$ precisely so that it satisfies item (ii).

\newpage

\begin{center}
\begin{figure}[h]
\includegraphics[height=8cm]{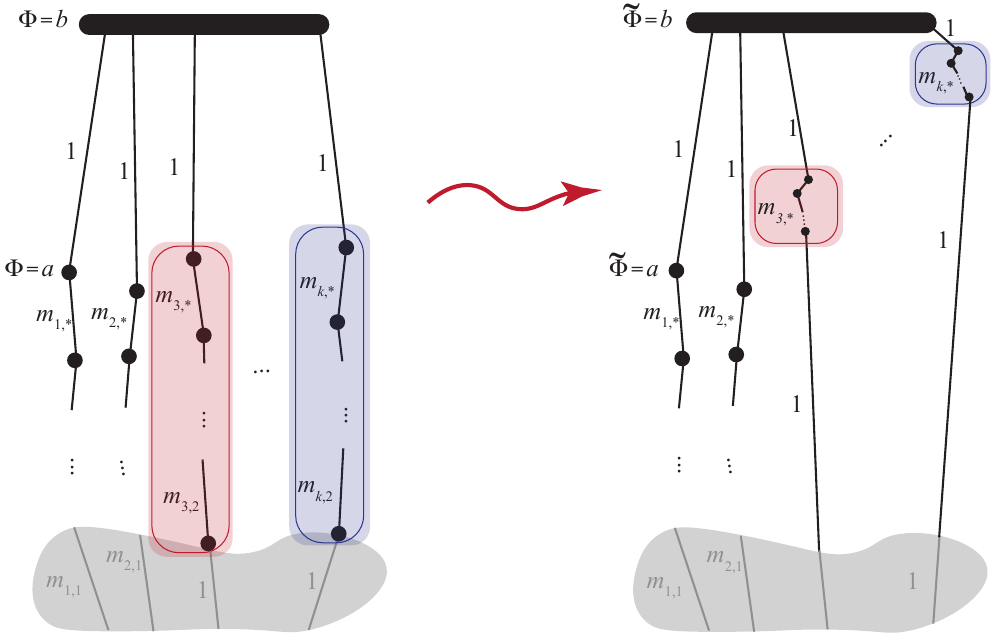} 
\caption[.]{This figure indicates the multi-rescaling that shifts moment images of spheres
with labels greater than $1$ so that any non-extremal level set of the moment map contains at
most two non-free orbits.  As indicated, the non-free orbits of the 
$3^{\mathrm{rd}}$ chain are above the non-free
orbits of the first two chains; and so forth until the non-free orbits in the $k^{\mathrm{th}}$ 
chain are above all other non-free orbits.
The area at the bottom of the figures is indicated in gray,
as the procedure applies both when 
the minimum corresponds to a
fixed surface and to when it is an isolated fixed  point.}
\label{fig:rescale to toric}
\end{figure}
\end{center}

 By Proposition \ref{scale1}, the multi-rescaling of the extended graph is induced by an or\-ien\-ta\-tion-preserving equivariant diffeomorphism of $S^1 \acts M$.
 This diffeomorphism takes  $\omega$ to $\widetilde{\omega}$
and $\Phi$ to $\widetilde{\Phi}$.
For values of $\widetilde{\Phi}$ above the minimum and up to $a$, 
there may be
up to two non-free orbits in the level set; and for values 
above $a$ and below $b$, there is is at most one non-free 
orbit in the level set.
That is, item (ii) holds.  
Therefore, the circle action on
$M$, equipped with the multi-rescaled symplectic form $\widetilde{\omega}$,
extends to a toric action, completing the proof.
\end{proof}

\section{A Generators-and-Relations description:\\ Notation, statements and corollaries} \labell{sec3}

The goal of this section is to give a generators and relations presentation for  
the even part $H_{S^1}^{2*}(M;\Z)$ of the equivariant cohomology of a 
Hamiltonian circle action on a compact, connected, four-dimensional symplectic manifold, 
as an algebra over $H_{S^1}^{*}(\pt)$.

\subsection*{Even degree equivariant cohomology}

\begin{Notation}[The Generators]\labell{not:gens}
Let $S^1\acts (M^4,\omega)$ be a compact, connected, four-dimensional symplectic manifold equipped with a 
Hamiltonian circle action. 
Consider the associated extended decorated graph with respect to a generic compatible K\"ahler metric.
Suppose that the extended decorated graph consists
of $k$ chains $C_1,\dots,C_k$
of edges between the maximum and minimum vertices. Note that $k \geq 2$, by our conventions in \ref{decorated1}. 
  If there are no fixed surfaces then the number of chains $k=2$. 
For each chain $1 \leq i \leq k$, let $\ell_i$ be the number of edges in the chain $C_i$; we enumerate the edges on $C_i$ by their order in the chain, starting from the bottom.
 Denote by $m_{i,j}$ the label of the $(i,j)$ edge.
 Without loss of generality, we assume that if there is exactly one fat vertex then it is maximal. 
We also fix an ordering of the chains so that $m_{1,1} \geq m_{2,1} \geq \cdots \geq m_{k,1}$.

If the minimal vertex is fat, denote its moment map preimage by $\Sigma_0$. If the maximal vertex is fat denote its preimage by $\Sigma_{\infty}$.
If the $j^{\mathrm{th}}$ edge on the $i^{\mathrm{th}}$ chain is not ephemeral  
(see Proposition \ref{claim:eph}), we fix  an 
$S^1$-invariant embedded symplectic sphere  $S_{i,j}$ corresponding to that edge  by the moment map.
By Remark \ref{rem:spheres}, each of the preimages of $\Sigma_0$ and $\Sigma_{\infty}$
is an invariant embedded symplectic surface, and the $S_{i,j}$s  exist.
If $m_{i,j}>1$ the sphere $S_{i,j}$ is a $\Z_{m_{i,j}}$-sphere. 
Every two distinct spheres $S_{i,j} \neq S_{i',j'}$ are either disjoint or intersect at
a single isolated fixed point.

We orient the manifold $M$ and any symplectic
submanifold using the symplectic form.  This allows us to use push-forward maps
to define the following degree $2$ classes
$$
\tau_0:=\iota_{\Sigma_0}^{!}(\mathbb{1}), \,\, \tau_{\infty}:=\iota_{\Sigma_{\infty}}^{!}(\mathbb{1}), \,\, \sigma_{i,j}:=\iota_{S_{i,j}}^{!}(\mathbb{1}).
$$
The restriction of one of these classes to ordinary cohomology is the class Poincar\'e dual to the surface, defined using the orientation on $M$ coming from the symplectic form.
If $\Sigma_0$ ($\Sigma_\infty$) does not exist, we set $\tau_0=0$ ($\tau_\infty=0$).
We then define
$$\tau_h:=\sum_{j=1}^{\ell_1} m_{1,j}\sigma_{1,j}.$$
Note that in a graph with two fat vertices and zero isolated vertices, corresponding to an $S^1$-ruled symplectic $S^2$-bundle over a compact surface, $\sigma_{1,1}=\tau_h=\sigma_{2,1}$ and $m_{1,1}=1=m_{2,1}$.  Thus,  in such a graph, $\tau_h=\iota_{S}^{!}(\mathbb{1})$ where $S$ in an invariant embedded symplectic sphere in the fiber class.
 In a graph with exactly one fat vertex, if the first edge in the $i^{\mathrm{th}}$ chain is ephemeral, denote 
$$
\sigma_{i,1}:=\tau_h-\sum_{j=2}^{\ell_i}m_{i,j}\sigma_{i,j}.
$$

Denote by $\max$ ($\min$) the fixed component of maximal (minimal) value of the moment map, it can be either a fixed surface $\Sigma_{\infty}$ ($\Sigma_{0}$) or an isolated vertex $v_{\infty}$ ($v_0$).

For $1 \leq i \leq k$ and $1\leq j < \ell_i$ denote by $v_{i,j}$ the south pole of the $S^1$-invariant embedded symplectic sphere $S_{i,j+1}$ whose moment map image is the $(i,j+1)$ edge in the extended decorated graph, i.e., the point on $S_{i,j+1}$ that is of minimal moment map value. 
We use the same notation $v_{i,j}$ for the corresponding (isolated) vertex of the decorated graph.                             
\end{Notation}

\begin{Notation}[The Relations]\labell{not:rels}
There are two types of relations among the generators defined above, multiplicative relations and  linear relations. 

The multiplicative relations can be verified using localization.  They hold because the submanifolds that are Poincar\'e dual to the classes can be chosen to be disjoint.  We define the {\bf multiplicative ideal} $\mathcal{I}$ to be generated by
\begin{enumerate}
\item[A.] $\tau_0\cup \tau_\infty$
\item[B1.] $\tau_0 \cup \sigma_{i,j}$ for every $2\leq j\leq \ell_i$
\item[B2.] $\tau_\infty \cup \sigma_{i,j}$ for every $1\leq j\leq \ell_i -1$
\item[C.] $\sigma_{i,j} \cup \sigma_{m,n}$ whenever the edges do not share a thin vertex
\item[D1.]  $\tau_h^2$ when there is both a fat minimum and fat maximum
\item[D2.] $\tau_{\infty} \cup  \sigma_{1,1} \cup \sigma_{2,1}$  when $M=\CP^2$ and there is a fat vertex
\item[D3.]   $\sigma_{1,1} \cup \sigma_{2,1} \cup \sigma_{2,2}$ when $M=\CP^2$ and there is no fat vertex
\end{enumerate}

\noindent For item A, this is redundant when there are not two fat vertices.  For items of type B, these are redundant
if there is no fat minimum or no fat maximum, respectively.  
Type C applies to classes where there are Poincar\'e dual embedded spheres.
Items of type D apply in special cases as indicated. Note that D1 follows from C, but we keep the relation for bookkeeping purposes.

The linear relations can also be verified by localization.  We define
the {\bf linear relation ideal} to be
$$
\mathcal{J} = \left \langle  \tau_h - \sum_{j=1}^{\ell_i} m_{i,j}\sigma_{i,j}\ \  \ \forall \ 1 \leq i \leq k\right\rangle.
$$
If the action $S^1\acts M$ extends to a toric $T^2\acts M$, 
the reader familiar with toric varieties will 
recognize these relations as part of a linear system on the $T^2$-equivariant cohomology of $M$.  
A full linear system describes the kernel of the
restriction map from $T^2$-equivariant to ordinary cohomology.  In this case, $\mathcal{J}$
describes the difference between the $T^2$- and $S^1$-equivariant cohomology
rings:
$$
H_{T^2}^*(M;\Z)/\mathcal{J} \cong H_{S^1}^*(M;\Z).
$$
\end{Notation}

We are now prepared to state our main theorem describing $H_{S^1}^{2*}(M;\Z)$ by generators and relations.

\begin{Theorem} \labell{ThNew}
Let $(M,\omega)$ be a compact, connected, four-dimensional symplectic manifold endowed with a Hamiltonian $S^1$-action.  Then
\begin{eqnarray*}
H_{S^1}^{2*}(M;\Z) &=& \frac{\Z[\tau_0,\tau_\infty,\tau_h,\sigma_{1,1},\dots,\sigma_{1,\ell_1},\dots, \sigma_{k,1},\dots,\sigma_{k,\ell_k}]}
{\mathcal{I}+\mathcal{J}
},
\end{eqnarray*}
where $\mathcal{I}$ is the multiplicative ideal and $\mathcal{J}$is  the linear relation ideal.
Moreover, the map $\pi^*:H_{S^1}^*(pt;\Z)\to H_{S^1}^{2*}(M;\Z)$ 
endows $H_{S^1}^{2*}(M;\Z)$
with the structure of an $H_{S^1}^*(pt;\Z)$-algebra.    This structure is determined
by the image of the generator 
\begin{equation} \labell{eq:algstr}
\pi^*(t)  =  \tau_\infty-\tau_0+(\Sigma_0 \cdot \Sigma_0) \tau_h +\sum_{i=1}^{k} \sum_{j=1}^{\ell_i} b_{i,j} \sigma_{i,j}, 
\end{equation}
where the $b_{i,j}$s are integers satisfying the properties listed in Lemma \ref{lem:comb} below.
\end{Theorem}
We note that we can omit the $\sigma_{i,j}$s that correspond to ephemeral edges from the list of generators, and moreover omit $\sigma_{i,1}$ for $i \geq 3$ in general, since they are linear combinations of the other $\sigma_{i,j}$s over $\Z$.
Some of the listed generators might be the zero element:  $\tau_\infty$ if $\sharp \text{fat vertices}=0$; $\tau_0$ if $\sharp \text{fat vertices}<2$.

\begin{Lemma} \labell{lem:comb}
For  $1\leq i\leq k$ and  $1\leq j\leq \ell_i$,
there are integers
$b_{i,j}$ so that for $j\geq 2$,
\begin{equation}\label{bsms}
b_{i,j}m_{i,j-1} - b_{i,j-1}m_{i,j}=1.
\end{equation}
Once we fix the first two $b_{i,1}$ and $b_{i,2}$, the $b_{i,j}$s are 
determined recursively for $j>2$ to satisfy $b_{i,2}m_{i,1}-b_{i,1}m_{i,2}=1$. 
We furthermore fix the $b_{i,j}$s so that if there is a maximal fixed surface,
then
\begin{equation} \label{bsf1}
\Sigma_{\infty} \cdot \Sigma_{\infty}+\Sigma_0 \cdot \Sigma_0+\sum_{i=1}^{k} b_{i,\ell_i}=0,
\end{equation}
and if additionally there is a minimal fixed surface, then
\begin{equation}\label{bsf2}
\sum_{i=1}^{k} b_{i,1}=0.
\end{equation}
Finally, we choose $b_{i,1}$ and $b_{i,2}$ as follows
so that the $b_{i,j}$s have the following additional properties depending on the nature of the dull graph.

\begin{itemize} 
\item[a.] Assume that there are two fixed surfaces.
We  choose $b_{i,1}=0$. For $\ell_i>1$ we  choose $b_{i,2}=1$; we then have $b_{i,\ell_i}=1$. 

\item[b.] Assume that there are no fixed surfaces (hence $k=2$). We  choose the $b_{i,j}$s such that they satisfy the gcd relation cyclically, i.e.,
$$
-b_{1,1}m_{2,1} - b_{2,1}m_{1,1}=1
\mbox{
and }
b_{2,\ell_2}m_{1,\ell_1} + b_{1,\ell_1}m_{2,\ell_2}=1.
$$
Moreover, if $m_{1,\ell_1}=1=m_{2,\ell_2}$ we  choose $b_{1,\ell_1}=0$ and $b_{2,\ell_2}=1$, such that if $\ell_i=1$ then $b_{i,\ell_i}=0$.
Alternatively, if $m_{1,1}=1=m_{2,1}$ we  choose $b_{1,1}=0$ and $b_{2,1}=-1$, such that if $\ell_i=1$ then $b_{i,1}=0$.
Note that we might not be able to make the latter two choices simultaneously.

\item[c.] Assume that there is exactly one fixed surface.  By convention, it is maximal 
and the chains are ordered such that  
$m_{1,1}\geq m_{2,1}\geq 1=m_{3,1}=\cdots = m_{k,1}$. 
If $\ell_i\geq2$ for some $i$, we  choose the $b_{i,j}$s in the first two chains 
so that the cyclic gcd relation
$$
-b_{1,1}m_{2,1} - b_{2,1}m_{1,1}=1
$$
is satisfied. 
For the remaining $k-2$ chains we  choose 
$b_{i,1}=0, \, b_{i,2}=1$, which yields $b_{i,\ell_i}=1$ as in the two-surface
case.
If $k=2$ and $\ell_1=\ell_2=1$, then we set  $b_{1,1}=0$ and $b_{2,1}=-1$.
\end{itemize}
\end{Lemma}

\begin{proof}
The existence of integers $b_{i,j}$s  that satisfy the basic property \eqref{bsms}  
is proved in \cite[Lemma 5.7]{karshon}; we include the proof here for completeness.
To prove that we can set the $b_{i,j}$s such that \eqref{bsms}, \eqref{bsf1}, \eqref{bsf2} hold, and verify items (a), (b) and (c), we apply straight forward induction arguments.

\noindent {\textsc{Base Case:  The minimal models.}}

\begin{center}
\begin{figure}[h]
\includegraphics[height=5cm]{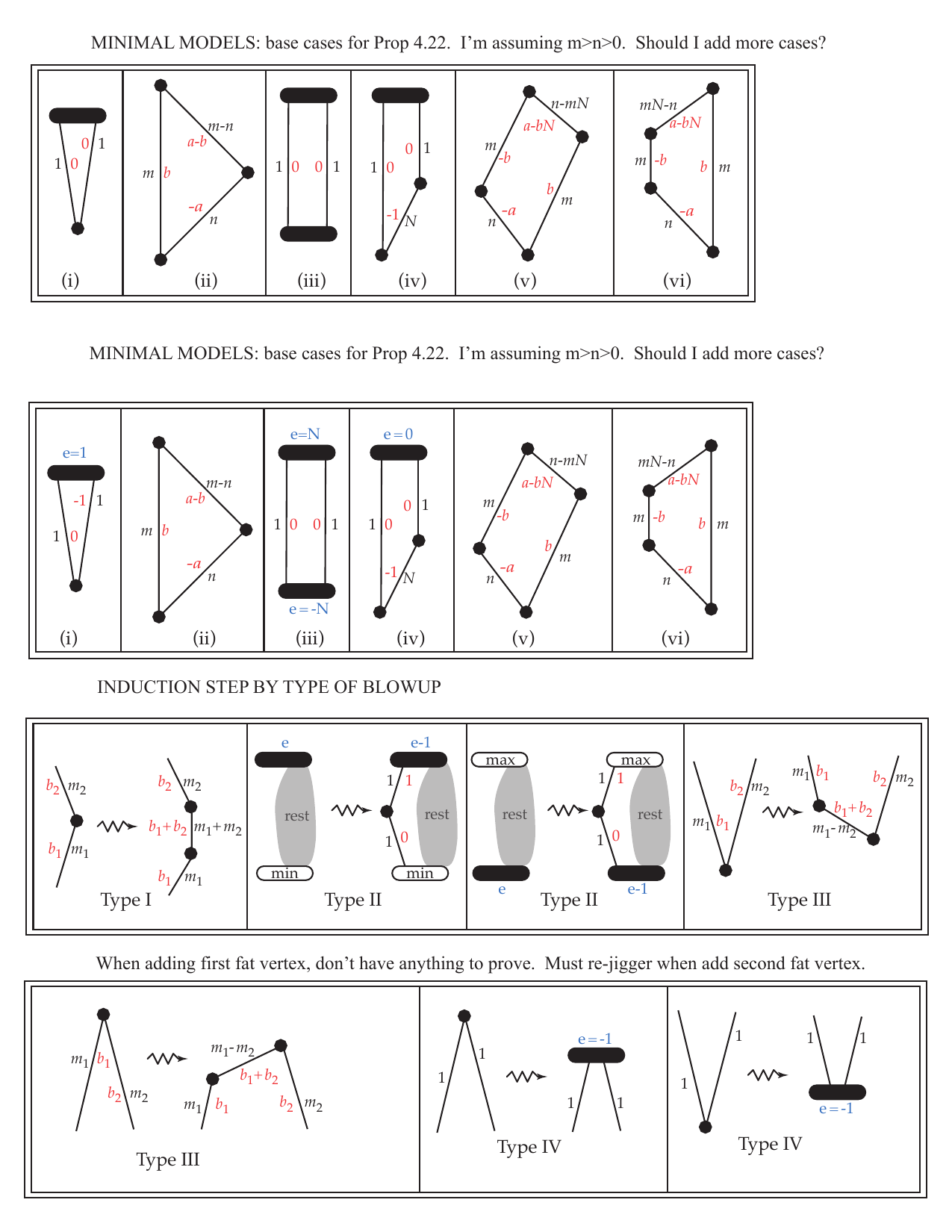} 

\caption[.]{
The base cases.}
\label{fig:toric projection1}
\end{figure}
\end{center}

In Figure~\ref{fig:toric projection1}, the $m_{i,j}$ are marked in black
and the $b_{i,j}$ are marked in red for the possible labeled graphs for $S^1\acts M$.  
Since $m$ and $n$ are relatively prime, we have fixed $a,b\in \Z$ such that  $am-bn=1$ as in \eqref{eq:gcd}.
We have also indicated in blue the self intersections of the fat vertices. The self intersections are calculated using \S \ref{factsi} , Figures \ref{fig:circle on cp2} and \ref{fig:circle on hirzebruch}, and Notation \ref{basic notions}.
It is then straight-forward to show that the relations \eqref{bsms}, \eqref{bsf1}, \eqref{bsf2}, and (a), (b), and (c) hold for the minimal
models.
For example, Figure~\ref{fig:toric projection1}(ii), 
for the length two chain, we verify \eqref{bsms} by
$$
(a-b)\cdot n - (-a)\cdot (m-n) = an-bn+am-an =am-bn=1;
$$
for the cyclic relation of part (b) of the Lemma, around the top, we have
$$
(a-b)\cdot m + b\cdot (m-n) = am-bm+bm-bn = am-bn = 1;
$$
and for the cyclic relation of part (b) of the Lemma, around the bottom
$$
-b\cdot n - (-a)\cdot m = am-bn = 1.
$$

\medskip

\noindent {\textsc{Inductive Step.}}  The effect of a blowup, by type.  In each case, the old and new
$m_{i,j}$, $b_{i,j}$ and self intersection of a fat vertex are marked in black, red and blue respectively.

\begin{center}
\begin{figure}[h]
\includegraphics[height=3.5cm]{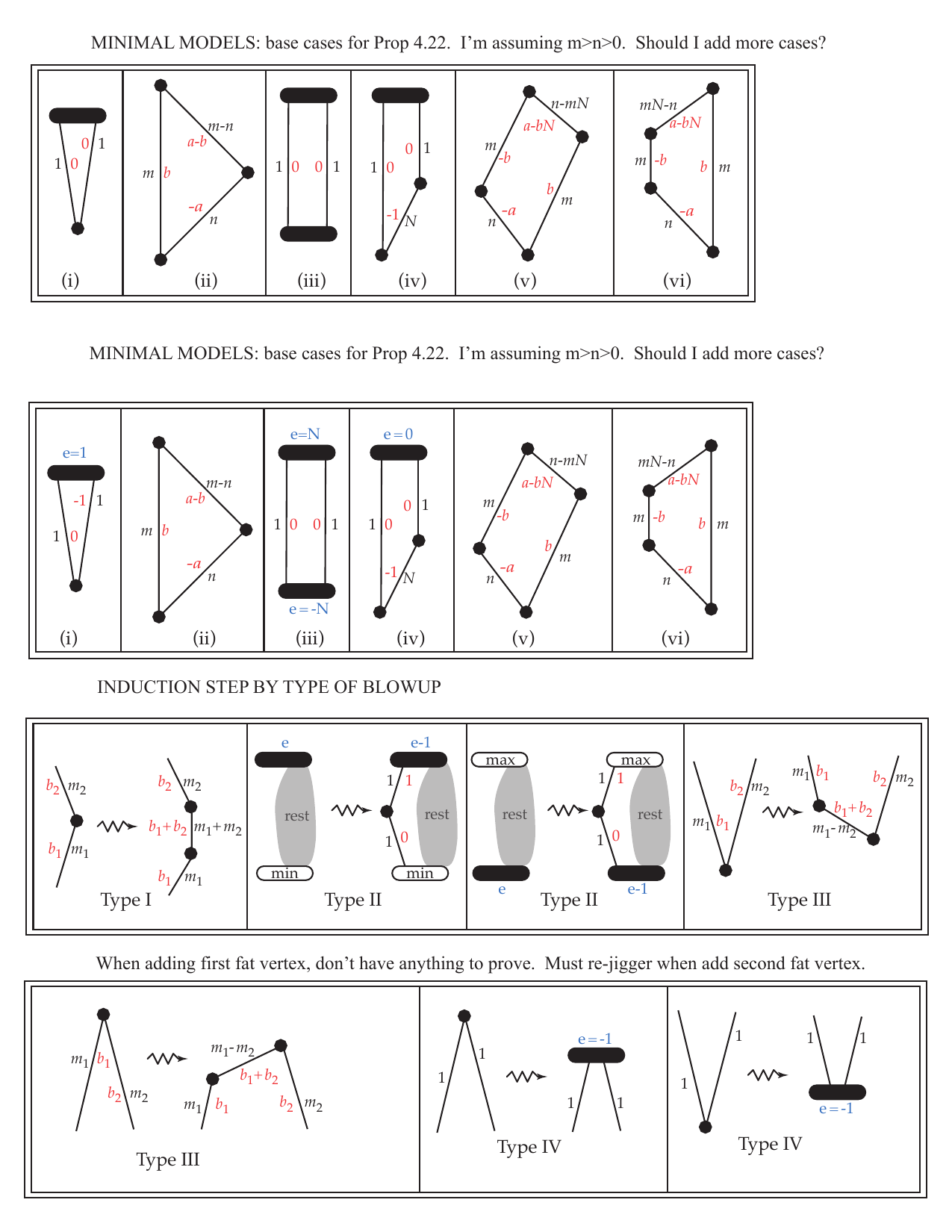} 

\includegraphics[height=3.5cm]{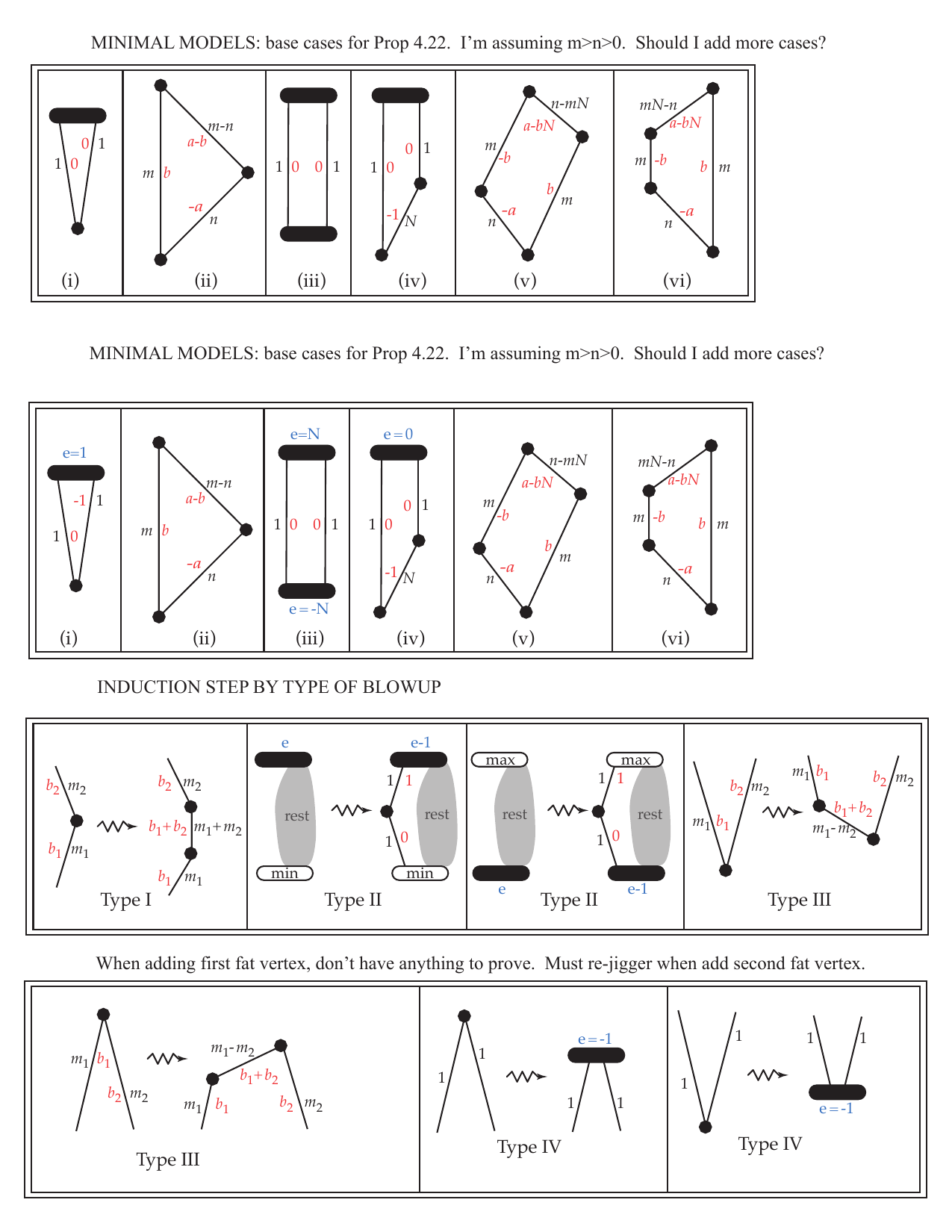} 

\caption[.]{
The inductive step.  In the last figure, we must be careful to flip if there was not a fat vertex at the top, and reassign the $b_{i,j}$s according to our convention.}
\label{fig:toric projection2}
\end{figure}
\end{center}

Again, straight forward computations show that the properties \eqref{bsms}, \eqref{bsf1}, \eqref{bsf2}, (a), (b) and (c) are maintained.  For example, in
Type I we assume by induction that
$$
b_2m_1-b_1m_2 = 1
$$
which then lets us deduce that
$$
b_2\cdot (m_1+m_2)-(b_1+b_2)\cdot m_2 = b_2m_1+b_2m_2 - b_1m_2 - b_2m_2 = b_2m_1-b_1m_2 = 1,
$$
and similarly
$$
(b_1+b_2)\cdot m_1-b_1\cdot (m_1+m_2) = b_1m_1+b_2m_1-b_1m_1-b_1m_2 = b_2m_1-b_1m_2 = 1.
$$

One must be careful when introducing a fat vertex in a blowup of Type IV.  
In case the fat vertex introduced at the blowup is the first one, and is maximal, we set $b_{1,\ell_1}=0$ and $b_{2,\ell_2}=1$, such that if $\ell_i=1$ then $b_{i,\ell_i}=0$, at the left graph (before the blowup); the blowup has no effect then on the $b_{i,j}$s, and in particular $b_{1,\ell_1}=0$ and $b_{2,\ell_2}=1$ at the right graph as well.
In case a second fat vertex is introduced at the blowup,
one may need to re-define all of the $b_{i,j}$ because what results from the cyclic convention does not agree with
the convention described in (a).  Again, one can argue inductively: once there are two fixed surfaces, it is possible to equivariantly blow
down to a minimal model with two fixed surfaces \cite[Lemma C.14]{karshon},
and re-start the inductive process from that minimal model, using
only blowups of Types I and II.
This completes the inductive step.

It is also possible to prove (b) directly, the case where there are isolated fixed points.  Indeed, 
here the action must extend to a toric one, with the
circle  action corresponding to $\begin{bmatrix}m\\n\end{bmatrix}$.  Again, we have fixed $a$ and $b$ so that
$
am-bn=1
$ as in \eqref{eq:gcd}.
The $m_{i,j}$ and $b_{i,j}$ can now be defined in terms of the toric action.  To see this, we let $e_1,\dots, e_{L}$ be the vectors
parallel to the edges of the toric polygon, as in Figure~\ref{fig:toric projection3}(i).  

\begin{center}
\begin{figure}[h]
\includegraphics[height=5cm]{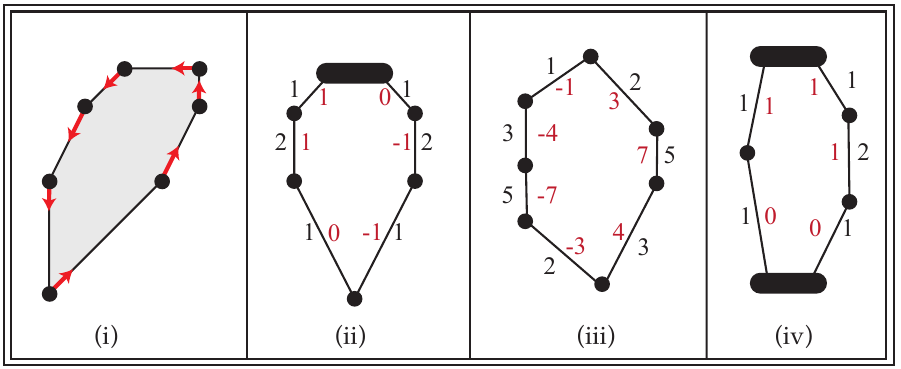} 

\caption[.]{
In (i) is Delzant polytope with edge directions $e_i$ marked as red vectors. In (ii), (ii) and (iv) are the labeled graphs
for three different choices of $\left(\begin{array}{cc}m\\ n\end{array}\right)$ and
with isotropy labels $m_{i,j}$ in black and {\color{red} $b_{i,j}$ in red}.}
\label{fig:toric projection3}
\end{figure}
\end{center}

Define 
$$
\mu_i = \left\langle \begin{bmatrix}m\\ n\end{bmatrix}, e_{i}\right\rangle  \mbox{ and }
\beta_i = \left\langle \begin{bmatrix}b\\ a\end{bmatrix}, e_{i}\right\rangle.
$$
That the circle action corresponding to $\begin{bmatrix}m\\n\end{bmatrix}$ has isolated fixed points
means that no $\mu_i=0$.
We check that
\begin{eqnarray*}
\beta_{i+1}\mu_i-\beta_i\mu_{i+1}
& = &
\left\langle \begin{bmatrix}b\\ a\end{bmatrix}, e_{i+1}
\right\rangle  \cdot \left\langle \begin{bmatrix}m\\ n\end{bmatrix}, e_{i}
\right\rangle -
\left\langle \begin{bmatrix}b\\ a\end{bmatrix}, e_{i}
\right\rangle  \cdot \left\langle \begin{bmatrix}m\\ n\end{bmatrix}, e_{i+1}
\right\rangle \\
& = &
\det \left(
\begin{bmatrix}
 m & n  \\
 b & a  \\
\end{bmatrix}\cdot
 \begin{bmatrix}
\top & \top \\
e_i & e_{i+1}\\
\bot & \bot
\end{bmatrix}
\right)\\
& = &
\det \left(
\begin{bmatrix}
 m & n  \\
 b & a  \\
\end{bmatrix}\right) \cdot
\det\left( \begin{bmatrix}
\top & \top \\
e_i & e_{i+1}\\
\bot & \bot
\end{bmatrix}
\right) = 1\cdot 1.\\
\end{eqnarray*}
This wraps around modulo $L$ where $L=\ell_1+\ell_2$ is the number of edges in the polygon. 

We note that $\mu_i>0$ on one chain and $\mu_i<0$ on the other chain.  The labels $m_{i,j}$ then correspond precisely to the
appropriate $|\mu_k|$.  The $b_{i,j}$ are exactly $\beta_k$.  One then has to carefully check that the sign changes exactly
cancel out and the relations described in (b) continue to hold.
\end{proof}

\begin{noTitle} \labell{NoTitinj} {\bf Localization in equivariant cohomology.}
By  \cite[Theorem 1.1(A)]{KesslerHolm2} the inclusion of the fixed point components
$$i=\bigoplus_{F\subset M^{S^1}} \iota_F \colon M^{S^1} \hookrightarrow M$$
induces an injection in equivariant cohomology
$$i^{*} \colon H_{S^1}^{*}(M;\Z) \hookrightarrow H_{S^1}^{*}(M^{S^1};\Z)=
\bigoplus_{F \subset M^{S^1}}H^{*}(F) \otimes \Z[t].
$$
 Let $F$ be a connected component of the fixed point set $M^{S^1}$.
Let $X$ be an invariant embedded symplectic (oriented) sphere in $S^1 \acts (M,\omega)$, and $\eta=\iota^{!}_{X \hookrightarrow M}(\mathbb{1}_X)$.
Consider the following diagram of inclusion maps:
\begin{equation} \labell{diagramrestrict}
\begin{array}{c}
\begin{xymatrix}{
 F \cap X \ar[rr]^{\iota_{F \cap X \hookrightarrow F}} \ar[d]_{\iota_{F \cap X \hookrightarrow X}}
 &{\color{white}.} &  F  \ar[d]^{\iota_{F \hookrightarrow M}} \\
 X\ar[rr]_{\iota_{X \hookrightarrow M}} & {\color{white}.} & M.}
\end{xymatrix}
\end{array}
\end{equation}
Then, by the push-pull property of the pushforward map, the \emph{restriction} of $\eta$ to $F$
\begin{equation} \labell{eqrestriction}
 \eta|_F:=\iota^{*}_{F  \hookrightarrow M} \circ \iota^{!}_{X \hookrightarrow M}(\mathbb{1}_{X})=\iota^{!}_{F \cap X \hookrightarrow F} \circ \iota^{*}_{F \cap X \hookrightarrow X}(\mathbb{1}_{X}).
 \end{equation}
In particular, if $X$ and $F$ do not intersect then $\eta|_{F}=0$.

The pushforward map is related to the Euler class by $\iota_X^{*}\left(\iota_{X}^{!}(\mathbb{1})\right)=e_{S^1}\big(\nu(X\subset M)\big)$. The equivariant Euler classes are computed in 
\cite[\S 4]{KesslerHolm2}.
As a result of this discussion, we establish Tables \ref{Table-rest-2}, \ref{Table-rest-0},
and \ref{Table-rest-1} of the 
restriction of the listed generators to the components of $M^{S^1}$. 
For a class $\eta \in H_{S^1}^{*}(M;\Z)$, the {\bf support} of $\eta$ is the set of
fixed point components on which $\eta|_F\neq 0$.
\end{noTitle}

\begin{proof}[Proof of the algebra structure \eqref{eq:algstr}]\labell{proof-algebra}
Since the map $i^{*} \colon H_{S^1}^{*}(M;\Z) \hookrightarrow H_{S^1}^{*}(M^{S^1};\Z)$
is injective  \cite[Theorem 1.1(A)]{KesslerHolm2}, it is enough to show  that for each connected component $F$, the restriction $\iota^{*}_{F \hookrightarrow M} $ of the right hand side to $F$ equals $\iota^{*}_{F  \hookrightarrow M} \circ \pi_M^{*}(t)=\pi_F^{*}(t)$, which equals $1 \otimes t$ if $F$ is a fixed surface and $t$ if $F$ is an isolated fixed point.
 This follows from Tables \ref{Table-rest-2}, \ref{Table-rest-0}, \ref{Table-rest-1}, justified in \S\ref{NoTitinj}, and Lemma \ref{lem:comb}.
\end{proof}

Let $\Sigma \subset M^T$ be a fixed surface. There is a unique orientation on the normal bundle $\nu(\Sigma\subset M)$ to $\Sigma$ so that $S^1$ acts on $\nu_p \Sigma$ with weight $+1$ for each $p \in \Sigma$.  We call this the 
{\bf positive-normal} orientation.
Note that we now possibly have two ways to orient the surface $\Sigma$ itself: 
\begin{itemize}
\item restricting the symplectic form on $M$ to an orientation on $\Sigma$, or 
\item combining the symplectic orientation on
$M$ and the positive-normal orientation on $\nu(\Sigma\subset M)$ to induce an orientation 
on $\Sigma$.
\end{itemize}
These two possibilities
are the same when $\Sigma$ is a minimal fixed surface and are different 
if $\Sigma$ is a maximal fixed surface.

We now define some additional classes that
will be key players in our understanding of isomorphisms of cohomology rings.
\begin{Definition}\labell{def:comp euler}
For each connected component $F\subset M^{S^1}$ of the fixed set, we define the {\bf component Euler class} $\varepsilon_F$ of $F$ 
in terms of its restrictions to the fixed components.
Specifically, we define it by
$$
\iota^*_{F'}(\varepsilon_F) = \left\{ \begin{array}{ll}
e_{S^1}(\nu(F\subset M)) \in H^{4-\dim(F)}_{S^1}(F;\Z) & \mbox{ when } F'=F; \\
0 & \mbox{ for all } F'\neq F,
\end{array}
\right.
$$
where $e_{S^1}(\nu(F\subset M))$ is defined using
the positive-normal orientation on $\nu(F\subset M)$ when $F$ is 
a fixed surface, and using the symplectic orientation on
$\nu(F\subset M) = T_FM$ when $F$ is an isolated fixed point.
Because this collection $\left( \iota^*_{F'}(\varepsilon_F)\right)_{F'\subset M^{S^1}}$ satisfies the compatibility conditions
in \cite[Theorem~1.1(C)]{KesslerHolm2}, 
it is a class in $i^*(H^*_{S^1}(M;\Z))$. The map $i^*$ is injective,
and hence the collection is the image of a well-defined class $\varepsilon_F\in H^{4-\dim(F)}_{S^1}(M;\Z)$.
\end{Definition}

\begin{Fact} \labell{fact1}
For the minimal and maximal fixed surfaces, the component Euler classes
are the classes 
$$
\varepsilon_{\Sigma_0} =\tau_0  \mbox{ and } \varepsilon_{\Sigma_\infty} = -\tau_{\infty}.
$$
    For an interior isolated fixed point $v_{i,j}$, the component Euler class
    $\varepsilon_{v_{i,j}}$ is $\sigma_{i,j} \cup \sigma_{i,j+1}$. 
For the extremal isolated fixed points $v_0$ and $v_{\infty}$, the component Euler classes are 
$\sigma_{1,1} \cup \sigma_{2,1}$ and $\sigma_{1,\ell_{1}} \cup \sigma_{2,\ell_{2}}$. 
Note that for $F\in \{\Sigma_0,\Sigma_{\infty}, v_0, v_{\infty}\}$, the component $F$ exists if and only if  $\varepsilon_{F}\neq 0$.
One can use the component Euler classes to distinguish the interior and extremal isolated vertices.  Indeed,
for an isolated vertex $F$, 
$$\iota^*_F(\varepsilon_F)\  \in H^4_{S^1}(F;\Z)=\Z\cdot t^2.$$  
Under the identifications we have made, the integer coefficient of this Euler class 
$\iota^*_F(\varepsilon_F)$ is precisely the product of the 
weights of the $S^1$-action on the tangent space $T_F M$, which can be equipped
with a complex structure compatible with the symplectic form.
Thus, for interior isolated vertices, the coefficient of $t^2$ in
$\iota^*_F(\varepsilon_F)$ is negative, while for extremal ones, the coefficient of $t^2$
in $\iota^*_F(\varepsilon_F)$ is positive.
\end{Fact}

\noindent We deduce the following corollary from
 \eqref{eq:algstr}, Lemma \ref{lem:comb} and Fact \ref{fact1}.
\begin{Corollary}\label{cor:pit}
$$\tau_0 \cup \pi^{*}(t)=-\tau_0 \cup \tau_0+(\Sigma_0 \cdot \Sigma_0)\, \tau_0 \cup \tau_h=-\tau_0 \cup \tau_0+e_{\min}\, \tau_0 \cup \tau_h.$$
$$\tau_\infty \cup \pi^{*}(t)=\tau_\infty \cup \tau_\infty-(\Sigma_\infty \cdot \Sigma_\infty) \,\tau_\infty \cup \tau_h=\tau_\infty \cup \tau_\infty-e_{\max} \,\tau_\infty \cup \tau_h.$$
$$\tau_h \cup \pi^{*}(t)=\begin{cases}
                                               \tau_h \cup \tau_\infty -\tau_h \cup \tau_0 & \text{ if }\fat=2\\
                                               \tau_h \cup \tau_\infty-\sigma_{1,1}\cup \sigma_{2,1} = \tau_h \cup \tau_\infty-\varepsilon_{v_0} & \text{ if } \fat=1\\
                                               \sigma_{1,\ell_1} \cup \sigma_{2,\ell_2} -\sigma_{1,1} \cup \sigma_{2,1}=\varepsilon_{v_\infty}-\varepsilon_{v_0} & \text{ if }\fat=0\\
                                               \end{cases}.$$
                                               In the case $\fat=1$ here we assume that $\tau_\infty \neq 0$ as we do in this section.
\end{Corollary}

\begin{Notation} \label{not:labels}
We set
$$
\sigma_{i,0}=\begin{cases}
\tau_0 & \text{ if }\fat=2\\
\sigma_{i^*,1}  & \text{ if } \fat=1, 0 \\
   \end{cases};
\mbox{ and }
m_{i,0}=\begin{cases}
0 & \text{ if }\fat=2 \\
-m_{i^*,1} & \text{ if } \fat=1,0 \\
\end{cases},
$$
and
$$
\sigma_{i,\ell_i+1}=\begin{cases}
\tau_\infty & \text{ if }\fat=2,1 \\
    \sigma_{i^*,\ell_{i^*}} & \text{ if } \fat=0 \\ 
\end{cases};
\mbox{ and }
m_{i,\ell_i+1}=\begin{cases}
0 & \text{ if }\fat=2,1 \\
    -m_{i^*,\ell_{i^*}} & \text{ if } \fat=0 \\ 
\end{cases}.
$$
Here, if $i \in \{1,2\}$ we set $i^{*}=1+i \ (\text{mod } 2)$; otherwise $i^*=1$.
\end{Notation}

We will need the following corollary of Theorem~\ref{ThNew} when we more closely investigate 
the algebra structure of $H_{S^1}^*(M;\Z)$. 

\begin{Corollary} \label{cor:relation}
We have the following relations among the generators.
\begin{enumerate}
\item For every $i$ and $1\leq j \leq \ell_i$ we have
\begin{eqnarray}\labell{eq:labelproducts}
m_{i,j} \sigma_{i,j} \cup \sigma_{i,j}&=&
-(m_{i,j-1} \sigma_{i,j} \cup \sigma_{i,j-1}+m_{i,j+1} \sigma_{i,j} \cup \sigma_{i,j+1}).
\end{eqnarray}
\item Moreover, for $\eta$ a linear combination over $\Z$ of classes of the form $\sigma_{r,s}$, 
with $1\leq s \leq \ell_r$, we have the following.
\begin{itemize}
\item If $\eta \cup \varepsilon_{v_{i,j}}=0$ for $1\leq j<\ell_i$ then the coefficients of $\sigma_{i,j}, \, \sigma_{i,j+1}$ are $\gamma m_{i.j},\, \gamma m_{i,j+1}$, respectively, with the same $\gamma \in \Z$.
\item  If $\varepsilon_{v_{0}}=\sigma_{1,1} \cup \sigma_{2,1} \neq 0$ and $\eta \cup \varepsilon_{v_{0}}=0$  then the coefficients of $\sigma_{1,1}, \, \sigma_{2,1}$ are $\gamma m_{1,1},\, -\gamma m_{2,1}$, respectively, with the same $\gamma \in \Z$.
\item  If $\varepsilon_{v_{\infty}}=\sigma_{1,\ell_1} \cup \sigma_{2,\ell_2} \neq 0$ and $\eta \cup \varepsilon_{v_{\infty}}=0$  then the coefficients of $\sigma_{1,\ell_1}, \, \sigma_{2,\ell_2}$ are $\gamma m_{1,\ell_1},\, -\gamma m_{2,\ell_2}$, respectively, with the same $\gamma \in \Z$.
  
  \item  If $\varepsilon_{\Sigma_{0}}=\tau_0  \neq 0$ and $\eta \cup (\varepsilon_{\Sigma_{0}} \cup \varepsilon_{\Sigma_{0}})=0$  then the coefficients of $\sigma_{i,1}, \, \sigma_{i',1}$ for $i\neq i'$ are $\gamma m_{i,1},\, -\gamma m_{i',1}$, respectively, with the same $\gamma \in \Z$.
   \item  If $\varepsilon_{\Sigma_{\infty}}=-\tau_\infty  \neq 0$ and $\eta \cup (\varepsilon_{\Sigma_{\infty}} \cup \varepsilon_{\Sigma_{\infty}})=0$  then the coefficients of $\sigma_{i,\ell_i}, \, \sigma_{i',\ell_{i'}}$ for $i\neq i'$ are $\gamma m_{i,\ell_i},\, -\gamma m_{i',\ell_{i'}}$, respectively, with the same $\gamma \in \Z$.
   \end{itemize}
   \end{enumerate}
\end{Corollary}

\begin{proof}
First we prove \eqref{eq:labelproducts}.
If $\fat=2$, or $\fat=1$ and $1<j$ or $\fat=0$ and $1<j<\ell_i$, we have 
\begin{eqnarray*}
0&=&\sigma_{i,j} \cup \tau_h=\sum_{s=1}^{\ell_i} m_{i,s}\sigma_{i,j} \cup \sigma_{i,s} \\
&=&m_{i,j-1}\sigma_{i,j} \cup \sigma_{i,j-1}+m_{i,j} \sigma_{i,j} \cup \sigma_{i,j}+m_{i,j+1}\sigma_{i,j} \cup \sigma_{i,j+1}.
\end{eqnarray*}
In the other cases the statement follows from  $\sigma_{i,j} \cup (\tau_h-\tau_h)=0$, e.g., if $\fat=0,1$,  and $\ell_i \geq 2$,
\begin{eqnarray*}
0&=&\sigma_{i,1} \cup (\tau_h-\tau_h) =\sigma_{i,1} \cup \Big(\sum_{s=1}^{\ell_i} m_{i,s}\sigma_{i,s} -\sum_{t=1}^{\ell_{i*}} m_{i*,t}\sigma_{i*,s}\Big) \\
&=&m_{i,1} \sigma_{i,1} \cup \sigma_{i,1}+m_{i,2} \sigma_{i,1} \cup \sigma_{i,2}-m_{i*,1} \sigma_{i,1} \cup \sigma_{i*,1}.
\end{eqnarray*}

Now we turn to a class $\eta$ that is a $\Z$-combination of classes $\sigma_{r,s}$ with $1\leq s \leq \ell_r$ and we consider the case
when  $\eta \cup \varepsilon_{v_{i,j}}=0$.
First assume $1\leq j<\ell_i$. The only classes $\sigma_{r,s}$ whose cup product with $\varepsilon_{v_{i,j}}=\sigma_{i,j} \cup \sigma_{i,j+1}$ 
is not zero are $\sigma_{i,j}$ and $\sigma_{i,j+1}$, so for $\eta=\sum_{r,s}a_{r,s} \sigma_{r,s}$,
$$\eta \cup \varepsilon_{v_{i,j}}=(a_{i,j}\sigma_{i,j}+a_{i,j+1}\sigma_{i,j+1}) \cup  \varepsilon_{v_{i,j}}.$$
Using \eqref{eq:labelproducts}, we have 
\begin{eqnarray*}
m_{i,j}\sigma_{i,j} \cup \varepsilon_{v_{i,j}}
&=& m_{i,j}\sigma_{i,j} \cup \sigma_{i,j} \cup \sigma_{i,j+1}\\
&=&-(m_{i,j-1} \sigma_{i,j} \cup \sigma_{i,j-1} \cup \sigma_{i,j+1}+m_{i,j+1} \sigma_{i,j} \cup \sigma_{i,j+1} \cup \sigma_{i,j+1})\\
&=&-m_{i,j+1} \sigma_{i,j+1} \cup \varepsilon_{v_{i,j}}.
\end{eqnarray*}
Thus, if $\eta \cup \varepsilon_{v_{i,j}}=0$, we have $a_{i,j}\sigma_{i,j} \cup \varepsilon_{v_{i,j}}=-a_{i,j+1}\sigma_{i,j+1} \cup \varepsilon_{v_{i,j}}$, and then either $a_{i,j}=0=a_{i,j+1}$ or $\frac{a_{i,j+1}}{a_{i,j}}=\frac{m_{i,j+1}}{m_{i,j}}$.
Hence, since $m_{i,j}$ and $m_{i,j+1}$ are relatively prime, $a_{i,j+1}=\gamma m_{i,j+1}$ and $a_{i,j} = \gamma m_{i,j}$ for $\gamma \in \Z$.
The proof of the other cases is similar.
\end{proof}

\subsection{Odd degree equivariant cohomology}
 By \cite{KesslerHolm2}, the equivariant Poincar\'e polynomial of $M$, over $\Z$, is
\begin{equation}\labell{eq:poincare}
\begin{array}{rcl}
P_{S^1}^{M}(t) & = & P^{M}(t) \cdot  \frac{1}{1-t^2}\\
& = & 1 + (\iso-1+2\fat)t^2 + 
(\iso+2\fat)t^4\left(\frac{1}{1-t^2}\right)\\
& &  + 
2g t +(\fat)2gt^3\left(\frac{1}{1-t^2} \right).
\end{array}
\end{equation}

It follows from \eqref{eq:poincare} that the odd degree ranks are determined 
by the genus of a fixed surface, if there is one and the number of fixed surfaces:
\begin{equation}\label{eq:firstbetti}
\beta_1 = 2g \mbox{ and } \beta_3 = (\fat) 2g.
\end{equation}
In particular, if $\fat=0$ or $g=0$ then the ranks of  $H_{S^1}^{2*+1}(M;\Z)$ 
are all zero.

\section{A generators and relations description:\\
 Proof of Theorem~\ref{ThNew}}\labell{sec4}

By \S \ref{preface}, to complete the proof of Theorem \ref{ThNew}, 
it is enough to 
 give a generators and relations presentation of the equivariant cohomology algebras in the minimal models, and describe the effect of an equivariant K\"ahler blowup on such a presentation.

\begin{noTitle} {\bf The case of circle actions that extend to toric actions.}
 A toric symplectic manifold yields a toric variety with fan defined by the moment map polytope.
  The equivariant cohomology of a toric variety $X$ is described by \cite[Proposition 2.1]{masuda}. 
 The generators are the $H_{T}^{2}(X)$-classes 
 \begin{equation*} 
\Upsilon_i=\iota_{X_i}^{!}(\mathbb{1}), \text{ where }\mathbb{1} \in H_{T}^{0}(X_i)
\end{equation*}
 and $X_1,\ldots,X_m$ are the $T$-invariant divisors.
 The relations correspond to the subsets of the $X_i$s that have an empty intersection. 
 For $\Upsilon_1,\ldots,\Upsilon_m$, the cup product $\Pi_{i \in I} \Upsilon_i$ ($I \subset \{1,\ldots,m\}$) is the Poincar\'e dual of the intersection $\displaystyle{\cap_{i \in I}{X_i}}$, hence  $\Pi_{i \in I} \Upsilon_i=0$ if and only if  $\cap_{i \in I}{X_i}=\emptyset$.
 By \cite[Proposition 2.2]{masuda}, to each $i \in \{1,\ldots,m\}$, there is a unique element $v_i \in H_{2}(BT)$ such that $\pi_{T}^{*}(u)=\sum_{i=1}^{m}\langle u,v_i\rangle \Upsilon_i$ for every $u \in H^{2}(BT)$.

Consider a circle action $S^1 \acts (M^4,\omega)$ that is obtained from a
toric action
 $T \acts (M,\omega)$
by the inclusion $$\inc \colon S^1 \hookrightarrow T={(S^1)}^{2}; \,\, s \mapsto (s^{m},s^{n}).$$ 

For a $T$-invariant symplectic sphere $X$, and the inclusions induced from $\iota_X \colon X \hookrightarrow M$, 
we have a commutative diagram:

\begin{equation} \labell{diagram1}
\begin{CD}
 ET {\times}_T X @> {\iota_X^T} >> ET {\times}_T M \\
@A \inc_X AA @A \inc AA \\
 ET {\times}_{S^1} X @> {\iota_X^{S^1}}>> ET {\times}_{S^1} M
\end{CD}
\end{equation}
 Here and later the vertical maps are defined using $ET {\times}_{S^1} M$ rather than $E{S^1} {\times}_{S^1} M$ and $ET/{S^1}$ rather then $E{S^1}/{S^1}$; these spaces are homotopy equivalent.
This commutative diagram is Cartesian in the sense that $ET {\times}_{S^1} X$ is the inverse image of $ET \times_{T} X$ under $\inc$. 
Hence the push-pull  formula
 \begin{equation*} 
 {\inc}^{*}\circ {\iota_X^T}^{!} = {\iota_X^{S^1}}^{!} \circ {\inc_X}^{*}
 \end{equation*}
 holds. Here ${\iota_X^T}^!$ and ${\iota_X^{S^1}}^{!}$ are the equivariant pushforward maps $H_{T}^{0}(X) \to H_{T}^{2}(M)$ and $H_{S^1}^{0}(X) \to H_{S^1}^{2}(M)$ induced by the inclusion of $X$ into $M$ and $\inc^*\colon H_{T}^{*}(M) \to H_{S^1}^{*}(M)$ and $\inc_X^*\colon H_{T}^{*}(X) \to H_{S^1}^{*}(X)$  are the pullback maps in equivariant cohomology induced by the inclusion of $S^1$ into $T$. 
Denote $$\eta:= \iota_X^{!}(\mathbb{1}^{S^1}_X).$$
We obtain the commutative diagram
 \begin{equation*} 
\begin{CD}
 \mathbb{1}^{T}_X \in H_{T}^{0}(X) @> {\iota_X^T}^! >> H_{T}^{2}(M) \ni \Upsilon \\
@V \inc_X^* VV @V \inc^* VV\\
  \mathbb{1}^{S^1}_X \in H_{S^1}^{0}(X) @> {\iota_X^{S^1}}^{!}>> H_{S^1}^{2}(M) \ni \eta
\end{CD}
\end{equation*}
where the vertical arrows are surjective.
Since  $\inc_X^{*}(\mathbb{1}^{T}_X)=1^{S^1}_X$, we have 
 \begin{equation*} 
{\inc}^{*}(\Upsilon)= {\inc}^{*}\circ {\iota_X^T}^{!}(\mathbb{1}^{T}_X) = {\iota_X^{S^1}}^{!}(\mathbb{1}^{S^1}_X)=\eta.
 \end{equation*}
  The commutative diagram \eqref{diagram1} also implies that the following diagram commutes.
 \begin{equation*} 
 \begin{CD}
ET {\times}_T M @>\pi_T >> BT\\
 @ A \inc AA @ A \inc_B AA \\
E{S^1} {\times}_{S^1} M @>\pi_{S^1} >> B{S^1}.
 \end{CD}
 \end{equation*}
  Consequently, the following diagram commutes.
 \begin{equation} \labell{diagram4}
 \begin{CD}
 H_{T}^{*}(M) @<\pi_{T}^* << H_{T}^{*}(\pt) @= R[x,y]\\
 @ V \inc^{*} VV @ V \inc_B^{*} VV \\
H_{S^1}^{*}(M) @<\pi_{S^1}^* << H_{S^1}^{*}(\pt)@=R[t].
 \end{CD}
 \end{equation}
  The induced map $\inc_B^{*} \colon H_{T}^{*}(\pt)=R[x,y] \to H_{S^1}^{*}(\pt)=R[t]$ is the map sending
$$x \mapsto m \cdot t,$$
$$y \mapsto n \cdot t.$$
By \eqref{diagram4},
\begin{equation*} 
H_{S^1}^{*}(M)=\frac{{\inc}^{*}H_T^{*}(M)}{{\inc}^{*}({\pi}_{T}^{*}(\ker \inc_B^{*}))},
\end{equation*}
and ${\pi}_{S^1}^{*}(t)$ equals
\begin{equation}\label{projection2} 
\begin{array}{ccc}
{\pi}_{S^1}^{*}(t) & = &  {\pi}_{S^1}^{*}\big((am-bn)t\big) = {\pi}_{S^1}^{*}(am \cdot t-bn \cdot t)\\
& =& a {\pi}_{S^1}^{*}(\inc_B^{*}(x))-b{\pi}_{S^1}^{*}(\inc_B^{*}(y))\\
& =& a \inc^{*}(\pi_T^{*}(x))-b \inc^{*}(\pi_T^{*}(y))
\end{array}
\end{equation}
up to the equivalence in $H_{S^1}^{*}(M)$, where $a,b$ are as in \eqref{eq:gcd}.
 \end{noTitle}

\begin{noTitle} {\bf Minimal models: $\CP^2$ and ruled rational surfaces.}
Let $(M,\omega)=(\CP^2,\lambda\omega_{\FS})$.
Consider the toric action on $(M,\omega)$, defined by
$$(t_1,t_2) \cdot [z_0;z_1;z_2]=[z_0;t_1 z_1; t_2 z_2].$$ 
By \cite{masuda}, 
the equivariant cohomology
$$H_{T}^{\star}(M)=\frac{\Z[\Upsilon_1,\Upsilon_2,\Upsilon_3]}{\langle \Upsilon_1 \cup \Upsilon_2 \cup \Upsilon_3 \rangle}$$ 
and
 $H_{T}^{*}(pt)=\Z[x,y]$, with
$${\pi}_{T}^{*}(x)=\Upsilon_3-\Upsilon_2,$$
$${\pi}_{T}^{*}(y)=\Upsilon_3-\Upsilon_1.$$

\begin{center}
  \begin{figure}[ht]
\begin{overpic}[
scale=1.0,unit=1mm]{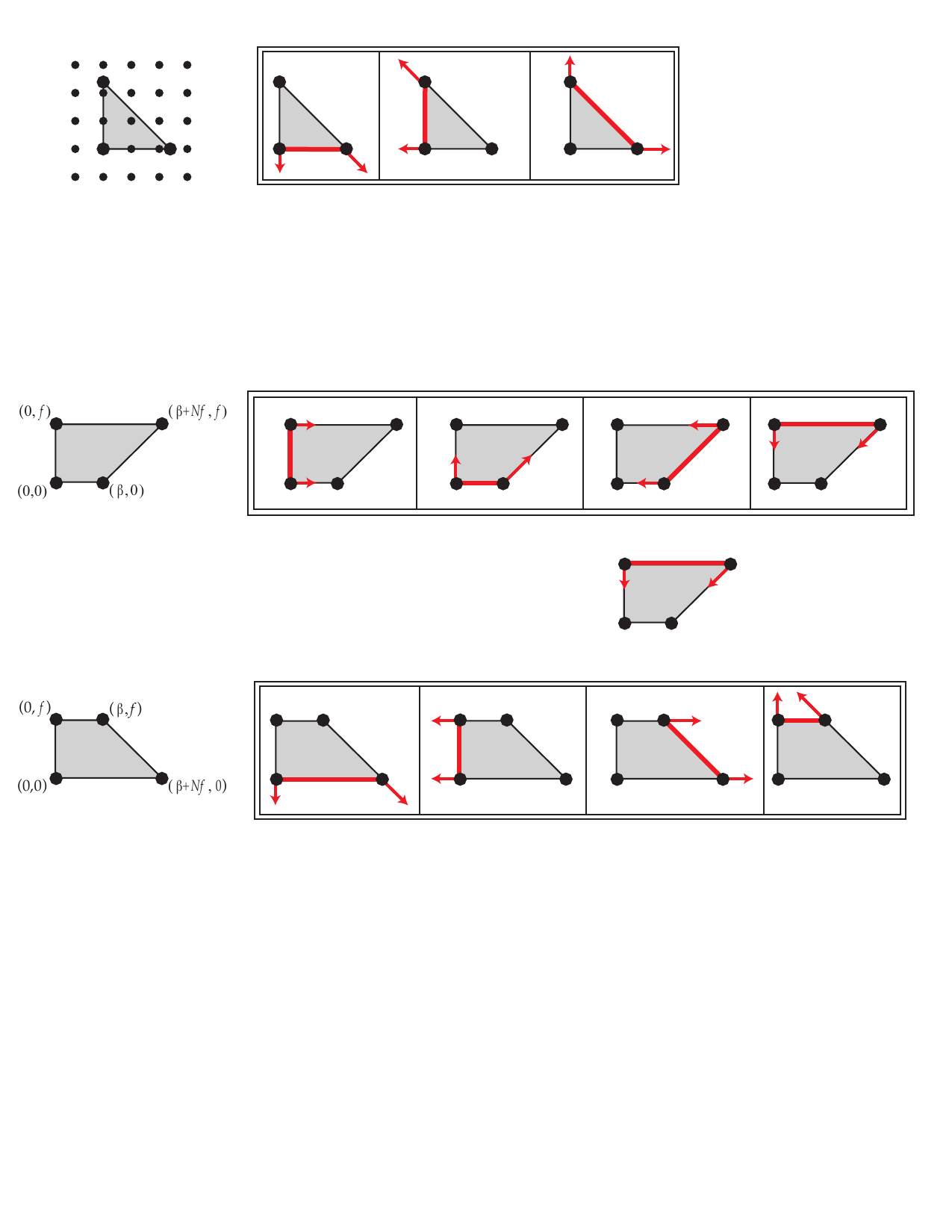}
   \put(55,5){\small{$\Upsilon_1$}}
   \put(76,14){\small{$\Upsilon_2$}}
   \put(122,18){\small{$\Upsilon_3$}}
\end{overpic}
\caption{$\CP^2$ moment image and generating classes (restricted to the fixed points, with non-zero
restrictions indicated by a red arrow in $\algt^*$).}
  \label{fig:classes for cp2}
\end{figure}
\end{center}

Now consider an effective $S^1$-action on $(M,\omega)$ obtained from an inclusion $\inc \colon S^1 \to T$.
It is defined by 
$$s \cdot [z_0; z_1; z_2]=[z_0;s^m z_1;s^n z_2]$$
for  $(m,n)\in {\Z}^2$ 
as in Example \ref{e:cp2}
with fixed $a,b \in \Z$ such that $am-bn=1$ as in \eqref{eq:gcd}.
 Following our convention that if there is one fat vertex, it must be a maximum value for the moment map, the 
relevant circle actions with one fat vertex correspond to $(m,n) \in  \{(-1,0),(0,-1),(1,1)\}$.

 We have 
 \begin{equation}\label{eq:linear}
 {{\inc}^{*}({\pi}_{T}^{*}(\ker {\inc_B}^{*}))}={\inc}^{*}\left({\pi}_{T}^{*}\big\langle nx-my\big\rangle\right)
 =\Big\langle n(\eta_3-\eta_2)-m(\eta_3-\eta_1)  \Big\rangle,
 \end{equation}
 and, by  \eqref{projection2}, 
\begin{equation} \label{eq:algebra}
{\pi}_{S^1}^{*}( t)=a (\eta_3-\eta_2)-b(\eta_3-\eta_1),
\end{equation}
where $\eta_i:=\inc^{*}(\Upsilon_i)$.
\end{noTitle}

\begin{Proposition} \labell{Procp2}
For an effective $S^1$-action on $(M,\omega)=(\CP^2,\lambda\omega_{\FS})$ that is obtained from a Delzant triangle of edge-length $\lambda$ by the projection $(x_1,x_2) \mapsto mx_1+nx_2$, 
we have
\begin{itemize}
\item
If $(m,n) \in  \{(-1,0),(0,-1),(1,1)\}$,
then $\fat=1$, $\tau_0=0$, and 
$$H_{S^1}^{*}(M)=\frac{\Z[\tau_{\infty},\sigma_{1,1},\sigma_{2,1}]}{\langle \tau_{\infty} \cup \sigma_{1,1} \cup \sigma_{2,1}, \sigma_{2,1}-\sigma_{1,1} \rangle}=\frac{\Z[\tau_{\infty},\sigma_{1,1}]}{\langle {\sigma_{1,1}}^2 \cup \tau_{\infty} \rangle},$$
and 
$${\pi}_{S^1}^{*}(t)=\tau_{\infty}-\sigma_{1,1}.$$

\item Otherwise for relatively prime $(m,n) \in \Z \times \Z \smallsetminus \{(\pm 1,0),(0,\pm 1),\pm(1,1)\}$, then $\fat=0$ and
$\tau_0=\tau_\infty=0$.  When $m>n>0$ as in Figure~\ref{fig:circle on cp2}(c), we have
$$H_{S^1}^{*}(M)=\frac{\Z[\sigma_{1,1},\sigma_{2,1},\sigma_{2,2}]}{\langle \sigma_{1,1} \cup \sigma_{1,2} \cup \sigma_{2,1}, n(\sigma_{2,2}-\sigma_{2,1})-m(\sigma_{2,2}-\sigma_{1,1})\rangle},$$
and 
$${\pi}_{S^1}^{*}(t)=a(\sigma_{2,2}-\sigma_{2,1})-b(\sigma_{2,2}-\sigma_{1,1}).$$
For other values of $m$ and $n$, this presentation is adjusted accordingly.  
\end{itemize}
\end{Proposition}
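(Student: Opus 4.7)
My plan is to apply the toric projection formalism of \eqref{eq:linear}--\eqref{eq:module} directly. Writing $\eta_i := \inc^*(\Upsilon_i)$, those equations give the presentation
\[
H_{S^1}^*(M) \;=\; \frac{\Z[\eta_1,\eta_2,\eta_3]/\langle\eta_1\eta_2\eta_3\rangle}{\langle n(\eta_3-\eta_2)-m(\eta_3-\eta_1)\rangle}
\]
together with $\pi_{S^1}^*(t)=a(\eta_3-\eta_2)-b(\eta_3-\eta_1)$, where $a,b$ are as in Notation~\ref{gcd}. The task reduces to identifying each $\eta_i$ with a geometric generator ($\tau_\infty$ or some $\sigma_{i,j}$) and then simplifying.

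The identification uses the dictionary of \S\ref{projection}. Comparing the given formulas $\pi_T^*(x)=\Upsilon_3-\Upsilon_2$ and $\pi_T^*(y)=\Upsilon_3-\Upsilon_1$ with the primitive inward normals $(1,0),(0,1),(-1,-1)$ to the three edges of the Delzant triangle pins down $\Upsilon_1=\Upsilon_{x_2=0}$, $\Upsilon_2=\Upsilon_{x_1=0}$, $\Upsilon_3=\Upsilon_{x_1+x_2=\lambda}$. An edge of the triangle is parallel to $(-n,m)$ precisely when its preimage projects to a fat vertex of the $S^1$-decorated graph; any other edge with primitive direction $(\alpha,\beta)$ projects to an edge of label $|m\alpha+n\beta|$. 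For each $(m,n)\in\{(-1,0),(0,-1),(1,1)\}$ exactly one of the three edges is parallel to $(-n,m)$, so exactly one $\eta_j$ is identified with $\tau_\infty$ and the remaining two are identified with $\sigma_{1,1}$ and $\sigma_{2,1}$ (both labels equal $1$). For generic coprime $(m,n)$ outside this list, no edge of the triangle is parallel to $(-n,m)$, so $\tau_0=\tau_\infty=0$ and each $\eta_i$ is a $\sigma_{\cdot,\cdot}$, indexed according to the chain convention $m_{1,1}\geq m_{2,1}$.

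For $(m,n)=(1,1)$ with $(a,b)=(0,-1)$, the kernel relation collapses to $\eta_1-\eta_2=0$; after setting $\eta_3=\tau_\infty$ and $\eta_1=\eta_2=\sigma_{1,1}$, the triple product $\eta_1\eta_2\eta_3=0$ becomes $\sigma_{1,1}^2\tau_\infty=0$, and $\pi_{S^1}^*(t)=\eta_3-\eta_1=\tau_\infty-\sigma_{1,1}$. The other two exceptional pairs $(-1,0)$ and $(0,-1)$ yield analogous collapses with $(a,b)=(-1,0)$ and $(a,b)=(0,1)$ respectively, producing the same final presentation. For the generic case with $m>n>0$, the values $0<n\lambda<m\lambda$ at the three polytope vertices identify $\eta_1=\sigma_{1,1}$ (single-edge chain, label $m$), $\eta_2=\sigma_{2,1}$ (label $n$), and $\eta_3=\sigma_{2,2}$ (label $m-n$); direct substitution into the kernel and into \eqref{eq:module} gives exactly the claimed presentation and module structure. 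The remaining sign choices for $(m,n)$ reduce to these via the flip symmetry of Remark~\ref{flipandscale} and a swap of the two chains.

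The main obstacle is purely bookkeeping: correctly matching the toric generators $\Upsilon_i$ to the geometric classes in each sign-choice of $(m,n)$, respecting the chain-ordering convention, and tracking the signs introduced by the choice of $(a,b)$ in Notation~\ref{gcd}. When an identification is ambiguous it can be verified at the level of restrictions to fixed points using the push-pull diagram \eqref{diagramrestrict} and the restriction tables of \S\ref{NoTitinj}.
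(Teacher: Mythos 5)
Your proposal is correct and takes essentially the same route as the paper's proof: both present $H_{S^1}^{*}(M)$ as the quotient of Masuda's presentation of $H_{T}^{*}(M)$ by ${\inc}^{*}\big({\pi}_{T}^{*}(\ker \inc_B^{*})\big)$ as in \eqref{eq:linear}, identify each $\eta_i=\inc^{*}(\Upsilon_i)$ with $\tau_\infty$ or a $\sigma_{i,j}$ via the dictionary of \S\ref{projection}, and read off ${\pi}_{S^1}^{*}(t)$ from \eqref{eq:module}, and your edge identifications and choices of $(a,b)$ agree exactly with the paper's tables. The one small wrinkle is your phrase ``inward normals'': with $(1,0),(0,1),(-1,-1)$ Masuda's formula would yield $\pi_T^{*}(x)=\Upsilon_2-\Upsilon_3$ rather than $\Upsilon_3-\Upsilon_2$, so the stated formulas in fact correspond to the opposite (outward) normal convention --- but the identification you pin down is the correct one, and your proposed fixed-point restriction check resolves any such ambiguity harmlessly.
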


\begin{proof}
This follows immediately by restricting from $T$ to $S^1$.  
In the first bullet, the classes $\eta_j$, corresponding to the $T$-invariant spheres, are, in the notations of
 Theorem~\ref{ThNew}, as follows.
$$
\begin{array}{| c | c | c |}\hline
\mbox{For } (m,n)=(-1,0) & \mbox{For } (m,n)=(0,-1)  & \mbox{For } (m,n)=(1,1) \\ \hline
\eta_1  \mapsto  \sigma_{1,1} & \eta_1  \mapsto   \tau_\infty & \eta_1  \mapsto  \sigma_{1,1} \\
\eta_2 \mapsto \tau_\infty & \eta_2  \mapsto \sigma_{1,1}  & \eta_2  \mapsto \sigma_{2,1} \\
\eta_3  \mapsto  \sigma_{2,1}  & \eta_3  \mapsto  \sigma_{2,1}  & \eta_3  \mapsto   \tau_\infty\\ \hline
\end{array}
$$
To match Theorem~\ref{ThNew}, we must add a generator $\tau_h$ satisfying
$$
\tau_h = \sigma_{1,1} = \sigma_{2,1}.
$$
That $ \sigma_{1,1} = \sigma_{2,1}$ is a consequence of \eqref{eq:linear}.  This
is equivalent to having relations $\tau_h-\sigma_{1,1}$ and $\tau_h-\sigma_{2,1}$, the 
difference of which is exactly \eqref{eq:linear} for these $m$ and $n$:  $\sigma_{1,1} - \sigma_{2,1}$.
The formula for $\pi_{S^1}^*(t)$ follows immediately from \eqref{eq:algebra}.

In the second bullet above, in the case $m>n>0$, we have
\begin{eqnarray*}
\eta_1 & \mapsto & \sigma_{1,1} \\
\eta_2 & \mapsto &\sigma_{2,1} \\
\eta_3 & \mapsto &  \sigma_{2,2}
\end{eqnarray*}
Note that to match Theorem~\ref{ThNew}, we must add a generator $\tau_h$ which satisfies
$$
\tau_h = m\sigma_{1,1} = n\sigma_{2,1}+(m-n)\sigma_{2,2}.
$$
As above, the two terms on the right-hand side are equal because of \eqref{eq:linear}. and
this is equivalent to two linear relations whose difference is
$$
m\sigma_{1,1} - \left( n\sigma_{2,1}+(m-n)\sigma_{2,2}\right) = n(\sigma_{2,2}-\sigma_{2,1})-m(\sigma_{2,2}-\sigma_{1,1}),
$$
as desired.  The formula for $\pi_{S^1}^*(t)$ follows immediately from \eqref{eq:algebra}.

For other values of $m$ and $n$, the map on the $\eta_j$s is changed appropriately.
This completes the proof of the Proposition, and indeed the Proof of Theorem~\ref{ThNew} when $M=\CP^2$.
\end{proof}

We now turn to the Hamiltonian action of $T=(S^1)^2$ on the 
Hirzebruch surface $(\Hirz_N, \omega_{\beta,f})$ induced by
\begin{equation*} 
(s,t)\cdot ([w_1;w_2],[z_0;z_1;z_2]) = ([w_1;sw_2],[tz_0;z_1;s^N z_2]).
\end{equation*}
By \cite{masuda}, $$H_{T}^{\star}(\Hirz_N)=\frac{\Z[\Upsilon_1,\Upsilon_2,\Upsilon_3,\Upsilon_4]}{\langle \Upsilon_1 \cup \Upsilon_4,\Upsilon_2 \cup \Upsilon_3 \rangle}.$$
See Figure \ref{fig:classes for Hirz} for the $\Upsilon_i$s.
For the generators of $H_{T}^{*}(pt)=\R[x,y]$,
$${\pi}_{T}^{*}(x)=\Upsilon_3-\Upsilon_2,$$
$${\pi}_{T}^{*}(y)=\Upsilon_4-\Upsilon_1+N \Upsilon_3.$$

\begin{center}
  \begin{figure}[ht]
\begin{overpic}[
scale=0.78,unit=1mm]{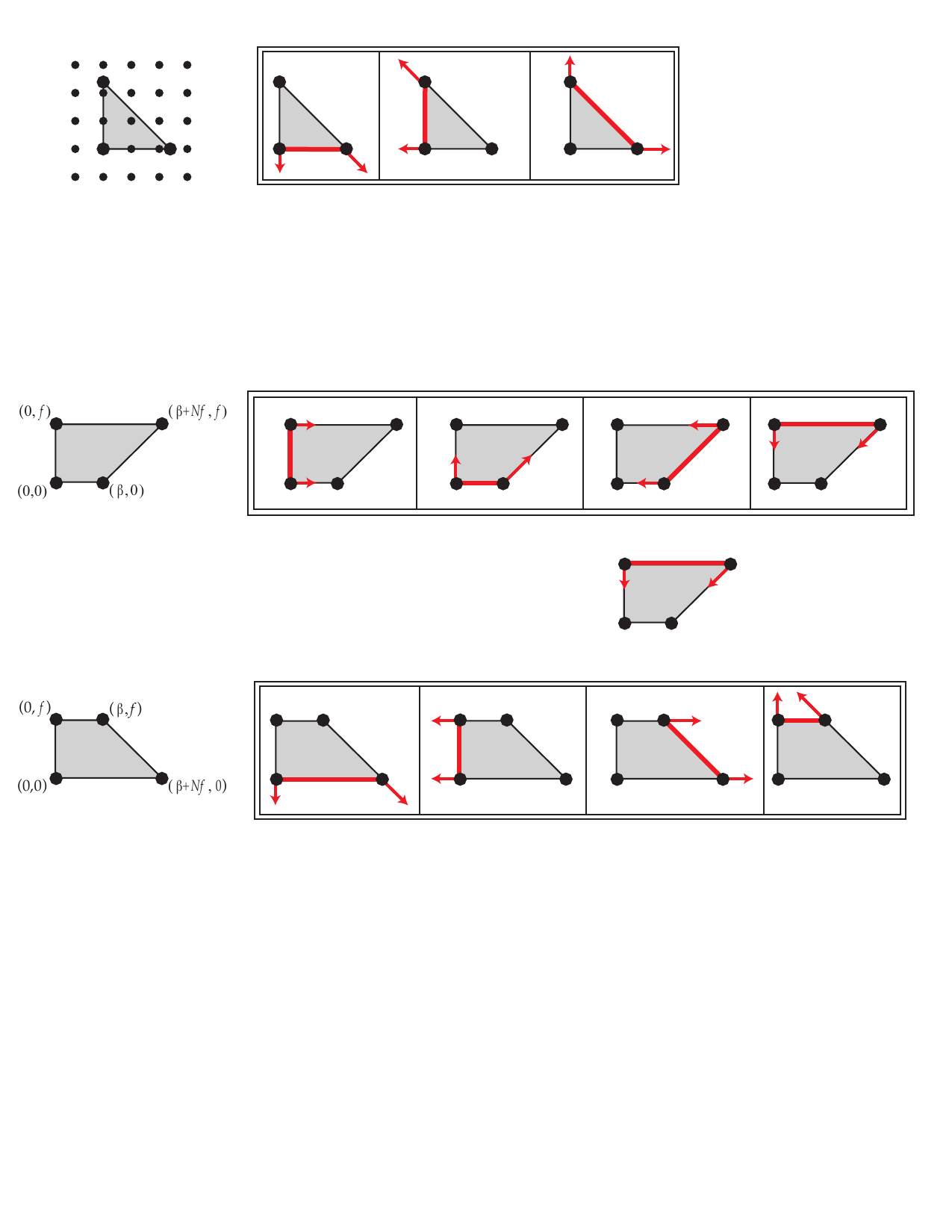}
   \put(54,3){\small{$\Upsilon_1$}}
   \put(73,12){\small{$\Upsilon_2$}}
   \put(123,12){\small{$\Upsilon_3$}}
   \put(135.5,20){\small{$\Upsilon_4$}}
\end{overpic}
\caption{The moment image for the Hirzebruch surface and generating classes (restricted to the fixed points, with non-zero
restrictions indicated by a red arrow in $\algt^*$).}\label{fig:classes for Hirz}
\end{figure}
\end{center}

Now we consider an effective $S^1$-action on $(\Hirz_N, \omega_{\beta,f})$ obtained from an inclusion $\inc \colon S^1 \to T$,
so that
\[
\xi \cdot ([w_1;w_2], [z_0;z_1,;z_2]) = ([w_1; \xi^m w_2], [\xi^n z_0; z_1;\xi^{Nm} z_2]).
\]
See Example \ref{e:hirz}.
We shall refer to a Hirzebruch surface with this $S^1$-action as $$\Hirz_N(m,n).$$
When $(m,n)=\pm(0,1)$, there are two fat vertices and the labeled graph is as in Figure~\ref{fig:circle on hirzebruch}(a).
Following our convention that if there is one fat vertex, it must be a maximum value for the moment map, the 
relevant circle actions with one fat vertex correspond to $(m,n) \in  \{(-1,0),(1,N)\}$
and the labeled graph is as in Figure~\ref{fig:circle on hirzebruch}(b).

We have 
\begin{equation}\label{eq:hirz linear}
{{\inc}^{*}({\pi}_{T}^{*}(\ker {\inc_B}^{*}))}=\langle  n(\eta_3-\eta_2)-m(\eta_4-\eta_1+N \eta_3) \rangle,
\end{equation}
and, by  \eqref{projection2}, 
\begin{equation}\label{eq: algebra t}
{\pi}_{S^1}^{*}(t)=a(\eta_3-\eta_2)-b(\eta_4-\eta_1+N \eta_3),
\end{equation}
where $\eta_i:=\inc^{*}(\Upsilon_i)$ and
 $a,b \in \Z$ are such that $am-bn=1$, using the conventions in Notation~\ref{gcd}.

\begin{Proposition} \labell{Prohirz}
For an effective $S^1$-action on  $\Hirz_{N}(m,n)$  on $(M,\omega)=(\Hirz_N, \omega_{\beta,f})$ 
with $m,n$ relatively prime, we have the following possibilities.
\begin{itemize}
\item For $(m,n)=\pm (0, 1)$, we have $\fat=2$, $e_{min}=\pm N$, and
$$H_{S^1}^{\star}(M)=\frac{\Z [\tau_0,\tau_{\infty},\sigma_{1,1},\sigma_{2,1}]}{\langle \sigma_{1,1} \cup \sigma_{2,1}\ ,\ \tau_{0} \cup \tau_{\infty}\ ,\  \sigma_{2,1}-\sigma_{1,1} \rangle}=
\frac{\Z [\tau_0,\tau_{\infty},\tau_h]}{\langle {\tau_h}^2\ ,\ \tau_0 \cup \tau_{\infty} \rangle},$$
and 
$${\pi}_{S^1}^{*}(t)=\tau_{\infty}-\tau_{0}+e_{min} \tau_h.$$
\item For $(m,n)\in \left\{ (-1,0),(1,N)\right\}$, we have 
$\fat=1$, $\tau_0=0$, the labeled graph is as in Figure~\ref{fig:circle on hirzebruch}(b), and
$$H_{S^1}^{\star}(M)=\frac{\Z [\tau_{\infty}, \sigma_{1,1},\sigma_{1,2}, \sigma_{2,1}]}{\langle \sigma_{1,1} \cup \tau_{\infty}\ ,\ \sigma_{2,1} \cup \sigma_{1,2}\ ,\  \sigma_{2,1}-\sigma_{1,2}-N \sigma_{1,1} \rangle}, $$
and
$${\pi}_{S^1}^{*}(t)=\tau_{\infty}-\sigma_{1,1}.$$

 \item 
For relatively prime $(m,n)$ in  $\Z \times \Z \smallsetminus \{\pm(1,0),\pm(0,1),\pm(1,N)\}$,  
we have $\fat=0$ and $\tau_0=\tau_\infty=0$. As in Figure~\ref{fig:circle on hirzebruch}(c) and (d), the possible configurations of chains
are two chains of length two; or one chain of length three and one of length one.
\begin{itemize}
\item When $n>mN>0$ as in Figure~\ref{fig:circle on hirzebruch}(c), we have
$$
H_{S^1}^{\star}(M)=\frac{\Z [\sigma_{1,1}\ ,\ \sigma_{1,2}\ ,\ \sigma_{2,1}\ ,\ \sigma_{2,2}]}
{\langle \sigma_{1,1} \cup \sigma_{2,2}\ ,\ \sigma_{1,2} \cup \sigma_{2,1}\ ,\ 
n(\sigma_{2,2}-\sigma_{1,1})-m(\sigma_{1,2}-\sigma_{2,1}+N \sigma_{2,2})\rangle},
$$
and 
$$ 
{\pi}_{S^1}^{*}(t)=a(\sigma_{2,2}-\sigma_{1,1})-b(\sigma_{1,2}-\sigma_{2,1}+N \sigma_{2,2}).
$$
\item When $mN>n>0$ and $n\geq m$ as in Figure~\ref{fig:circle on hirzebruch}(d), we have
$$
H_{S^1}^{\star}(M)=\frac{\Z [\sigma_{1,1}\ ,\ \sigma_{1,2}\ ,\ \sigma_{1,3}\ ,\ \sigma_{2,1}]}
{\langle \sigma_{1,1} \cup \sigma_{1,3}\ ,\ \sigma_{1,2} \cup \sigma_{2,1}\ ,\ 
n(\sigma_{1,3}-\sigma_{1,1})-m(\sigma_{1,2}-\sigma_{2,1}+N \sigma_{1,3})\rangle},
$$
and $$ {\pi}_{S^1}^{*}(t)=a(\sigma_{1,3}-\sigma_{1,1})-b(\sigma_{1,2}-\sigma_{2,1}+N \sigma_{1,3}).$$
\end{itemize}
For other values of $m$ and $n$, this presentation is adjusted accordingly.  
\end{itemize}
\end{Proposition}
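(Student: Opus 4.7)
The plan is to mirror exactly the argument used in the proof of Proposition~\ref{Procp2}. Starting from Masuda's presentation $H_{T}^{*}(\Hirz_N) = \Z[\Upsilon_1,\Upsilon_2,\Upsilon_3,\Upsilon_4]/\langle \Upsilon_1\Upsilon_4, \Upsilon_2\Upsilon_3\rangle$ and the commutative diagram \eqref{diagram4}, the $S^1$-equivariant cohomology is obtained from the classes $\eta_i := \inc^{*}(\Upsilon_i)$ by quotienting by the single additional linear relation in \eqref{eq:hirz linear}. The module structure is then read off from \eqref{eq: module t} once we fix $a,b$ via Notation~\ref{gcd}. The substantive work is to identify, in each of the listed cases, which of $\eta_1,\ldots,\eta_4$ corresponds to which geometric generator $\tau_0,\tau_\infty,\sigma_{i,j}$ of Notation~\ref{not:main}, and then to reconcile the resulting presentation with the $\tau_h$-form required by Theorem~\ref{ThNew}.

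I would first dispatch Case 1, $(m,n)=\pm(0,1)$: the projection to the $y$-axis collapses each horizontal edge of the Hirzebruch trapezoid to a point, so $\Upsilon_1$ and $\Upsilon_4$ become fat vertices, giving $\eta_1=\tau_0$ and $\eta_4=\tau_\infty$, while $\eta_2=\sigma_{1,1}$ and $\eta_3=\sigma_{2,1}$. With $(a,b)=(0,\mp 1)$, the Masuda relation \eqref{eq:hirz linear} reduces to $\sigma_{2,1}-\sigma_{1,1}$, which lets us introduce $\tau_h:=\sigma_{1,1}=\sigma_{2,1}$; the relations $\eta_1\eta_4=0$ and $\eta_2\eta_3=0$ give $\tau_0\tau_\infty=0$ and $\tau_h^2=0$, while \eqref{eq: module t} produces $\pi_{S^1}^{*}(t)=\tau_\infty-\tau_0+e_{\min}\tau_h$ with $e_{\min}=\pm N$ by Facts~\ref{factsi}. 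Case 2, $(m,n)\in\{(-1,0),(1,N)\}$, is analogous: exactly one horizontal edge of the trapezoid projects to a fat vertex (the maximum, by our ordering convention), so one of $\eta_1,\eta_4$ becomes $\tau_\infty$ and the remaining three $\eta_i$'s become the sphere classes $\sigma_{1,1},\sigma_{1,2},\sigma_{2,1}$ of the chains in Figure~\ref{fig:circle on hirzebruch}(b). The two product relations translate directly, and \eqref{eq:hirz linear} specializes to $\sigma_{2,1}-\sigma_{1,2}-N\sigma_{1,1}$, which is exactly $\sum_j m_{2,j}\sigma_{2,j}-\sum_j m_{1,j}\sigma_{1,j}$ after setting $\tau_h:=\sigma_{2,1}$.

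Case 3 splits into subcases depending on the signs of $m$, $n$ and $mN-n$, but each is handled identically. Working the representative $n>mN>0$ (Figure~\ref{fig:circle on hirzebruch}(c)), both horizontal and both slanted edges of the trapezoid project to non-degenerate edges of an extended graph with two chains of length two, so all four $\eta_i$'s become $\sigma$-classes via a matching determined by the vertical order of the trapezoid vertices. The Masuda relations $\eta_1\eta_4=\eta_2\eta_3=0$ translate to the two product relations, and \eqref{eq:hirz linear} becomes the displayed linear relation. Substituting into \eqref{eq: module t} with our choice of $a,b$ gives the claimed formula for $\pi_{S^1}^{*}(t)$. The case $mN>n>0$ differs only in that one chain acquires length three and the other length one (Figure~\ref{fig:circle on hirzebruch}(d)); the edge orderings change the assignment of $\eta_i$ to the $\sigma$ generators, but the substitutions into \eqref{eq:hirz linear} and \eqref{eq: module t} go through unchanged. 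The remaining sign patterns for $(m,n)$ reduce to the representative cases by flipping the graph and/or negating the $S^1$ parametrization.

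The main obstacle is organizational rather than conceptual: one must tabulate, for each admissible $(m,n)$, the correspondence between the four $T$-invariant divisors $\Upsilon_i$ (two horizontal and two slanted edges of the trapezoid) and the geometric generators appearing in the decorated graph, taking care with the convention that a single fat vertex is placed at the maximum. The only genuine piece of algebra is verifying that the ``two-chain'' presentation in the statement (with the extra generator $\tau_h$ and the relations $\tau_h-\sum_j m_{i,j}\sigma_{i,j}$ for each $i$) is equivalent to the restricted Masuda presentation (with one fewer generator and the single linear relation \eqref{eq:hirz linear}). This follows because the Masuda relation is exactly the difference of the two chain-sum expressions for $\tau_h$, so introducing $\tau_h$ and imposing both chain relations is equivalent to imposing the single relation in \eqref{eq:hirz linear}, as already seen for $\CP^2$ in Proposition~\ref{Procp2}.
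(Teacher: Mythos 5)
Your overall route is exactly the paper's: restrict Masuda's presentation of $H_{T}^{*}(\Hirz_N)$ along $\inc\colon S^1\hookrightarrow T$, impose the single linear relation \eqref{eq:hirz linear}, read the module structure off \eqref{eq: module t} with $a,b$ as in Notation~\ref{gcd}, and reconcile with Theorem~\ref{ThNew} by introducing $\tau_h$ so that the difference of the two chain relations is precisely \eqref{eq:hirz linear}. Your Case 1, Case 3, and the closing $\tau_h$-reconciliation paragraph agree with the paper's proof essentially verbatim.

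However, your Case 2 contains a concrete error that would derail the computation if executed as written. You assert that for $(m,n)\in\{(-1,0),(1,N)\}$ ``one of $\eta_1,\eta_4$ becomes $\tau_\infty$,'' i.e., that a \emph{horizontal} edge of the trapezoid projects to the fat vertex. By \S\ref{projection}, the fixed surfaces are the preimages of the edges parallel to $(-n,m)$; the horizontal edges (parallel to $(1,0)$) are fat precisely when $(m,n)=\pm(0,1)$ --- which is your Case 1, where you used this correctly. For $(m,n)=(-1,0)$ the vector $(-n,m)=(0,-1)$ is vertical, so the fat vertex is the preimage of the left edge and $\eta_2=\tau_\infty$; for $(m,n)=(1,N)$ the vector $(-n,m)=(-N,1)$ is parallel to the slanted right edge of Figure~\ref{fig:hirzebruch}, so $\eta_3=\tau_\infty$ --- exactly the assignments in the paper's table. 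This matters because the coefficients in \eqref{eq:hirz linear} are fixed: for $(m,n)=(-1,0)$ it reads $\eta_4-\eta_1+N\eta_3$, and for $(m,n)=(1,N)$ it reads $\eta_1-N\eta_2-\eta_4$; only when the fat class is the $\eta_i$ that is \emph{absent} from the relation ($\eta_2$, resp.\ $\eta_3$) does it specialize to the sphere-only relation $\sigma_{2,1}-\sigma_{1,2}-N\sigma_{1,1}$ of the statement. Under your assignment, \eqref{eq:hirz linear} would involve $\tau_\infty$ with nonzero coefficient, which is a false relation, since $\tau_\infty$ is independent of the sphere classes in the target presentation; the Masuda products $\eta_1\eta_4$, $\eta_2\eta_3$ would likewise fail to translate to the stated relations $\sigma_{1,1}\cdot\tau_\infty$ and $\sigma_{2,1}\cdot\sigma_{1,2}$. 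The repair is purely local --- apply the parallel-to-$(-n,m)$ criterion in Case 2 as you did implicitly in Case 1 --- after which your argument coincides with the paper's proof.
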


\begin{proof}
This follows immediately by restricting from $T$ to $S^1$.  In the first bullet, the classes $\eta_j$, corresponding to the $T$-invariant spheres, are, in the notations of Theorem~\ref{ThNew}, as follows.
$$
\begin{array}{| c | c | }\hline
\mbox{For } (m,n)=(0,1) & \mbox{For } (m,n)=(0,-1)   \\ \hline
\eta_1  \mapsto  \tau_0 & \eta_1  \mapsto   \tau_\infty  \\
\eta_2 \mapsto\sigma_{1,1} & \eta_2  \mapsto \sigma_{1,1}   \\
\eta_3  \mapsto  \sigma_{2,1}  & \eta_3  \mapsto  \sigma_{2,1} \\ 
\eta_4  \mapsto  \tau_\infty  & \eta_4  \mapsto \tau_0 \\ \hline
\end{array}
$$
Again, to match Theorem~\ref{ThNew}, we must add a generator $\tau_h$ satisfying
$$
\tau_h = \sigma_{1,1} = \sigma_{2,1}.
$$
That $ \sigma_{1,1} = \sigma_{2,1}$ is a consequence of \eqref{eq:hirz linear}.  This
is equivalent to having relations $\tau_h-\sigma_{1,1}$ and $\tau_h-\sigma_{2,1}$, the 
difference of which is exactly \eqref{eq:hirz linear} for these $m$ and $n$:  $\sigma_{1,1} - \sigma_{2,1}$.
The formula for $\pi_{S^1}^*(t)$ follows immediately from \eqref{eq: algebra t}.

In the second bullet above, we have
$$
\begin{array}{| c | c | }\hline
\mbox{For } (m,n)=(-1,0) & \mbox{For } (m,n)=(1,N)   \\ \hline
\eta_1  \mapsto  \sigma_{2,1} & \eta_1  \mapsto  \sigma_{2,1} \\
\eta_2 \mapsto  \tau_\infty & \eta_2  \mapsto \sigma_{1,1}   \\
\eta_3  \mapsto  \sigma_{1,1}  & \eta_3  \mapsto  \tau_\infty  \\ 
\eta_4  \mapsto   \sigma_{1,2} & \eta_4  \mapsto  \sigma_{1,2}  \\ \hline
\end{array}
$$
To match Theorem~\ref{ThNew}, we must add a generator $\tau_h$ satirsfying
$$
\tau_h = N\cdot \sigma_{1,1}+\sigma_{1,2} = \sigma_{2,1},
$$
That the two terms on the right-hand side are equal is a consequence of \eqref{eq:hirz linear}. This is equivalent
to adding two linear relations whose difference is 
$N\cdot \sigma_{1,1}+\sigma_{1,2} = \sigma_{2,1}$.
The formula for $\pi_{S^1}^*(t)$ follows immediately from \eqref{eq: algebra t}.

In the third bullet above, again we have two cases and 
$$
\begin{array}{| c | c | }\hline
\mbox{For } n>mN>0 & \begin{array}{c} \mbox{For } mN>n>0 \\ \mbox{and } n>m\end{array}  \\ \hline
\eta_1  \mapsto  \sigma_{2,1} & \eta_1  \mapsto  \sigma_{2,1} \\
\eta_2 \mapsto \sigma_{1,1} & \eta_2  \mapsto \sigma_{1,1}   \\
\eta_3  \mapsto  \sigma_{2,2}  & \eta_3  \mapsto \sigma_{1,3}  \\ 
\eta_4  \mapsto   \sigma_{1,2} & \eta_4  \mapsto  \sigma_{1,2}  \\ \hline
\end{array}
$$
To match Theorem~\ref{ThNew}, we must add a generator $\tau_h$. First when $n_mN>0$, we let
$$
\tau_h = n\cdot \sigma_{1,1}+m\cdot \sigma_{1,2} = m\cdot \sigma_{2,1} + (n-mN)\cdot \sigma_{2,2},
$$
where the two terms on the right are equal by \eqref{eq:hirz linear}.
When $mN-n>0$ we let
$$
\tau_h = n\cdot \sigma_{1,1}+m\cdot \sigma_{1,2} + (mN-n)\cdot \sigma_{1,3} = m\cdot \sigma_{2,1}.
$$
where again the two terms on the right are equal by \eqref{eq:hirz linear}.
These equalities each give rise to two linear relations whose respective differences are
$$
n(\sigma_{2,2}-\sigma_{1,1})=m(\sigma_{1,2}-\sigma_{2,1}+N\sigma_{2,2}) \mbox{ or }
n(\sigma_{1,3}-\sigma_{1,1})=m(\sigma_{1,2}-\sigma_{2,1}+N\sigma_{1,3}),
$$
as desired.
Finally, the formula given for $\pi_{S^1}^*(t)$ follows from \eqref{eq: algebra t}.

For other values of $m$ and $n$, the map on the $\eta_j$s is changed appropriately.
This completes the proof of the Proposition, and indeed the Proof of Theorem~\ref{ThNew} when 
$(M,\omega)=(\Hirz_N, \omega_{\beta,f})$.
\end{proof}

\begin{noTitle} {\bf Minimal models: symplectic $S^1$-ruled surfaces}
Recall that a {symplectic $S^1$-ruled surface} is an $S^2$-bundle over a compact surface 
$\Sigma$ with a circle action that fixes the basis and rotates each fiber. This admits an 
invariant symplectic form, an invariant K\"ahler structure, 
and a moment map. 

\begin{Proposition} \labell{Proruled}
For an  $S^1$-ruled surface $S^1 \acts (M,\omega)$,
we have
$$H_{S^1}^{2 \star}(M)=\frac{\Z [\tau_0,\tau_{\infty},\tau_h]}{\langle \tau_0 \cup \tau_{\infty}, {\tau_h}^2\rangle}.$$
The algebra structure is given by $${\pi}^{*}(t)=\tau_{\infty}-\tau_0+e_{\min} \tau_h$$ as in \eqref{eq:algstr}.
\end{Proposition}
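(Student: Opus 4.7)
The plan is to use the injectivity of the localization map
$$i^*: H^*_{S^1}(M;\Z) \hookrightarrow H^*_{S^1}(M^{S^1};\Z)$$
from \cite[Theorem 1.1(A)]{KesslerHolm2}, which has already been invoked to prove \eqref{eq:algstr}. For an $S^1$-ruled surface, the decorated graph has two fat vertices and no edges of label $>1$, so $M^{S^1} = \Sigma_0 \sqcup \Sigma_\infty$ is a disjoint union of two copies of the base, and
$$H^*_{S^1}(M^{S^1};\Z) = \bigl(H^*(\Sigma_0;\Z)\otimes \Z[t]\bigr)\ \oplus\ \bigl(H^*(\Sigma_\infty;\Z)\otimes \Z[t]\bigr).$$

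First I would compute the restrictions of the three proposed generators using the push-pull description of $\S\ref{NoTitinj}$. Since $\Sigma_0 \cap \Sigma_\infty = \emptyset$, one gets $\tau_0|_{\Sigma_\infty} = 0 = \tau_\infty|_{\Sigma_0}$, while $\tau_0|_{\Sigma_0}$ and $\tau_\infty|_{\Sigma_\infty}$ equal the equivariant Euler classes of the respective normal bundles; these are $e_{\min}[\mathrm{pt}_{\Sigma_0}] - t$ and $e_{\max}[\mathrm{pt}_{\Sigma_\infty}] + t$ (the weights $\pm 1$ being forced because each vertex has only edges of label $1$ emanating from it). For the fiber class $\tau_h = \iota_S^!(\mathbb{1})$, where $S$ is an invariant symplectic sphere in the fiber, $S$ meets each $\Sigma_i$ transversally in one point, so $\tau_h|_{\Sigma_i} = [\mathrm{pt}_{\Sigma_i}]$. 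From these formulas, the two relations follow immediately: $\tau_0\cdot\tau_\infty = 0$ because the factors have disjoint supports, and $\tau_h^2 = 0$ because $[\mathrm{pt}]^2 = 0$ on a real 2-manifold.

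For the module structure, I would verify that $\tau_\infty - \tau_0 + e_{\min}\tau_h$ restricts to $t$ on each component. On $\Sigma_0$ the restriction is $0 - (e_{\min}[\mathrm{pt}_{\Sigma_0}] - t) + e_{\min}[\mathrm{pt}_{\Sigma_0}] = t$. On $\Sigma_\infty$ it is $(e_{\max}[\mathrm{pt}_{\Sigma_\infty}] + t) + e_{\min}[\mathrm{pt}_{\Sigma_\infty}]$, which equals $t$ provided $e_{\max} = -e_{\min}$. The latter identity is an easy consequence of \eqref{eq:emin} and \eqref{eq:emax} in the present situation (no interior isolated fixed points). Together with injectivity of $i^*$, this forces $\pi^*(t) = \tau_\infty - \tau_0 + e_{\min}\tau_h$ in $H^{2*}_{S^1}(M;\Z)$.

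It remains to show that the surjection from $\Z[\tau_0,\tau_\infty,\tau_h]/\langle \tau_0\tau_\infty,\tau_h^2\rangle$ onto the subring of $H^{2*}_{S^1}(M;\Z)$ it generates is both surjective onto all of $H^{2*}_{S^1}(M;\Z)$ and injective. I would do both by a Poincar\'e polynomial count: the quotient ring has $\Z$-rank $1$ in degree $0$, rank $3$ in degree $2$ (spanned by $\tau_0,\tau_\infty,\tau_h$), and rank $4$ in each degree $2k$ for $k\geq 2$ (spanned by $\tau_0^k,\tau_\infty^k,\tau_0^{k-1}\tau_h,\tau_\infty^{k-1}\tau_h$). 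This exactly matches the even part of \eqref{eq:poincare} with $\fat = 2$ and $\iso = 0$, namely $1 + 3t^2 + 4t^4/(1-t^2)$. The main obstacle is confirming that no further relations are needed and that the proposed classes already generate the integral image of $i^*$ (not just rationally); the rank comparison combined with the explicit restriction formulas, whose determinant in each degree is a unit, handles both points simultaneously.
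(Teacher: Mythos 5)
Your proposal is correct, but it proves the ring structure by a genuinely different route than the paper. The paper's own proof of Proposition~\ref{Proruled} does not use localization at all for the generators and relations: it observes that the ruled fibration $M\to\Sigma$ induces a fibration $(M\times ES^1)/S^1\to\Sigma$ with fiber $(S^2\times ES^1)/S^1$, checks that $\tau_0,\tau_\infty$ restrict to the standard generators $\xi_S,\xi_N$ of $H^*_{S^1}(S^2)=\Z[\xi_N,\xi_S]/\langle\xi_N\xi_S\rangle$ and that $\tau_h$ is pulled back from $[\Sigma]$, and then invokes Leray--Hirsch to get $H^{2*}_{S^1}(M)\cong H^{2*}(\Sigma)\otimes H^*_{S^1}(S^2)$ with exactly the stated relations, integrally and in one stroke. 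You instead inject into $H^{2*}_{S^1}(M^{S^1})$ via \cite[Theorem 1.1(A)]{KesslerHolm2}, read off the two relations and the module identity $\pi^*(t)=\tau_\infty-\tau_0+e_{\min}\tau_h$ from the restriction formulas (for the module structure this coincides with the paper's own argument in \S\ref{sec3}, including the use of $e_{\max}=-e_{\min}$, which indeed follows from \eqref{eq:emin}--\eqref{eq:emax} with no interior fixed points), and then close the ring computation against the Poincar\'e polynomial \eqref{eq:poincare} with $\fat=2$, $\iso=0$. Your lattice argument does complete, but your phrase ``whose determinant in each degree is a unit'' is only literally true in degrees $2k$ with $k\geq 2$, where the four monomials $\tau_0^k,\tau_\infty^k,\tau_0^{k-1}\tau_h,\tau_\infty^{k-1}\tau_h$ restrict to a triangular matrix with diagonal entries $\pm1$ against the rank-four target, so their span is the \emph{entire} target lattice and the sandwich $i^*(R)\subseteq i^*(H^{2k}_{S^1}(M))\subseteq H^{2k}_{S^1}(M^{S^1})$ collapses. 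In degree $2$ the restriction matrix is $3\times 4$ and has no determinant; what you need there is that the three restriction vectors span a \emph{saturated} sublattice (e.g.\ the $3\times 3$ minor in the coordinates $t|_{\Sigma_0},[\Sigma]|_{\Sigma_0},t|_{\Sigma_\infty}$ equals $\pm1$), so that the rank-three module $i^*(H^{2}_{S^1}(M))$ containing it with finite index must equal it. With that one adjustment your argument is airtight. The trade-off: the paper's Leray--Hirsch proof is shorter, needs neither \eqref{eq:poincare} nor any integral saturation bookkeeping, and exhibits the tensor-product structure directly; your localization proof is more uniform with the blowup induction used elsewhere in \S\ref{sec4} and would apply even absent a global fibration, at the cost of presupposing the Betti numbers from \cite{KesslerHolm2}.
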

Note that in this case the decorated graph has two fat vertices and no isolated vertices, and in the extended decorated graph, there are also two edges labeled $1$ between the fat vertices.
We have $\sigma_{1,1}=\tau_h=\sigma_{2,1}$ and set
$b_{1,1}=b_{2,1}=0$.

\begin{proof}
The classes  $\tau_0,\tau_{\infty},\tau_h \in H_{S^1}^2(M)$ span a subring of the equivariant cohomology. We will show that this subring equals $H_{S^1}^{2*}(M)$.
The ruled $S^1$-action fixes the base $\Sigma$ and rotates each $S^2$-fiber, fixing the south pole $S$ and the north pole $N$.
Therefore the given fibration $M \to \Sigma$ with fiber $S^2$ yields a fibration $$\prr \colon (M \times ES^1) / {S^1} \to \Sigma$$  with fiber $(S^2 \times ES^1)/{S^1}$.  Recall that $$H^{*}((S^2 \times ES^1)/{S^1})=H_{S^1}^{*}(S^2)=\frac{\Z[\xi_N,\xi_S]}{\langle \xi_N \cup \xi_S \rangle}, \, \xi_N|_{S}=0, \, \xi_{N}|_{N}=-t, \, \xi_S|_{S}=t, \, \xi_{S}|_{N}=0,$$ and $H^{2*}(\Sigma)=\Z[[\Sigma]], \text{ with }[\Sigma]^2=0$. The inclusion of the fiber $\iota \colon S^2 \to M$ gives an inclusion $\iota \colon (S^2 \times ES^1) /{S^1} \to (M \times ES^1) / {S^1}$. We have $\iota^{*}\tau_0=\xi_S,\, \iota^{*}\tau_{\infty}=\xi_N$ and $\prr^{*}([\Sigma])=\tau_h$. Therefore, by the Leray-Hirsch Theorem, $$H_{S^1}^{2*}(M)\sim H^{2*}(\Sigma) \otimes H_{S^1}^{*}(S^2),$$ and $\tau_0,\tau_\infty,\tau_h$ generate 
$H_{S^1}^{2*}(M)$, with the specified relations.
\end{proof}

\end{noTitle}

\begin{noTitle} {\bf The effect of an equivariant blowup.}
For $S^1 \acts (M,\omega)$, let $J$ be an integrable $\omega$-compatible complex structure on $M$
 with respect to which the $S^1$-action is holomorphic. Let $p$ be an $S^1$-fixed point in $M$.
Recall the equivariant complex blowup $(\widetilde{M}, \widetilde{J})$ of $(M,J)$ at $p$ and the blowup map: the equivariant  projection 
$$
\bl \colon \widetilde{M} \to M
$$
extending the identity on $M  \smallsetminus \{p\}$, defined in \S \ref{nbup}.
Denote the pushforward of the blowup map 
$${{\bl}}^{!} \colon H_{S^1}^{i}\left(\widetilde{M}\right) \to H_{S^1}^{i}(M).$$
Denote the pushforward of the inclusion of the exceptional divisor $\bl^{-1}(p)$
$$\iota_{\bl^{-1}(p)}^{!} \colon H_{S^1}^{i}(\CP^{n-1}) \to H_{S^1}^{i+2}\left(\widetilde{M}\right).$$

We have the diagram
\begin{equation} \labell{diagram1}
\begin{CD}
 \bl^{-1}(p) @> {\iota_{\bl^{-1}(p)}} >> \widetilde{M} \\
@V q VV @V \bl VV \\
 p @> {\iota_{p}}>> M
\end{CD}
\end{equation}
Anderson and Fulton use this diagram in \cite{AF24} to determine the relationship between the equivariant 
cohomology of $M$ and of its equivariant blowup $\widetilde M$ as follows. 
 The proof 
from ordinary cohomology generalizes to equivariant cohomology because all subsets and maps are equivariant with 
respect to an ambient torus action. 
For the statement in ordinary cohomology for 
the blowup along a complex subvariety, see \cite[Theorem~7.31]{voisin}.   We need only the case of the
$S^1$-equivariant blowup of a point in a four-manifold, and so the statements from the literature 
simplify greatly.
\end{noTitle}

\begin{Lemma}\cite[Proposition 6.2.1]{AF24}. \label{lem:coh-of-blowup}
For $S^1 \acts (M^4,\omega,J)$, there is an isomorphism
\begin{equation*}
H_{S^1}^{k}(M) \oplus H_{S^1}^{k-2} {(p)} \to H_{S^1}^{k}(\widetilde{M})
\end{equation*}
defined by
$$(a,b) \mapsto \bl^{*}(a)+{\iota^{!}_{\bl^{-1}(p)}}(q^{*}(b)).$$
The product structure on $H_{S^1}^{*}(\widetilde{M})$ is determined by the following formulas:
\begin{enumerate}
    \item $\bl^{*}(a) \cup \bl^{*}(a')=\bl^{*}(a \cup a')$;
    \item ${\iota^{!}_{\bl^{-1}(p)}}(q^{*}(b)) \cup {\iota^{!}_{\bl^{-1}(p)}}(q^{*}(b'))=-{\iota^{!}_{\bl^{-1}(p)}}(q^{*}(b \cup b') \cup \zeta)$;
    \item ${\bl}^{*}(a) \cup {\iota^{!}_{\bl^{-1}(p)}}(q^{*}(b))={\iota^{!}_{\bl^{-1}(p)}}(q^{*}(\iota_{p}^{*}(a) \cup b)).$
\end{enumerate}
 Here $\zeta=c_{1}^{S^1}(\mathcal{O}(1))\in H_{S^1}^2(\CP^1;\Z)$.
\end{Lemma}
 { Assume that the invariant integrable $\omega$-compatible complex structure $J$ is such that $\omega(\cdot,J\cdot)$ is a generic Riemannian metric.}
The effect of an equivariant K\"ahler blowup on $S^1 \acts (M^4,\omega,J)$ on the decorated graph is described in \S \ref{nbup} and Figure \ref{fig:blowup effect}. 
We highlight the following facts.

\begin{Facts}\labell{facts}
Let $s$ ($s'$) be a robust edge (fat vertex) in the extended decorated graph of $S^1 \acts (M,\omega)$ with respect to a generic metric. By Remark \ref{rem:spheres}, there
is an
 invariant embedded complex  (hence symplectic) surface $S$ in $S^1 \acts (M,\omega,J)$ whose moment map image is $s$ ($s'$). 
 Then there
is an 
invariant embedded complex   surface $\widetilde{S}$  in $S^1 \acts (\widetilde{M},\widetilde{\omega},\widetilde{J})$ 
whose image is  the  edge (fat vertex) obtained from $s$ ($s'$) in the blowup decorated graph,
and  
$$\bl \circ \iota_{\widetilde{S}}=\iota_S.$$
The exceptional divisor $\bl^{-1}(p)$ is an  $S^1$-invariant embedded complex  sphere 
whose moment map image is  the new edge or fat vertex in the  extended decorated graph for $S^1 \acts (\widetilde{M},\widetilde{\omega})$  with respect to a generic metric.
\end{Facts}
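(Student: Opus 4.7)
The plan is to verify the claims in \ref{facts} by a direct local analysis of the equivariant K\"ahler blowup at the fixed point $p$, followed by a case check against the four blowup types shown in Figure~\ref{fig:toric projection}.

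First I would establish the statement about the exceptional divisor. By construction in \S\ref{nbup}, the $S^1$-action on the invariant ball $U$ around $p$ is linear in holomorphic coordinates, so its restriction to $T_pM$ is a holomorphic linear $S^1$-representation. The exceptional divisor $\bl^{-1}(p) \cong \mathbb{P}(T_pM)$ is the associated projectivization, hence an embedded holomorphic sphere in $\widetilde{M}$ on which $S^1$ acts holomorphically. Embeddedness, invariance, and complex analyticity are immediate from the local model $\widetilde U=\{(z,l):z\in l\}\subset U\times \mathbb{CP}^1$. That the moment map image of $\bl^{-1}(p)$ is the new edge or vertex added to the decorated graph is exactly the combinatorial content of \S\ref{nbup}, i.e., the rule defining the Types I--IV blowups.

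Next I would handle the surface $\widetilde S$ corresponding to the edge or fat vertex $s$ in the blowup graph. The key dichotomy is whether $p$ lies on $S$ or not. If $p\notin S$, then since $\bl$ is the identity on $M\setminus\{p\}$, the preimage $\widetilde S:=\bl^{-1}(S)$ is mapped biholomorphically onto $S$ and inherits from $S$ all required properties (embedded, complex analytic, $S^1$-invariant), and $\bl\circ \iota_{\widetilde S}=\iota_S$ holds tautologically under this identification. If $p\in S$, take $\widetilde S$ to be the proper transform, namely the closure of $\bl^{-1}(S\setminus\{p\})$ in $\widetilde M$. Because $J$ is integrable, the $S^1$-action on $U$ is holomorphic and linear, and $S\cap U$ is a smooth complex analytic curve cut out (after a linear change of coordinates diagonalizing the $S^1$-representation) by a single holomorphic equation whose linear part defines one of the coordinate axes through $p$; the standard local picture of the blowup then shows that the proper transform is smooth, embedded, complex analytic in $\widetilde U$, and intersects $\bl^{-1}(p)$ transversely at the point corresponding to the tangent line $T_pS\subset T_pM$. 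Since the $S^1$-action on $\widetilde U$ is induced from a holomorphic linear action, it preserves this proper transform. Gluing with $\bl^{-1}(S\setminus\{p\})\subset \widetilde M\setminus \bl^{-1}(p)$ gives the global $\widetilde S$, and by construction $\bl$ maps $\widetilde S$ onto $S$.

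Finally I would match $\widetilde S$ to the edge or fat vertex in the blowup graph produced by $s$. This is a type-by-type check against Figure~\ref{fig:toric projection}: for each of Types I--IV the combinatorial blowup rule is designed to record exactly the moment-map image of the proper transforms of the invariant symplectic spheres adjacent to the blowup point together with the image of the new exceptional sphere; the only subtlety is that the moment map values get shifted by $\lambda$ on one side of $p$, and that the isotropy labels of the two edges at $p$ change according to whether their underlying invariant spheres contain $p$. The main obstacle in the plan is the case analysis in this last step, in particular verifying that when $s$ is a fat vertex at an extremum and $p$ is the vertex of the blown-up graph created adjacent to it (Type~III/IV), the proper transform $\widetilde S$ still has moment-map image equal to the (possibly translated) fat vertex in the blowup graph, with unchanged genus and correctly updated self-intersection; this is handled using the facts recorded after Notation~\ref{basic notions} together with the local normal form of the $S^1$-action near $p$ on the fixed surface.
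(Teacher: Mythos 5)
Your proposal is correct and follows essentially the same route as the paper, which records these Facts without a separate proof, as immediate consequences of the equivariant complex blowup construction in \S\ref{nbup} (with the K\"ahler form and the blown-up decorated graph supplied by \cite[Theorem 7.1]{karshon}) together with Remark~\ref{rem:spheres}; your local-model analysis of the exceptional divisor $\mathbb{P}(T_pM)$ and of proper transforms (biholomorphic onto $S$ since blowing up a smooth point of a smooth curve is an isomorphism, which is what gives $\bl \circ \iota_{\widetilde{S}}=\iota_S$) simply makes that justification explicit. One small correction to your final paragraph: the isotropy labels of the edges through $p$ do not change under the blowup, since $\bl$ is an equivariant biholomorphism away from the exceptional divisor and hence preserves the stabilizers of the proper transforms (only the new exceptional edge acquires a label, e.g.\ $m_{i^*,j^*}+m_{i^*,j^*+1}$ in Type I), and the case in which the blowup point lies on a fixed surface is Type II rather than Type III/IV --- but neither slip affects the substance of your argument.
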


\begin{Notation} \labell{notbl}
Denote 
\begin{equation}\labell{eq:e}
e=\iota_{\bl^{-1}(p)}^{!}(\mathbb{1}_{\CP^{1}})=\iota_{\CP^1}^{!}(\mathbb{1}_{\CP^{1}}) \in H_{S^1}^{2}(\widetilde{M}).
\end{equation}
For an invariant embedded complex  surface $S$ (of genus $\geq 0$) in $S^1 \acts (M,J)$, we set $\eta:=\iota_{S}^{!}(\mathbb{1}_{S})$. For
an invariant embedded complex  surface $\widetilde{S}$ in $S^1 \acts (\widetilde{M},\widetilde{J})$ such that 
\begin{equation}\label{eq:bliota}
\bl \circ \iota_{\widetilde{S}}=\iota_S,
\end{equation}
set
$$\widetilde{\eta}:=\iota_{\widetilde{S}}^{!}(\mathbb{1}_{\widetilde{S}}).$$ 
Note that by \eqref{eq:bliota} and  the functoriality of the pushforward:
\begin{equation*}
{\bl}^{!}(\widetilde{\eta})={\bl}^{!} \left(\iota_{\widetilde{S}}^{!}(\mathbb{1}_{\widetilde{S}})\right)=({\bl \circ \iota_{\widetilde{S}}})^{!}(\mathbb{1}_{{S}})=\iota_S^{!}(\mathbb{1}_{S})=:\eta.
\end{equation*}
For $\xi=\sum a_r \eta_r$ with $a_r \in \Z$ and $\eta_r=\iota_{S_r}^{!}(\mathbb{1}_{S_r})$ denote $\widetilde{\xi}:=\sum a_r \widetilde{\eta_r}.$
\end{Notation}

We use Facts~\ref{facts} and the effect of the blowup on the graph, shown in Figure~\ref{fig:blowup effect},
to deduce that
in $H_{S^1}^*(\widetilde{M};\Z)$ we have Table \ref{Table-blup}.

\newpage
 
\renewcommand{\arraystretch}{1.5}
\begin{table}[h]
$$
\begin{array}{c|c|c|c|c|c}
\text{Blowup type} & \text{at} & \widetilde{\tau_0} & \widetilde{\tau_{\infty}}& \widetilde{\sigma_{i,j}}& e \\ \hline\hline
I & v_{i^*,j^*} &{\tau}^{\widetilde{M}}_0 &{\tau}^{\widetilde{M}}_\infty & \mathrm{\begin{tabular}{c}${\sigma}^{\widetilde{M}}_{i,j}$  if $i \neq i^*$\\ ${\sigma}^{\widetilde{M}}_{i^*,j}$ if   $i=i^*$ and $j\leq j^*$\\ ${\sigma}^{\widetilde{M}}_{i^*,j+1}$ if   $i=i^*$ and  $j>j^*$ \end{tabular}}&{\sigma}^{\widetilde{M}}_{i^*,j^*+1} \\ \hline
I\! I & \mathrm{\begin{tabular}{c} max \end{tabular}} &{\tau}^{\widetilde{M}}_0  & {\tau}^{\widetilde{M}}_\infty & {\sigma}^{\widetilde{M}}_{i,j}&{\sigma}^{\widetilde{M}}_{k+1,2}\\ \hline
I\! I  & \mathrm{\begin{tabular}{c}min \end{tabular}} &{\tau}^{\widetilde{M}}_0  & {\tau}^{\widetilde{M}}_\infty & {\sigma}^{\widetilde{M}}_{i,j}&{\sigma}^{\widetilde{M}}_{k+1,1} \\ \hline
I\! I\! I & \mathrm{\begin{tabular}{c} max \end{tabular}} & {\tau}^{\widetilde{M}}_0 &{\tau}^{\widetilde{M}}_\infty=0 & {\sigma}^{\widetilde{M}}_{i,j}& {\sigma}^{\widetilde{M}}_{1,\ell_1+1}\\ \hline
   I\! I\! I &\mathrm{\begin{tabular}{c} min \end{tabular}}  & {\tau}^{\widetilde{M}}_0=0 &{\tau}^{\widetilde{M}}_\infty & \mathrm{\begin{tabular}{c}${\sigma}^{\widetilde{M}}_{i,j}$ if $i\neq 1$ \\ ${\sigma}^{\widetilde{M}}_{1,j+1}$ if $i=1$ \end{tabular}}& {\sigma}^{\widetilde{M}}_{1,1}\\ \hline
     I\! V & \mathrm{\begin{tabular}{c}max\end{tabular}} &{\tau}^{\widetilde{M}}_0  &0 &{\sigma}^{\widetilde{M}}_{i,j}&\tau^{\widetilde{M}}_{\infty} \\ \hline
I\! V &  \mathrm{\begin{tabular}{c}min\end{tabular}}   & 0 &{\tau}^{\widetilde{M}}_\infty &{\sigma}^{\widetilde{M}}_{i,j}&\tau^{\widetilde{M}}_{0} \\ 
 \hline
\end{array}
$$
\bigskip

\caption{The impact of a blowup on  $\tau_0,\tau_{\infty},\sigma_{i,j}$ and the class $e$, by type of blowup. The columns marked  $\widetilde{\alpha}$ indicate the image
of a class $\alpha\in H^*_{S^1}(M)$ in $H_{S^1}^*(\widetilde{M})$.  The column $e$ identifies the exceptional
class in $H_{S^1}^*(\widetilde{M})$. 
Note that in the extended decorated graph  with respect to a generic metric, 
the number of chains remains the same under a blowup, except in type II where there is one new chain of 
length $2$ created by the blowup.  
The length of any chain that is not touched by a blowup remains the same; and for blowups of type I and III,
the length of the chain in 
which the blowup occurs is increased by one.
}\labell{Table-blup}
\end{table}

\vskip 0.1in

\noindent We now proceed to prove the main theorem.

\begin{proof}[Proof of the generators and relations description in Theorem \ref{ThNew}]
By \S \ref{preface}, we can justify the description by induction on the number $n$ of $S^1$-equivariant  K\"ahler blowups, starting from a minimal model. 
The base case $n=0$ is contained in 
Proposition \ref{Procp2}, Proposition \ref{Prohirz}, and Proposition \ref{Proruled}.

For the induction step, let $S^1 \acts (M,\omega)$ be an $n$-fold $S^1$-equivariant symplectic
blowup of a minimal model.  Consider an $S^1$-equivariant  K\"ahler blowup $S^1 \acts (\widetilde{M},\widetilde{\omega})$.
We aim to describe the evolution of $H_{S^1}^*(M;\Z)$ to $H_{S^1}^*(\widetilde{M};\Z)$.

 Denote by $G^M$  the set that  consists of the $H_{S^1}^{2}(M)$-classes $\tau^M_0,\,\tau^M_{\infty}$, and the $\sigma^M_{i,j}$s that correspond to robust edges in the extended decorated graph of $S^1 \acts (M,\omega)$ with respect to a generic metric. Let $G^{\widetilde{M}}$ be the set defined similarly for $S^1 \acts (\widetilde{M},\widetilde{\omega})$. By the induction hypothesis, $H_{S^1}^{2*}(M)$ is generated by the elements of  $G$. We claim that $H_{S^1}^{2*}(\widetilde{M})$ is generated by the elements of  $G^{\widetilde{M}}$. 

We first note that Table~\ref{Table-blup} above establishes that the  set $\{\widetilde{\eta}\,|\, \eta \in G^M\} \bigcup \{e\}$ equals ${G}^{\widetilde{M}}$.
We aim to show that a class $\alpha\in H^{2k}_{S^1}(\widetilde M;\Z)$ is a linear combination of
products of classes in ${G}^{\widetilde{M}}$.  By Lemma~\ref{lem:coh-of-blowup}, 
$\alpha$ is the image of a class
$(a,b)\in H_{S^1}^{2k}(M) \oplus H_{S^1}^{2k-2} {(p)}$.  By the induction hypothesis, the class $a$ is a linear combination 
of products of the degree $2$ classes in $G^M$.  The formul\ae\ given in Lemma~\ref{lem:coh-of-blowup} 
for the product structure in $H^{2k}_{S^1}(\widetilde M;\Z)$ imply that $\bl^*(a)$ is also 
a linear combination of products of elements in $\{\widetilde{\eta}\,|\, \eta \in G^M\}$.  

Next, we note that $b\in H_{S^1}^{2k-2} {(p)}\cong \Z t^{2k-2}$ (when $k>0$). The pull-back
$q^*(b)$ restricts to the fixed points of $\CP^1$ as $b$ on each.  This is an equivariant constant class.
It is then straightforward to compute 
that 
$${\iota^{!}_{\bl^{-1}(p)}}(q^{*}(b))=\beta\cup e,$$ 
where $\beta$ is the equivariant constant class, 
the integer multiple of $\pi^*(t)^{2k-2}$ corresponding to $b\in \Z t^{2k-2}$;
and the $\cup$ denotes the $H_{S^1}^*(pt)$-algebra multiplication.  
Using the formula \eqref{eq:algstr} for $\pi^*(t)$, proved on page~\pageref{proof-algebra}, we deduce that 
$\beta$ is a linear combination of elements in  ${G}^{\widetilde{M}}$.
Therefore, the class
$\alpha$ itself is in the set generated by $\{\widetilde{\eta}\,|\, \eta \in G^M\} \bigcup \{e\}={G}^{\widetilde{M}}$, 
which is thus indeed a generating set for $H^{2k}_{S^1}(\widetilde M;\Z)$.

Now, we show that the linear relation \begin{equation}\labell{tauhblup}
\tau^{\widetilde{M}}_{h}=\sum_{j=1}^{\ell^{\widetilde{M}}_{i}}m^{\widetilde{M}}_{i,j}\sigma^{\widetilde{M}}_{i,j}\end{equation}
holds for all $1 \leq i \leq k^{\widetilde{M}}$. 
Since the map $i^{*} \colon H_{S^1}^{*}(M;\Z) \hookrightarrow H_{S^1}^{*}(M^{S^1};\Z)$
is injective  \cite[Theorem 1.1]{KesslerHolm2}, it is enough to show that equation 
\eqref{tauhblup} holds on every component of the fixed point set.  That is, we must 
check that the restrictions of the left-hand and right-hand classes to each of the 
fixed surfaces $\Sigma_0,\Sigma_{\infty}$ and isolated fixed points $v_{i,j}$ 
coincide. This is straight forward bookkeeping based on the localization formul\ae\ given
in Tables \ref{Table-rest-2}, \ref{Table-rest-0}, 
and \ref{Table-rest-1}.
If $\tau_h$ is one of the $\sigma_{i,j}$s that correspond to robust edges in the graph of $S^1 \acts (M,\omega)$ then 
$\tau_h=\iota_{S}^{!}(\mathbb{1}_{\CP^1})$ and  $\tau^{\widetilde{M}}_h=\widetilde{\tau_h}=\iota_{\widetilde{S}}^{!}(\mathbb{1}_{\CP^1})$. 

We must show that these are all the linear relations among the generators.  Suppose there is
another. By performing an equivariant K\"ahler blow down,
 this would give a linear relation in $H_{S^1}^{2*}({M})$.  By the induction hypothesis,
this must coincide with a combination of the relations in the blown down list \eqref{tauhblup} with
$M$ instead of $\widetilde{M}$.
When we blow up again, we will get a combination of linear relations in  \eqref{tauhblup}  
together with a multiple 
$\gamma e$ for $\gamma \in \Z$.
However, since the above relations hold, $\gamma$ must be zero. 

 Finally, we track the effect of the blowup on the multiplicative relations. Since the map $i^{*} \colon H_{S^1}^{*}(M;\Z) \hookrightarrow H_{S^1}^{*}(M^{S^1};\Z)$
is injective  \cite[Theorem 1.1]{KesslerHolm2}, we know that a product of classes $x_1\cup \cdots \cup x_j$ is
zero precisely when $(x_1\cup \cdots \cup x_j)|_F=0$ for each fixed component $F$.  But
$(x_1 \cup \cdots \cup x_j)|_F = (x_1|_F) \cup \cdots \cup (x_j|_F)$, so for each $F$, we must have
some $x_k|_F=0$ because we are working in a domain. In other words, the intersection of
the supports of the cohomology classes must be empty.
In particular, for $\eta \in G^M$, the product 
$$\widetilde{\eta} \cup e=\left[\widetilde{S_\eta}\right] \cup [\bl^{-1}p]$$ is zero if and only if $p \notin S_\eta$. 
Similarly, when $\eta \neq \theta$ in $G^M$, the product $$\widetilde{\eta} \cup \widetilde{\theta}$$ equals $0$ if and only if either 
\begin{itemize}
    \item $\eta \cup \theta=0$; or 
    \item $\eta \cup \theta \neq 0$ and $p \in S_{\eta} \bigcap S_{\theta}$. 
\end{itemize}
The effect of a blowup on $\tau_h^2$ is laid out in Table~\ref{Table-blup2}.

\newpage

\renewcommand{\arraystretch}{1.2}
\begin{table}[h]
$$
\begin{array}{c|c|c|c}
\text{Blowup type} & \text{at} & \text{with}& {\text{the restriction of }\tau^{\widetilde{M}}_{h}} \cup   {\tau^{\widetilde{M}}_{h} \text{ equals }}\\ \hline\hline
I & v_{i^*,j^*}& &{\tau_h \cup \tau_h}|_{v_{i,j}}  \text{ at }\tilde{v}_{i,j}\\
&&&\text{ if }i \neq i^{*} \text{ or }i=i^{*} \text{ and }j \leq j^{*}\\
&&&{\tau_h \cup \tau_h}|_{*} \text{ at }* \text{ for }*=\max,\min\\
&&& {\tau_h \cup \tau_h}|_{v_{i^*,j-1}} \text{ at }\tilde{v}_{i^*,j} \text{ if }j>j^{*}+1\\
  & & &\text{and $0$ at } \tilde{v}_{i^{*},j^{*}+1}\\ \hline
I\! I & \mathrm{\begin{tabular}{c} max or min  \end{tabular}} && {\tau_h \cup \tau_h}|_{v_{i,j}} \text{ at }\tilde{v}_{i,j} \text{ if } i \leq k \\
&&& {\tau_h \cup \tau_h}|_{{*}} \text{ at } {*} \text{ for }*=\min, \max\\
&&& 0 \text{ at }\tilde{v}_{k+1,1} \\ \hline
I\! I\! I & \mathrm{\begin{tabular}{c} max \end{tabular}} & \mathrm{\begin{tabular}{c}$m^M_{\sigma_{1,\ell_1}}>m^M_{\sigma_{2,\ell_2}}$\\$\geq \ldots \geq m^M_{\sigma_{k,\ell_k}}$ \end{tabular}} & (m^M_{\sigma_{1,\ell_1}}-m^M_{\sigma_{2,\ell_2}})^2 {m^M_{\sigma_{2,\ell_2}}}^2 t^2 \text{ at } \max\\ 
&&& 0 \text{ at } \tilde{v}_{1,\ell_1}\\
&&& {\tau_h \cup \tau_h}|_{v_{1,j}} \text{ at }\tilde{v}_{1,j}\text{ for }j<\ell_1\\
&&&{\tau_h \cup \tau_h}|_{v_{i,j}} \text{ at }\tilde{v}_{i,j} \text{ for }i>1\\
&&&{\tau_h \cup \tau_h}|_{\min} \text{ at }\min \\ \hline
   I\! I\! I &\mathrm{\begin{tabular}{c} min \end{tabular}} & \mathrm{\begin{tabular}{c} $m^M_{\sigma_{1,1}}>m^M_{\sigma_{2,1}}$\\ $\geq \ldots \geq m^M_{\sigma_{k,1}}$ \end{tabular}} & (m^M_{\sigma_{1,1}}-m^M_{\sigma_{2,2}})^2 {m^M_{\sigma_{2,2}}}^2 t^2 \text{ at } \min \\ 
   &&& 0 \text{ at } \tilde{v}_{1,1}\\
   &&& {\tau_h \cup \tau_h}|_{v_{1,j-1}} \text{ at }\tilde{v}_{1,j}\text{ for }j>1\\
&&&{\tau_h \cup \tau_h}|_{v_{i,j}} \text{ at }\tilde{v}_{i,j} \text{ for }i>1\\
&&&{\tau_h \cup \tau_h}|_{\max} \text{ at }\max\\ \hline
     I\! V & \mathrm{\begin{tabular}{c}max or min \end{tabular}}& &0 \text{ at $\max$ ($\min$)} \\ 
     & & &\text{and $\tau_h \cup \tau_h$ elsewhere} \\ \hline
   
\end{array}
$$
\bigskip
\caption{The impact of a blowup on $\tau_h \cup \tau_h$. In type III, if the blowup is at $\max$, the term $(m^M_{\sigma_{1,\ell_1}}-m^M_{\sigma_{2,\ell_2}}) m^M_{\sigma_{2,\ell_2}}$  is $>0$; if the blowup is at $\min$, the term $(m^M_{\sigma_{1,1}}-m^M_{\sigma_{2,2}}) m^M_{\sigma_{2,2}}$ is $>0$.
}
\labell{Table-blup2}
\end{table}

We argue that we have found all of the multiplicative relations.
Indeed, because the blowup is a local change, the only
changes to the list of multiplicative relations will be changes that involve the support of
the new exceptional divisor replacing the support of the point blown up.  These are  
precisely the changes we have identified.
\end{proof}


\section{Invariants under a weak isomorphism of algebras} \labell{sec5a}

Let $M$ and $N$ be compact, connected, four-dimensional symplectic manifolds, 
each equipped with a Hamiltonian circle action.  In this section, we establish what effect 
a weak isomorphism  $f\colon H_{S^1}^*(M;Z)\to H_{S^1}^*(N;Z)$ can have on the generators.
For that, we refine our understanding of the structure of $H_{S^1}^*(M;Z)$.

We assume that if $\fat^M=1$, then we are in the case when $\tau^M_\infty \neq 0$.
Otherwise, we compose the given weak isomorphism  $f\colon H_{S^1}^*(M;Z)\to H_{S^1}^*(N;Z)$ 
with the isomorphism $H_{S^1}^*(M;Z) \to H_{S^1}^*(M;Z)$ that sends 
\begin{equation*} 
\begin{array}{ll}
\tau^M_0 \mapsto -\tau^M_{\infty}; & \\
\tau^M_{\infty} \mapsto -\tau^M_0; &\\
 \sigma^M_{i,j} \mapsto -\sigma^M_{i,\ell_i-j+1} &  \forall 1\leq j \leq \ell_i ; \text{ and }\\
 \pi_{M}^{*}(t) \mapsto  \pi_{M}^{*}(t).
 \end{array}
\end{equation*}
This isomorphism is induced by changing the Hamiltonian $S^1$-manifold
from $(S^1 \acts M, \omega,\Phi)$ to $(S^1 \acts M,-\omega,-\Phi)$. That is,
we replace $\omega$ with $-\omega$, without changing the circle action; this gives
a new presentation for the cohomology.

\begin{noTitle}{\bf Annihilator subalgebras.}
For an equivariant cohomology class $\alpha\in H_{S^1}^{2*}(M)$, we denote by 
$$
\Ann(\alpha) = \left\{\beta \in H_{S^1}^{2*}(M) \big|\ \alpha \cup \beta = 0\right\}
$$
the {\bf annihilator} of $\alpha$ in $H_{S^1}^{2*}(M)$.  This is an $H_{S^1}^*(pt;\Z)$-subalgebra of the 
equivariant cohomology ring and it retains a grading.  We define
$$
\Ann^{2k}(\alpha) = \Ann(\alpha)\cap H_{S^1}^{2k}(M)
$$
and note that $\Ann^{2k}(\alpha)$ is  a $\Z$-module.
Localization makes annihilator submodules 
easier to compute: one does all the calculations in the equivariant cohomology of the fixed point sets where it
is easy to see what the zero divisors are.  The ranks of the graded pieces of $\Ann(\alpha)$ 
play a key role in the proof of Theorem \ref{thm:unique}.  
\end{noTitle}

\begin{Notation} \label{not:z012}
The following sets $Z_0$, $Z_1$, and $Z_2$ are key to understanding the
annihilator subalgebra.  
They are the primitive classes in $H^4_{S^1}(M)$ that are 
supported on exactly one fixed component.
Their definition depends on the number of fixed surfaces.
\begin{itemize}
\item If $\fat=2$ denote
$$Z_0:=\{\tau_h \cup \tau_0, \, \tau_h\cup \tau_\infty\},\, Z_1:=\{ \sigma_{i,j} \cup \sigma_{i,j+1} \,|\,  j < \ell_i\}, \, Z_2:=\{\tau_0 \cup \tau_0, \,, \tau_\infty \cup \tau_\infty \}.$$
\item If $\fat=1$, we have assumed that $\tau_\infty \neq 0$ and denote 
$$Z_0:=\{\tau_h\cup \tau_\infty\},\, Z_1:=\{ \sigma_{1,1} \cup \sigma_{2,1}, \, \sigma_{i,j} \cup \sigma_{i,j+1} \,|\,  j < \ell_i\}, \, Z_2:=\{\tau_\infty \cup \tau_\infty\}.$$
\item If $\fat=0$ denote
$$Z_0:=\emptyset, \, Z_1:=\{\sigma_{1,1} \cup \sigma_{2,1}, \, \sigma_{1,\ell_1} \cup \sigma_{2,\ell_2}, \, \sigma_{i,j} \cup \sigma_{i,j+1} \,|\, j < \ell_i\},\, Z_2:=\emptyset.$$
\end{itemize}
\end{Notation}

\begin{Lemma} \label{cl:ann} 
{\color{white}b.}
\begin{enumerate}
\item The sets $Z_0$, $Z_1$, and $Z_2$ are pairwise disjoint and
the elements of $Z_0 \bigcup Z_1 \bigcup Z_2$ are linearly independent over $\Z$. \label{ZlinIdept}
\item For $\alpha \in Z_0 \bigcup Z_1 \bigcup Z_2$, we have $\alpha \cup \alpha=0$ exactly if $\alpha \in Z_0$.
\label{Zselfzerodiv}
\item If $\alpha,\beta$ are distinct classes in $Z_0 \bigcup Z_1$, then $\alpha \cup \beta=0$. \label{Zzerodiv}
\item The rank of $\Ann^{2}(\alpha)$ is
$$
\rank\left(\Ann^{2}(\alpha)\right) = \begin{cases}
    \sharp iso+2 \sharp fat -2 & \mbox{ when } \alpha \in Z_0 \bigcup Z_1\\
    \sharp iso+2 \sharp fat -3 & \mbox{ when } \alpha \in Z_2\\
\end{cases}
$$ \label{rankAnn2}

\item For $\alpha, \, \beta \in Z_0 \bigcup Z_1$, such that  $\alpha \neq \beta$, we have 
$$\rank\left(\Ann^{2}(\alpha) \bigcap \Ann^{2}(\beta)\right) < \rank(\Ann^{2}(\alpha)).$$\label{rankInequality}
\end{enumerate}
\end{Lemma}

\begin{proof}
Items \eqref{ZlinIdept}--\eqref{Zzerodiv} are straight-forward calculations using localization and 
the generators and relations description in Theorem \ref{ThNew}.
Items \eqref{rankAnn2} and \eqref{rankInequality} follow from the calculation of $\Ann^{2}(\alpha)$ for $\alpha \in Z_0 \bigcup Z_1 \bigcup Z_2$, listed in Tables \ref{Table-ann-2-1a}, \ref{Table-ann-1-1a}, \ref{Table-ann-0}, and explained above the tables.
\end{proof}

\begin{Proposition}\label{pro:ann}
The non-zero classes $\alpha \in H_{S^1}^{4}(M)$ that  satisfy that $\Ann^{2}(\alpha)$ has maximal rank are precisely the 
non-zero integer
multiples of the classes in $Z_0 \bigcup Z_1$.
Moreover, such a class $\alpha$ is a multiple of a class in $Z_0$ when $\alpha \cup \alpha =0$ and a multiple of a class  in $Z_1$ when $\alpha \cup \alpha \neq 0$.
\end{Proposition}
\noindent By \emph{has maximal rank}, we mean maximal among the ideals for non-zero classes.
The proof of this proposition is inspired by Masuda's proof of \cite[Lemma 3.1]{masuda} in the toric setting.

\begin{proof}
For a class $\alpha \in H_{S^1}^{4}(M)$ that is supported on exactly one fixed component, the rank of $\Ann^{2}(\alpha)$ is given in Lemma \ref{cl:ann}\eqref{rankAnn2}.
If  $\alpha \in H_{S^1}^{4}(M)$ is supported on at least two fixed components, then there are degree $2$ classes  that vanish on
one of the components on which $\alpha$ is non-zero, 
and do not vanish on (at least one of) the other(s). 
So $\alpha \cup H_{S^1}^{2}(M)$ has rank at least two, and so its kernel in $H_{S^1}^{2}(M)$ has codimension at least two. Recall that $\dim H_{S^1}^{2}(M)$ equals $\iso+2\fat -1$, see \eqref{eq:poincare}. 
The second (``Moreover") statement is 
a consequence of the first part and Lemma \ref{cl:ann}\eqref{Zselfzerodiv}.
\end{proof}

\begin{Notation} \label{Not:zieta}
Let $\eta \in H_{S^1}^{2*}(M)$. 
For the sets $Z_0, \, Z_1, \, Z_2$ defined in Notation~\ref{not:z012}, 
we set 
$$Z_i(\eta):=\left\{\sigma \in Z_i \, \, \Big|\,\, \eta \cup \sigma=0\right\}$$
for $i=0,1,2$.
\end{Notation}

\begin{Lemma}\label{cl:ann2}
\phantom{.}
\begin{enumerate}
\item When $\fat=2$, the classes $\tau_0$ and $\tau_\infty$ are linearly independent over $\Z$. Moreover, the set
$\sspan_{\Z}(\tau_0)\bigcup \sspan_{\Z}(\tau_\infty) \smallsetminus \{0\}$ equals the set 
$$
S_{0,\infty}:=\left\{\eta \in H_{S^1}^{2}(M) \, \Big| \, Z_1(\eta)=Z_1, \, \eta \cup \eta \neq 0, \, \rank \Ann^{2}(\eta)=\iso+1\right\}.
$$

\vskip 0.1in

\noindent 
When $\fat=1$, we have assumed $\tau_\infty \neq 0$, and then
$$
\sspan_{\Z}(\tau_\infty)
= \left\{\eta \in H_{S^1}^{2}(M) \, \Big| \, Z_1(\eta)=Z_1\right\}.$$

\item The set 
$$
\sspan_{\Z}\left(\bigcup_{i=1}^{k}\bigcup_{j=1}^{\ell_i}\{\sigma_{i,j}\}\right)=\left\{\eta \in H_{S^1}^{2}(M)\,\Big|\, Z_0(\eta)=Z_0\right\}.
$$

\item 
The set
$$
\left \{\eta \in H_{S^1}^{2}(M) \, \Big| \, Z_0(\eta)=Z_0, \, |Z_1(\eta)|+|Z_{2}(\eta)|=\iso+\fat-2\right\}$$
consists of non-zero integer multiples of one of the following:
\begin{itemize}
\item $\sigma_{i,j}$ for $1 \leq i \leq k$ and $1\leq j \leq \ell_i$;
\item  $\displaystyle{\sum_{j=\alpha}^{\beta}m_{i,j}\sigma_{i,j}}$ for $1 \leq \alpha <\beta \leq \ell_i$;
\item $\displaystyle{\sum_{j=1}^{\beta_i}m_{i,j}\sigma_{i,j}-\sum_{j=1}^{\beta_{i'}}m_{i',j}\sigma_{i',j}}$ for $i \neq i'$ and $\beta_i < \ell_i, \, \beta_{i'}<\ell_{i'}$; or
\item $\displaystyle{\sum_{j=\alpha_{i}}^{\ell_i}m_{i,j}\sigma_{i,j}-\sum_{j=\alpha_{i'}}^{\ell_{i'}}m_{i',j}\sigma_{i',j}}$ for $i \neq i'$ and $1<\alpha_i , \, 1<\alpha_{i'}$.
\end{itemize}

\end{enumerate}

\end{Lemma}

\begin{proof}
We verify each item by a double inclusion argument.
\begin{enumerate}
\item Assume $\fat=2$.
Let $\eta$ be an element in the set $S_{0,\infty}$. By definition of the set, $\eta$ is in $\bigcap_{\sigma \in Z_1}{\Ann^{2}(\sigma)}$. Hence, by Table \ref{Table-ann-2-1a}, 
$$\eta=a_0 \tau_0+a_h \tau_h+a_\infty \tau_\infty \text{ for } a_0,a_h,a_\infty \in \Z.$$ Moreover, by Table~\ref{Table-ann-2-1a}, if at least two of $a_0,a_h,a_\infty$ are  $\neq 0$ then $\rank \Ann^{2}(a_0 \tau_0+a_h \tau_h+a_\infty \tau_\infty)<\iso+1$, and if exactly one of $a_0,a_h,a_\infty$ is $\neq 0$ then $\rank \Ann^{2}(a_0 \tau_0+a_h \tau_h+a_\infty \tau_\infty)=\iso+1$. Since the last property holds, we must have that $\eta$ is a non-zero multiple of either $\tau_0$ or $\tau_h$ or $\tau_\infty$. By Table \ref{Table-rest-2}, for $*=0,\infty$,
 $$\tau_h \cup \tau_h =0 \neq \tau_{*} \cup \tau_{*}.$$ Hence, since $\eta \cup \eta \neq 0$, the class $\eta$ must be in $\sspan_{\Z}(\tau_0)\bigcup \sspan_{\Z}(\tau_\infty) \smallsetminus \{0\}$.
 On the other hand, it follows immediately from Table \ref{Table-ann-2-1a} and Table \ref{Table-rest-2} that $\tau_0$ and $\tau_\infty$ are in $S_{0,\infty}$, and therefore so is $\sspan_{\Z}(\tau_0)\bigcup \sspan_{\Z}(\tau_\infty) \smallsetminus \{0\}$.

\noindent 
Now, when $\fat=1$, our convention is that $\tau_\infty\neq 0$ and $\tau_0=0$.  By Table \ref{Table-ann-1-1a},
 $$\bigcap_{\sigma \in Z_1}{\Ann^{2}(\sigma)}=\{a_\infty \tau_\infty \, | \, a_\infty \in \Z\}.$$
 \item The second item follows directly from Tables \ref{Table-ann-2-1a}, \ref{Table-ann-1-1a}, and \ref{Table-ann-0}.

 \item The third item follows from the previous one and from item (2) of Corollary \ref{cor:relation}.
 
\end{enumerate}
\noindent This completes the proof of the lemma.
\end{proof}

\begin{Notation}\labell{not:genus}
We set the \emph{genus $g$} of $S^1 \acts M^4$ to be the genus of a fixed surface, if one exists, and $0$ otherwise.
\end{Notation}

\begin{Lemma} \labell{iso+fat}
Assume that $H_{S^1}^{2*}(M)$ and $H_{S^1}^{2*}(N)$ are weakly isomorphic as algebras over $H_{S^1}^{*}(\pt)$. 
Then we have 
$$\fat_M=\fat_N \mbox{ and } \iso_M=\iso_N.$$
If, in addition, $\rank H_{S^1}^{1}(M) = \rank H_{S^1}^{1}(N)$, then
the genera $g_M=g_N$.
\end{Lemma}

\begin{proof}
By Proposition \ref{pro:ann}, a weak isomorphism $H_{S^1}^{2*}(M) \to H_{S^1}^{2*}(N)$ sends $Z_0^M \bigcup Z_1^M$ to $Z_0^N \bigcup Z_1^N$. Therefore
$$
\iso_M+\fat_M=\iso_N+\fat_N.
$$
The weak isomorphism of $H_{S^1}^{2*}(M)$ and $H_{S^1}^{2*}(N)$ implies that the even degree parts of the 
Poincar\'e polynomials $P_{S^1}^{M}(t)$ and $P_{S^1}^{N}(t)$ coincide.
Comparing with \eqref{eq:poincare}, we conclude that
$$
\iso_M+2\fat_M=\iso_N+2\fat_N.
$$
Combining the two equalities, we get  $\fat_M=\fat_N$ and $\iso_M=\iso_N$.

\noindent Finally, the equality of the ranks of $H_{S^1}^{1}$ implies 
that $g_M=g_N$.
\end{proof}

\begin{Notation}\label{not:f}
For the remainder of this section,  we let $f \colon H_{S^1}^{2*}(M) \to H_{S^1}^{2*}(N)$ be a weak isomorphism of 
algebras over $H_{S^1}^{*}(\pt)$, and we assume that the rank of $H_{S^1}^{3}(M)$ equals the rank of $H_{S^1}^{3}(N)$.
We will rely on Lemma \ref{iso+fat} and write $\fat$ for both terms $\fat_M$ and $\fat_N$, and write $\iso$ for 
both terms $\iso_M$ and $\iso_N$.
\end{Notation}

\begin{noTitle}\labell{restrictions} {\bf Restrictions to the fixed points.}
We have proved in \cite[Theorem~1.1]{KesslerHolm2} 
that the inclusion $i:  M^{S^1}\to M$ induces an injection called ``restriction to the fixed points" in equivariant cohomology: 
$$
i^* 
=\bigoplus_{F \subset M^{S^1}}\iota_F^{*} : H_{S^1}^*(M;\Z)
\to \bigoplus_{F \subset M^{S^1}} H_{S^1}^*(F;\Z).
$$
The weak isomorphism $f \colon H_{S^1}^{2*}(M) \to H_{S^1}^{2*}(N)$ induces 
$f \colon {i^{*}}(H_{S^1}^{2*}(M)) \to {i^{*}}(H_{S^1}^{2*}(N))$, such that the following diagram is commutative. 
\begin{equation} \labell{diagramisorestrict1}
\begin{gathered}
\xymatrix{
 H_{S^1}^{2*}(M) \ar[r]^-{i^{*}_M}\ar[d]_{f} & {i^{*}}(H_{S^1}^{2*}(M)) \mbox{ \Large$\subset$ } H_{S^1}^*(M^{S^1}) \ar@<-8ex>[d]^f \\ 
 H_{S^1}^{2*}(N) \ar[r]_-{i^{*}_N} & {i^{*}}(H_{S^1}^{2*}(N)) \mbox{ \Large$\subset$ } H_{S^1}^*(N^{S^1})
}
\end{gathered}
\end{equation}
For $\eta \in H_{S^1}^{2*}(M)$, extending \eqref{eqrestriction}, we denote its restriction to a fixed component
$F$ by $\eta|_{F} := \iota_F^*(\eta)\in H_{S^1}^*(F;\Z)$.  Thus,
for a class $\eta\in H_{S^1}^{2*}(M)$, we have
\begin{equation} \labell{eqrestrict2}
{f}\left(\bigoplus_{F \subset M^{S^1}}(\eta|_F)\right)={f}\left(\bigoplus_{F \subset M^{S^1}}\iota_F^{*}(\eta)\right)=
{f}(i_M^{*}(\eta))=i_N^{*}(f(\eta))=
\bigoplus_{F \subset N^{S^1}}f(\eta)|_{F}.
\end{equation}
\end{noTitle}

Our results on the ranks of the annihilator allow us to understand the map induced by a weak algebra isomorphism on the component Euler classes  defined in
Definition~\ref{def:comp euler}.

\begin{Corollary} \labell{cor:Euler} 
Let $M$ and $N$ be compact, connected, four-dimensional 
Hamiltonian $S^1$-manifolds and let 
$f \colon H_{S^1}^{2*}(M) \to H_{S^1}^{2*}(N)$ be a
weak algebra isomorphism.
Then $f$ induces a bijection
\begin{equation}\labell{eqhatf}
\widehat{f} \colon \{\text{connected components of }M^{S^1}\} \to  \{\text{connected components of }N^{S^1} \}
\end{equation}
that sends a component $F \subset M^{S^1}$ to the component $\widehat{f} (F) \subset N^{S^1}$ such that 
\begin{equation}\label{Euler100}
f(\varepsilon_F) =
\pm \varepsilon_{\widehat{f} (F)}.
\end{equation}
The map $\widehat{f}$ sends isolated points to isolated points and surfaces to surfaces.
Moreover, 
for $\eta \in H_{S^1}^{2*}(M)$, 
\begin{equation} \labell{eqzero}
\eta|_{F}=0 \Leftrightarrow {f(\eta)}|_{\widehat{f}(F)}=0.
\end{equation}
\end{Corollary}

\begin{proof}
The weak algebra isomorphism $f$ sends $\Ann^{2k}(\eta)$ to $\Ann^{2k}(f(\eta))$ for every $\eta \in H_{S^1}^{2*}(M)$ and $k \in \N$. Therefore, by Proposition \ref{pro:ann}, for $i=0,1$, $f$ sends $Z_i^M \bigcup -Z_i^M$ to $Z_i^N \bigcup -Z_i^N$ and 
\begin{equation} \label{eq:fz01}
Z_i^{N}(f(\eta)) \bigcup -Z_i^{N}(f(\eta)) =f(Z_{i}^{M}(\eta) \bigcup -Z_i^{M}(\eta)) \text{ for every }\eta \in H_{S^1}^{2}(M).
\end{equation}
Hence, by item (1) of Lemma \ref{cl:ann2}, 
\begin{equation} \label{eq:fz2}
\{\pm f(\tau^M_0),\pm f(\tau^M_\infty) \}=\{\pm\tau^N_0,\pm\tau^N_\infty\},
\end{equation}
and thus $f$ sends $Z_2^M$ to $Z_2^N$, and 
\begin{equation} \label{eq:fz22}
Z_2^{N}(f(\eta))=f(Z_{2}^{M}(\eta)) \text{ for every }\eta \in H_{S^1}^{2}(M).
\end{equation}
Stating \eqref{eq:fz01} and \eqref{eq:fz2} in terms of the component Euler classes  gives \eqref{Euler100} with $\dim(F)=\dim\widehat{f}(F)$.

 Since $f$ is one-to-one, so is $\widehat{f}$. 
 Furthermore, by Lemma \ref{iso+fat}, we have $\iso_M+\fat_M=\iso_N+\fat_N$, and so the map $\widehat{f}$ is also onto.

We may use the
component Euler classes to determine the support (or equivalently the zero locus) of an arbitrary equivariant class:
$$
\alpha|_F = 0 \Longleftrightarrow \alpha\cup \varepsilon_F= 0.
$$
Because $f$ preserves cup product, this implies \eqref{eqzero} must hold.
\end{proof}

\begin{Proposition}\labell{cor:constant}
Let $M$ and $N$ be compact, connected, four-dimensional 
Hamiltonian $S^1$-manifolds and let 
$f \colon H_{S^1}^{2*}(M) \to H_{S^1}^{2*}(N)$ be a
weak algebra isomorphism.
For an isolated fixed point $p$, let $\delta^f_p \in \{+1,-1\}$ be the sign so that $f(\varepsilon_p)=\delta^f_p \varepsilon_{\widehat{f}(p)}$. Then $\delta^f_p$ is constant on the set of isolated fixed points.
\end{Proposition}
\begin{proof}
Let $p$ and $q$ be any two isolated fixed points in $M^{S^1}$. We aim to show that $\delta^f_p=\delta^f_q$.  We start by defining a class $\sigma \in H_{S^1}^{2}(M)$ whose support is $\{p,q\}$. The fixed points $p$ and $q$ correspond to vertices $v_{i,\alpha}$, $v_{i^*,\alpha^*}$ in chains $i,i^*$ of edges in an extended decorated graph. 
    If at least one of the vertices is extremal, we choose $i$ and $i^*$ to be equal.
    If $i=i^*$ and, without loss of generality, $\alpha<\alpha^*$, we set $$\sigma:=\sum_{j=\alpha}^{\alpha^*}m_{i,j}\sigma_{i,j}.$$ 
    By our conventions, if $i \neq i^*$ then $1<\alpha$ and $1<\alpha^*$; in this case,  we set $$\sigma:=\sum_{j=\alpha}^{\ell_i}m_{i,j}\sigma_{i,j}-\sum_{j=\alpha^{*}}^{\ell_{i^*}}m_{i,j}\sigma_{i,j}.
    $$
    The fact that this class $\sigma$ is non-zero precisely on $p$ and $q$
    follows by straight-forward, localization calculations using Tables
    \eqref{Table-rest-2},
    \eqref{Table-rest-0}, and
    \eqref{Table-rest-1}

We will now show that $\delta_p^f=\delta_q^f$.
Let $\varepsilon_M$ be the sum $\sum_{F}\varepsilon_F$ 
of component Euler classes,
taken over the connected components $F$ of $M^{S^1}$. This class satisfies 
$e_{S^1}(\nu(F\subset M))=\iota_{F}^{*}(\varepsilon_M)$. 
We define $\varepsilon_N$ analogously.
By the ABBV localization formula,  we have
$$
\frac{\iota_{p}^{*}\sigma}{\iota_{p}^{*}(\varepsilon_M)}+\frac{\iota_{q}^{*}\sigma}{\iota_{q}^{*}(\varepsilon_M)}=0.
$$
Extending $f$ and $\widehat{f}$ over $\Q[t^{-1}]$, and using \eqref{eqrestrict2}, this implies
\begin{equation}\label{eq:abbv1}
0= \frac{f(\iota_{p}^{*}\sigma)}{f(\iota_{p}^{*}(\varepsilon_M))} + 
\frac{f(\iota_{q}^{*}\sigma)}{f(\iota_{q}^{*}(\varepsilon_M))} 
=\frac{\iota_{\widehat{f}(p)}^{*}f(\sigma)}{\iota_{\widehat{f}(p)}^{*}f(\varepsilon_M)}
+ \frac{\iota_{\widehat{f}(q)}^{*}f(\sigma)}{\iota_{\widehat{f}(q)}^{*}f(\varepsilon_M)}. 
 \end{equation}
On the other hand, because $f(\sigma)$ is supported on the isolated fixed points $\{\widehat{f}(p), \widehat{f}(q)\}$, using ABBV on $N$, we must have
 \begin{equation}\label{eq:abbv2}
\frac{\iota_{\widehat{f}(p)}^{*}f(\sigma)}{\iota_{\widehat{f}(p)}^{*}(\varepsilon_N)}+
\frac{\iota_{\widehat{f}(q)}^{*}f(\sigma)}{\iota_{\widehat{f}(q)}^{*}(\varepsilon_N)}=0.
 \end{equation}
In Corollary~\ref{cor:Euler}, we established that
 $$
 \iota^*_{\widehat{f}(p)} f(\varepsilon_M) = \pm  \iota^*_{\widehat{f}(p)} \varepsilon_N.
 $$
Combining this fact with \eqref{eq:abbv1} and  \eqref{eq:abbv2}, we can then deduce that 
$$\Big( \iota_{\widehat{f}(p)}^{*}f(\varepsilon_M), \iota_{\widehat{f}(q)}^{*}f(\varepsilon_M)\Big)=\pm 
\Big(\iota_{\widehat{f}(p)}^{*}(\varepsilon_N),\iota_{\widehat{f}(q)}^{*}(\varepsilon_N)\Big).$$
That is, $\delta_p^f = \delta_q^f$, as desired.
\end{proof}

The following definition is key in determining when a weak algebra isomorphism 
$f$ preserves or reverses signs on component Euler classes:
$$
\varepsilon_F\mapsto \varepsilon_{\widehat{f}(F)}
\mbox{ versus }
\varepsilon_F \mapsto -\varepsilon_{\widehat{f}(F)}.
$$

\begin{Definition}\labell{def:orientation}
Let $M$ and $N$ be oriented, $d$-dimensional, compact $S^1$-manifolds 
that are equivariantly formal over $\Z$.
A weak isomorphism of algebras
 $f \colon H_{S^1}^{*}(M) \to H_{S^1}^{*}(N)$ is 
 {\bf or\-ien\-ta\-tion-pres\-erving} if the induced isomorphism 
in ordinary cohomology is or\-ien\-ta\-tion-pres\-erving.  That is,
the following diagram commutes:
 \begin{equation}\label{eq:orientation}
    \xymatrix{
    H_{S^1}^{d}(M) \ar[rr]^{f}\ar[d]_{I_M^*} & & H_{S^1}^{d}(N)\ar[d]^{I_N^*}\\
    H_{}^{d}(M) \ar[rr]^{\overline{f}} \ar[dr]_{\overline{\pi}_M^!} &  & H_{}^{d}(N) \ar[dl]^{\overline{\pi}_N^!}\\
    & \Z 
    }.
    \end{equation}
    Specifically we have
    $\overline{\pi}_N^{!} \circ {\overline{f}}|_{H^{d}(M)}=\overline{\pi}_M^{!}$.  
   We call $f$ {\bf orientation-reversing} if we have the alternative:     $\overline{\pi}_N^{!} \circ {\overline{f}}|_{H^{d}(M)}=- \overline{\pi}_M^{!}$.
    Here $\overline{\pi}_N^{!}$ and $\overline{\pi}_M^{!}$ are the (ordinary) pushforward maps for $\pi_N \colon N \to \pt$ and $\pi_M \colon M \to \pt$.

    Abusing notation in the context of compact, connected, four-dimensional 
    Hamiltonian $S^1$-manifolds, we use the commutativity of \eqref{eq:orientation}
    to define when a weak isomorphism $f:H^{2*}(M)\to H^{2*}(N)$ is orientation-preserving.
\end{Definition}

\begin{Remark} 
Note that the maps $\overline{\pi}_M^{!}$ and $\overline{\pi}_N^{!}$ are defined using Poincar\'e duality, 
which makes use of the orientations on $M$ and $N$.
The maps 
$\overline{\pi}_N^{!} \circ {\overline{f}}|_{H^{d}(M)}$ and $\overline{\pi}_M^{!}$ are both ring isomorphisms that identify $H^{d}(M)$ with $\Z$. In particular, a weak isomorphism $f$ must be either orientation-preserving or orientation-reversing.  
\end{Remark}

\begin{Remark} 
    In the symplectic category, we orient our manifolds using the top power of the symplectic form.
    If $f \colon H_{S^1}^{2*}(M) \to H_{S^1}^{2*}(N)$ is induced from an equivariant diffeomorphism $\check{f} \colon M \to N$, then $f$ is orientation preserving (reversing) if and only if $\check{f}$ preserves (reverses) orientation, with respect to the orientations on $M$ and $N$ induced by the symplectic forms.
\end{Remark}

\begin{Notation}
For $\alpha \in H_{S^1}^{p}(M)$ and $\beta \in H_{S^1}^{q}(M)$, denote the \emph{intersection form}
$$\alpha \cdot \beta =\pi^!_{M} (\alpha \cup \beta) \in H_{S^1}^{p+q-\dim M}(\pt),$$
where $\pi^!_{M}$ is the equivariant pushforward of $\pi_M \colon M \to \pt$, as defined in \eqref{eqpi}, and $\cup$ is the cup product in $H_{S^1}^{*}(M)=H^{*}(M \times ES^1/S^1)$.
The pushforward map can be identified with integration along the fiber, which gives an integration
formula 
$$\alpha \cdot \beta =\int_{M} (\alpha \cup \beta)$$
for the intersection form that may be more familiar to the reader.
\end{Notation}

\begin{Lemma}\labell{lem:inter1}
Let $M$ and $N$ be compact, connected, four-dimensional 
Hamiltonian $S^1$-manifolds and let 
$f \colon H_{S^1}^{2*}(M) \to H_{S^1}^{2*}(N)$ be a
weak algebra isomorphism.
If $f$ is or\-ien\-ta\-tion-pre\-ser\-ving, then it
preserves the intersection form
on classes in $H_{S^1}^2$.  If $f$ is orientation-reversing, then
it negates the intersection form.
\end{Lemma}

\begin{proof}
Let $\alpha, \, \beta \in H_{S^1}^{2}(M)$.
If $f$ is orientation-preserving, we have
 \begin{eqnarray*}
f(\alpha) \cdot f(\beta) 
&:=&\pi_N^{!} (f(\alpha) \cup f(\beta))=\pi_N^{!} (f(\alpha \cup \beta))\\
&=&\overline{\pi}_{N}^{!}(I_N^{*}(f(\alpha \cup \beta)))=\overline{\pi}_{N}^{!}(\overline{f}(I_M^{*}(\alpha \cup \beta))) \\
&=&
\overline{\pi}_M^{!}(I_M^{*}(\alpha \cup \beta))
= \pi_M^! (\alpha \cup \beta)\\
&=:&\alpha \cdot \beta.
\end{eqnarray*}
The first and last equalities are by definition of the intersection form. The second equality is since $f$ is a ring homomorphism. The third and penultimate equalities are since the equivariant pushforward of $\pi$ equals the non-equivariant one composed on $I^*$. The fourth equality is by definition of the induced isomorphism $\overline{f}$ in ordinary cohomology. The fifth equality is since $f$ is orientation-preserving. 

 Similarly, if $f$ is orientation-reversing, we have
\begin{eqnarray*}
f(\alpha) \cdot f(\beta) 
:=\pi_N^{!} (f(\alpha) \cup f(\beta))=\pi_N^{!} (f(\alpha \cup \beta))
= -\pi_M^! (\alpha \cup \beta)
=: -\alpha \cdot \beta.
\end{eqnarray*}
This completes the proof.
\end{proof}

\begin{Corollary} \label{cor:opor}
   Let $M$ and $N$ be compact, connected, four-dimensional 
Hamiltonian $S^1$-manifolds and let 
$f \colon H_{S^1}^{2*}(M) \to H_{S^1}^{2*}(N)$ be a
weak algebra isomorphism.
    \begin{enumerate}
   \item If $f$ is orientation-preserving then
    for any isolated fixed point $p$, 
    $$f(\varepsilon_p)= \varepsilon_{\widehat{f}(p)};$$
and if $f$ is orientation-reversing, then for any isolated fixed point $p$,  
    $$f(\varepsilon_p)= -\varepsilon_{\widehat{f}(p)}.$$
   \item  If $f$ is orientation-preserving, then $\widehat{f}$ sends extremal isolated fixed points to extremal ones, and interior isolated fixed 
points to interior ones.  On the other hand, if $f$   orientation-reversing, then $\widehat{f}$ interchanges extremal isolated fixed points and interior ones.
\item If $f$ is orientation-preserving then either 
\begin{itemize}
    \item $(f(\tau^M_0),f(\tau^M_\infty))=(\pm \tau^N_0,\pm \tau^N_\infty)$ and $e^M_{*}=e^N_{*}$ for $*=\min,\max$, or
    \item $(f(\tau^M_0),f(\tau^M_\infty))=(\pm \tau^N_\infty,\pm \tau^N_0)$ and $e^M_{\min}=e^N_{\max}$, $e^M_{\max}=e^N_{\min}$.
\end{itemize}
If $f$ is orientation-reversing then either 
\begin{itemize}
    \item $(f(\tau^M_0),f(\tau^M_\infty))=(\mp \tau^N_0,\mp \tau^N_\infty)$ and $e^M_{*}=-e^N_{*}$ for $*=\min,\max$, or
    \item $(f(\tau^M_0),f(\tau^M_\infty))=(\mp \tau^N_\infty,\mp \tau^N_0)$ and $e^M_{\min}=-e^N_{\max}$, $e^M_{\max}=-e^N_{\min}$.
\end{itemize}
\end{enumerate}
 \end{Corollary}

 \begin{proof}
We check each point in turn.
 
 \begin{enumerate}
 \item     
By Corollary \ref{cor:Euler}, 
for every isolated fixed point $p$, we have
$$f(\varepsilon_p)=\delta^f_p \varepsilon_{\widehat{f}(p)}$$
with $\delta^f_p=\pm 1$ and $\widehat{f}(p)$ an isolated fixed point in $N$.
Moreover, by Proposition~\ref{cor:constant}, the sign $\delta^f_p$ is constant on the set of isolated fixed points. We denote this constant by $\delta^f$. 

By the ABBV formula, 
$$\overline{\pi}_{M}^{!}(I_M^{*}(\varepsilon_p))=\pi_M^{!}(\varepsilon_p)=1=\pi_N^{!}(\varepsilon_{\widehat{f}(p)})=\overline{\pi}_N^{!}(I_N^{*}(\varepsilon_{\widehat{f}(p)})).$$
So 
$$\overline{\pi}_{M}^{!}(I_M^*(\varepsilon_{p}))=\overline{\pi}_N^{!}(I_N^{*}(\delta^f f(\varepsilon_p))=\delta^f \overline{\pi}_N^{!}(\overline{f}(I_M^*(\varepsilon_p))).$$
Therefore, by definition, whenever $f$ is orientation-preserving, we have 
$\delta^f=1$,
and when $f$ is orientation-reversing, we have $\delta^f=-1$.

\item  To analyze the impact of $\widehat{f}$ on the isolated fixed points,  
    we first note that a weak isomorphism must preserve the coefficient ring: it sends the constant class 
    $\mathbb{1}\in H^0_{S^1}(M;\Z)=\Z$ to the constant
$\mathbb{1}\in H^0_{S^1}(N;\Z)=\Z$. We now consider an isolated fixed point $p$.  By 
Fact~\ref{fact1}, its component Euler class
$\varepsilon_p$ has the property that $\iota_{p}^*(\varepsilon_p)= A_p\cdot {\pi^{*}(t)}^2$, where the integer coefficient $A_p$ 
is the product of the weights for the circle action $S^1\acts T_pM$. 
 If $p$ is an interior fixed point, then $A_p<0$, and if $p$ is extremal, $A_p>0$.
Using that $f(\mathbb{1})=\mathbb{1}$ and $f({\pi_M^{*}(t)}^2)={\pi_N^{*}(t)}^2$, we first compute
\begin{eqnarray*}
    f(\iota_{p}^*(\varepsilon_p)) & = & f( A_p\cdot {\pi_M^{*}(t)}^2) \\
    & = & f(A_p)\cdot f({\pi_M^{*}(t)}^2) \\ 
    & = & f(A_p)\cdot {\pi_N^{*}(t)}^2 \\
    & = & A_p \cdot {\pi_N^{*}(t)}^2.
\end{eqnarray*}
On the other hand, using the commutative diagram \eqref{diagramisorestrict1}, we also have
\begin{eqnarray*}
    f(\iota_{p}^*(\varepsilon_p)) & = & \iota_{\widehat{f}(p)}^*(f(\varepsilon_p))  \\
    & = & \iota_{\widehat{f}(p)}^*(\delta^f\cdot \varepsilon_{\widehat{f}(p)}) \\ 
    & = & \delta^f\cdot \iota_{\widehat{f}(p)}^*(\varepsilon_{\widehat{f}(p)}) \\
    & = &  \delta^f\cdot A_{\widehat{f}(p)} \cdot {\pi_N^{*}(t)}^2.
\end{eqnarray*}
By the previous item, when $f$ is orientation-preserving, $\delta^f=1$ and we may conclude that $A_p = A_{\widehat{f}(p)}$, so $p$ is 
an interior (resp.\ extremal) point if and only if $\widehat{f}(p)$ is interior (resp.\ extremal).  
When $f$ is orientation-reversing, $\delta^f=-1$ and we may conclude that $A_p = -A_{\widehat{f}(p)}$, so $p$ is 
an interior (resp.\ extremal) point if and only if $\widehat{f}(p)$ is  extremal (resp.\ interior). 
This establishes the desired conclusion that when $f$ is orientation-preserving
(orientation-reversing), $\widehat{f}$ will preserve (interchange) the interior and extremal labels.

\item  The last item follows immediately from 
\eqref{eq:fz2} and Lemma \ref{lem:inter1}.
\end{enumerate}
This completes the proof.
 \end{proof}

To further characterize an orientation-preserving, weak algebra isomorphism, we also need the following lemma, which is in the spirit of Lemma \ref{cl:ann2}.

\begin{Lemma}\label{lem:ann3}
When $\fat=2$,  a class $\eta \in H_{S^1}^{2}(M)$ is in $\sspan_{\Z}(\tau_h)$ if and only if
\begin{equation} \label{eq:tauh2}
 Z_0(\eta) =Z_0,  \mbox{ and } Z_1(\eta)=Z_1. 
 \end{equation}
When $\fat=1$,  a class $\eta \in H_{S^1}^{2}(M)$ is in  $\sspan_{\Z}(\tau_h) \smallsetminus \{0\}$ if and only if
\begin{equation} \label{eq:tauh1}
Z_0(\eta) =Z_0 
\, , \, |Z_1(\eta)|=\iso-1 \, ,  Z_2(\eta)=\emptyset\, , \mbox{ and }
\eta \cdot \eta >0. 
\end{equation}
\end{Lemma}

\begin{proof}
 The case $\fat=2$ follows from Table \ref{Table-ann-2-1a}.
 
 \vskip 0.1in
 
\noindent When $\fat=1$, a class $\eta$ listed in item  (3) of Lemma \ref{cl:ann2} has a positive self intersection only if it is a 
non-zero integer multiple of either $\sigma_{i,j}$ or $\sum_{\alpha}^{\beta}{m_{i,j} \sigma_{i,j}}$ with $\alpha=1$ 
or  $\beta=\ell_i$ (or both), by Table \ref{Table-zl-2}.
If it is also true that $Z_{2}(\eta)=\emptyset$, i.e., $\eta \cup \tau_\infty \neq 0$ we must have $\eta=\gamma \sum_{1}^{\ell_i}{m_{i,j}\sigma_{i,j}}=\gamma \tau_h$. Note that if $\fat=1$, we have $\sigma_{i,\ell_i} \cdot \sigma_{i,\ell_i} >0$ only if $\ell_i=1$; in this case, $m_{i,\ell_i}=1$. 
 \end{proof}

\begin{Corollary}\label{cl:tauh}
Let $M$ and $N$ be compact, connected, four-dimensional 
Hamiltonian $S^1$-manifolds and let 
$f \colon H_{S^1}^{2*}(M) \to H_{S^1}^{2*}(N)$ be an
orientation-preserving, weak algebra isomorphism.
Then $f$ maps 
the tuple $(\tau^M_0,\tau^M_{\infty},\tau^M_h,\pi_M^{*}(t))$
to one of the following:
$$
\begin{cases}
 (\tau^N_0,\tau^N_{\infty},\tau^N_h,\pi_N^{*}(t));\\
  (-\tau^N_\infty,-\tau^N_{0},-\tau^N_h,\pi_N^{*}(t)); \text{ or }\\
   (-\tau^N_0,-\tau^N_{\infty},-\tau^N_h,-\pi_N^{*}(t));\\
   (\tau^N_\infty,\tau^N_{0},\tau^N_h,-\pi_N^{*}(t))
         \end{cases}.
      $$  
If $f$ is a non-weak isomorphism,
one of the first two options holds 
and consequently
$f(\varepsilon_F) = \varepsilon_{\widehat{f}(F)}$ for fixed surfaces.
If $f$ is a strictly weak isomorphism, one
of the last two options holds and consequently
$f(\varepsilon_F) = -\varepsilon_{\widehat{f}(F)}$ for fixed surfaces. 

\end{Corollary}

\begin{proof}
Without loss of generality, up to composing with the strictly weak orientation-preserving isomorphism $H_{S^1}^{2*}(M) \to H_{S^1}^{2*}(M)$ that maps $\eta \to -\eta$ for every $\eta \in H_{S^1}^{2}(M)$, we may assume that $f(\pi_M^{*}(t))=\pi_N^{*}(t)$.   We now set out to prove
that $f$ sends the tuple $(\tau^M_0,\tau^M_{\infty},\tau^M_h,\pi_M^{*}(t))$
to one of the following:
\begin{equation}\labell{eq:all-plus-or-all-minus}
    \begin{cases}
 (\tau^N_0,\tau^N_{\infty},\tau^N_h,\pi_N^{*}(t)); \text{ or } \\
  (-\tau^N_\infty,-\tau^N_{0},-\tau^N_h,\pi_N^{*}(t)). \\
         \end{cases}.
\end{equation}    

We achieve this by examining the three cases, $\fat=2$, $\fat=1$, and $\fat=0$, in that order.
We first assume $\fat=2$. Then
    \begin{eqnarray*}
\eta \in \sspan_{\Z}(\tau^M_h) &\Leftrightarrow& Z^M_i(\eta) \bigcup -Z^M_i(\eta)=Z^M_i \bigcup -Z^M_i \text{ for }i=0,1 \\
&\Leftrightarrow& Z^N_i(f(\eta)) \bigcup -Z^N_i(f(\eta))=Z^N_i \bigcup -Z^N_i \text{ for }i=0,1 \\
&\Leftrightarrow & f(\eta) \in \sspan_{\Z}(\tau^N_h).
    \end{eqnarray*}
   The first and third equivalences follow from Lemma \ref{lem:ann3}, and the middle one is by 
  \eqref{eq:fz01}
   applied to $f$ and to $f^{-1}$.
    So
    $$f(\sspan_{\Z}(\tau^M_h))=\sspan_{\Z}(\tau^N_h).$$ Hence $f$ sends the generators $\pm \tau^M_h$ to the generators $\pm \tau^N_h$. Let $\delta_h \in \{-1,1\}$ be such that  $f(\tau^M_h)=\delta_h \tau^N_h$.

    Now, by   \eqref{eq:fz2}, for $*=0,\infty,(0,\infty)$, there are values 
    $\delta_* \in \{-1,1\}$ such that:
    \begin{itemize}
        \item[$\bullet$] either $\delta_{(0,\infty)}=1$ and 
    $f(\tau^M_0)=\delta_0 \tau^N_0, \, f(\tau^M_\infty)=\delta_\infty \tau^N_\infty$; 
    \item[$\bullet$] or $\delta_{(0,\infty)}=-1$ and 
    $f(\tau^M_0)=\delta_0 \tau^N_\infty, \, f(\tau^M_\infty)=\delta_\infty \tau^N_0$.
    \end{itemize}
    By Lemma \ref{lem:inter1}, since $f$ is orientation-preserving it preserves the intersection form; by calculation, $\tau_h \cdot \tau_0=1=\tau_h \cdot \tau_\infty$.  Therefore
    \begin{equation}\label{eq:h0}
    \delta_h \delta_0=1=\delta_h \delta_\infty.
    \end{equation}

 By Corollary \ref{cor:pit}, 
 $$\tau_h \cup \pi^{*}(t) =\tau_h \cup (\tau_\infty-\tau_0).$$
 Hence
 \begin{eqnarray*}
  \delta_h  (\tau^N_h \cup (\tau^N_\infty-\tau^N_0))&=&
\delta_h \tau^N_h \cup \pi_{N}^{*}(t)\\ 
&=&f(\tau^M_h \cup \pi_M^{*}(t))\\
&=&f(\tau^M_h \cup (\tau^M_\infty -\tau^M_0))\\
& = &\delta_h \tau^N_h \cup \delta_{(0,\infty)}(\delta_\infty \tau^N_\infty-\delta_0 \tau^N_0).
   \end{eqnarray*}
   Since $\tau^N_h \cup \tau^N_\infty$ and $\tau^N_h \cup \tau^N_0$ are linearly independent, we deduce that 
   \begin{equation}\label{eq:h1}
       \delta_{(0,\infty)} \delta_\infty=1=\delta_{(0,\infty)} \delta_0.
   \end{equation}
   Combining \eqref{eq:h0} and \eqref{eq:h1}, we get that either $\delta_{(0,\infty)}=\delta_h=\delta_0=\delta_\infty=1$  and we have the first
   possibility in \eqref{eq:all-plus-or-all-minus}; or we get that $\delta_{(0,\infty)}=\delta_h=\delta_0=\delta_\infty=-1$  and we have the second
   possibility in \eqref{eq:all-plus-or-all-minus}.

Next, we turn to the case $\fat=1$. 
 Then
    \begin{eqnarray*}
\eta \in \sspan_{\Z}(\tau^M_h) &\Leftrightarrow& Z^M_0(\eta) \bigcup -Z^M_0(\eta)=Z^M_0 \bigcup -Z^M_0,\\
& & |Z^M_{1}(\eta)|=\iso+\fat-2,\\
& & Z^M_2(\eta)=\emptyset, \text{ and }\eta {\cdot}\eta >0  \\
&\Leftrightarrow& 
Z^N_0(f(\eta)) \bigcup -Z^N_0(f(\eta))=Z^N_0 \bigcup -Z^N_0,  \\
& & |Z^N_{1}(f(\eta))|=\iso+\fat-2,  \\
& & Z^N_2(f(\eta))=\emptyset, \text{ and }f(\eta) {\cdot}f(\eta) >0   \\
&\Leftrightarrow & f(\eta) \in \sspan_{\Z}(\tau^N_h).
    \end{eqnarray*}
    The first and third equivalences are by Lemma \ref{lem:ann3}, and the middle one is by  \eqref{eq:fz01} and \eqref{eq:fz22} applied to $f$ and to $f^{-1}$, and by the assumption that $f$ is orientation preserving and Lemma \ref{lem:inter1}.
    So
    $$f(\sspan_{\Z}(\tau^M_h))=\sspan_{\Z}(\tau^N_h).$$ Hence $f$ sends the generators $\pm \tau^M_h$ to the generators $\pm \tau^N_h$. Let $\delta_h \in \{-1,1\}$ be such that  $f(\tau^M_h)=\delta_h \tau^N_h$.

    Without loss of generality, we assume that the fixed surface in $M$ is the maximum, so $\tau^M_{\infty} \neq 0$. By  \eqref{eq:fz2} and Corollary \ref{cor:opor},
    for $*=\infty,(0,\infty)$, there are values $\delta_* \in \{-1,1\}$ such that
    \begin{itemize}
        \item  either $\delta_{(0,\infty)}=1$ and $f(\tau^M_\infty)=\delta_\infty \tau^N_\infty, \, f(\varepsilon^M_{v_0})=\varepsilon^N_{v_0}$,
        \item or $\delta_{(0,\infty)}=-1$ and 
    $f(\tau^M_\infty)=\delta_\infty \tau^N_0, \,f(\varepsilon^M_{v_0})= \varepsilon^N_{v_\infty}$.
    \end{itemize}
By Lemma \ref{lem:inter1} and since $\tau^N_h \cdot \tau^N_{*}=1$ for $*\in \{0,\infty\}$ 
such that $\tau^N_* \neq 0$, we get that 
\begin{equation} \label{eq:h01}
\delta_h \delta_\infty=1.
\end{equation} 

    By Corollary \ref{cor:pit}, 
 $$\tau^N_{h} \cup \pi^{*}_ N(t)=\begin{cases}
 \tau^N_h \cup \tau^N_\infty -\varepsilon^N_{v_0} & \text{ if } \tau^N_\infty \neq 0 \\
\varepsilon^N_{v_\infty}- \tau^N_h \cup \tau^N_0 & \text{ if } \tau^N_0 \neq 0.
\end{cases}
$$
Applying the isomorphism $f$ to both sides of 
$$\tau^M_{h} \cup \pi_M^{*}(t)=
 \tau^M_h \cup \tau^M_\infty -\varepsilon^M_{v_0},$$  we get that 
\begin{eqnarray*}
\delta_h \tau^N_h \cup \pi_N^{*}(t)&=&f(\tau^M_h) \cup \pi_N^{*}(t)\\
&=&f(\tau^M_h \cup \pi_M^{*}(t))\\
&=&f(\tau^M_h \cup \tau^M_\infty-\varepsilon^M_{v_0}).
\end{eqnarray*}
and either $\delta_{(0,\infty)}=1$ and the last term equals $\delta_h \tau^N_h \cup \delta_{\infty}\tau^N_\infty -\varepsilon^M_{v_0}$ or $\delta_{(0,\infty)}=-1$ and the last term equals $\delta_h \tau^N_h \cup \delta_\infty \tau^N_0 -\varepsilon^N_{v_\infty}$. Using \eqref{eq:h01}, we conclude 
$$
    \delta_{(0,\infty)}=\delta_{\infty}=\delta_h.
$$    
So either they are all $1$  and we have the first
   possibility in \eqref{eq:all-plus-or-all-minus}; or they are all $-1$
   and we have the second
   possibility in \eqref{eq:all-plus-or-all-minus}.

Finally, we consider the case $\fat=0$.  In this case, $\tau_0=\tau_\infty=0$ in $M$ and $N$
and we only need to check that $\tau^M_h$ is sent to $\pm \tau_h^N$.
    By Corollary \ref{cor:pit}, 
$\tau^M_h \cup \pi_M^{*}(t)=\varepsilon^M_{v_\infty}-\varepsilon^M_{v_0}$.
By Corollary \ref{cor:opor}, since $f$ is orientation preserving, $\{f(\varepsilon^M_{v_\infty}),f(\varepsilon^M_{v_0})\}=\{\varepsilon^N_{v_\infty},\varepsilon^N_{v_0}\}$.
So
\begin{eqnarray*}
f(\tau^M_h) \cup \pi_N^{*}(t)&=&f(\tau^M_h \cup \pi_M^{*}(t))\\
&=&f(\varepsilon^M_{v_\infty}-\varepsilon^M_{v_0})
= \pm (\varepsilon^N_{v_\infty}-\varepsilon^N_{v_0})\\&=& \pm (\tau^N_h \cup \pi_N^{*}(t))=\pm \tau^N_h \cup \pi_N^{*}(t).
\end{eqnarray*}
Since $H_{S^1}^{2*}(N)$ is a free module over $H_{S^1}^{2*}(\pt)$ (see \cite[Theorem~1.1(A)]{KesslerHolm1}), we deduce that $f(\tau^M_h)=\pm \tau^N_h$. This completes the proof.
\end{proof}

\section{Relating Weak  algebra isomorphisms and  isomorphisms of dull graphs} \labell{sec5b}

In this section, we list explicitly the algebra isomorphisms of the even part of 
the equivariant cohomology
of a compact, connected, four-dimensional Hamiltonian $S^1$-manifold.
We show that orientation-preserving isomorphisms are induced by isomorphisms of dull
graphs.  We determine which isomorphisms preserve the equivariant first Chern class.
The section concludes with a detailed examination of an example of manifolds whose decorated graphs
differ by a chain flip.

Recall that a ring isomorphism $f \colon H_{S^1}^{2*}(M) \to H_{S^1}^{2*}(N)$ 
is a weak isomorphism of algebras over $H_{S^1}^{*}(\pt)$ if there is 
an automorphism $\gamma$ of $S^1$ such that $f(uw)=\gamma^{*}(u)f(w)$ for any $u \in H_{S^1}^{*}(\pt)$ and $w \in H_{S^1}^{2*}(M)$.
Note that the circle admits only two automorphisms: the trivial one and the non-trivial one. 
If $\gamma$ is the identity automorphism then $f$ is an \emph{isomorphism of algebras}.
On the other hand, if $\gamma$ is the non-trivial automorphism
then $f$ is a \emph{strictly weak isomorphism}.
Note that composing a strictly weak isomorphism $H_{S^1}^{2*}(M) \to H_{S^1}^{2*}(N)$ with a strictly weak isomorphism 
$H_{S^1}^{2*}(M) \to H_{S^1}^{2*}(M)$ gives an isomorphism.

We now describe several  weak isomorphisms of the equivariant cohomology
$ H_{S^1}^{2*}(M)$, as an $H_{S^1}^*(pt)$-algebra, that result from  
shuffling the data of the $S^1$-action, the symplectic
form, and the moment map.

\begin{Example}\labell{perm}
In Notation~\ref{not:gens}, we fixed an ordering of the chains $C_1,\dots,C_k$ so that the labels of the bottom
edges $m_{1,1},\dots,m_{k,1}$ satisfy $m_{1,1}\geq m_{2,1}\geq\cdots\geq m_{k,1}\geq 1$. 
Whenever two chains have the same length, $\ell_i=\ell_j$, and same labels $m_{i,s}=m_{j,s}$ for all $1\leq s\leq \ell_i$,
we say that the
two such chains $C_i$ and $C_j$ are {\bf isomorphic}.
Note that these chains are isomorphic in the dull graph.  Their moment map labels could differ, so they
may not be identical in the extended decorated graph (with respect to a generic metric).
We may permute isomorphic chains when fixing the data that determine our generators-and-relations presentation.
Let $\mathfrak{S}_M$ denote the subgroup of the permutation group $\mathfrak{S}_k$ corresponding to permutations of
the isomorphic chains.  Note that $\mathfrak{S}_M$ is a Young subgroup of $\mathfrak{S}_k$.
For any $\varrho\in\mathfrak{S}_M$, there is an induced map  $H_{S^1}^{2*}(M) \to H_{S^1}^{2*}(M)$ that sends
$$
\begin{array}{ll}
\tau_0\mapsto\tau_{0};& \\ 
\tau_{\infty}\mapsto\tau_{\infty}; & \text{ and }\\
  \sigma_{i,j}\mapsto \sigma_{\varrho(i),j} & \text{ for every } i \text{ and }1\leq j \leq \ell_i.
  \end{array}
  $$
  Such a map sends $\pi^{*}(t)$ to $\pi^{*}(t)$.
  This yields an orientation-preserving
isomorphism $f_{\varrho}$ of the equivariant cohomology ring as an algebras that we call a \emph{permutation}.
  It induces a map on the fixed point set that sends
\begin{equation} \labell{permutations-fixed}
\begin{array}{rll}
\Sigma_0 & \mapsto\  \Sigma_{0}\ ; & \\
\Sigma_{\infty} & \mapsto\  \Sigma_\infty\ ; & \\
v_0 & \mapsto\   v_{0}\ ; & \\
v_{\infty}&  \mapsto\   v_\infty\ ; & \text{ and }\\
 v_{i,j} & \mapsto\   v_{\varrho(i),j} &  \forall\ 1 \leq j < \ell_i.
 \end{array}
\end{equation}
A permutation of isomorphic chains is induced by an equivariant diffeomorphism of the
underlying manifold.  To whit, if the chains are isomorphic but not identical in the extended decorated
graph, there is a multi-rescaling of the graph so that the chains become identical.  By Proposition~\ref{scale1},
the multi-rescaling is induced by an equivariant diffeomorphism.  Karshon's uniqueness result \cite[Theorem~4.1]{karshon}
implies that there is an equivariant symplectomorphism inducing the permutation of identical chains in the extended decorated graph.
Composing these gives an equivariant diffeomorphism inducing the permutation of isomorphic chains. 
This diffeomorphism preserves the orientation induced by the symplectic form and an $S^1$-invariant,
compatible almost-complex structure.
 \hfill $\diamondsuit$
\end{Example}

\begin{Example}\labell{symp-flip}
Replacing $\omega$ with $-\omega$, without changing the circle action, 
sends the Hamiltonian manifold $S^1 \acts (M,\omega)$ with moment map $\Phi$ 
to a Hamiltonian manifold $S^1 \acts (M,-\omega)$ with moment map $-\Phi$.
These two manifolds have different decorated graphs: the 
decorated graph for $S^1 \acts (M,-\omega)$ is 
obtained by flipping the decorated graph of $S^1 \acts (M,\omega)$.
These two graphs give
different cohomology presentations.
We will let $\tau_*$ and $\sigma_{*,*}$ denote the generators defined using the graph
for $S^1 \acts (M,\omega)$; 
and $\widetilde\tau_*$ and $\widetilde\sigma_{*,*}$ denote the generators 
for the flipped graph.
The induced map  $f_\omega: H_{S^1}^{2*}(M) \to H_{S^1}^{2*}(M)$ 
is an isomorphism that sends 
\begin{equation} \labell{eq:sympflip}
\begin{array}{ll}
\tau_0 \mapsto -\widetilde\tau_{\infty}; & \\
\tau_{\infty} \mapsto -\widetilde\tau_0; & \text{ and }\\
 \sigma_{i,j} \mapsto -\widetilde\sigma_{i,\ell_i-j+1} &  \forall\ 1\leq j \leq \ell_i.
 \end{array}
\end{equation}
 The induced 
map on $H_{S^1}^{*}(\pt)$ is the identity map. 
This yields an orientation-preserving isomorphism of the equivariant cohomology rings as algebras that we call the \emph{symplectic flip}. 
This map induces the correspondence $\widehat{f}_\omega$ on the fixed set:
\begin{equation} \labell{sympflip-fixed}
\begin{array}{rll}
\Sigma_0 & \mapsto\  \widetilde\Sigma_{\infty}\ ; & \\
\Sigma_{\infty} & \mapsto\  \widetilde\Sigma_0\ ; & \\
v_0 & \mapsto\  \widetilde v_{\infty}\ ; & \\
v_{\infty}&  \mapsto\  \widetilde v_0\ ; & \text{ and }\\
 v_{i,j} & \mapsto\  \widetilde v_{i,\ell_i-j} &  \forall\ 1\leq j < \ell_i.
 \end{array}
\end{equation}
We have abused notation in \eqref{eq:sympflip} and \eqref{sympflip-fixed}: not each type of fixed set exists in
each type of fixed set ($\fat=0,1,2$): when one does not, the corresponding line is 
irrelevant in that case.
Because of the sign changes in \eqref{eq:sympflip} and the judicious choice of sign convention in 
Definition~\ref{def:comp euler}, the algebra isomorphism $f_\omega$ satisfies
$$
f_\omega(\varepsilon_F) = \varepsilon_{\widehat{f}_{\omega}(F)}.
$$
 \hfill $\diamondsuit$
\end{Example}

\begin{Example}\labell{action-flip}
The non-trivial automorphism of the circle sends $S^1 \acts (M,\omega)$ to a Hamiltonian circle action whose decorated graph is 
obtained by flipping the decorated graph of $S^1 \acts (M,\omega)$ (together with a vertical shift of the moment map).
As above we will let
$\tau_*$, $\sigma_{*,*}$
and $\widetilde\tau_*$, $\widetilde\sigma_{*,*}$ denote the two different 
sets of generators corresponding to the two graphs.
It
induces  
a map $f_{\acts}:H_{S^1}^{2*}(M) \to H_{S^1}^{2*}(M)$ 
that sends 
$$
\begin{array}{ll}
\tau_0\mapsto\widetilde\tau_{\infty};& \\ 
\tau_{\infty}\mapsto\widetilde\tau_0; & \text{ and }\\
  \sigma_{i,j}\mapsto \widetilde\sigma_{i,\ell_i-j+1} & \text{ for every } i \text{ and }1\leq j \leq \ell_i.
  \end{array}
  $$
 The induced 
map $H_{S^1}^{*}(\pt) \to H_{S^1}^{*}(\pt)$ is the automorphism $
t \mapsto -t$.
We get an orientation-preserving strictly weak isomorphism of the equivariant cohomology rings as algebras; we call it the \emph{action flip}.  Note that $\widehat{f}_{\acts}= \widehat{f}_\omega$.  \hfill $\diamondsuit$
\end{Example}

\begin{Example}
Composing the action flip with the symplectic flip gives
a map  
$$
{f}_{\acts}\circ {f}_\omega =  {f}_\omega\circ {f}_{\acts}: H_{S^1}^{2*}(M) \to H_{S^1}^{2*}(M)
$$ 
that sends 
\begin{equation} \labell{minusid}
\begin{array}{ll}
\tau_0 \mapsto -\tau_{0}; & \\
\tau_{\infty} \mapsto -\tau_\infty; & \text{ and }\\
 \sigma_{i,j} \mapsto -\sigma_{i,j} &  \forall\ 1\leq j \leq \ell_i,
 \end{array}
\end{equation}
and $\pi^{*}(t)$ to $-\pi^{*}(t)$.
This map is an orientation-preserving strictly weak isomorphism of the equivariant cohomology ring as an algebra,
which negates classes in $H^2_{S^1}(M)$.
This map induces the identity $\widehat{{f}_{\acts}\circ {f}_\omega } = \id$ on 
the fixed set and more generally on the decorated graph (up to a vertical
translation).  \hfill $\diamondsuit$
\end{Example}

Given a strictly weak isomorphism of algebras, we may compose with 
${f}_{\acts}\circ {f}_\omega$ to produce a (non-weak) isomorphism.  In the rest of the
section, we focus on non-weak isomorphisms.
To give a complete characterization of 
isomorphisms 
between equivariant cohomology rings 
(as algebras), we need one more isomorphism.
This isomorphism appears when the extended decorated graphs
of two Hamiltonian $S^1$-manifolds differ by turning 
a single chain upside down.
An example is illustrated in Figure~\ref{fig:chain flip}, where
the two extended decorated graphs differ in their leftmost chain.

Let $M$ and $N$ be compact, connected, Hamiltonian $S^1$-manifolds and 
consider the associated extended decorated graphs with 
respect to generic compatible K\"ahler metrics.
Suppose that the extended decorated graphs of $M$ and $N$
differ by a single flip of a chain that begins and ends
with the label $1$.  
We assume that the flipped chain is the $i^{\mathrm{th}}$
chain in $M$ and in $N$.
Then we have the following:
\begin{enumerate}
\item The number and lengths of chains for $M$ and $N$ agree: 
$k_M=k_N$ and $\ell^M_r=\ell^N_r$ for each $1\leq r \leq k_M$.

\item The $i^{\mathrm{th}}$ chain satisfies $\ell_{i}>1$, and
 \begin{equation} \labell{prd}
m^N_{i,1}=m^N_{i,\ell_{i}}=m^M_{i,\ell_{i}}= m^M_{i,1}=1.
 \end{equation}
 \item For $1<j<\ell_{i}$,
\begin{equation} \labell{eqmij}
m^N_{i,\ell_{i}-j+1}=m^M_{i,j}.
\end{equation}
\item For all $r \neq i$, we have  $$m^M_{r,j}=m^N_{r,j} \text{ for all } 1 \leq j \leq \ell_r.$$
\item If $\tau_0^M \neq 0 \,\, (\tau_{\infty}^M \neq 0)$ then $\tau_0^N \neq 0 \,\,(\tau_\infty^N \neq 0)$, 
the genus of $\Sigma_0^M \, (\Sigma_{\infty}^M)$ equals the genus of $\Sigma_0^N \, (\Sigma_{\infty}^N)$,
 and the self intersection $e_{\min}^M \, (e_{\max}^M)$ equals the self intersection $e_{\min}^N \, (e_{\max}^N)$.
\end{enumerate}

\begin{Definition} \labell{defpartial} 
For the manifolds $M$ and $N$ as just described,
the \emph{chain flip of the $i^{\mathrm{th}}$ chain}
is the map  defined on the generators of $H_{S^1}^{2*}(M)$ and mapping
to  $H_{S^1}^{2*}(N)$ 
that sends
$$
\tau^M_0\mapsto\tau^N_{0}\ ;\  \tau^M_{\infty}\mapsto\tau^N_{\infty} \ ; \
\sigma^M_{r,j}\mapsto \sigma^N_{r,j} \mbox{ for all } r\neq i \ ;
$$
and on the generators corresponding to the $i^{\mathrm{th}}$ chain has the effect
\begin{eqnarray}
\sigma^M_{i,1} &\mapsto &\displaystyle{\sum_{s=1}^{\ell_{i}-1}m^N_{i,s}\sigma^N_{i,s}\ } ; \labell{pra}\\
\sigma^M_{i,\ell_i} &\mapsto &\displaystyle{\sum_{s=2}^{\ell_{i}}m^N_{i,s}\sigma^N_{i,s}\ } ; \mbox{ and }  \labell{prb}\\
\sigma^M_{i,j} & \mapsto & -\sigma^N_{i,\ell_i-j+1} \mbox{ for } 1<j<\ell_i. \labell{prc}
\end{eqnarray}
\end{Definition}

We now verify that the map in Definition~\ref{defpartial} extends to 
an isomorphism from $H^{2*}_{S^1}(M)$ to $H_{S^1}^{2*}(N)$.

\begin{Proposition} \labell{rem:iso}
The chain flip of the $i^{\mathrm{th}}$ chain
induces a well-defined, orientation-preserving isomorphism  of algebras $f_\chi \colon H_{S^1}^{2*}(M)\to H_{S^1}^{2*}(N)$.  Moreover, $\widehat{f}_\chi$ is the following correspondence on the fixed set
\begin{equation} \labell{chainflip-fixed}
\renewcommand{\arraystretch}{1.5}
\begin{array}{rll}
\Sigma^M_0 & \mapsto\  \Sigma^N_{0}\ ; & \\
\Sigma_{\infty}^M & \mapsto\  \Sigma_{\infty}^N \ ; & \\
v_0^M & \mapsto\  v_0^N\ ; & \\
v_{\infty}^M &  \mapsto\  v_{\infty}^N \ ; &\\
 v_{i,j}^M & \mapsto\   v_{i,\ell_i-j}^N &  \forall\ 1\leq j < \ell_i \  ; \text{ and }\\
 v_{k,j}^M & \mapsto\ v_{k,j}^N & \forall\ 1\leq j < \ell_k\ . 
 \end{array}
 \renewcommand{\arraystretch}{1}
\end{equation}

\end{Proposition}

We also call the induced isomorphism $f$ of algebras 
a \emph{chain flip}
of the $i^{\mathrm{th}}$ chain.

\begin{proof}
By Theorem \ref{ThNew},  and the definition of a chain flip, 
to verify that this is a well-defined isomorphism,
it is enough to show that
\begin{equation} \labell{eqchain}
f_\chi\left(\sum_{j=1}^{\ell_{i}}m^M_{i,j}\sigma^M_{i,j}\right)=\sum_{j=1}^{\ell_{i}}m^N_{i,j}\sigma^N_{i,j}
 \quad \mbox{ and } \quad
f_\chi\left(\sum_{j=1}^{\ell_{i}}b^M_{i,j}\sigma^M_{i,j}\right)=\sum_{j=1}^{\ell_{i}}b^N_{i,j}\sigma^N_{i,j}.
\end{equation}
We have
\begin{eqnarray*}
f_\chi\left(\sum_{j=1}^{\ell_{i}}m^M_{i,j}\sigma^M_{i,j}\right)&=&\sum_{j=1}^{\ell_{i}}
m^M_{i,j}f_\chi\left(\sigma^M_{i,j}\right)\\
&=&\sum_{r=1}^{\ell_i-1}m^N_{i,r}\sigma^N_{i,r}-\sum_{j=2}^{\ell_i-1}m^M_{i,j}\sigma^N_{i,\ell_i-j+1}+\sum_{r=2}^{\ell_i}m^N_{i,r}\sigma^N_{i,r}\\
&=& \sum_{r=1}^{\ell_i-1}m^N_{i,r}\sigma^N_{i,r}-\sum_{j=2}^{\ell_i-1}m^N_{i,\ell_i-j+1}\sigma^N_{i,\ell_i-j+1}+\sum_{r=2}^{\ell_i}m^N_{i,r}\sigma^N_{i,r}\\
&=&\sum_{j=1}^{\ell_i}m^N_{i,j}\sigma^N_{i,j}.
\end{eqnarray*}
The first equality is by definition, the second by \eqref{pra}, \eqref{prb}, \eqref{prc}, and \eqref{prd}, and the third by \eqref{eqmij}.

Similarly, using the convention $b_{i,1}=0, \, b_{i,\ell_i}=1$, justified in Lemma \ref{lem:comb},
\begin{eqnarray*}
f_\chi\left(\sum_{j=1}^{\ell_{i}}b^M_{i,j}\sigma^M_{i,j}\right)&=&b^M_{i,1}\sum_{r=1}^{\ell_i-1}m^N_{i,r}\sigma^N_{i,r}-\sum_{j=2}^{\ell_i-1}b^M_{i,j}\sigma^N_{i,\ell_i-j+1}+b^M_{i,\ell_i}\sum_{r=2}^{\ell_i}m^N_{i,r}\sigma^N_{i,r}\\
&=&-\sum_{j=2}^{\ell_i-1}b^M_{i,\ell_i-j+1}\sigma^N_{i,j}+\sum_{r=2}^{\ell_i}m^N_{i,r}\sigma^N_{i,r}\\
&=&\sum_{j=2}^{\ell_i}(m^N_{i,j}-b^M_{i,\ell_i-j+1})\sigma^N_{i,j}.
\end{eqnarray*}
Set $b^N_{i,1}=0$; note that $b^N_{i,1}=0=m^N_{i,1}-b^M_{i,\ell_i-1+1}$ since $m^N_{i,1}=1=b^M_{i,\ell_i}$.
We claim that $$b^N_{i,j}=m^N_{i,j}-b^M_{i,\ell_i-j+1} \, \, \text{ for }1 < j \leq \ell_i.$$
By Lemma \ref{lem:comb}, it is enough to show that 
$(m^N_{i,j}-b^M_{i,\ell_i-j+1})m^N_{i,j-1}-(m^N_{i,j-1}-b^M_{i,\ell_i-j+2})m^N_{i,j}=1$ for $1<j \leq \ell_i$.
Indeed, 
\begin{eqnarray*}
(m^N_{i,j}-b^M_{i,\ell_i-j+1})m^N_{i,j-1}-(m^N_{i,j-1}-b^M_{i,\ell_i-j+2})m^N_{i,j} 
&=&m^N_{i,j}m^N_{i,j-1}-b^M_{i,\ell_i-j+1}m^N_{i,j-1}\\
&&-m^N_{i,j-1}m^N_{i,j} +b^M_{i,\ell_i-j+2}m^N_{i,j}\\
&=&b^M_{i,\ell_i-j+2}m^N_{i,j}-b^M_{i,\ell_i-j+1}m^N_{i,j-1}\\
&=&b^M_{i,\ell_i-j+2}m^M_{i,\ell_i-j+1}-b^M_{i,\ell_i-j+1}m^M_{i,\ell_i-j+2}\\
&=&1
\end{eqnarray*}
where the third equality is by  \eqref{eqmij}, and the fourth is by Lemma \ref{lem:comb}.

Finally, we verify that the map $f_\chi$ is orientation-preserving.  Recall the notation
that a map $g$ in equivariant cohomology restricts to a map $\overline{g}$ in ordinary cohomology.
Let  $r\neq i$.
 If $\tau^M_0\neq 0$ or $\tau_\infty^M\neq 0$, then we have
$$
\overline{\pi}_N^{!} \circ {\overline{f}_\chi} (\sigma^M_{r,1} \cup \tau_0^M) = 
\overline{\pi}_N^{!} (\sigma^N_{r,1}\cup \tau_0^N) = 1
=\overline{\pi}_M^{!}(\sigma^M_{r,1}\cup \tau_0^M)
$$
or
$$
\overline{\pi}_N^{!} \circ {\overline{f}_\chi} (\sigma^M_{r,\ell_r}\cup \tau_\infty^M) = 
\overline{\pi}_N^{!} (\sigma^N_{r,\ell_r}\cup \tau_\infty^N) = 1
=\overline{\pi}_M^{!}(\sigma^M_{r,\ell_r}\cup\tau_\infty^M).
$$
Otherwise, 
$$
\overline{\pi}_N^{!} \circ {\overline{f}_\chi} (\sigma^M_{r,1} \cup \sigma^M_{i,1}) = 
\overline{\pi}_N^{!} (\sigma^N_{r,1}\cup \displaystyle{\sum_{s=1}^{\ell_{i}-1}m^N_{i,s}\sigma^N_{i,s}}) = m^N_{i,1}=1=m^M_{i,1}
=\overline{\pi}_M^{!}(\sigma^M_{r,1}\cup \sigma^M_{i,1}).
$$
Since we must have either $\overline{\pi}_N^{!} \circ {\overline{f}\textbf{}}=\overline{\pi}_M^{!}$ or $\overline{\pi}_N^{!} \circ {\overline{f}\textbf{}}=-\overline{\pi}_M^{!}$, we deduce that $f$ is orientation-preserving.
\end{proof}

\begin{Proposition}\label{prop:newiso}
    Suppose that $f \colon H_{S^1}^{2*}(M) \to H_{S^1}^{2*}(N)$ is an orientation-preserving isomorphism of  $H_{S^1}^{*}(\pt)$-algebras.
    Then, after possibly precomposing with a
symplectic flip, the isomorphism $f$ is a composition of finitely many permutations of isomorphic chains and finitely many 
 chain flips.
    \end{Proposition}

\begin{proof}
 By Corollary \ref{cl:tauh}, 
 we can assume that, up to  precomposing with the symplectic flip,
$$f(\tau^M_h)=\tau^N_h, f(\tau^M_0)=\tau^N_0, \, f(\tau^M_\infty)=\tau^N_\infty, \, \widehat{f}({\min^M})={\min^N}, \, \widehat{f}({\max^M})={\max^N}.$$

We claim that for any chain $i^*$ in the extended decorated graph of $S^1 \acts (M,\omega_M)$, there exists a chain $i$ in the extended decorated graph of $S^1 \acts (N,\omega_N)$ such that
either
\begin{itemize}
\item[a.] we have $$f(\sigma^M_{i^*,j})=\sigma^N_{i,j}, \text{ and } m^N_{i,j}=m^M_{i^*,j} \text{ for all } 1 \leq j \leq \ell_{i^*},$$
$$ \widehat{f}(v^M_{i^*,j})=v^N_{i,j}
\text{ for all } 1 \leq j < \ell_{i^*},$$
$$\ell_i=\ell_{i^*};
$$ 
{\bf or}
\item[b.] we have 
$$f(\sigma^M_{i^*,1})=\sum_{j=1}^{\ell_{i}-1}m^N_{i,j}\sigma^N_{i,j}, \mbox{ and } f(\sigma^M_{i^{*},\ell_{i^*}})=\sum_{j=2}^{\ell_i}m^N_{i,j} \sigma^N_{i,j},
$$

$$
f(\sigma^M_{i^*,j})=-\sigma^N_{i,\ell_i-j+1} \text{ for all }1<j<\ell_{i^*}, $$ 
$$\widehat{f}(v^M_{i^*,j})=v^N_{i,\ell_i-j+1} \text{ for all }1\leq j<\ell_{i^*}, $$
$$m^N_{i,1}=m^M_{i^*,1}=1=m^M_{i^*,\ell_{i^*}}=m^N_{i,\ell_i} \text{ and } m^N_{i,j}=m^M_{i^*,\ell_i-j+1} \text{ for all }1<j<\ell_{i^*},$$
$$\ell_i=\ell_{i^*}>1.$$
\end{itemize}

Since $f$ is an isomorphism, sending  $i^* \mapsto i$ gives a 
bijection on the set of chains.
We prove the claim by a
case-by-case analysis of the possible images of the generators.
We first assume $\ell_{i^*}>1$. 
 By Proposition \ref{pro:ann} and item (1) of Lemma  \ref{cl:ann2}, the possible options for the image $f(\sigma^M_{i^*,j})$  are the classes in $H_{S^1}^{2}(N)$ listed in item (3) of Lemma \ref{cl:ann2}. The assumption that $\widehat{f}(\min^M)=\min^N$ and $\widehat{f}(\max^M)=\max^N$  implies, by Corollary \ref{cor:Euler}, that
$$
{f(\sigma^M_{i^*,j})}\big|_{\min^N}=0\    \Longleftrightarrow \
{\sigma^M_{i^*,j}}\big|_{\min^M}=0 \text{ and }{f(\sigma^M_{i^*,j})}\big|_{\max^N}=0\ \Longleftrightarrow \
{\sigma^M_{i^*,j}}\big|_{\max^M}=0.
$$
Since ${\sigma^M_{i^*,1}}|_{\min^M} \neq 0$ and ${\sigma^M_{i^*,1}}|_{\max^M} = 0$, some of the options for $f(\sigma^M_{i^*,1})$ are eliminated. Explicitly, 
$$
f(\sigma^M_{i^*,1})=\begin{cases} \delta \sigma^N_{i,1} & \text{ or} \\ \delta \sum_{j=1}^{\beta}m^N_{i,j}\sigma^N_{i,j} & \text{ with }\beta<\ell_i \end{cases}.$$
By assumption, $f$ preserves the cup product. By Lemma~\ref{lem:inter1}, $f$ also preserves the intersection form on classes in $H_{S^1}^{2}$.
Hence, if $\tau^N_0 \neq 0$, then $\tau^N_h \cup \tau^N_0 \neq 0$, and
\begin{eqnarray*}
\delta \tau^N_h \cup \tau^N_0&=& \delta \sigma^{N}_{i,1} \cup \tau^N_0=f(\sigma^M_{i^*,1}) \cup \tau^N_0 
\\&=&f(\sigma^M_{i^*,1} \cup \tau^M_0)=f(\tau^M_{h} \cup \tau^M_0)=f(\tau^M_h) \cup f(\tau^M_0)\\
&=&\tau^N_h \cup \tau^N_0,
\end{eqnarray*}
whence $\delta=1$.
If $\tau^N_0=0$, then
$f(\sigma^M_{i^*,1}) \cdot \tau^N_h =\sigma^M_{i^*,1} \cdot \tau^M_h> 0$, so $1 \leq \delta \in \N$.
Moreover, by Corollary \ref{cor:Euler}, the induced map $\widehat{f}$ is a bijection that maps a component in the support of a class (in $H^{2}_{S^1}(M)$ or in $H^{4}_{S^1}(M)$) to a component in the support of its image.
We note that for two classes $\eta_1,\eta_2 \in H_{S^1}^{2}(M)$, if $\eta_1 \neq \gamma \eta_2$ for any $\gamma \in \Z$,  the
product and intersection of $\eta_1$ and $\eta_2$ is non-zero 
only if the intersection of their supports is non-empty: 
there is a component of the fixed point set on which the restriction of each class is not zero.
Therefore, when $f(\sigma^M_{i^*,1})=\delta \sigma^N_{i,1}$, we have $\widehat{f}(v^M_{i^*,1})=v^N_{i,1}$; and when $f(\sigma^M_{i^*,1})=\delta \sum_{j=1}^{\beta}m^N_{i,j}\sigma^N_{i,j}$, we have $\widehat{f}(v^M_{i^*,1})=v^N_{i,\beta}$.

Similarly,  looking at $\sigma^M_{i^*,\ell^{i^*}}$, we have
$$f(\sigma^M_{i^*,\ell_{i^*}})=\begin{cases} \delta' \sigma^N_{i',\ell_{i'}} & \text{ or} \\\delta' \sum_{j=\alpha}^{\ell_{i'}}m^N_{i',j}\sigma^N_{i',j} & \text{ with }1 <\alpha\ \ \ , \end{cases}$$
 and in both cases $1 \leq \delta' \in \N$. In the first case,  $\widehat{f}(v^M_{i^*,\ell_{i^*}-1})=v^N_{i',\ell_{i'}-1}$  ; in the second case,   $\widehat{f}(v^M_{i^*,\ell_{i^*}-1})= v^N_{i',\alpha-1}$.

Now we make the  further assumption that $\ell_{i^*}>2$. Then we can analyze the possibilities as follows.

\begin{enumerate}
\item If $f(\sigma^M_{i^*,1})=\delta \sigma^N_{i,1}$, then because $1<2<\ell_{i^*}$, we conclude 
that 
$${f(\sigma^M_{i^*,2})}|_{\max^N}=0={f(\sigma^M_{i^*,2})}|_{\min^N}.$$ 
Moreover,
since $f(\sigma^M_{i^*,1}) \cdot f(\sigma^M_{i^*,2})=1$, we know that the restriction of $f(\sigma^M_{i^*,2})$ to $v^N_{i,1}$ is not zero.
By \eqref{eq:fz01} and \eqref{eq:fz22}, the image of $\sigma^M_{i^*,2}$ is one of the classes listed in item (3) in Lemma \ref{cl:ann2}. By Lemma \ref{lem:intersums}, the fact that $\delta \sigma^N_{i,1} \cdot f(\sigma^M_{i^*,2})=1$ further restricts the possibilities for that image to 
$\gamma \sigma^N_{i,2}$. Moreover, since $\sigma^N_{i,1} \cdot \sigma^N_{i,2}=1$ and $\delta \geq 1$, we have
 $f(\sigma^M_{i^*,2})=\sigma^N_{i,2}$ and $f(\sigma^M_{i^*,1})= \sigma^N_{i,1}$.
So $\widehat{f}(v^M_{i^*,1})=v^N_{i,1}$  and $\widehat{f}(v^M_{i^*,2})=v^N_{i,2}$.
 We proceed by induction: for $2<j \leq \ell_{i^*}-1$,  the induction hypothesis is that for $1 \leq s <j$, we have $f(\sigma^M_{i^*,s})=\sigma^N_{i,s}$ and $\widehat{f}(v^M_{i^*,s})=v^N_{i,s}$. Because
 $$\sigma^N_{i,s} \cdot f(\sigma^M_{i^*,j})=f(\sigma^M_{i^*,s}) \cdot f(\sigma^M_{i^*,j})=\sigma^M_{i^*,s} \cdot \sigma^M_{i^*,j}=\begin{cases} 1 & \text{ if }s=j-1 \\ 0 & \text{ if } 1 \leq s<j-1  \end{cases},$$
we deduce that $f(\sigma^M_{i^*,j})$ is supported on $v^N_{i,j-1}$ and $v^N_{i,j}$.  By Lemma \ref{lem:intersums},  
we have $f(\sigma^M_{i^*,j})=\sigma^N_{i,j}$, so  $\widehat{f}(v^M_{i^*,j})=v^N_{i,j}$.
  In particular $\widehat{f}(v^M_{i^*,\ell_i-1})=v^N_{i,\ell_{i^*-1}}$, hence $f(\sigma^M_{i^*,\ell_{i^*}})$ must have non-zero
  restriction to $v^N_{i,\ell_{i^*}-1}$.

\item  Similarly, if  $f(\sigma^M_{i^*,\ell_{i^*}})=\delta' \sigma^N_{i',\ell_{i'}}$ then $f(\sigma^M_{i^*,\ell_{i^*}-j+1})=\sigma^N_{i',\ell_{i'}-j+1}$ for $1 < j \leq \ell_{i^*}$.
 
\item  If $f(\sigma^M_{i^*,1})=\delta\sum_{j=1}^{\beta}m^N_{i,j}\sigma^N_{i,j}$ with $\beta < \ell_{i}$, then $f(\sigma^M_{i^,2})$ 
must have non-zero
  restriction to $v^N_{i,\beta}$ since it restricts to zero on $\min^N$
and $\max^N$.
Therefore,  by  Lemma \ref{lem:intersums}, the fact that
 $\delta\sum_{j=1}^{\beta}m^N_{i,j}\sigma^N_{i,j} \cdot f(\sigma^M_{i^*,2})=1$ implies that
$f(\sigma^M_{i^*,2})=\gamma \sigma^N_{i,\beta}$
and $\beta=\ell_i-1$, the label $m^N_{i,\ell_i}=1$, and that $\delta=1=-\gamma$.
Therefore $f(\sigma^M_{i^*,2})=\sum_{j=1}^{\ell_i-1}m^N_{i,j}\sigma^N_{i,j}$ and $f(\sigma^M_{i^*,2})=-\sigma^N_{i,\ell_i-1}$. 
So $\widehat{f}(v^M_{i^*,2})=v^N_{i,\ell_i-1}$. We continue as in the previous item and deduce that for 
$1<j<\ell_{i^*}$ we have $f(\sigma^M_{i^*,j})=-\sigma^N_{i,\ell_i-j+1}$ and $\widehat{f}(v^M_{i^*,j})=v^N_{i,\ell_i-j+1}$. 
In particular $\widehat{f}(v^M_{i^*,\ell_{i^*}-1})=v^N_{i,\ell_i-\ell_{i^*}+2}$, hence $f(\sigma^M_{i^*,\ell_{i^*}})$ 
must have non-zero restriction to $v^N_{i,\ell_i-\ell_{i^*}+2}$.

 \item Similarly, if $f(\sigma^M_{i^*,\ell_{i^*}})=\delta'\sum_{j=\alpha}^{\ell_i}m^N_{i',j}\sigma^N_{i',j}$ with $1<\alpha$ then 
$$f(\sigma^M_{i^*,\ell_{i^*}})=\sum_{j=2}^{\ell_i}m^N_{i',j}\sigma^N_{i',j},$$  and $f(\sigma^M_{i^*,\ell_{i^*}-j+1})=-\sigma^N_{i',j}$ for all $1<j<\ell_{i^*}$, and 
the label $m^N_{i,1}=1$.

\item 
In the  case that $f(\sigma^M_{i^*,1})=\sigma^N_{i,1}$, we must have $f(\sigma^M_{i^*,\ell_{i^*}})=\sigma^N_{i',\ell_{i'}}$; otherwise, by items (1) and (4), for $1<j<\ell_{i^*}$, e.g., $j=2$, we will have
both $f(\sigma^M_{i^*,j})=\sigma^N_{i,j}$ and $f(\sigma^M_{i^*,j})=-\sigma^N_{i',\ell_{i'}-j+1}$, 
which results in a contradiction, since no generator $\sigma^N_{i,j}$ is the additive inverse of a generator $\sigma^N_{i',j'}$. 
Moreover we must have $i=i'$, since $\sigma^N_{i,j}=f(\sigma^M_{i^*,j})=\sigma^N_{i',j}$ for $1<j<\ell_{i^*}$.
In particular, $v^N_{i,\ell_{i}-1}=\widehat{f}(v^M_{i^*,\ell_{i^*}-1})=v^N_{i,\ell_{i^*}-1}$, and hence $\ell_i=\ell_{i^*}$. Also, 
\begin{equation} \labell{eq:tauhagain}
\sum_{j=1}^{\ell_i}m^N_{i,j}\sigma^N_{i,j}=\tau^N_h=f(\tau^M_h)=f\Big(\sum_{j=1}^{\ell_{i^*}} m^M_{i^*,j} \sigma^M_{i^*,j}\Big)=\sum_{j=1}^{\ell_{i}}m^{M}_{i,j} \sigma^N_{i,j}, 
\end{equation}
where the last equality is a direct consequence of the fact that $f$ is a homomorphism.
Using the fact that the $\sigma^N_{i,j}$ for $1\leq j \leq \ell_i$ are independent over $\Z$, 
we deduce that $m^N_{i,j}=m^M_{i^*,j}$ for all $j$.

\item Similarly, in the case when $f(\sigma^M_{i^*,1})=\sum_{j=1}^{\beta}m^N_{i,j}\sigma^N_{i,j}$ 
we must have $f(\sigma^M_{i^*,\ell_{i^*}})=\sum_{j=2}^{\ell_i}m^N_{i,j}\sigma^N_{i,j}$ and $\ell_i=\ell_{i^*}$.
Note that in this case  we showed that $m^N_{i,1}=1=m^N_{i,\ell_i}$. By looking at $f^{-1}$ on $\sigma^N_{i,j}$, 
we deduce that 
$m^M_{i^*,1}=1=m^M_{i^*,\ell_{i^*}}$.
Furthermore, since
 \begin{eqnarray*}
\sum_{j=1}^{\ell_i}m^N_{i,j}\sigma^N_{i,j}&=&\tau^N_h=f(\tau^M_h)=f\Big(\sum_{j=1}^{\ell_{i^*}} m^M_{i^*,j} \sigma^M_{i^*,j}\Big)\\
&=&
\sum_{j=1}^{\ell_i-1}m^N_{i,j}\sigma^N_{i,j}-\sum_{j=2}^{\ell_i-1}m_{i^*,j}^M\sigma^N_{\ell_{i}-j+1}+\sum_{j=2}^{\ell_i}m^N_{i,j}\sigma^N_{i,j}\\
&=&m^N_{i,1}\sigma^N_{i,1}+\sum_{j=2}^{\ell_i-1}(2m^N_{i,j}-m^M_{i^*,\ell_i-j+1})\sigma^N_{i,j}+m^N_{i,\ell_i}\sigma^N_{i,\ell_i},
 \end{eqnarray*}
and $\sigma^N_{i,j} \, 1\leq j \leq \ell_i$ are independent over $\Z$, we get that
$2m^N_{i,j}-m^M_{i^*,\ell_i-j+1}=m^N_{i,j}$ and hence
 $m^N_{i,j}=m^M_{i^*,\ell_i-j+1}$, for all $1<j<\ell_i$.
 \end{enumerate}

Now we return to the case $\ell_{i^*}=2$. 
 If $f(\sigma^M_{i^*,1})=\delta \sigma^N_{i,1}$ for $\delta \in \N$
and $f(\sigma^M_{i^*,\ell_{i^*}})=\delta' \sigma^N_{i',\ell_{i'}}$ for $\delta' \in \N$ then,
since $f(\sigma^M_{i^*,1}) \cdot f(\sigma^M_{i^*,\ell_{i^*}})=1$ and $f(\sigma^M_{i^*,1})|_{\max^N}=0=f(\sigma^M_{i^*,\ell_{i^*}})|_{\min^N}$, we have $i=i'$, $\delta=\delta'=1$, and $\ell_i=2$. 
Moreover, \eqref{eq:tauhagain} holds and we must have   $m^N_{i,j}=m^M_{i^*,j}$ for $j=1,2$.

If  $f(\sigma^M_{i^*,1})=\delta \sum_{r=1}^{\beta}m^N_{i,r}\sigma^N_{i,r}$ with $1<\beta<\ell_i$ then $\ell_i>2$.  We apply 
the previous case $\ell_{i} >2$  to $f^{-1}$ and deduce that 
$f^{-1}(\sigma^N_{i,\beta})= \delta \sigma^M_{i'',j''}$ for $i'',j''$ such that $$\sigma^M_{i'',j''}|_{\min^M}=0=\sigma_{i'',j''}|_{\max^M}.$$ 
That is, $1<j''<\ell_{i''}$,
and $$\delta \sigma^M_{i'',j''} \cdot \sigma^M_{i^*,1}=f^{-1}(\sigma^N_{i,\beta}) \cdot f^{-1}\left(\sum_{r=1}^{\beta}m^N_{i,r}\sigma^N_{i,r}\right)=m^N_{i,\beta-1}+m^N_{i,\beta}\sigma^N_{i,\beta} \cdot \sigma^N_{i,\beta}=-m^N_{i,\beta+1}\neq 0,$$ 
hence $i''=i^*$ and $j''=2$, 
contradicting the assumption $\ell_{i^*}=2$. Similarly, we cannot have $f(\sigma^M_{i^*,\ell_{i^*}})=\sum_{r=\alpha}^{\ell_{i'}}m^N_{i',r}\sigma^N_{i',r}$ with $1<\alpha<\ell_i$.

 Finally, assume $\ell_{i^*}=1$. So ${\sigma^M_{i^*,1}}|_{\min^M}\neq 0$ and ${\sigma^M_{i^*,1}}|_{\max^M}\neq 0$.
Hence either $$f(\sigma^M_{i^*,1})=\delta \sigma^N_{i,1} \text{ with }\ell_i=1  \text{ or }f(\sigma^M_{i^*,1})=\delta \tau^N_h.$$ 
In the first case, $$m^M_{i^*,1}\delta \sigma^N_{i,1}=f(m^M_{i^*,1}\sigma^M_{i^*,1})=f(\tau^M_h)=\tau^N_h=m^N_{i,1} \sigma^N_{i,1,}$$ hence 
$\delta m^M_{i^*,1}=m^N_{i,1}$.
In the second case, $$m^M_{i^*,1} \delta \tau^N_h=f(m^M_{i^*,1}\sigma^M_{i^*,1})=f(\tau^M_h)=\tau^N_h,$$ hence $m^M_{i^*,1}=1=\delta$. Moreover $\ell_i=1$, otherwise the case $\ell_i >1$ applied to $f^{-1}$ gives a contradiction.

We have two cases to consider: $m^M_{i^*,1} > 1$ and $m^M_{i^*,1} = 1$.  We start with the case that  $m^M_{i^*,1} > 1$. Then we must have $f(\sigma^M_{i^*,1})=\delta \sigma^N_{i,1}$ with $\ell_i=1$, and $\delta m^M_{i^*,1}=m^N_{i,1}$.
Moreover, in this case,
$\fat^M =0$ and hence $\fat^N=0$ and there are precisely two chains in the graph. 
By reviewing the possible minimal models with zero fat vertices, we can deduce that
the other chain in the graph must have 
more than one edge.
Without loss of generality we may assume $i=i^*=1$. 
The case $\ell_{i^*}>1$ above implies 
that generators in the second chain in the graph of $S^1 \acts M$ are sent to sums of generators in the second chain in the graph of $S^1 \acts N$ and $m^M_{2,1}=m^N_{2,1}, \, m^M_{2,\ell_2}=m^N_{2,\ell_2}$. 
Since $f$ preserves self-intersection, we deduce that 
$$\frac{m^M_{2,1}+m^M_{2,\ell_2}}{m^M_{1,1}}=\sigma^M_{1,1} \cdot \sigma^M_{1,1}=\delta\sigma^N_{1,1} \cdot \delta \sigma^N_{1,1}=\delta^2 \frac{m^N_{2,1}+m^N_{2,\ell_2}}{m^N_{1,1}}=\delta^2 \frac{m^M_{2,1}+m^M_{2,\ell_2}}{\delta m^M_{1,1}}$$ hence $\delta=1$ and $m^N_{1,1}=m^M_{1,1}$, as needed.

For the other case, $m^M_{i^*,1}=1$, we then have $\sigma^M_{i^*,1}=\tau^M_h$. 
By the conventions laid out in \S \ref{decorated} and \S \ref{decorated1}, either 
\begin{itemize}

\item $\fat^M=1$ and there are $2$ chains, each of exactly one edge in the graph of $S^1 \acts M$. In this case, $\fat^N=1$ (by Lemma \ref{iso+fat}) and there are $2$ chains of one edge in the graph of $S^1 \acts N$, otherwise we get a contradiction to the previous case $\ell_{i}>1$ above applied to $f^{-1}$, note that
the labels of the edges must both equal $1$;
or
\item $\fat^M<2$ and there is exactly one chain $i$ of one edge of label $1$ and one more chain which has more than one edge. This holds for the graph $S^1 \acts N$ as well, otherwise we  get a contradiction to either the case $\ell_{i^*}>1$ or the case $\ell_{i^*}=1$ and $m^M_{i^*,1} > 1$; or
\item $\fat^M=2$ and $\iso^M=0$. Then, by Lemma \ref{iso+fat}, $\fat^N=2$ and $\iso^N=0$. 
\end{itemize}
So, in all of these cases there is and index $i$ such that $\sigma^N_{i,1}=\tau^N_h$, $m^N_{i,1}=1$ and $\ell_i=1$.
Since $f(\tau^M_h)=\tau^N_h$ we get $f(\sigma^M_{i^*,1})=\sigma^N_{i,1}$.

This completes our case-by-case analysis and the proposition now follows.
\end{proof}

Proposition \ref{prop:newiso} and the description of $\widehat{f}$ in Example \ref{symp-flip}, Example \ref{perm}, and Proposition \ref{rem:iso} imply the following result.

 \begin{Corollary} \label{cor:hatf2}
    Let $f,g \colon H_{S^1}^{2*}(M) \to H_{S^1}^{2*}(N)$ be orientation-preserving algebra isomorphisms.
\begin{enumerate}
  \item   If $f \neq g$, then $\widehat{f} \neq \widehat{g}$.
  \item The map $\widehat{f}$ induces a bijection
        \begin{equation}
            \widehat{\widehat{f}} \colon \bigcup_{k >1}\{S\ | \ S \text{ is a }\Z_k \text{-sphere in } M\} \to \bigcup_{k >1}\{S \ |\ S \text{ is a }\Z_k \text{-sphere in } N\},
        \end{equation}
        defined by $$\widehat{\widehat{f}}(S_M)=S_N \text{  exactly if }\{\widehat{f}(p_M),\widehat{f}(q_M)\}=\{p_M,q_M\},$$ where $p_M,q_M$ ($p_N,q_N$) are the fixed poles of $S_M$ ($S_N$). 
        The map $ \widehat{\widehat{f}}$ satisfies the following properties:
        \begin{itemize}
            \item $S$ is a $\Z_k$-sphere in $M$ if and only if  $\widehat{\widehat{f}}(S)$ is a $\Z_k$-sphere in $M$, for the same $k$, and
            \item $S'$ and $S$ intersect at a fixed point $p$ if and only if $\widehat{\widehat{f}}(S)$ and $\widehat{\widehat{f}}(S')$ intersect at $\widehat{f}(p)$.
        \end{itemize}
        \end{enumerate}
    \end{Corollary}

We saw in Example~\ref{action-flip} that $\widehat{f}_{\acts}= \widehat{f}_\omega$.  This doesn't contradict 
Corollary~\ref{cor:hatf2}(1) because  $\widehat{f}_{\acts}$ is a strictly weak isomorphism.

\begin{Theorem} \label{thm:strong}
Let  $S^1 \acts (M,\omega_M)$ and $S^1 \acts (N,\omega_N)$ be compact, connected, 
Hamiltonian $S^1$-manifolds of 
dimension four, with dull graphs $\O_M$ and $\O_N$ 
respectively.

Suppose first that there is an isomorphism $\psi: \O_M \to \O_N$ of labeled graphs.
Then there is a unique, orientation-preserving isomorphism 
$f: H_{S^1}^{2*}(M;\Z) \to H_{S^1}^{2*}(N;\Z)$, as algebras over $H_{S^1}^{*}(\pt)$,
so that the induced map $\widehat{f}$ between the fixed components of $M$ and of $N$ agrees with
$\psi$ on the corresponding vertices of $\O_M$ and $\O_N$.
The dull graph isomorphism also guarantees that $M$ and $N$ have equal genus $g_M=g_{N}$.

Conversely, if we have $g_M=g_N$, then given an orientation-preserving isomorphism 
$f: H_{S^1}^{2*}(M) \to H_{S^1}^{2*}(N)$, as algebras over $H_{S^1}^{*}(\pt)$, there 
is a unique isomorphism  $\psi: \O_M \to \O_N$ of labeled graphs 
 whose restriction to the vertices coincides with $\widehat{f}$.
\end{Theorem}

 \begin{proof}
Let $\psi: \O_M \to \O_N$ be an isomorphism of dull graphs as labeled graphs.
Consider the extended decorated graphs with respect to a generic compatible metric associated to $S^1 \acts (M,\omega_M)$ and $S^1 \acts (N,\omega_N)$. 
We fix orderings of the chains in the extended decorated graphs that satisfy $m^M_{1,1}\geq m^M_{2,1} \geq \ldots \geq m^M_{k_M,1}$ for $M$ and $m^N_{1,1}\geq m^N_{2,1} \geq \ldots \geq m^N_{k_N,1}$  for $N$, and also with $m^M_{r,j}=m^N_{r,j}$. 
By Lemma~\ref{equiv}, the map $\psi$ between the dull graphs is induced by a composition of finitely many chain flips in the extended decorated graph, up to possibly precomposing with the flip of the extended decorated graph and a multi-rescaling of the graph.  Since we fix orderings of the chains, we may need to precompose with a permutation of isomorphic chains.

As explained in Example \ref{perm}, a permutation of isomorphic chains induces an orientation-preserving isomorphism $H^{2*}_{S^1}(M)\to H^{2*}_{S^1}(M)$.
Precomposing with the flip of the extended decorated graph 
(and possibly a vertical translation) induces the algebra-map 
that we described in Example~\ref{symp-flip}, and called the symplectic flip.
The symplectic flip is an orientation-preserving isomorphism $H^{2*}_{S^1}(M)\to H^{2*}_{S^1}(M)$.
Following the conventions in Notation~\ref{not:gens}, the  algebra-map 
induced by a multi-rescaling of the extended decorated graph is 
$$
\tau^M_0\mapsto\tau^M_{0}\ ;\  \tau^M_{\infty}\mapsto\tau^M_{\infty} \ ; \
\sigma^M_{r,j}\mapsto \sigma^M_{r,j} \mbox{ for all } r \ ;
$$
this map extends to the identity map, which is an orientation-preserving isomorphism $H^{2*}_{S^1}(M)\to H^{2*}_{S^1}(M)$.
Finally, a chain flip on the extended decorated graphs induces
a chain flip in equivariant cohomology, 
as in Definition~\ref{defpartial}.  By Proposition~\ref{rem:iso},
this yields an orientation-preserving isomorphism $H^{2*}_{S^1}(M)\to H^{2*}_{S^1}(N)$.

It is straight-forward to check that
composing the algebra isomorphisms  induced by a permutation of isomorphic chains, a flip the extended decorated 
graph (if needed), a multi-rescaling, and  chain flips 
yields an orientation-preserving isomorphism $$f \colon H^{2*}_{S^1}(M)\to H^{2*}_{S^1}(N),$$ such that
$f(\varepsilon_F) = \varepsilon_{\psi(F)}$ for every fixed component $F$.
This means that
the map $\widehat{f}$ between the sets of fixed components of $M$ and of $N$ agrees with $\psi$ on the corresponding vertices of 
the dull graphs. By item (1) of Corollary \ref{cor:hatf2}, $f$ is unique.
Lastly, since the genera $g_M$ and $g_N$ are either labels in $\O_M$ and $\O_N$ or both zero, we must have $g_M=g_N$.

In the other direction, assume that $f \colon H_{S^1}^{2*}(M) \to H_{S^1}^{2*}(N)$ 
is an orientation-preserving algebra isomorphism and that $g_M=g_N$. By 
Corollary \ref{cor:Euler}, the map $\widehat{f}$ induces a bijection 
between the set of vertices of the dull graph of $M$ and the set of vertices 
of the dull graph of $N$. By items (2) and (3) of Corollary \ref{cor:opor}, 
and the assumption on the genus, this map preserves the extremal labels, 
the self-intersection labels, and the genus labels. Moreover, by item (2) 
of Corollary \ref{cor:hatf2}, the map induced by $\widehat{f}$ 
from the set of edges of the dull graph of $M$ 
to the set of edges of the dull graph of $N$ is a well-defined bijection that
sends an edge with end points $p,q$ to the edge with end 
points $\widehat{f}(p),\widehat{f}(q)$.  This map
preserves the edge-labels and the adjacency relation. Therefore, the map on 
dull graphs induced by $\widehat{f}$ is an isomorphism.  It is the unique 
isomorphism that agrees with $\widehat{f}$ on the vertices.
 \end{proof}

The last part of Theorem~\ref{thm:strong} has the following immediate corollary.

\begin{Corollary}  \label{cor:sue}
Let  $S^1 \acts (M,\omega_M)$ and $S^1 \acts (N,\omega_N)$ be compact, connected, 
four-dim\-en\-sion\-al Hamiltonian $S^1$-manifolds 
and let $\Lambda: H_{S^1}^{*}(M) \to H_{S^1}^{*}(N)$
be an orientation-preserving algebra isomorphism.  Then there is a unique isomorphism $\psi$ of the 
dull graphs associated to $M$ and $N$  such that $\Lambda(\varepsilon_F) = \varepsilon_{\psi(F)}$ for all fixed components $F$ of $M$.
 \end{Corollary}

In order to prove Theorem~\ref{thm:unique},
the remaining step is to understand
orientation-reversing isomorphisms.
In the following lemma, we explicitly construct
orientation-reversing isomorphisms
when $b_2(M)=2$.
This is a key ingredient in the proof of
Proposition~\ref{prop:inver}, 
which establishes exactly when orientation-reversing isomorphisms exist.

\begin{Lemma}\label{2reversing}
Let $S^1 \acts (M,\omega)$ be a  compact, connected, four-dimensional Hamiltonian $S^1$-manifold with $b_2(M) = 2$.
Then there exists an orientation-reversing isomorphism $H_{S^1}^{2*}(M) \to H_{S^1}^{2*}(M)$ as algebras over $H_{S^1}^{*}(\pt)$.
\end{Lemma}

\begin{proof}
When $b_2(M)=2$, we are in a base case where the (even-degree)
equivariant cohomology is explicitly described in Proposition~\ref{Prohirz} or Proposition~\ref{Proruled}.
We reproduce the extended decorated graphs, first shown in Figure~\ref{fig:toric projection1}(iii)-(vi) here, now labeled with
cohomology generators in $H^2_{S^1}(M;\Z)$.

\newpage 
\begin{center}
  \begin{figure}[ht]
\begin{overpic}[
scale=0.8,unit=1mm]{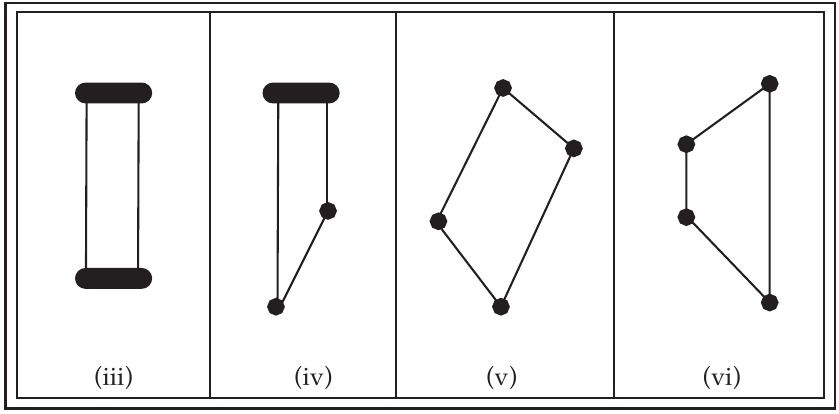}
   \put(14,13){\small{$\tau_0$}}
   \put(14,47){\small{$\tau_\infty$}}
   \put(5,30){\small{$\sigma_{1,1}$}}
   \put(20,30){\small{$\sigma_{2,1}$}}
   \put(39,47){\small{$\tau_\infty$}}
   \put(31,30){\small{$\sigma_{2,1}$}}
   \put(43,20){\small{$\sigma_{1,1}$}}
   \put(46,33){\small{$\sigma_{1,2}$}}
   \put(57,18){\small{$\sigma_{1,1}$}}
   \put(57,36){\small{$\sigma_{1,2}$}}
   \put(73,42){\small{$\sigma_{2,2}$}}
   \put(74,22){\small{$\sigma_{2,1}$}}
   \put(92,18){\small{$\sigma_{1,1}$}}
   \put(86,30){\small{$\sigma_{1,2}$}}
   \put(92,42){\small{$\sigma_{1,3}$}}
   \put(105,30){\small{$\sigma_{2,1}$}}
\end{overpic}
\caption{The possible extended decorated graphs, labeled with generators
in $H^2_{S^1}(M;\Z)$, for manifolds with $b_2(M)=2$.  See Figure~\ref{fig:toric projection1} for 
the decorated labels on these graphs.}
  \label{fig:classes for b2=2}
\end{figure}
\end{center}

\noindent For each case, we indicate the map on the generators
in $H^2_{S^1}(M;\Z)$, which extends to an orientation-reversing
isomorphism
$H_{S^1}^{2*}(M) \to H_{S^1}^{2*}(M)$ as algebras over $H_{S^1}^{*}(\pt)$. 
To verify that the isomorphism is orientation-reversing, 
it is enough to check that it negates the intersection form on the generators.

\vskip 0.1in

\noindent {\bf Case (iii).}
This case includes all ruled surfaces with $g>0$, as well as Hirzebruch surfaces
endowed with a circle action with two fixed surfaces. For the
generators $\tau_0$ ,$\tau_\infty$, $\sigma_{1,1}$, 
and $\sigma_{2,1}$, we define a correspondence
\begin{eqnarray*}
\tau_0 & \mapsto & \tau_\infty \\
\tau_\infty & \mapsto & \tau_0 \\
\sigma_{1,1} & \mapsto & -\sigma_{1,1}\\
\sigma_{2,1} & \mapsto & -\sigma_{2,1}
\end{eqnarray*}
Note that the classes $\sigma_{1,1}$ and $\sigma_{2,1}$ are equal
and may be identified with $\tau_h$.

\vskip 0.1in

\noindent {\bf Case (iv).}
This is the case of Hirzebruch surfaces endowed with a circle
action with a single fixed surface.
For the
generators $\tau_\infty$, $\sigma_{1,1}$, $\sigma_{1,2}$,
and $\sigma_{2,1}$, we define a correspondence
\begin{eqnarray*}
\tau_\infty & \mapsto & -\tau_\infty \\
\sigma_{1,1} & \mapsto & -\sigma_{1,1}\\
\sigma_{1,2} & \mapsto & \sigma_{2,1}\\
\sigma_{2,1} & \mapsto & \sigma_{1,2}
\end{eqnarray*}

\vskip 0.1in

\noindent {\bf Case (v).}
This is the case of Hirzebruch surfaces endowed with a circle
action with four isolated fixed points and two chains
with $\ell_1=\ell_2=2$.  
For the
generators $\sigma_{1,1}$, $\sigma_{1,2}$,
$\sigma_{2,1}$, and $\sigma_{2,2}$ we define a correspondence
\begin{eqnarray*}
\sigma_{1,1} & \mapsto & -\sigma_{1,1}\\
\sigma_{1,2} & \mapsto & \sigma_{2,1}\\
\sigma_{2,1} & \mapsto & \sigma_{1,2}\\
\sigma_{2,2} & \mapsto & -\sigma_{2,2}
\end{eqnarray*}

\vskip 0.1in

\noindent {\bf Case (vi).}
This is the case of Hirzebruch surfaces endowed with a circle
action with four isolated fixed points, one chain of length $3$, and the
other chain of length $1$.  Without loss of generality,
we assume $\ell_1=3$ and $\ell_2=1$.
For the
generators $\sigma_{1,1}$, $\sigma_{1,2}$, $\sigma_{1,3}$,
and $\sigma_{2,1}$, we define a correspondence
\begin{eqnarray*}
\sigma_{1,1} & \mapsto & -\sigma_{1,1}\\
\sigma_{1,2} & \mapsto & \sigma_{2,1}\\
\sigma_{1,3} & \mapsto &  -\sigma_{1,3}\\
\sigma_{2,1} & \mapsto & \sigma_{1,2}
\end{eqnarray*}
Having identified the isomorphism in each case completes the proof.
\end{proof}

\begin{Proposition} \label{prop:inver}
Let $S^1 \acts (M,\omega_M)$ and $S^1 \acts (N,\omega_N)$ be   
compact, connected, four-dim\-en\-sion\-al Hamiltonian $S^1$-manifolds with equal
genus $g_M=g_{N}$.
If there is an orientation-reversing isomorphism 
$H_{S^1}^{2*}(M) \to H_{S^1}^{2*}(N)$ as algebras over $H_{S^1}^{*}(\pt)$,
then $b_2(M) = b_2(N) = 2$
and there also exists an
orientation-preserving isomorphism $H_{S^1}^{2*}(M) \to H_{S^1}^{2*}(N)$ as algebras over $H_{S^1}^{*}(\pt)$.
\end{Proposition}

   \begin{proof}
   Let $f \colon H_{S^1}^{2*}(M) \to H_{S^1}^{2*}(N)$ be
 an orientation-reversing algebra-isomorphism. 
By Corollary \ref{cor:Euler}, since $f$ is an algebra-isomorphism, it induces 
a bijection $\widehat{f}$ between the sets of connected components of the fixed points sets of $M$ and $N$ that sends isolated points to isolated points and surfaces to surfaces. In particular, the decorated graphs of $S^1 \acts (M,\omega_M)$ and of $S^1 \acts (N,\omega_N)$ have the same number of fat vertices and the same number of thin vertices. 
Since every graph has exactly two extremal vertices, and since every fat vertex
is extremal, this implies that they have the same number of extremal thin vertices. 
Moreover, by item (2) of Corollary \ref{cor:opor}, since $f$ is orientation-reversing, $\widehat{f}$ interchanges extremal isolated fixed points and interior ones. Therefore, the number of extremal thin
vertices equals the number of interior thin vertices in each graph. Hence, in each graph
either there are exactly four thin vertices, or there is a fat vertex and exactly two thin
vertices, or there are two fat vertices and no thin vertices. Hence, by Morse theory, $b_{2}(M)=2=b_2(N)$. Lemma~\ref{2reversing} now guarantees that there is an orientation-reversing algebra-isomorphism $g \colon H_{S^1}^{2*}(M) \to H_{S^1}^{2*}(M)$. The composition $f \circ g$ is an orientation-preserving algebra-isomorphism $H_{S^1}^{2*}(M) \to H_{S^1}^{2*}(N)$.
\end{proof}

\begin{Remark}
For compact, oriented four-manifolds $M$ and $N$, an 
orientation-reversing isomorphism in cohomology $H^{*}(M) \to H^{*}(N)$ automatically negates the
intersection form.  Thus, such an isomorphism can exist only if the signature of the intersection
form is zero (see \cite{milnor husemoller} for the more general story).
Every compact, connected, symplectic four-manifold that admits a Hamiltonian $S^1$-action is
a blowup of $\C P^2$ or a blowup of a ruled surface, implying that $b_2^+=1$.
Zero signature then implies that $b_2^{-}=1$. So for there possibly to be
an orientation-reversing isomorphism, we must be in the case that $b_{2}=b_2^++b_2^-=2$.

Turning to the manifolds themselves,
M\"ullner has investigated orientation-reversing automorphisms of compact, oriented manifolds, and in
four dimensions has established the existence of orientation-reversing automorphisms 
in the (non-equivariant) topological category for the zero signature,
simply connected case \cite[Theorem~B]{mullner}.
In  \cite{hkt}, we provide explicit, equivariant, orientation-reversing diffeomorphisms
for compact, connected Hamiltonian $S^1$-manifolds of dimension four with $b_{2}=2$, which need not be simply connected.
\end{Remark}

\begin{proof}[Proof of Theorem \ref{thm:unique}]
The implication (1) $\Longrightarrow$ (2) is by the first part of Theorem \ref{thm:strong} and since, by \eqref{eq:poincare}, the odd degree ranks of $H_{S^1}^*$ are determined by the genus $g$ of the fixed surface if exists, and are all $0$ otherwise.

The implication (2) $\Longrightarrow$ (3) is immediate.

To show that (3)  $\Longrightarrow$ (1), assume that $2g_M=\rank H_{S^1}^{1}(M)=\rank H_{S^1}^{1}(N)=2g_N$ and that there is a 
weak algebra-isomorphism
$H_{S^1}^{2*}(M) \to H_{S^1}^{2*}(N)$. 
Up to pre-composing with the strictly weak algebra-isomorphism induced from 
$-\id \colon H_{S^1}^{2}(M) \to H_{S^1}^{2}(M)$ if necessary, we can assume that 
the weak isomorphism is an isomorphism. 
Moreover, by Proposition~\ref{prop:inver} we can assume that there is an 
orientation-preserving algebra-isomorphism $H_{S^1}^{2*}(M) \to H_{S^1}^{2*}(N)$. 
Therefore, by the second part of Theorem~\ref{thm:strong}, there is an isomorphism 
between the dull graphs of $M$ and $N$. 
\end{proof}

  Now, we check which of the isomorphisms preserves (or negates) the first equivariant Chern class. Recall that the \emph{equivariant Chern classes} $c_{\ell}^{S^1}(E)$ of an equivariant complex vector bundle $E$ are the Chern classes of the vector bundle $\widetilde{E}$ on $(M \times ES^1)/S^1$ whose pull back to $M \times ES^1$ is $E \times ES^1$. 
   To consider $TM \to M$ as an equivariant complex vector bundle, equip it with an $S^1$-invariant complex structure $J \colon TM \to TM$ that is compatible with $\omega$; 
   the Chern class of the complex vector bundle $(TM,J)$ is independent of the choice of invariant compatible almost complex structure.

  \begin{Proposition}\labell{pro:equivchern}
  Given Hamiltonian $S^1 \acts(M^4,\omega)$, we have 
  \begin{equation} \labell{eqc1}
c_1^{S^1}(TM)=\tau_0+\tau_{\infty}+\left(\sum_{i=1}^{k}\sum_{j=1}^{\ell_i}\sigma_{i,j}\right)-(2g+k-2)\tau_h;
\end{equation}
\begin{equation} \labell{eqc2}
c_{1}^{S^1}(TM)^2-2c_2^{S^1}(TM)=\tau_0^2+\tau_{\infty}^2+\left(\sum_{i=1}^{k}\sum_{j=1}^{\ell_i}\sigma_{i,j}^2\right)-(k-2)\tau_h^2;
\end{equation}
\begin{equation} \labell{eqc3}
(k-2) \tau_h^2-\sum_{i=3}^{k}{\sigma_i^2}=0.
\end{equation}
  \end{Proposition}
The number $k$, the classes  $\tau_h,\, \sigma_{i,j}, \, \tau_0, \, \tau_\infty$, and the lables $m_{i,j}$ 
are defined in Notation \ref{not:gens}; the genus $g$ is defined in Notation \ref{not:genus}. For $3 \leq i \leq k$, denote $$\sigma_{i}=
\begin{cases}
\sigma_{i,1} & \text{ if there is exactly one fixed surface and it is maximal}\\
\sigma_{i,\ell_i} & \text{ if there is exactly one fixed surface and it is minimal}\\
0 & \text{ otherwise}.
\end{cases}$$
In case the top (bottom) fat vertex does not exist we take $\tau_{\infty}$ to be $0$ ($\tau_0$ to be $0$).

\begin{proof}
The map 
$$
i^{*} \colon H_{S^1}^{*}(M) \to H_{S^1}^{*}(M^{S^1})=\bigoplus_{F \subset M^{S^1}} H^{*}(F)[t]
$$  
is injective by \cite[Theorem 1.1(A)]{KesslerHolm2}. Hence it is enough to show that equalities \eqref{eqc1} and \eqref{eqc2} hold when both sides are restricted to $F$, for every connected component $F$ of $M^{S^1}$.
By \cite[Corollary 4.9]{KesslerHolm2}, relying on \cite[Appendix~C]{GGK},
\begin{itemize}
\item if $F=\pt$ is an isolated fixed point, and  $w_1$ and $w_2$ are the
 weights of the $S^1$-action at $\pt$, with respect to $J$, then 
$${c_{1}^{S^1}(TM)}|_{\pt}=(-w_1-w_2)t \in H_{S^1}^{2}(\pt),$$ $${c_{2}^{S^1}(TM)}|_{\pt}=w_1 w_2 t^2 \in H_{S^1}^{4}(\pt),$$ and $$({{c_1^{S^1}})^2-2c_2^{S^1}}|_{\pt}=(w_1^2+w_2^2) t^2.$$   

\item If $F=\Sigma$ is a fixed surface, then
$$c_1^{S^1}(TM)|_{\Sigma_{*}}=(2-2g)[\Sigma]\otimes 1 + e_{*}[\Sigma] \otimes 1+(-1)^{\delta_{*=\min}}	 \otimes t,$$
$$c_2^{S^1}(TM)|_{\Sigma_{*}}=(-1)^{\delta_{*=\min}}(2-2g)[\Sigma]\otimes t,$$ 
and 
$$({{c_1^{S^1}})^2-2c_2^{S^1}}|_{\Sigma_{*}}=1 \otimes t^2+2(-1)^{\delta_{*=\min}}e_{*}[\Sigma] \otimes t,$$
where $[\Sigma]$ is the Poincar\'e dual of the class of a point in $H_{0}(\Sigma;\Z)$.
\end{itemize}

Note that if $\pt =v_{i,j}$ (the vertex between the $j$ edge and the $j+1$ edge on the $i$ chain) then the weights are $-m_{i,j}$ and $m_{i,j+1}$; if $\pt =v_0$ (isolated minimum) then the weights are $m_{1,1}$ and $m_{2,1}$; if $\pt =v_{\infty}$ (isolated maximum) then the weights are $-m_{1,\ell_1}$ and $-m_{2,\ell_2}$.
(Recall that if $\fat=1$ we assume, without loss of generality, that the fat vertex is the top vertex and that $m_{1,1} \geq m_{2,1} \geq 1=m_{3,1}=\ldots=m_{k,1}$.)
 
 Now, the equality of these classes to the restrictions to $F$ of the right-hands in \eqref{eqc1}, \eqref{eqc2} and \eqref{eqc3} follows directly from the restrictions of $\tau_0, \tau_{\infty},\tau_h,\sigma_{i,j}$ to $F$ listed in Tables \ref{Table-rest-2}, \ref{Table-rest-0}, \ref{Table-rest-1}, and justified in \S \ref{NoTitinj}.
\end{proof}

\begin{Proposition} \labell{cl:c1}
If $\ell_{i^*}>2$, for the map $f \colon H_{S^1}^{2*}(M) \to H_{S^1}^{2*}(N)$ induced by 
 a chain flip of the $(i^*)^{\mathrm{th}}$ chain, we have 
 $$f(c_1^{S^1}(TM))\neq \pm c_1^{S^1}(TN).$$ 
 However, 
 $$f(c_1^{S^1}(TM))|_{p} =  {c_1^{S^1}(TN)}|_{p}
 $$
 whenever $p=\max^N$, $p=\min^N$, or $p$ is an isolated vertex not on the flipped chain.
\end{Proposition}

\begin{proof}
Without loss of generality $i^*=1$.
By definition of the chain flip and Proposition \ref{pro:equivchern}, 
\begin{eqnarray*}
f(c_1^{S^1}(TM)) &=& f(\tau^M_0+\tau^M_{\infty}+\left(\sum_{i=1}^{k}\sum_{j=1}^{\ell_i}\sigma^M_{i,j}\right)-(2g+k-2)\tau^M_h)\\
&=&\tau^N_0+\tau^N_\infty+\sum_{j=1}^{\ell_1-1}m^N_{1,j}\sigma^N_{1,j}-\sum_{j=2}^{\ell_1-1}\sigma^N_{1,j}+\sum_{j=2}^{\ell_1}m^N_{1,j}\sigma^N_{1,j}\\
& & +\sum_{i=2}^{k}\sum_{j=1}^{\ell_i}\sigma^N_{i,j}-(2g+k-2)\tau^N_h\\
&=& c_1^{S^1}(TN)+\sum_{j=2}^{\ell_1-1}(2m^N_{1,j}-2)\sigma^N_{1,j}\\
&=&c_1^{S^1}(TN)+2(\tau^N_h-\sum_{j=1}^{\ell_1}\sigma^N_{1,j}) \neq c_1^{S^1}(TN).
\end{eqnarray*}
The last step follows from $\tau^N_h=\sum_{j=1}^{\ell_1}m_{1,j} \sigma^N_{1,j}$  and 
the fact that there are no other linear relations among $\tau_h,\sigma_{1,1},\ldots,$ and $\sigma_{1,\ell_1}$. The term $\sum_{j=1}^{\ell_1}m_{1,j} \sigma^N_{1,j}$ does not equal $\sum_{j=1}^{\ell_1}\sigma^N_{1,j}$ because we assumed $\ell_1>2$ and by Proposition 
\ref{label1}. 
Moreover, we get
$$f(c_1^{S^1}(TM))|_{\max^N}=c_1^{S^1}((TN))|_{\max^N} \neq 0$$
and similarly at $\min^N$ and at $p$ for every isolated vertex $p$ that is not on the first chain,
hence $f(c_1^{S^1}(TM))$ cannot equal $-c_1^{S^1}(TN)$. 
\end{proof}

We now turn to a key Corollary of Theorem~\ref{thm:unique}.  This will allow us to detect when there cannot
be a diffeomorphism preserving a compatible complex structure.

\begin{Corollary}\labell{cor:uniquec1}
Let  $S^1 \acts (M,\omega_M)$ and $S^1 \acts (N,\omega_N)$ be compact, connected, four-di\-men\-sion\-al 
Hamiltonian $S^1$-manifolds. 
Equip $M$ and $N$ with the orientations induced from the symplectic forms.
Let  $f \colon H_{S^1}^{2*}(M) \to H_{S^1}^{2*}(N)$ be an  
orientation-preserving isomorphism of algebras. 
Then 
\begin{itemize}
\item
$f(c_1^{S^1}(TM))=c_1^{S^1}(TN)$ if and only if $f$ is induced from an orientation-preserving 
equivariant diffeomorphism $\widecheck{f} \colon M \to N$ such that for any $S^1$-invariant, 
$\omega_M$-compatible almost complex structure $J_M$ on $M$, the structure 
$$\widecheck{f}_{*}{J_M}:=d\widecheck{f} \circ J_M \circ d\widecheck{f}^{-1}$$ 
is $S^1$-invariant and $\omega_N$-compatible;
\item 
$f(c_1^{S^1}(TM))=-c_1^{S^1}(TN)$ if and only if the isomorphism 
$f$ is induced from an 
orientation-preserving, equivariant diffeomorphism $\widecheck{f} \colon M \to N$ 
such that for any $S^1$-invariant, $\omega_M$-compatible almost complex structure 
$J_M$ on $M$, the structure $-\widecheck{f}_{*}{J_M}$ is $S^1$-invariant and $\omega_N$-compatible.
\end{itemize}
 \end{Corollary}

\begin{proof}
By Proposition \ref{prop:newiso},  an orientation-preserving isomorphism 
$$f \colon H_{S^1}^{2*}(M) \to  H_{S^1}^{2*}(N)$$
can be presented as a composition of finitely many chain flips, followed by
finitely many permutations of isomorphic chains, and possibly a symplectic flip.
Let $n$ be the minimal number of chain flips in such a presentation. 
It follows from formula \eqref{eqc1} in Proposition \ref{pro:equivchern} that a permutation of 
isomorphic chains sends $c_1^{S^1}$ to $c_1^{S^1}$, and a symplectic flip sends $c_1^{S^1}$ to 
$-c_1^{S^1}$. Thus, by Proposition \ref{cl:c1}, the isomorphism $f$ sends $c_1^{S^1}(TM)$ to 
either $c_1^{S^1}(TN)$ or to $-c_1^{S^1}(TN)$
if and only if the number $n$ is $0$.
Moreover, if $f(c_1^{S^1}(TM))=-c_1^{S^1}(TN)$, then there is a symplectic flip 
in any presentation of $f$ in which $n=0$; and if $f(c_1^{S^1}(TM))=c_1^{S^1}(TN)$, 
then there is a presentation of $f$ with $n=0$ and with no symplectic flip.

On the other hand, by Proposition \ref{scale1}, a permutation of isomorphic chains is induced by an
orientation-preserving equivariant diffeomorphisms that sends any invariant, compatible almost 
complex structure to an invariant, compatible almost complex structure. By Remark \ref{flipandscale}, 
the symplectic flip is induced by an orientation-preserving equivariant diffeomorphism that sends 
any invariant, compatible almost complex structure to minus an invariant, compatible almost 
complex structure.
\end{proof}

Recall that every compact, connected, four-dimensional Hamiltonian 
$S^1$-manifold of admits an $S^1$-invariant, integrable almost complex 
structure that is compatible with the symplectic form \cite[Theorem~7.1]{karshon}. 
An almost complex structure is integrable if and only if the 
Nijenhuis tensor vanishes (see \cite[Appendix 8]{370} and \cite[Chapter 2]{160}). 
Thus the naturality of the Nijenhuis tensor (see \cite[Proposition 4.2.1]{MS})
implies that the pushforward by a diffeomorphism of an integrable almost 
complex structure is again integrable. 
Therefore, Corollary \ref{cor:uniquec1} has the following consequence.

\begin{Corollary}\label{cor:uniquec2}
Let  $S^1 \acts (M,\omega_M)$ and $S^1 \acts (N,\omega_N)$ be compact, connected, four-di\-men\-sion\-al 
Hamiltonian $S^1$-manifolds. 
Equip $M$ and $N$ with the orientations induced from the symplectic forms.
If there is an orientation-preserving isomorphism of algebras from $H_{S^1}^{*}(M)$ to $H_{S^1}^{*}(N)$ that
 takes $c_1^{S^1}(TM)$ to $c_1^{S^1}(TN)$, then
there exist
invariant complex 
structures on $M$ and $N$, compatible with the symplectic forms, so that the manifolds are equivariantly biholomorphic.
\end{Corollary}

\begin{Remark} \label{rem:orient-rev}
In the case of an orientation-reversing isomorphism  
$$f \colon H_{S^1}^{2*}(M) \to H_{S^1}^{2*}(N),$$
the equivariant first Chern class is neither preserved nor negated.  
This is straight-forward to see by examining
the orientation-reversing algebra isomorphisms listed in the proof of Lemma~\ref{2reversing}
and using the formula \eqref{eqc1}.
\end{Remark}

We conclude this section by revisiting Example~\ref{first pass}.  We are now equipped with the tools
necessary to complete a full analysis of these $S^1$-manifolds. 
\begin{Example} \labell{counterE}
We 
consider the two compact Hamiltonian $S^1$-manifolds $S^1 \acts (M,\omega_M)$ and $S^1 \acts (N,\omega_N)$ with extended decorated graphs shown in Figure~\ref{fig:chain flip2}. 


\begin{center}
\begin{figure}[h]
\includegraphics[height=5cm]{cropped-partial-flip-not-diffeo.pdf} 
\caption[.]{On the left are two extended decorated graphs that differ by a chain flip. 
On the right is the dull graph
that is the dull graph of each.
}
\label{fig:chain flip2}
\end{figure}
\end{center}

The $S^1$-manifolds $S^1 \acts (M,\omega_M)$ and $S^1 \acts (N,\omega_N)$ are obtained by precomposing the inclusion $S^1 \hookrightarrow (S^1)^2$ sending $s \mapsto (1,s)$ on the toric actions in Figure \ref{fig:toricchainflip}.
\begin{center}
\begin{figure}[h]
\includegraphics[height=5cm]{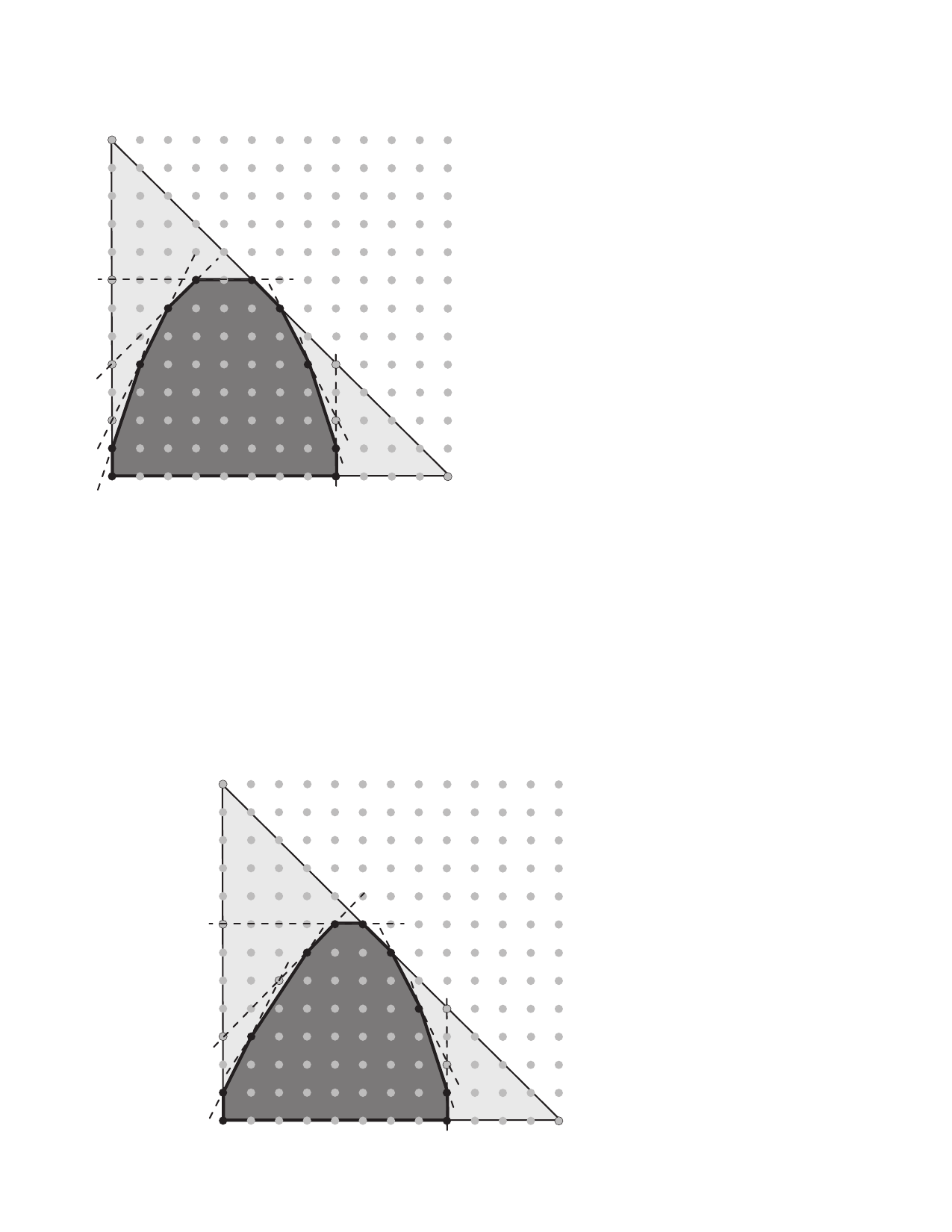} \,\,\,\,\,\,\,\,\,\,\,\,\,\,\,\,\,\,\,\,\,\,\,\,\,\,\,\,\,\,\,\,\,\,\,\,\,\,\,\,\,\,
\includegraphics[height=5cm]{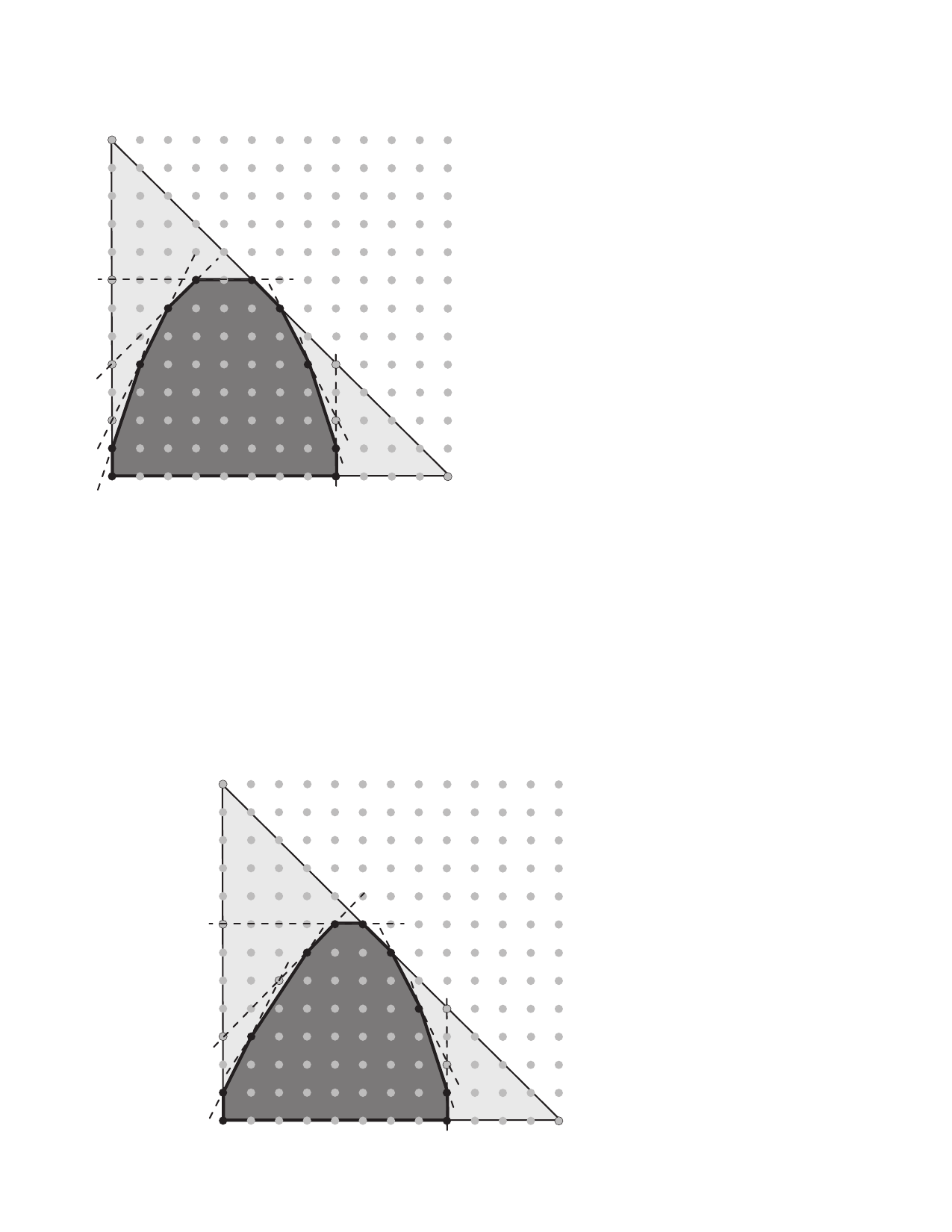}
\caption[.]{The moment map  (Delzant) polytopes of the toric actions extending the $S^1$-actions.}
\label{fig:toricchainflip}
\end{figure}
\end{center}
Both toric actions are obtained from the toric action on $(\CP^2, 12 \omega_{\FS})$ by a sequence of $7$ equivariant blowups,  of sizes $(5,4,3,2,2,1,1)$ in the left and  $(5,4,4,2,2,1,1)$ in the right. 
The moment map polytopes define different fans, so the toric manifolds are different as toric varieties. 

The decorated graphs in Figure \ref{fig:chain flip2} are not isomorphic, so the two manifolds 
are not $S^1$-equivariantly symplectomorphic. 
The manifolds $M$ and $N$ are diffeomorphic: they are both 
$7$-fold blowups of $\CP^2$ (or equivalently $6$-fold blowups of the non-trivial $S^2$-bundle over $S^2$).  The blowup forms determining the symplectic structures have reduced forms
$(12;5,4,3,2,2,1,1)$ and $(11;4,3,3,2,2,1,1)$ respectively.  Because these are not equal,
there is no diffeomorphism taking the symplectic form of one to the symplectic form of the other
\cite[Thm.~1.8]{KK:blowups}.

We show that the manifolds are not isomorphic by a weaker isomorphism: an equivariant diffeomorphism $\widecheck{f} \colon M \to N$ 
such that for an $S^1$-invariant $\omega_M$-compatible complex structure $J_M \colon TM \to TM$,  the complex 
structure $J_N:=\widecheck{f}_{*}J_M$ on $TN$ is $\omega_N$-compatible.
Under such an isomorphism, we would have  
\begin{equation} \labell{eq:c11}
\begin{array}{lcl}
\widecheck{f}_{*}(c_1^{S^1}(TM)) & =& \widecheck{f}_{*}(c_1^{S^1}(TM,J_M))\\
& =& c_1^{S^1}(TN,\widecheck{f}_{*}J_M)\\
& =&  c_1^{S^1}(TN,J_N)\\
&=& c_1^{S^1}(TN).
\end{array}
\end{equation}
The map $\widecheck{f}$ must send fixed points to fixed points. Moreover, since $d\widecheck{f}$ is an invariant $(J_M,J_N)$-complex linear isomorphism at each fixed point $p \in M$, the weights of the $J_N$-complex representation at $\widecheck{f}(p)$ 
equal either the weights of  the $J_M$-complex representation or their negation.
Furthermore, the map $\widecheck{f}$ sends any invariant $J_M$-holomorphic embedded sphere $S$ to an invariant $J_N$-holomorphic embedded sphere; it preserves or negates simultaneously the $J_M$-weights at fixed points that are the 
poles of $S$.
In particular, it sends the embedded spheres that correspond to the fat vertices and edges in the decorated graph of $S^1 \acts M$ to the ones that correspond to the fat vertices and edges in the decorated graph of $S^1 \acts N$.

The compatibility with the symplectic form implies that the weights of the complex representation can be read from the graph, as in \cite[\S 2]{karshon}:
$$\big\{ {\color{blue} \{-1,3\}, \{-3,2\},\{-2,1\}}, \{-1,3\}, \{-3,2\},\{-2,1\}\big\}$$ at the isolated fixed points in $M$ and 
$$\big\{ {\color{red} \{-1,2\}, \{-2,3\},\{-3,1\},} \{-1,3\}, \{-3,2\},\{-2,1\}\big\}$$ at the isolated fixed points in $N$. 
We conclude that $\widecheck{f}$ sends the isolated fixed points that are in a chain in the decorated graph of $M$ to the isolated fixed points that are in a chain in the decorated graph of $N$, but for one chain, the order 
(according to the moment map value) remains the same, and for the other chain, the order is reversed.
We call the latter a \emph{flipped chain}.

By \cite[Corollary 4.9]{KesslerHolm2}, relying on \cite[Appendix~C]{GGK}, at each isolated fixed point $\pt$, we have ${c_1^{S^1}}|_{\pt}=(-w_1-w_2)t$, where $w_1, \, w_2$ are the weights of the complex representation at $\pt$.
Hence, for every fixed point $p$ in the flipped chain, we have 
$${c_1^{S^1}(TN)}|_{F(p)}=-{c_1^{S^1}(TM)}|_{p}$$ 
and for every isolated fixed point $q$ in the other chain, 
$${c_1^{S^1}(TN)}|_{F(q)}={c_1^{S^1}(TM)}|_{q}$$ contradicting \eqref{eq:c11}.
Therefore, no such equivariant diffeomorphism $\widecheck{f}$ can exist.

 Still, the two $S^1$-manifolds in Figure \ref{fig:chain flip2} have the same dull graph, thus, by Theorem \ref{thm:unique},  their equivariant cohomology rings are isomorphic as algebras.
  (Note that $g=0$ in both, hence $H_{S^1}^{2*+1}(M)=0=H_{S^1}^{2*+1}(N)$ and an isomorphism of the even part is an isomorphism of $H_{S^1}^{*}$.)
  Explicitly, the map $f:H_{S^1}^{2*}(M;\Z)\to H_{S^1}^{2*}(N;\Z)$ 
induced by the chain flip of the left chain is an isomorphism, see Proposition~\ref{rem:iso}.
 \hfill $\diamondsuit$
\end{Example}

\section{Finitely many inequivalent maximal Hamiltonian torus 
actions\\ on a compact symplectic four-manifold} \labell{sec8}

We now use our understanding of the cohomology ring and invariants of a Hamiltonian
$S^1\acts (M^4,\omega)$ to provide a soft proof that there are finitely many maximal torus actions
on a fixed, compact, connected, four-dimensional symplectic manifold $(M,\omega)$.
If  we do not restrict to maximal torus actions, then there are infinitely many 
Hamiltonian tours actions on any four-dimensional symplectic manifold that admits a toric
action.
For example, for every relatively prime positive integers $(m,n)$, there is a 
Hamiltonian circle action on $(\CP^2,\omega_{\FS})$ whose graph has three 
edges of labels $m,n,m+n$, as in Figure~\ref{fig:circle on cp2}(c).
These actions are non-equivalent, and they all extend to the unique 
toric action on $(\CP^2,\omega_{\FS})$.

In her monograph, Karshon established that a Hamiltonian circle action extends 
to a toric action if and only if each fixed surface has genus zero and each 
non-extremal level set for the moment map contains at most two non-free orbits 
\cite[Proposition 5.21]{karshon}. In particular,  
every maximal circle action on $(M,\omega)$ has either 
\begin{itemize}[leftmargin=1in]
\item[(Case I)] two fixed surfaces; or
\item[(Case II)] exactly one fixed surface. 
\end{itemize}
For a maximal circle action, then, if $g(\Sigma)=0$, then the decorated graph 
with respect to any metric must have at least $3$ chains. 
Karshon also showed that if $M=\CP^2$ or is obtained from an $S^2$-bundle over 
$S^2$ by at most one symplectic blowup, then every effective circle action 
extends to a toric action.

In Case I above, the Hamiltonian $S^1$-manifold can be obtained  
by a sequence of $r$ equivariant symplectic blowups from a manifold with 
two fixed surfaces and no interior fixed points, that is from a symplectic $S^1$-ruled 
surface \cite[Theorem 6.1]{karshon}. If $g(\Sigma)=0$, then by the comments above, we
must have $r>1$.
If $g(\Sigma)>0$, then every Hamiltonian circle action falls into Case I.

For Case II, the $S^1$-manifold can be obtained by a sequence of 
$r>1$ equivariant symplectic blowups from either a minimal model of 
four isolated fixed points or from a minimal model of one fixed surface 
and two isolated fixed points. Both minimal models are projections of 
Delzant polytopes.

\begin{Notation} \labell{notm}
Fix $(M,\omega)$ that is a symplectic manifold obtained  by a sequence of 
$r$ symplectic blowups of sizes $\varepsilon_1,\ldots,\varepsilon_r$ from a ruled symplectic $S^2$-bundle $(M_{\Sigma},\omega_{\Sigma})$ over a surface $\Sigma$,
using Notation~\ref{basic notions}. 
As a smooth manifold, $M$ is the $r$-fold complex blowup of $M_{\Sigma}$, equipped with a complex structure such that each fiber is a holomorphic sphere. Let $E_1,\ldots,E_r$ denote the homology classes of the exceptional divisors.

Let 
$$N=\max\left\{\quad \dim H^{2}(M;\R)+2\quad ,
\quad 2\dim H^{2}(M;\R)-2\quad \right\}.
$$ 
Then $N$ is the maximal possible number of edges + fat vertices in the extended decorated graph of a Hamiltonian circle action on $(M,\omega)$, and in particular bounds from above the number of chains in such a graph. 
\end{Notation}

\begin{Remark}
We do not know {\em a priori} that every Hamiltonian circle action on $(M,\omega)$ is obtained by preforming the blowups of sizes $\varepsilon_1,\ldots,\varepsilon_r$ equivariantly. 
It might be obtained by $S^1$-equivariant blowups of different sizes, or starting from 
a different minimal model.
In fact, one can use ``hard" pseudo-holomorphic tools to 
every Hamiltonian circle action is obtained by preforming the 
blowups of sizes $\varepsilon_1,\ldots,\varepsilon_r$ equivariantly.
For the case  $g(\Sigma)=0$, this is in \cite{kkp}.  For positive genus, this is in
\cite{KesslerHolm1}.  In particular, these results imply
the finiteness of inequivalent maximal Hamiltonian circle actions on 
the symplectic manifold. In this section we give a soft proof of the
finiteness result, so in this section, we will not use \cite{KesslerHolm1,kkp}.
\end{Remark}

We now describe how to recover the decorated graph, 
without the height and area labels,  given the set of robust 
generators (without indicating the indices) and pointers to the minimal 
and maximal elements. 
We deduce the algorithm from the correlation between the intersection 
form on the generators and the labels and the adjacency relation in 
the decorated graph, explained in \S \ref{rem:int}.
We will denote $$\tau_{\max}=\begin{cases} \tau_{\infty} & \text{ if }\Sigma_{\infty} \text{ exists}\\
                                                            \sigma_{1,\ell_1} \cup \sigma_{2,\ell_2} & \text{ otherwise}
                                     \end{cases},$$     
                  $$\tau_{\min}=\begin{cases} \tau_{0} & \text{ if }\Sigma_0 \text{ exists}\\
                                                            \sigma_{1,1} \cup \sigma_{2,1} & \text{ otherwise}
                                     \end{cases}.$$     
Then we have the following.

\begin{Proposition} \labell{orderetc2}
Given the set  $(\tau_{\min},\tau_{\max}) \times S\times (g)$, where 
$$S:=\{\sigma_{i,j}\,| \, 1 \leq j \leq \ell_i  , \, 1 \leq i \leq k\} \smallsetminus  \{\sigma_{i,1}\, |\, i>2\},
$$ 
and the intersection form on the elements $\{\tau_{\min},\tau_{\max}\} \cup S$,
the 
decorated graph associated to the action, minus the height values and the 
area labels, is determined as follows.
\begin{enumerate}
\item First we recover the indices of the generators recursively.

\noindent Set $S_{\last}:=\{x \in S \, | \, x \cdot \tau_{\max} \neq 0\} \smallsetminus \{\tau_{\max}\}$ and
$$S_{\last-1}=\{x \in S \, | \, x \cdot y \neq 0 \text{ for some }y \in S_{\last}\} \smallsetminus (S_{\last} \cup \{\tau_{\max}\}).$$ 
For $1<j \leq \last-1$, set 
$$S_{\last-j}=\{x \in S \, | \, x \cdot y \neq 0 \text{ for some }y \in S_{\last-j+1}\} \smallsetminus \bigcup_{r=0}^{j-1} S_{\last-r}.$$ 
For $x,x' \neq \tau_{\max}$ we say $x \equiv x'$ if there are $y_1,\ldots,y_m$ in $S \smallsetminus \{\tau_{\max}\}$ such that $x \cdot y_1  \neq 0, \, y_1 \cdot y_2 \neq 0,  \ldots, y_{m-1} \cdot y_{m}\neq 0, y_{m} \cdot x'\neq 0$.  
Set $x=\sigma_{i,\ell_i-j}$ if $x$ is in the $i^{\mathrm{th}}$ equivalence class and in $S_{\last-j}$; the index $i$ is such that
$$\sigma_{1,\ell_1} \cdot \tau_{\max} \geq \sigma_{2,\ell_2} \cdot \tau_{\max} \geq \sigma_{3,\ell_3} \cdot \tau_{\max} \geq \ldots \ge \sigma_{k,\ell_k} \cdot \tau_{\max}.$$ 
The number of elements in the $i^{\mathrm{th}}$ equivalence class equals $\ell_i$ if $i=1,2$ and equals $\ell_i-1$ if $i \geq 2$.
\item  If $\tau_{\max} \in H_{S^1}^{2}(M) \smallsetminus \{0\}$ there is a $\max$ fat vertex, with self-intersection 
 $\tau_{\max} \cdot \tau_{\max}$.
Otherwise the maximal vertex is isolated. 

\noindent If $\tau_{\min} \in H_{S^1}^{2}(M) \smallsetminus \{0\}$ there is a $\min$ fat vertex, with self intersection 
 $\tau_{\min} \cdot \tau_{\min}$. 

\noindent Otherwise the minimal vertex is isolated. For every $i,j$ such that $\sigma_{i,j}\in S$, there is an edge that corresponds to $\sigma_{i,j}$. The $(i,j+1)$ edge is adjacent to the $(i,j)$ edge, the common vertex $v_{i,j}$ is isolated. The $(i,\ell_i)$ edge is adjacent to the top vertex, the $(i,1)$ edge is adjacent to the bottom vertex.
\item The labels are determined recursively.

\noindent If $\tau_{\max} \in H_{S^1}^{2}(M) \smallsetminus \{0\}$ then for every $1\leq i \leq k$, the label $m_{i,\ell_i}$ of the $(i,\ell_i)$ edge equals $1$. Otherwise $m_{1,\ell_1}$ and $m_{2,\ell_2}$ are
$\sigma_{2,\ell_2} \cdot \tau_{\max}$ and  $\sigma_{1,\ell_1} \cdot \tau_{\max}$ respectively, and for $3 \leq i \leq k$, $m_{i,\ell_i}=1$.
 
\noindent  For $j<\ell_i$, the label of the $(i,j-1)$ edge $m_{i,j-1}$ is 
$(m_{i,j}\sigma_{i,j} \cdot \sigma_{i,j}+m_{i,j+1})$.  
\item The genus label of each of the fat vertices is $g$.
\end{enumerate}
\end{Proposition}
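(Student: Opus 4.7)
The plan is to reverse-engineer the labelled decorated graph from the intersection form, using the explicit pushforward computations in Tables \ref{Table-inter-2}, \ref{Table-int-1}, \ref{Table-int-0} (see \S\ref{rem:int}). The governing principle is: $\sigma_{i,j}\cdot \sigma_{i',j'}\ne 0$ exactly when the invariant spheres $S_{i,j}$ and $S_{i',j'}$ share an isolated fixed point, and $\sigma_{i,j}\cdot \tau_{\max}$ (resp.\ $\tau_{\min}$) is non-zero exactly when $S_{i,j}$ reaches the top (resp.\ bottom) vertex. Both of these read off consecutive-edge adjacency within a chain and adjacency to an extremum, which is all the combinatorial data needed.

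For item (1), I would first argue that a generator $\sigma_{i,j}$ lies in $S_{\last}$ iff $j=\ell_i$, since by the intersection tables, $\sigma_{i,j}\cdot \tau_{\max}\ne 0$ forces the sphere $S_{i,j}$ to be incident to the top vertex. Downward induction on $k$ then shows $S_{\last-k}=\{\sigma_{i,\ell_i-k}\}_i$: among non-ephemeral generators, $\sigma_{i,j}$ pairs non-trivially only with $\sigma_{i,j\pm1}$ (in the same chain) or with an extremal class. Next I would verify the equivalence relation $\equiv$ groups exactly the generators in one chain: within a chain, $\sigma_{i,j}\cdot \sigma_{i,j+1}\ne 0$ chains them together, while distinct chains can only interact via $\tau_{\max}$ or $\tau_{\min}$, which the definition of $\equiv$ forbids as intermediate steps. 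The chain ordering by $\sigma_{i,\ell_i}\cdot \tau_{\max}$ matches the ordering by $m_{i,\ell_i}$ because, reading the intersection tables, this number equals $m_{i,\ell_i}$ when $\tau_{\max}=\tau_\infty$, and equals $m_{j,\ell_j}$ for the other chain's top label when there are no fat vertices; the count $\ell_i$ or $\ell_i-1$ of elements in each equivalence class matches the ephemeral exclusion from Notation~\ref{not:main}.

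For items (2) and (3), I would observe that fatness at the extrema is detected directly by $\tau_{\max},\tau_{\min}\ne 0$, with the stated self-intersections; adjacency of the $(i,j)$ and $(i,j+1)$ edges at an isolated vertex $v_{i,j}$ is built into item (1). To recover labels: when $\tau_{\max}$ is fat, Proposition~\ref{label1} forces $m_{i,\ell_i}=1$; otherwise the two-chain case gives $m_{1,\ell_1}=\sigma_{2,\ell_2}\cdot \tau_{\max}$ by direct computation from the intersection tables, and $m_{i,\ell_i}=1$ for $i\ge 3$ again by Proposition~\ref{label1}. The recursion $m_{i,j-1}=m_{i,j}(\sigma_{i,j}\cdot \sigma_{i,j})+m_{i,j+1}$ for $j<\ell_i$ is then the self-intersection formula $m_{i,j}(\sigma_{i,j}\cdot \sigma_{i,j})=-m_{i,j-1}-m_{i,j+1}$, which itself is the pullback to the fixed locus of the linear relation $\tau_h=\sum_j m_{i,j}\sigma_{i,j}$ together with Proposition~\ref{cl:inform}. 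Item (4) is vacuous since $g$ is supplied as input.

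The main obstacle will be the bookkeeping around ephemeral edges: one must carefully verify that excluding $\sigma_{i,1}$ for $i\ge 3$ from $S$ does not disrupt the equivalence classes nor the inductive identification of the chain levels $S_{\last-k}$, and that the ordering convention $m_{1,1}\ge m_{2,1}\ge\dots\ge m_{k,1}$ can be consistently reconstructed from the analogous ordering at the top established here (via a symmetric argument applied to $\tau_{\min}$, when there is no minimal fat vertex).
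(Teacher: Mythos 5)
Your overall route---reconstructing adjacency, chains, and labels directly from the intersection data in Appendix~\ref{app:fixed}---is the same one the paper relies on (the proposition is stated there without a separate proof, being deduced from \S\ref{rem:int}), and your items (2)--(4), the identification of $S_{\last}$, the top-label computation including the swap $m_{1,\ell_1}=\sigma_{2,\ell_2}\cdot\tau_{\max}$, and the recursion $m_{i,j-1}=m_{i,j}(\sigma_{i,j}\cdot\sigma_{i,j})+m_{i,j+1}$ are all correct. But your proof of item (1) has a genuine gap: the claim that a non-ephemeral $\sigma_{i,j}$ pairs non-trivially only with $\sigma_{i,j\pm1}$ or an extremal class, and that distinct chains interact only through $\tau_{\max}$ or $\tau_{\min}$, is contradicted by the paper's own tables. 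By Tables \ref{Table-int-1} and \ref{Table-int-0}, when the minimum is an isolated fixed point one has $\sigma_{i,1}\cdot\sigma_{i',1}=1$ for $i\neq i'$ (and dually $\sigma_{i,\ell_i}\cdot\sigma_{i',\ell_{i'}}=1$ when the maximum is isolated); only in the case $\fat=2$ (Table \ref{Table-inter-2}) do distinct chains fail to meet. This has two consequences your induction does not survive. First, $\equiv$ does not separate the first two chains when the minimum is isolated: $\sigma_{1,1}\cdot\sigma_{2,1}\neq 0$ holds directly, with no intermediate class needed, and since $\tau_{\min}$ is not an element of $S$, forbidding it as an intermediate step (as you propose) excludes nothing. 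Second, the layering is contaminated: take a fat maximum, an isolated minimum, and chain lengths $\ell_1=3$, $\ell_2=1$ (e.g.\ a Type I blowup of the graph in Figure~\ref{fig:circle on hirzebruch}(b)). Then $S_{\last}=\{\sigma_{1,3},\sigma_{2,1}\}$, and because $\sigma_{1,1}\cdot\sigma_{2,1}=1$ the bottom edge $\sigma_{1,1}$ is pulled into $S_{\last-1}$ alongside $\sigma_{1,2}$; your identification $S_{\last-j}=\{\sigma_{i,\ell_i-j}\}_i$ already fails at $j=1$, and the assignment rule would give both elements the index $(1,\ell_1-1)$. This configuration lies squarely inside the intended range of application (Proposition~\ref{lemunique} and Remark~\ref{rem:project} assume a fixed surface, WLOG maximal, but the minimum is isolated whenever $\fat=1$), so it is not a dismissible corner case.

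The missing idea is to isolate the minimum-adjacent generators before running the layering and the equivalence relation, and the given data suffices for this: the bottom edges are exactly the $x\in S$ with $x\cdot\tau_{\min}\neq 0$ (when the minimum is fat this reads $x\cdot\tau_0=1$ by Table \ref{Table-inter-2}; when it is isolated, localization at $v_0$ shows $x\cdot\tau_{\min}$ is nonzero precisely for $\sigma_{1,1}$ and $\sigma_{2,1}$). Discarding intersections between two such bottom elements---and, in the $\fat=0$ case, between two top elements---restores the chain-by-chain structure, separates the equivalence classes (whose sizes are then $\ell_i$ for $i=1,2$ and $\ell_i-1$ for $i\geq 3$, which is evidently what the statement's count intends), and lets your downward induction on the $S_{\last-j}$ go through. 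So the real obstacle in this proof is the cross-chain intersections at an isolated extremum, not the ephemeral-edge bookkeeping your closing paragraph flags; as written, your verification of item (1) asserts exactly the false separation statement that a correct proof must instead work around.
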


\begin{Remark} \labell{rem:project}
If $\Sigma_{\infty}$ exists, hence $\tau_{\max}=\tau_{\infty}$, the algorithm works, as is, when we replace the elements of the set $\{\tau_{\max}(=\tau_{\infty})\} \cup S \subset H_{S^1}^{2}(M)$ with their images in $H^{2}(M)$ under the map $I^{*} \colon H_{S^1}^{2}(M) \to H^{2}(M)$ of \eqref{surjfiber}, and replace $\tau_{\min}$ with the image of $\tau_0$ under this map (setting $\tau_0$ to be zero if $\Sigma_0$ does not exist). 
This is because the intersection form on the elements of $\{\tau_{\infty}\} \cup S \subset H_{S^1}^{2}(M)$ coincides with the intersection form on their images under $I^*$ in $H^{2}(M)$, by naturality of equivariant and ordinary cup products.
\end{Remark}

\begin{Notation}\labell{notxi}
Consider a Hamiltonian $S^1$-action on $(M,\omega)$. Associate the extended decorated graph  with respect to a generic metric as in \S \ref{decorated1}.
Denote by $k$ the number of chains in the extended decorated graph, and by $\ell_i$ the number of edges in the $i^{\mathrm{th}}$ chain. 
If there is a maximal fixed surface we order the chains such that $m_{1,1} \geq m_{2,1}\geq 1=m_{3,1}=\ldots=m_{k,1}$, where $m_{i,1}$ is the label of the first edge from the bottom in the $i^{\mathrm{th}}$ chain. 
Otherwise, if there is a minimal fixed surface we order the chains such that $m_{1,\ell_1} \geq m_{2,\ell_2}\geq 1=m_{3,\ell_3}=\ldots=m_{k,\ell_k}$, where $m_{i,\ell_i}$ is the label of the first edge from the top in the $i^{\mathrm{th}}$ chain. If $\sharp \text{fixed surfaces} = 0$ then $k=2$.
 If $\sharp \text{fixed surfaces} =2$, then $m_{i,1}=1=m_{i,\ell_i}$ for all $i$, so any order of $\{1,\ldots,k\}$ will work, see Proposition \ref{label1}. If $\sharp \text{fixed surfaces} = 1$ there is such an order, see Proposition \ref{claim:eph}.

Denote by $$x_0,x_{\infty},x_h,x_{i,j}, \, 1\leq i \leq k, \, 1 \leq j \leq \ell_i$$  the images of $\tau_0,\tau_{\infty},\tau_h,\sigma_{i,j} \in  H_{S^1}^{2}(M)$, defined in Notation \ref{not:gens}, under the map 
\begin{equation}\labell{eq:s1 to ordinary}
I^{*} \colon H_{S^1}^{2}(M) \to H^{2}(M)
\end{equation}
restricting to ordinary cohomology.
 If there is no minimal (maximal) fixed surface $\Sigma_0$ ($\Sigma_{\infty}$), we take $x_0$ ($x_{\infty}$) to be the $0$ class. 
 Note that $x_h=\sum_{j=1}^{\ell_i}{m_{i,j}x_{i,j}}$ for all $i$; as a consequence of Theorem~\ref{ThNew},
 this is well defined. 
  For $3 \leq i \leq k$, denote $$z_{i}=
\begin{cases}
x_{i,1} & \text{ if there is exactly one fixed surface and it is maximal}\\
x_{i,\ell_i} & \text{ if there is exactly one fixed surface and it is minimal}\\
0 & \text{ otherwise}.
\end{cases}$$
Denote the set 
$$X=\{x_{i,j}, \, 1\leq i \leq k, \, 1 \leq j \leq \ell_i \} \smallsetminus \{z_i, \, 3\leq i \leq k\}.$$

Since the equivariant and ordinary push forward maps  commute with the map \eqref{surjfiber}, see \eqref{dic},
the classes $x_0$ and $x_{\infty}$ are the Poincar\'e duals of the moment map-preimages of the bottom and top fat vertices; if $\sigma_{i,j}$ corresponds to a robust edge, the class $x_{i,j}$ is the Poincar\'e dual of the invariant embedded sphere whose image is that edge.
By \cite[Theorem 1.1]{KesslerHolm2}, the map \eqref{eq:s1 to ordinary} is surjective. 
Thus Theorem~\ref{ThNew} implies that the classes in $X$ generate the ordinary even-dimensional cohomology of $M$.
\end{Notation}

\begin{Definition}
Define a map $\xi$ from the set of Hamiltonian circle actions on $(M,\omega)$ 
to $(H^{2}(M;\R))^2 \times 2^{H^{2}(M;\R)}$ by
\begin{equation} \labell{tau}
\xi([S^1 \acts (M,\omega)])= (x_0,x_{\infty}) \times X, 
\end{equation}
where the classes $x_0,x_{\infty}$ and the set $X$ are as in Notation \ref{notxi}.
\end{Definition}

\begin{Proposition}\labell{lemunique}
When restricted to maximal Hamiltonian  circle actions on $(M,\omega)$, or to the larger subset of Hamiltonian circle actions on $(M,\omega)$ with $\sharp \text{fixed surfaces} \geq 1$, 
the map \eqref{tau} is one-to-one. 
\end{Proposition}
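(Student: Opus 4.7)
The plan is to reconstruct the full decorated graph of the action from the data $\xi(S^1\acts(M,\omega))=((x_0,x_\infty),X)$; injectivity then follows from Karshon's classification \cite[Theorem 4.1]{karshon}, which says that a Hamiltonian $S^1$-space of dimension four is determined, up to equivariant symplectomorphism, by its decorated graph.

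First I would recover the dull graph. The intersection form on $\{x_0,x_\infty\}\cup X$ is determined by $\xi$ alone, since each element is a class in $H^2(M;\R)$ and $\alpha\cdot\beta:=\int_M\alpha\cup\beta$ is an invariant of $(M,\omega)$, not of the action. Under the hypothesis $\sharp\text{fixed surfaces}\geq 1$, at least one of $x_0,x_\infty$ is nonzero, so the ordered pair identifies which extremum carries a fixed surface. This provides exactly the input that the algorithm of Proposition \ref{orderetc2} requires; applying it (via Remark \ref{rem:project} when $x_\infty\neq 0$, and the symmetric variant when only $x_0\neq 0$), I would obtain the underlying labelled graph, the chain index assignment of each $x_{i,j}$, the edge labels $m_{i,j}$, the extremal vertex types, and the self-intersection of each fat vertex. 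The genus label is a topological invariant of $M$ itself. For actions with exactly one fixed surface, the elements $\sigma_{i,1}$ with $i\geq 3$ omitted from $X$ correspond to ephemeral edges of label $1$ (Proposition \ref{claim:eph}); they can be re-inserted combinatorially, since their number is fixed by $\dim H^2(M;\R)$ and by the reconstructed combinatorics of the first two chains.

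Next I would recover the area and height labels by pairing with $[\omega]$. For each $x_{i,j}\in X$ corresponding to a non-ephemeral edge, the invariant sphere $S_{i,j}$ has symplectic size $\tfrac{1}{2\pi}\langle[\omega],x_{i,j}\rangle$; likewise the areas of $\Sigma_0$ and $\Sigma_\infty$ are $\tfrac{1}{2\pi}\langle[\omega],x_0\rangle$ and $\tfrac{1}{2\pi}\langle[\omega],x_\infty\rangle$. Since the length of the $(i,j)$-edge equals $m_{i,j}$ times the size of $S_{i,j}$ (\S\ref{decorated}), every edge-length is determined, and with the normalization $\min\Phi=0$ this determines every vertex height. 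Combined with the dull graph, the fat-vertex area labels and the genus, this is the full decorated graph.

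The main obstacle I anticipate is not conceptual but organisational: I must check that Proposition \ref{orderetc2}, which is stated in terms of the equivariant generators, transports faithfully through $I^*\colon H^*_{S^1}(M)\to H^*(M)$ to the set $X$ in ordinary cohomology. This is essentially the content of Remark \ref{rem:project}, which uses the surjectivity of $I^*$ from \cite[Theorem 1.1]{KesslerHolm2} and the naturality of cup product and pushforward to match the two intersection forms. With that step in place, the assumption $\sharp\text{fixed surfaces}\geq 1$ is precisely what rules out the symplectic-flip ambiguity: a canonical ``top'' (or, symmetrically, ``bottom'') generator is visible in $\xi$, which is exactly what the recursive index assignment in Proposition \ref{orderetc2} needs in order to start.
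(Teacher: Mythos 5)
Your proposal is correct and takes essentially the same route as the paper's proof: both reduce injectivity to Karshon's uniqueness of the decorated graph, recover the graph minus heights and areas from the intersection form on $\{x_0,x_\infty\}\cup X$ via Proposition~\ref{orderetc2} and Remark~\ref{rem:project} (with the genus read off from $M$ itself), and then recover edge-lengths, vertex heights, and fat-vertex area labels by pairing the classes with $[\omega]$, using that the $(i,j)$ edge-length is $m_{i,j}$ times $\frac{1}{2\pi}\int_M x_{i,j}\,\omega$. The only cosmetic differences are that the paper normalizes by assuming a lone fixed surface is maximal rather than treating the minimal case symmetrically, and your extra bookkeeping for the omitted classes $\sigma_{i,1}$, $i\geq 3$, is unnecessary since the algorithm of Proposition~\ref{orderetc2} already accounts for them.
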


\begin{proof}
Since  the decorated graph determines the Hamiltonian $S^1$-manifold \cite[Proposition 4.1]{karshon},  it is enough to show that the image of \eqref{tau} determines the decorated graph. Assume without loss of generality that if  $\sharp \text{fixed surfaces} = 1$ then the fixed surface is maximal.
By Remark \ref{rem:project}, 
 the decorated graph sans the height values and the area labels is determined by this image, and we can recover the indices of the elements of $X$. The genus label $g=g(\Sigma)$ is determined by $M$. 
 
 The height values and area labels are determined as follows, as we get directly from Notation \ref{notxi} and \S \ref{decorated}.
The height of the minimal vertex is $0$, the height difference between the minimal vertex and $v_{i,1}$ is $m_{i,1}\frac{1}{2\pi}  \int_{M} x_{i,1} \omega$, the height difference between $v_{i,j}$ and $v_{i,j+1}$ is $m_{i,j+1} \frac{1}{2\pi} \int_{M} x_{i,j+1} \omega$, the height of the maximal vertex is $\frac{1}{2\pi}\int_{M}x_h \omega$.
The area label of the top fat vertex is $\frac{1}{2\pi}\int_{M}x_\infty \omega$. If $x_0 \neq 0$ the area label of the bottom fat vertex is $\frac{1}{2\pi}\int_{M}x_0 \omega$.
\end{proof}

In proving the finiteness theorem, we will use the following
 formul\ae\ for the first Chern class and first Pontryagin class.
To consider $TM \to M$ as a complex vector bundle, equip it with a complex structure $J \colon TM \to TM$  that is compatible with $\omega$.

\begin{Lemma}\labell{c1c2}
For every Hamiltonian circle action on $(M,\omega)$ and $x_0,x_{\infty},x_h,x_{i,j},z_i$ as in Notation \ref{notxi}, we have
\begin{enumerate}
\item $\displaystyle{c_1(TM)=x_0+x_{\infty}+\left(\sum_{i=1}^{k}\sum_{j=1}^{\ell_i}x_{i,j}\right)-(2g+k-2)x_h,}$

\noindent where $k$ is the number of chains in the extended decorated graph and $\ell_i$ is the number of edges 
in the $i^{\mathrm{th}}$ chain; and
\item $\displaystyle{c_{1}(TM)^2-2c_2(TM)=x_0^2+x_{\infty}^2+\left(\sum_{i=1}^{k}\sum_{j=1}^{\ell_i}x_{i,j}^2\right)-(k-2)x_h^2}$
and 

\noindent $\displaystyle{(k-2) x_h^2-\sum_{i=3}^{k}z_i^2=0.}$
\end{enumerate}
\end{Lemma}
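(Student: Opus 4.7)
Both identities will be proved by induction on the number of $S^1$-equivariant K\"ahler blowups from a minimal model (\S\ref{preface}), using the blowup recipe of \S\ref{nbup} and Table~\ref{Table-blup}. I will use the standard surface-blowup formulas $c_1(T\widetilde M) = \bl^* c_1(TM) - [E]$ and $c_2(T\widetilde M) = \bl^* c_2(TM) + 1$, so that $c_1^2 - 2c_2$, viewed as an integer in $H^4 \cong \Z$, decreases by exactly $3$ under each blowup. The second key input is the behaviour of our generators under pullback: if an invariant sphere $S$ passes through the blowup point $p$, then $\bl^*[S] = [\widetilde S] + [E]$ with $\widetilde S \cdot \widetilde S = S \cdot S - 1$, while a sphere avoiding $p$ lifts to its proper transform with unchanged self-intersection.

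\textbf{Base case.} For each minimal model I verify both formulas directly. For a symplectic $S^1$-ruled $S^2$-bundle over $\Sigma_g$ ($k = 2$, $\ell_1 = \ell_2 = 1$, two fat vertices) we have $c_1(TM) = [\Sigma_0] + [\Sigma_\infty] + (2 - 2g)[F]$, matching $x_0 + x_\infty + x_{1,1} + x_{2,1} - 2g\, x_h$ since $x_{1,1} = x_{2,1} = x_h = [F]$. For $\mathbb{C}P^2$ (using $c_1 = 3L$) and Hirzebruch surfaces (using $c_1 = 2[S_\infty] + (2+N)[F]$), I check each of the circle-action types in Figures~\ref{fig:circle on cp2} and~\ref{fig:circle on hirzebruch} against the explicit generator identifications from Propositions~\ref{Procp2} and~\ref{Prohirz}. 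Part (2) follows by combining part (1) with $c_2(M) = \chi(M) = \iso + \fat(2 - 2g)$, computed from the fixed-point set; the relation $(k-2)x_h^2 = \sum z_i^2$ reduces to $0 = 0$ in every base case.

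\textbf{Inductive step.} Given a blowup $\bl \colon \widetilde M \to M$ at a fixed point $p$, I apply $\bl^*$ to the RHS formulas for $M$ and compare with the RHS formulas for $\widetilde M$. For Type~I (blowup at interior $v_{i^*,j^*}$), the two spheres $S_{i^*,j^*}, S_{i^*,j^*+1}$ pass through $p$; all other generators lift without correction; the new generator $\widetilde{\sigma}_{i^*,j^*+1}$ equals the exceptional class $[E]$; and a direct computation using the new labels gives $\bl^* x_h^M = \widetilde{x}_h$. Therefore the change in the RHS of (1) from $M$ to $\widetilde M$ is exactly $-[E]$, matching $c_1(T\widetilde M) - \bl^* c_1(TM) = -[E]$; and the change in the RHS of (2) is $-3$ (two self-intersection decrements plus $[E]^2 = -1$), matching the change in $c_1^2 - 2c_2$. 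Types~II, III, IV are handled analogously, reading off the updates of $k$, $\ell_i$, $\tau_0$, $\tau_\infty$, $\tau_h$ from Table~\ref{Table-blup}.

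\textbf{The side identity and main obstacle.} The relation $(k-2)x_h^2 = \sum_{i\geq 3} z_i^2$ is automatic when $\fat \neq 1$ (either $k = 2$, or $\tau_h^2 = 0$ by Theorem~\ref{ThNew} combined with $z_i = 0$). When $\fat = 1$, WLOG the max is fat, and the $z_i = x_{i,1}$ for $i \geq 3$ come from ephemeral first edges, defined by the linear relation $\sigma_{i,1} = \tau_h - \sum_{j\geq 2} m_{i,j}\sigma_{i,j}$. Either by expanding $x_{i,1}^2$ using this relation together with the intersection numbers of \S\ref{rem:int}, or by running the blowup induction, one obtains the identity. The main difficulty lies in Types~III and IV, where an extremal fat vertex is altered or replaced simultaneously with a change in $\tau_h$: one must coordinate the shifts in $x_\infty$ (or $x_0$) and in the $(2g + k - 2)x_h$ term so that the combined change is precisely $-[E]$, and this is exactly where the explicit combinatorial description of Theorem~\ref{ThNew} is indispensable.
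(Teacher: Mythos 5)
Your strategy is genuinely different from the paper's, and for parts (1) and the first identity of (2) it is a viable alternative. The paper does not touch blowup formulas at all: it proves the \emph{equivariant} refinements $c_1^{S^1}(TM)=\tau_0+\tau_\infty+\sum\sigma_{i,j}-(2g+k-2)\tau_h$ and the analogue for $(c_1^{S^1})^2-2c_2^{S^1}$, then applies the restriction $I^*$ of \eqref{surjfiber} to ordinary cohomology. The whole proof is a single localization check: since $i^*\colon H^*_{S^1}(M)\hookrightarrow H^*_{S^1}(M^{S^1})$ is injective by \cite{KesslerHolm2}, it suffices to compare restrictions at each fixed component, using $c_1^{S^1}|_{\pt}=(-w_1-w_2)t$, $((c_1^{S^1})^2-2c_2^{S^1})|_{\pt}=(w_1^2+w_2^2)t^2$, the fixed-surface formulas from \cite[Corollary 4.9]{KesslerHolm2}, and Tables \ref{Table-rest-2}--\ref{Table-rest-1}. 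That buys uniformity: no induction, no case analysis over blowup types, and the ephemeral generators cost nothing since their restrictions follow from the linear relation. Your induction buys elementarity ($c_1(T\widetilde M)=\bl^*c_1(TM)-[E]$ and $\chi(\widetilde M)=\chi(M)+1$ are classical, and you stay in ordinary cohomology), but the cost of ``Types II, III, IV are handled analogously'' is larger than your sketch suggests: the Type I pattern $\bl^*x_h=\widetilde x_h$ fails for the extremal types. In a Type III blowup at an isolated minimum, \emph{both} $S_{1,1}$ and $S_{2,1}$ pass through $p$, the exceptional edge is inserted at the bottom of chain $1$, and one finds $\bl^*x_h=\widetilde x_h+m_{2,1}\epsilon$ (with $\epsilon$ the dual of $E$); in Type IV one gets $\bl^*x_h=\widetilde x_h+\epsilon$ while simultaneously $x_0$ (or $x_\infty$) jumps from $0$ to $\epsilon$. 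The totals do come out to $-\epsilon$ and $-3$, but only after coordinated cancellations you would need to display, including for the ephemeral $x_{i,1}$, whose behaviour under $\bl^*$ is governed by $x_{i,1}=x_h-\sum_{j\ge2}m_{i,j}x_{i,j}$ rather than by proper transforms.

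One concrete warning about the side identity $(k-2)x_h^2=\sum_{i\ge3}z_i^2$: your proposed route of ``expanding $x_{i,1}^2$ using this relation together with the intersection numbers'' does not simply produce the identity. With $\fat=1$ one has $x_h\cdot x_{i,j}=0$ for $j\ge2$ and $\bigl(\sum_{j=2}^{\ell_i}m_{i,j}x_{i,j}\bigr)^2=-m_{i,1}m_{i,2}=-m_{i,2}$, so the expansion gives $z_i^2=x_h^2-m_{i,2}$ and hence
\begin{equation*}
(k-2)x_h^2-\sum_{i=3}^{k}z_i^2=\sum_{i=3}^{k}m_{i,2},
\end{equation*}
which vanishes exactly when every chain $i\ge3$ consists of a single edge ($\ell_i=1$, so $m_{i,2}=0$). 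So ``one obtains the identity'' needs that reduction stated and justified, e.g.\ by arranging the ordering convention so that all chains of length $\ge2$ sit in positions $1,2$. To be fair, the paper's own restriction-based check is subject to the same caveat: at a vertex $v_{i,1}$ with $i\ge3$ and $\ell_i\ge2$, the class $\sigma_{i,1}^2$ restricts to $m_{i,2}^2t^2$ while $\tau_h^2$ restricts to $0$, so the paper too is implicitly verifying \eqref{eqc3} in the single-edge configuration. You are not worse off than the paper here, but as written this step of your argument is an assertion, not a proof, and it is the one place where your computation, carried out honestly, returns a nonzero discrepancy rather than $0$.
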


The lemma follows from its equivariant counterpart in Proposition \ref{pro:equivchern}, by appying the map $I^* \colon H_{S^1}^{*}(M) \to H^{*}(M)$ of \eqref{surjfiber} to both sides of equations \eqref{eqc1}, \eqref{eqc2} and \eqref{eqc3}.
As discussed in \cite[\S5]{Tu-char:2010}, the equivariant characteristic classes are equivariant extensions 
  of the ordinary characteristic classes.

In what follows, we will need the following characterization
of the image of the fiber class
under the inclusion from the cohomology of the minimal model $M_\Sigma$ into the cohomology of its $k$-fold blowup.

\begin{Lemma} \labell{lemf}
For a class $A \in H_{2}(M_{\Sigma};\Z)$, assume that its image under the inclusion into $H_{2}(M;\Z)$ satisfies the following conditions.
\begin{itemize}
\item Its self intersection number is zero.
\item Its coupling with the first Chern class $c_{1}(TM)$ equals two.
\item Its $\omega$-symplectic area  is positive.
\end{itemize}
Then 
\begin{enumerate}
\item if $g(\Sigma)>0$, then $A=F$.
\item if $g(\Sigma)=0$ then either $A=F$ or $M_{\Sigma}=S^2 \times \Sigma$ and $A$ is the base class $B$. 
\end{enumerate}
\end{Lemma}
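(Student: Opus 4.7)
The plan is to reduce everything to a short Diophantine computation in $H_2(M_\Sigma;\Z)$. First I would observe that since $A$ is the image under inclusion of a class from the minimal model, it can be represented by a cycle disjoint from the exceptional divisors $E_1,\ldots,E_r$, so that $A\cdot E_i = 0$ for every $i$. Combined with the standard blowup formula for the first Chern class ($c_1(TM) = \bl^*c_1(TM_\Sigma) - \sum_i \PD(E_i)$), this gives $A\cdot_M A = A\cdot_{M_\Sigma} A$ and $\langle c_1(TM),A\rangle = \langle c_1(TM_\Sigma),A\rangle$, so it suffices to work entirely inside $M_\Sigma$.

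Next I would split according to whether the $S^2$-bundle $M_\Sigma\to\Sigma$ is trivial or non-trivial, using the bases from Notation~\ref{basic notions}. For the trivial bundle $\Sigma\times S^2$, the pairing on $\{F,B\}$ is $F\cdot F = B\cdot B = 0$, $F\cdot B = 1$. For the non-trivial bundle I would work in the basis $\{F,B_1\}$ with $F\cdot F = 0$, $F\cdot B_1 = 1$, $B_1\cdot B_1 = 1$. In both cases the adjunction formula $c_1(TM_\Sigma)\cdot C = (2-2g_C) + C\cdot C$, applied to $F$ and to a chosen section, yields $c_1(TM_\Sigma) = (2-2g)\PD(F)+2\PD(B)$ in the trivial case and $c_1(TM_\Sigma) = (1-2g)\PD(F)+2\PD(B_1)$ in the non-trivial case.

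Writing $A = aF + bB$ (respectively $A = aF + bB_1$), the hypothesis $A\cdot A = 0$ forces $ab=0$ in the trivial case and $b(2a+b)=0$ in the non-trivial case. The Chern number condition $\langle c_1(TM),A\rangle = 2$ then pins down the remaining coefficient. A short case analysis yields: the branch $b=0$ always produces $A = F$; the complementary branch gives an integer solution only for the trivial bundle with $g=0$, producing $A = B$. For non-trivial bundles the complementary branch forces $(4g-4)a=2$, which has no integer solutions for any $g \geq 0$. The positivity of $\omega$-area fixes orientations (the Chern number condition already excludes the spurious classes $-F$ and $-B$, which would pair with $c_1$ to give $-2$). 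The main obstacle is bookkeeping, particularly in the non-trivial bundle case where the Chern class coefficients must be computed via adjunction rather than via a direct splitting of the tangent bundle; once that formula is in hand, the remaining Diophantine analysis is immediate and completes the proof.
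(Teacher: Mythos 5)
Your overall route is essentially the paper's own: write $A$ in the basis $\{F,\hat B\}$ of Notation~\ref{basic notions}, use $A\cdot A=0$ to split into two branches, pin down coefficients with $\langle c_1(TM),A\rangle=2$, and finish with positivity of the symplectic area. Your opening reduction to $M_\Sigma$ (orthogonality $A\cdot E_i=0$ together with $c_1(T\widetilde{M})=\bl^{*}c_1(TM_\Sigma)-\PD(E)$) makes explicit a step the paper leaves implicit, which is a genuine improvement in exposition. Your non-trivial-bundle analysis is also correct: the branch $b=-2a$ forces $(4g-4)a=2$, which indeed has no integer solutions, matching the paper's treatment of the $p+2q=0$ branch.

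However, your case analysis in the trivial-bundle case contains a genuine error. For $\Sigma\times S^2$ with $g=2$, the complementary branch $a=0$ \emph{does} have an integer solution: $(2-2g)b=2$ gives $b=-1$, i.e.\ $A=-B$. Moreover, your parenthetical claim that the Chern condition already excludes $-B$ because it ``would pair with $c_1$ to give $-2$'' is false precisely there: $\langle c_1(TM),-B\rangle=-(2-2g)=2g-2=2$ when $g=2$. So $A=-B$ satisfies both the self-intersection and the Chern hypotheses, and is ruled out \emph{only} by the third hypothesis, since $\omega(-B)=-\omega(B)<0$. This is exactly the one place where the paper's proof invokes area positivity (its case ``$g=2$ and $p=-1$''); your write-up asserts the case does not arise and justifies that with an incorrect pairing computation, so as written the argument fails at $g=2$. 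The repair is short but necessary: the divisibility condition $(2-2g)\mid 2$ gives $g\in\{0,2\}$, with $g=0$ producing $A=B$ as you state, and $g=2$ producing $A=-B$, which must then be excluded by the area condition rather than by the Chern condition.
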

Part (1) is proven in \cite[Lemma 4.7]{KesslerHolm1}; part (2) is a slight modification of that proof. For completeness, we prove the lemma here.

\begin{proof}
Denote $\widehat{B}=B$ if  $M=(\Sigma \times S^2)_{k}$ and 
$\widehat{B}=B_{1}$ if $M=(M_{\Sigma})_{k}$.
Write $A\in H_{2}(M_{\Sigma};\Z)$ as $A=p\widehat{B}+qF$ for $p,q \in \Z$. 
Since $A \cdot A=0$, $\widehat{B} \cdot F=1$, $F \cdot F=0$ and 
$$\widehat{B} \cdot \widehat{B}=\begin{cases} 0 & \text{ if }\widehat{B}=B\\ 1 & \text{ if }\widehat{B}=B_{1}\end{cases},$$ 
we get that  if $M_{\Sigma}=\Sigma \times S^2$ then $2pq=0$, i.e., either $p=0$ or $q=0$, and  if $M_{\Sigma}$ is the non-trivial $S^2$-bundle then $0=p^2+2pq=p(p+2q)$, i.e.,  either $p=0$ or $p+2q=0$. 
In the trivial bundle case, if $q=0$ then by the second property of $A$ we have  $2=c_{1}(TM)(A)=(2-2g)p$. Similarly, in the non-trivial bundle case, if $p+2q=0$ then $2=c_{1}(TM)A=(2-2g)p+p+2q=(2-2g)p$. 

If $g$ is a positive integer, the equality $2=(2-2g)p$ holds only if $g=2$ and $p=-1$, however, if $M_{\Sigma}=\Sigma \times S^2$ this (and $q=0$) yield that  $\omega(A)= -\omega(\widehat{B})<0$ contradicting the third condition; if $M_{\Sigma}$ is the non-trivial bundle, this (and $p+2q=0$) yield that $2q=1$ contradicting the fact that $q$ is an integer.
 We conclude that $p=0$ hence $2q=c_{1}(TM)(A)=2$, i.e., $q=1$. 
 
 If $g=0$ then $2=(2-2g)p$ implies that $p=1$. If $M_{\Sigma}$ is the trivial bundle then
 this (and $q=0$)  yield that  $A=B$; if $M_{\Sigma}$ is the non-trivial bundle we get $1+2q=p+2q=0$ contradicting $q \in \Z$. Otherwise $p=0$ hence (since $2q=c_{1}(TM)(A)=2$) $q=1$.
\end{proof}

We next turn to a key estimate that will allow us to show that the generators must lie in a bounded region in the (equivariant) 
cohomology algebra with real coefficients, thought of as a normed vector space.

\begin{Lemma}\labell{xhbounded}
There exists a positive constant $C_h(=C_h(M,\omega))$ such that for every maximal Hamiltonian circle action on $(M,\omega)$, for the class $x_h$ and the $k-2$ classes $z_3,\ldots,z_{k}$ associated to the action we have 
\begin{equation} \labell{eq:xhbound}
\int_{M}((2g+k-2)x_h-\sum_{i=3}^{k}z_i)\omega \leq C_h.
\end{equation}
\end{Lemma}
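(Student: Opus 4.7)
The plan is to apply Lemma \ref{c1c2}(1) to express $(2g+k-2)x_h - \sum z_i$ as a sum of Poincar\'e duals of embedded invariant symplectic surfaces minus the fixed class $c_1(TM)$, and then to bound the resulting sum of symplectic areas uniformly.

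First, Lemma \ref{c1c2}(1) yields the identity
$$
(2g+k-2)x_h - \sum_{i=3}^{k} z_i = x_0 + x_\infty + \sum_{(i,j):\, x_{i,j}\neq z_r \text{ for any } r} x_{i,j} - c_1(TM)
$$
in $H^2(M;\R)$. Pairing with $[\omega]$ and noting that $\int_M c_1(TM)\,\omega$ depends only on $(M,\omega)$ (and hence is a fixed constant), it suffices to bound
$$
\int_M \Bigl(x_0 + x_\infty + \sum_{(i,j):\,x_{i,j}\neq z_r} x_{i,j}\Bigr)\,\omega
$$
from above by a constant depending only on $(M,\omega)$.

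Next, I would identify each class appearing in the integrand as the Poincar\'e dual of an embedded invariant symplectic surface. By Notation \ref{notxi}, $x_0$ (resp.\ $x_\infty$) is the Poincar\'e dual of $\Sigma_0$ (resp.\ $\Sigma_\infty$) when it exists. By Proposition \ref{claim:eph} together with the definition of $z_r$, the retained $x_{i,j}$ are precisely Poincar\'e duals of non-ephemeral gradient spheres $S_{i,j}$ from Notation \ref{not:main}; the discarded classes $z_r = x_{r,1}$ for $r\geq 3$ (in the one-fat-vertex case) are either the ephemeral first-edges of chains $i\geq 3$ or honest spheres that we omit. By Remark \ref{rem:spheres}, each such surface is $J$-holomorphic for an invariant $\omega$-compatible integrable complex structure $J$ as in \S\ref{preface}, so each integral $\int_M x_\bullet\,\omega>0$ equals the symplectic area of the corresponding embedded $J$-holomorphic curve.

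The main obstacle, and the heart of the argument, is a uniform upper bound on the sum of these symplectic areas as the action varies over all Hamiltonian circle actions on $(M,\omega)$. For this I would combine three ingredients: (i) the total number of summands is at most $N+2$, where $N$ is as in Notation \ref{notm} and depends only on $\dim H^2(M;\R)$; (ii) the pairwise intersection numbers $x_\bullet\cdot x_{\bullet'}$ lie in $\{0,1\}$, as the corresponding embedded surfaces are either disjoint or meet transversely at a single isolated fixed point; and (iii) the self-intersection $x_\bullet^2$ is controlled via the adjunction formula by the fixed genus of the curve (namely $g$ for fat-vertex surfaces and $0$ for the spheres). Combining (i)--(iii) with the Hodge index theorem applied to the compatible K\"ahler class $[\omega]$ on $(M,\omega,J)$, each class $x_\bullet$ is confined to a bounded subset of $H^2(M;\R)$, so each pairing $\int_M x_\bullet\,\omega = \langle x_\bullet,[\omega]\rangle$ is uniformly bounded. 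Summing over the at most $N+2$ terms and incorporating the fixed correction $-\int_M c_1(TM)\,\omega$ produces the required constant $C_h$.
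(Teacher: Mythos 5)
Your opening reduction is fine: rearranging Lemma~\ref{c1c2}(1) to
$(2g+k-2)x_h-\sum_{i=3}^{k}z_i = x_0+x_\infty+\sum' x_{i,j}-c_1(TM)$
and absorbing $\int_M c_1(TM)\,\omega$ into the constant is exactly how the bound is consumed later, and the positivity of the retained areas is correct. The genuine gap is your final step. Ingredients (i)--(iii) plus the Hodge index theorem do \emph{not} confine the classes $x_\bullet$ to a bounded subset of $H^2(M;\R)$. Writing $x=y+r[\omega]$ with $\langle y,\omega\rangle=0$, Hodge index gives only $\langle y,y\rangle\leq 0$ and positivity of area gives only $r\geq 0$; since $x\cdot x=\langle y,y\rangle+r^2\langle\omega,\omega\rangle$, any bound on $x\cdot x$ and on pairwise intersections is compatible with $r\to\infty$ as long as $\langle y,y\rangle\to-\infty$. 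In the paper, the Hodge-index confinement argument appears in the proof of Theorem~\ref{finite}, and the upper bound on $\sum_n r_n$ there is supplied \emph{by} Lemma~\ref{xhbounded}; invoking that mechanism to prove the lemma is circular. Your ingredient (iii) is also false as stated: adjunction gives $[S]^2=c_1(TM)\cdot[S]-2$ for an embedded sphere, which does not bound $[S]^2$ by the genus, and indeed the self-intersections $-\frac{m_{i,j-1}+m_{i,j+1}}{m_{i,j}}$ are unbounded below as the action varies (cf.\ the $m,1,m+1$ example in the Remark following the lemma, where $\frac{1}{2\pi}\int_M x_h\,\omega=m+1$ is unbounded over the non-maximal actions on $(\CP^2,\omega_{\FS})$).

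Note also that your argument never uses maximality, which that Remark shows is essential, so some input beyond intersection numbers is unavoidable. The paper's actual proof gets the upper bound from the structure theory: a maximal action has two fixed surfaces (Case I) or exactly one (Case II). In Case I the action is obtained from a ruled minimal model $(M'_\Sigma,\omega'_\Sigma)$ by equivariant blowups that do not affect the fiber edge, so $x_h$ equals the image of the fiber class $F'$, and Lemma~\ref{lemf} (self-intersection $0$, $c_1$-pairing $2$, positive area) pins $F'$ down as $F$, or the base class $B$ when $g=0$; the areas $\omega(F)$ and $\omega(B)$ are invariants of $(M,\omega)$, and multiplying by the bound $k\leq N$ of Notation~\ref{notm} gives $C_h$. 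Case II is reduced to Case I by ``reverse induction'' blowups at the isolated minimum, and it is precisely the correction terms $z_i$ in \eqref{eq:xhbound} that absorb the resulting area changes, with positivity of $\int_{\tilde{M}}\tilde{x}_{i,1}\,\tilde{\omega}$ yielding the inequality. Without an argument of this kind identifying $x_h$ with a class whose area is fixed by $(M,\omega)$, your proposal does not close.
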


\begin{proof}
We will prove \eqref{eq:xhbound} in each of the two possible cases 
of maximal Hamiltonian circle actions described in at the beginning of the section.

In Case I, the circle action is obtained from a ruled circle action on  a 
symplectic $S^2$-bundle $(M'_{\Sigma},\omega'_{\Sigma})$ with fiber 
class $F'$ by a sequence of $r$ equivariant blowups at either a fixed 
surface or a non-extremal isolated fixed point. Such blowups do not 
affect the fiber edge and the height difference between the fat vertices. 
Therefore $x_h=F'$ and
$$
\omega(x_h)=\omega(F')=2 \pi \cdot \left(\text{ the height difference between the fat 
vertices }\right)=\omega'_{\Sigma} (F').
$$
The surface $\Sigma$ is determined by the genus $g$.
Moreover, we can assume that as 
smooth, K\"ahler
$S^2$-bundles $M_{\Sigma}=M'_{\Sigma}$. 
If $\dim H_2(M)>2$ then by \cite[Lemma 5.1]{KesslerHolm1},
 we can assume that the sequence of equivariant 
 blow downs along invariant embedded symplectic spheres 
 in $E'_k$, then in $E'_{k-1}$, and so on, results in a ruled  
 $S^1$-action on $S^2 \times \Sigma=M'_{\Sigma}$, otherwise 
 replace the last blow down with a blow down along an invariant 
 embedded symplectic sphere in $F'-E'_1$; we can similarly assume 
 that $M_{\Sigma}=S^2 \times \Sigma$.
  If $\dim H_{2}(M)=2$ then if $M$ is the non trivial bundle there are classes 
  of odd self intersection in $H_{2}(M)$, while in the trivial bundle there are not.
  By Lemma \ref{lemf},
 if $g>0$ then $F'=F$; if $g=0$ then either $F'=F$ or $F'$ is the base class 
 $B$ in $H_{2}(M_{\Sigma})=H_{2}(S^2 \times S^2)$, (note that, since the 
 action is maximal, if $g=0$ then $\dim H_{2}(M)>2$ so we can 
 assume $M_{\Sigma}=S^2 \times \Sigma$).

 We conclude that 
$$\text{if }g>0, \, \int_{M}{x_h} \omega = \omega(F); \, \, \int_{M}(2g+k-2){x_h} \omega \leq (2g+N)\omega(F)=:C_h;$$
$$\text{if }g=0, \, \int_{M}{x_h} \omega \leq \max\{\omega(F),\omega(B)\};  \, \, \int_{M}(k-2){x_h} \omega \leq N\max\{\omega(F),\omega(B)\}=:C_h,$$
where $N$ is as in Notation \ref{notm}.
(In this case $z_i=0$ for all $i$.)

Now turning to Case II, 
we must have  exactly one fixed sphere and the action is obtained by a 
sequence of more than one equivariant blowups from an $S^1$-action 
that extends to a toric action. without loss of generality, the fixed sphere 
is maximal. We now apply ``reverse induction": blow up (by a small enough size) 
at the isolated minimum with weights $m_{1,1},m_{2,1}$ such that 
$m_{1,1}>m_{2,1}$ to get a new minimum with weights 
$m_{1,1},m_{1,1}-m_{2,1}$.  We repeat until we get a minimal point with 
both weights equal $1$; in the next blowup it is replaced by a minimal 
sphere.
We get a circle action of 
Case I on the symplectic manifold $(\tilde{M},\tilde{\omega})$  
obtained from $(M,\omega)$ by the sequence of symplectic blowups.

Note that all the new vertices created in this process are on the first two chains.
The effect of each blowup in the process on each of the $k-2$ chains is by reducing the size of the first edge (that is ephemeral unless the label of the first edge on the second chain equals $1$). Such an edge has label $1$ through the process, and is robust at the end of the process. 
Therefore, for the class $\tilde{x}_h$ associated to the obtained circle action on the obtained $(\tilde{M},\tilde{\omega})$ we have $$\int_{M} x_h \omega-\int_{\tilde{M}}\tilde{x_h}\tilde{\omega}=\int_{M}{z_i}\omega-\int_{\tilde{M}}\tilde{x}_{i,1} \tilde{\omega}$$
for $1 \leq i \leq k-2$.
Therefore
\begin{eqnarray*}
\int_{M}{((k-2)x_h-\sum_{i=3}^{k}z_i)\omega} &=& \int_{\tilde{M}}{((k-2)\tilde{x}_h-\sum_{i=3}^{k}\tilde{x}_{i,1})\tilde{\omega}}\\
 &\leq & \int_{\tilde{M}}{((k-2)\tilde{x}_h)\tilde{\omega}} \\
 & \leq&  N\max\{\tilde{\omega}(F),\tilde{\omega}(B)\}\\
 & =&  N\max\{\omega(F),\omega(B)\}=:C_h.\\
\end{eqnarray*}
The first inequality is a consequence of the fact that the $\tilde{x}_{i,1}$s are 
Poincar\'e dual to embedded $\tilde{\omega}$-symplectic spheres and hence 
their coupling with $\tilde{\omega}$ is positive. The second inequality
follows from Case I. 
Now because $(\tilde{M},\tilde{\omega})$ is obtained from $(M,\omega)$ 
by symplectic blowups, hence is obtained from $(M_{\Sigma},\omega_{\Sigma})$ 
by symplectic blowups, we have that $\tilde{\omega}(F)=\omega_{\Sigma}(F)=\omega(F)$ and $\tilde{\omega}(B)=\omega_{\Sigma}(B)=\omega(B)$.
\end{proof}

\begin{Remark}
Lemma \ref{xhbounded} is not true if we do not restrict to maximal actions. For example, for every $m \in \N$ there is a non-maximal Hamiltonian circle action on $(\CP^2,\omega_{\FS})$ whose graph has three edges of labels $m,1,m+1$, and each edge is the image of an invariant embedded symplectic sphere with symplectic size $1$; the symplectic size $\frac{1}{2\pi}\int_M x_h \omega=m+1$.
\end{Remark}

We now turn to the main theorem of this section, that there are only finitely many maximal $S^1$ and
$S^1\times S^1$ actions on a four-dimensional symplectic manifold $(M,\omega)$.
The main idea in the proof is to use the Hodge Index Theorem, as in McDuff and Borisov's proof for
the finiteness of toric actions \cite[Theorem 1.2]{mcduff:finite}. 
We need the additional estimates described above to implement this method for circle actions on
four-dimensional manifolds.

\begin{proof}[Proof of Theorem \ref{finite}]

The finiteness of the toric actions is by
 \cite[Proposition 3.1]{mcduff:finite}. 
So 
 it is enough to show that the number of maximal Hamiltonian circle actions is finite.
By Proposition~\ref{lemunique},
it is enough to show that there is a finite subset $B$ of $H^{2}(M,\R)$, such that for every maximal Hamiltonian circle action on $(M,\omega)$, the set $X \cup \{x_0,x_{\infty}\}$ (as in Notation \ref{notxi}) that corresponds to the action is a subset of $B$.
Since these are integral classes, it is enough to show that they are all contained in a bounded subset of  $H^{2}(M,\R)$.

Consider a maximal Hamiltonian $S^1 \acts (M,\omega)$. There 
is an integrable  complex structure $J$ on $M$ such that $(M,\omega,J)$ is K\"ahler, the action is holomorphic, and the associated metric $\omega(\cdot,J\cdot)$ is generic \cite[Theorem 7.1]{karshon}. This allows us to apply the  Hodge index theorem, and get that the Hodge-Riemann form
$$\langle \alpha,\beta\rangle:=\int_{M}\alpha \, \wedge \, \beta $$ on $H^{1,1}(M,J) \cap H^{2}(M;\R)$
is nondegenerate of type $(1,-1,\ldots,-1)$, i.e.,
is negative definite on the orthogonal complement to $[\omega]$.

The spheres and surfaces whose images are fat vertices and robust edges of the extended decorated graph  with respect to a generic metric are holomorphic curves in $(M,J)$, see Remark \ref{rem:spheres}. Hence the set $X_s$ of their Poincar\'e  duals 
  is contained in $H^{1,1}(M,J) \cap H^{2}(M;\R)$, see \cite[pp.\ 162--163]{GH}.
Enumerate the elements of the set $X \cup \{x_0,x_{\infty}\}$ that is associated with the $S^1$-action as $\{x_n\}$, with $n=1,\ldots,2+|X|$; write 
$$x_n=y_n+r_n[\omega], \text{ where }\langle y_n,\omega\rangle =0 \text{ and }r_n \in \R.$$
By the Hodge index theorem
$\langle y_n,y_n \rangle\leq 0.$
Since each of the $x_n$s is the Poincar\'e dual to the class of a symplectic sphere or surface
 $S_n$, 
$$r_n\langle \omega, \omega\rangle=\langle x_n,\omega \rangle=\int_{S_n}\omega  \geq 0.$$ 
By item (1) of Lemma \ref{c1c2} and Lemma \ref{xhbounded}, 
$$\sum_{n}\int_{M}x_n \omega = \int_{M}c_{1}(TM) \omega + \int_M ((2g+k-2) x_h -\sum_{i=3}^{k}z_i) \omega \leq  \int_{M}c_{1}(TM) \omega +C_h=:C,$$
where $x_h, z_3,\ldots,z_{k}$ are as in Notation \ref{notxi}.
So
$\sum r_n\langle \omega,\omega \rangle$ is bounded from above by the constant $C$. 
We now assume, without loss of generality, that $\langle \omega,\omega \rangle =\int_{M} \omega^2 = 1$, normalizing $\omega$ and the constants if necessary.
Hence $0\leq r_n\leq C$ for all $n$, and $\sum {r_n}^2\leq  (2+|X|) C^2 \leq N C^2$, where $N$ is as in Notation \ref{notm}. By item (2) of Lemma \ref{c1c2},
$$\sum_{n}\int_{M}{x_n}{x_n}=\int_{M}({c_1}^2-2c_2)=:A,$$
so
 $\sum{r_n}^2+\sum\langle y_n,y_n \rangle=\sum\langle x_n,x_n \rangle$ equals the constant $A$, hence $0 \leq -\sum \langle y_n,y_n \rangle \leq NC^2-A$. 

We conclude that for every maximal Hamiltonian $S^1$-action, each of the elements in the associated set $X \cup \{x_0,x_{\infty}\}$ is in the bounded subset 
$$\{y+r[\omega]\, :\, 0\leq - \langle y,y \rangle \leq NC^2-A \text{ and }0\leq r \leq C \}$$
of $H^{2}(M;\R)$.
\end{proof}

In fact, we have proven the following result, analogous to the McDuff-Borisov result  
\cite[Theorem~1.2]{mcduff:finite}.

\begin{Theorem}
Let $R$ be a commutative ring of finite rank with even grading, and write $R_{\R} := R \otimes _{\Z} \R$. Fix elements $[\omega] \in R_{\R}$ and $c_1,c_2 \in R$ 
of degrees $2$, $2$ and $4$ respectively. 
Then, for any non-negative integer $g$, there are, up to equivariant symplectomorphism, 
at most finitely many maximal Hamiltonian $S^1$-actions on a fixed, compact, connected, four-dimensional  symplectic manifold $(M,\omega)$ for which 
\begin{itemize}
\item there is a ring isomorphism $\Psi \colon H^{2*}(M;\Z) \to R$ that takes the symplectic class and the Chern classes $c_{i}(M)$, $i=1,2$, to the given elements $[\omega] \in R_{\R}, \, c_i \in R$, 
\item the genus of the fixed surfaces in $S^1 \acts M$, if they exist, is $g$.
\end{itemize}
\end{Theorem}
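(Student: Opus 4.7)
My plan is to adapt the proof of Theorem \ref{finite} to the present setting by transporting the relevant geometry into $R$ via $\Psi$. For each maximal Hamiltonian $S^1 \acts (M, \omega_M)$ satisfying the hypothesis, the generators $x_0, x_\infty, x_{i,j}$ of Notation \ref{notxi} push forward to integral classes in the degree-$2$ part of $R$. By Proposition \ref{lemunique}, the tuple $\xi([S^1 \acts (M,\omega_M)])$ determines the $S^1$-space up to equivariant symplectomorphism; moreover the statement of Proposition \ref{lemunique} depends only on $\Psi$-invariant data. Hence it suffices to show that the transported images of $\xi$ lie in a bounded subset of $R_{\R}$, because integrality then forces finiteness.

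Next, I would run the Hodge-index argument of Theorem \ref{finite} intrinsically in $R$. Since $\Psi$ preserves $c_1$, $c_2$, and $[\omega]$, the identities of Lemma \ref{c1c2},
\[
c_1 = y_0 + y_\infty + \sum_{i,j} y_{i,j} - (2g + k - 2)\, y_h
\quad\text{and}\quad
c_1^2 - 2c_2 = y_0^2 + y_\infty^2 + \sum_{i,j} y_{i,j}^2 - (k-2)\, y_h^2,
\]
hold in $R$, where $y_n := \Psi(x_n)$ and $k$ is the number of chains. The intersection form on the degree-$2$ part of $R$ is Lorentzian of signature $(1, r-1)$, where $r$ is its rank---an intrinsic feature of $R$, since by hypothesis it is realized as $H^2(M;\Z)$ for a K\"ahler surface with $p_g = 0$ (rational or ruled, by Karshon's classification). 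Decomposing $y_n = \alpha_n [\omega] + y_n^\perp$ with $\alpha_n \geq 0$ and $(y_n^\perp)^2 \leq 0$, and using $\sum_n y_n^2 = c_1^2 - 2c_2$ together with an upper bound on $\sum_n \alpha_n \cdot [\omega]^2$, each $y_n$ will be confined to a bounded region of $R_{\R}$.

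The hard part will be producing the bound on $\sum_n \alpha_n$ uniformly across all $(M, \omega_M)$ in the hypothesis---equivalently, a version of Lemma \ref{xhbounded} whose constant $C_h$ depends only on $(R, [\omega], c_1, c_2, g)$. That constant involves the symplectic areas of the fiber $F$ and (when $g = 0$) the base $B$ of the minimal model $M_\Sigma$. Under $\Psi$ these become pairings $[\omega] \cdot y_F$ and $[\omega] \cdot y_B$, where $y_F, y_B \in R$ satisfy $y^2 = 0$ and $c_1 \cdot y = 2$ by Lemma \ref{lemf}. These algebraic conditions alone need not bound such $y$ when the rank of $R$ is large; for example, in $H^{2*}(\CP^2 \# k\overline{\CP^2})$ with $k \geq 9$ infinitely many integral null classes with $c_1$-pairing $2$ exist, and their $[\omega]$-pairings are unbounded. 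To circumvent this, I would combine the algebraic characterization with the effectivity constraint that $y_F$ and $y_B$ are Poincar\'e dual to embedded $J$-holomorphic spheres---hence lie in the closed pseudoeffective cone and pair nonnegatively with every $y_n$---together with the Hodge-index structure on $R$, to derive the required uniform bound on $C_h$. Once that is in place, the proof of Theorem \ref{finite} applies verbatim in $R$ to complete the proof.
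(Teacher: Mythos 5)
Your overall architecture matches the paper's: the paper states this theorem with no separate argument, presenting it as a restatement of what the proof of Theorem \ref{finite} already established --- transport the tuple of Proposition \ref{lemunique} into $R$ via $\Psi$, reconstruct the decorated graph from the intersection pairings and the $[\omega]$-pairings of the transported generators together with $g$ (Proposition \ref{orderetc2} and Remark \ref{rem:project}), and rerun the Hodge-index estimate using the identities of Lemma \ref{c1c2}, which involve only $c_1$, $c_2$, $[\omega]$, the rank bound $N$, and the constant $C_h$ of Lemma \ref{xhbounded}. You have correctly isolated the one step where ``depends only on $(R,[\omega],c_1,c_2,g)$'' is not a formality: $C_h$ is built from $\omega(F)$ and $\omega(B)$, and your observation that the conditions $y^2=0$, $c_1\cdot y=2$, $[\omega]\cdot y>0$ of Lemma \ref{lemf} do not confine $y$ to a bounded set in large rank is correct --- the Cremona/Weyl orbit of $L-E_1$ in a $k$-fold blowup of $\CP^2$, $k\geq 9$, preserves the first two conditions and contains infinitely many integral classes, hence classes with unbounded $[\omega]$-pairing. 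Note, though, that Lemma \ref{lemf} is stated only for classes $A$ coming from $H_2(M_\Sigma;\Z)$, the rank-two sublattice of the minimal model; inside that sublattice the conditions do pin down $F$ (or $B$), which is why the proof of Theorem \ref{finite} for a single fixed $(M,\omega)$ is untouched by your examples. The issue you raise is whether that sublattice --- hence the numbers $\omega(F)$, $\omega(B)$ --- is controlled by the abstract data as $(M,\omega_M,\Psi)$ ranges over the family; the paper treats this as immediate and leaves it implicit.

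Your proposal, however, does not close this gap: the decisive step is announced (``I would combine \dots to derive the required uniform bound'') rather than carried out, and the ingredients you reach for are not visible to $\Psi$. Membership in the pseudoeffective cone, representability by an embedded $J$-holomorphic sphere, and nonnegativity of pairings with the classes $y_n$ are properties of a particular member $(M,\omega_M,J)$, not of $(R,[\omega],c_1,c_2,g)$, so they cannot by themselves produce a constant depending only on the ring data; moreover the $y_n$ are precisely the classes you are trying to confine to a bounded region, so positivity of $y_F$ against them is circular as a source of a uniform bound (recall also that when $2g+k-2\geq 1$ the naive inequality $\sum_n \langle x_n,\omega\rangle \leq \langle c_1,\omega\rangle + (2g+k-2)\langle x_h,\omega\rangle$ is vacuous, since $x_h$ is itself a positive combination of the $x_{1,j}$ --- this is exactly why Lemma \ref{xhbounded} needed the geometric blowdown argument rather than algebra). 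As it stands, your argument proves finiteness for each fixed $(M,\omega)$, which is Theorem \ref{finite}, but not the stated theorem: what is still missing is either a bound on $\omega_M(F_M)$ and $\omega_M(B_M)$ in terms of $(R,[\omega],c_1,c_2,g)$ alone, or a substitute for Lemma \ref{xhbounded} formulated intrinsically in $R$.
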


\newpage
\appendix

\section{Preliminaries on equivariant cohomology} \labell{app:A}

In the case of circle actions, we  consider the \emph{classifying bundle}
$ES^1 := S^{\infty}$ as the unit sphere in an infinite
dimensional complex Hilbert space $\C^\infty$.  This space is contractible 
and equipped with a free $S^1$-action by
coordinate multiplication.  We define
$$
H_{S^1}^*(M) = H_{S^1}^{*}(M;\Z):=H^*((M\times ES^1)/S^1; \Z),
$$
where $S^1\acts (M\times ES^1)$ diagonally. 
The classifying space is $BS^1 = ES^1/S^1 = \CP^\infty$.
The equivariant cohomology of a point is
\begin{equation} \label{eqhs1pt}
H^*_{S^1}(\pt) = H^*(BS^1;\Z) = H^*(\CP^\infty;\Z) = \Z[t], \,\,\,\deg(t)=2.
\end{equation}

\begin{Remark} \labell{finiteappro}
We can interpret $ES^1=S^{\infty}$ as the direct limit of odd-dimensional spheres $S^{2k+1} \subset {\C}^{k+1}$ with respect to the natural inclusions, and $BS^1=\CP^{\infty}= \lim \limits_{\longrightarrow} {\CP^k}$.
Then $(M \times ES^1) / S^1$ is a direct limit of $(M \times S^{2k+1})/ S^1 $. For every degree $q$ we have 
$H_{S^1}^{q}(M)=H^{q}((M \times S^{2k+1})/ S^1;\Z)$ for all sufficiently large $k$. See, e.g., \cite[Example C.1]{GGK}.
\end{Remark}

If we endow a point $\pt$ with the trivial $S^1$-action, then the constant map
$$\pi:M\to \pt$$ is equivariant.  This induces a map
in equivariant cohomology 
\begin{equation} \labell{pi*}
\pi^*:H_{S^1}^*(\pt) \to H_{S^1}^*(M).
\end{equation}

Consider the map \begin{equation} \labell{surjfiber}
 I^{*} \colon H_{S^1}^{*}(M;\Z) \to H^{*}(M;\Z),
\end{equation}
induced by the
 fiber inclusion $I \colon M \to (M\times ES^1)/S^1$.
By \cite[Theorem 1.1]{KesslerHolm2}, in case $M$ is  a compact connected symplectic four-manifold, and $S^1\acts M$ is Hamiltonian, the map
\eqref{surjfiber}
is a surjection.
Moreover, the sequence of maps
\begin{equation*} 
0 \to M \to (M\times ES^1)/S^1 \to BS^1 \to 0
\end{equation*}
induces
 a short exact sequence

\begin{equation*} 
0 \leftarrow H^{*}(M) \leftarrow H_{S^1}^{*}(M) \leftarrow \langle \pi^{*}t \rangle \leftarrow 0.
\end{equation*}
Hence 
\begin{equation*} 
H^{*}(M)=H_{S^1}^{*}(M)/\langle \pi^{*}t \rangle.
\end{equation*}
Moreover, \eqref{surjfiber} gives a natural map between ``ordinary" invariants, e.g., Chern numbers, to ``equivariant" ones.
The equivariant Euler class is denoted $e_{S^1}$ and is defined to be the ordinary Euler class of $ES^1$.

Recall that the map \eqref{pi*} endows
$H_{S^1}^*(M)$ with a $H_{S^1}^*(\pt)=H^*(BS^1;\Z)$-algebra structure. 
Due to \eqref{eqhs1pt}, 
one can find the algebra-structure of $H_{S^1}^{*}(M)$ over $H^{*}(BS^1;\Z)$ if one knows how elements in $H^{2}(BS^1;\Z)$ map to $H_{S^1}^{2}(M)$ by $\pi^{*}$,
 and in particular if one knows $\pi^{*}(t)$.

An  $S^1$-equivariant continuous map of compact oriented $S^1$-manifolds, $f \colon N \to M$  induces 
 the {\bf equivariant pushforward map}
$$f^{!} \colon H_{S^1}^{*}(N) \to H_{S^1}^{*-n+m}(M),$$
where $n=\dim N, \, m=\dim M$, as follows.
 For $q, \, k \in \N$,  
 we have the push-forward homomorphism 
$ H^{q}(N \times S^{2k+1}/S^1) \to H^{q-n+m}(M \times S^{2k+1}/S^1)$
defined by
$$
\xymatrix{
    H^{q}(N \times S^{2k+1}/S^1) \ar[d]  & H^{q+m-n}(M \times S^{2k+1}/S^1) \\
    H_{n-q}(N \times S^{2k+1}/S^1)\ar[r] & H_{n-q}(M \times S^{2k+1}/S^1) \ar[u]
}$$
where the vertical maps are the Poincar\'e duality homomorphisms and the horizontal one is the map induced by $f$ on homology.
To define the equivariant push-forward map $f^{!}$ take $k$ large enough such that these cohomology spaces are equal to the equivariant cohomology of $M$ and $N$, see Remark \ref{finiteappro}. The push-forward is independent of $k$.
This map is sometimes called the {\bf equivariant Gysin homomorphism}. 
We have the following commutative diagram
 \begin{equation} \labell{dic}
\begin{CD}
H_{S^1}^{*}(N) @> f^{!} >> H_{S^1}^{*-n+m}(M)\\
@V \pr VV @V \pr VV\\
H^{*}(N) @> f^{!} >> H^{*-n+m}(M)
\end{CD}
\end{equation}
where the top pushforward map is equivariant and the bottom one is standard, and the $\pr$ map is as in \eqref{surjfiber}.

We denote 
\begin{equation} \labell{eqpi}
\int_{N}:={\pi}^{!} \colon H_{S^1}^{*}(N) \to H_{S^1}^{*-\dim N}(\pt), \text{ where }\pi \colon N \to \pt
\end{equation}
is the constant map. We will denote the push forward of $\pi$ in standard cohomology by the same notation. We similarly define the equivariant pushforward map induced by $(S^1)^k$-equivariant maps.

 For an $S^1$-invariant embedded surface $\iota_{\Sigma} \colon \Sigma \to M$ in a four-dimensional $M$, 
the Poincar\'e dual of $\Sigma$ as an equivariant cycle in $M$, i.e.,
$\iota_{\Sigma}^{!}(\mathbb{1})$, where $\mathbb{1} \in H_{S^1}^{0}(\Sigma)$, is a class in $H_{S^1}^{2}(M)$. Its pullback under $\iota_{\Sigma}$ is  the equivariant Euler class  of the normal $S^1$-vector bundle of $\Sigma$ in $M$:
$$
\iota_{\Sigma}^*\left(\iota_{\Sigma}^{!}(\mathbb{1})\right) = e_{S^1}(\nu(\Sigma \subset M)).
$$

For an action of a torus $T=(S^1)^k$ on a compact manifold $M$, the \emph{Atiyah-Bott/ Berline-Vergne (ABBV) localization formula}
\cite{AB, BV:1982} expresses 
the equivariant pushforward of any class $\alpha\in H_T^*(M;\Q)$
under $\pi \colon M \to \pt$
as a sum 
\begin{equation}\label{eq:abbv}
\pi^{!}(\alpha) = \sum_{{F\subseteq M^T}}  \left({\pi|_{F}}\right)^{!}\left(\frac{\alpha|_F}{e_T(\nu(F\subseteq M)}\right).
\end{equation}
of the equivariant pushforwards under $\pi_F$ over
the connected components $F$ of the fixed point set $M^T$.
Here
$\alpha|_F$ is the restriction of $\alpha$ to $F$, and $e_T(\nu(F\subseteq M))$ is the equivariant Euler class of the normal bundle of $F$.
(We must use $\Q$ coefficients because Euler classes are inverted.)

\newpage

\pagestyle{empty}
\begin{landscape}

\section{Data on equivariant cohomology read from the decorated graph}\labell{app:fixed}

\subsection{Restrictions to connected components of the fixed point set}
 In the following three tables, we list
all {\bf non-zero} restrictions of each generator to the fixed point set.  Restrictions to any other vertices not listed in the table are zero. 
For these restriction calculations, see \S \ref{NoTitinj}.

\subsection*{The case of two fixed surfaces.}
Here, $\sigma_{i,j}$ is supported on the isolated vertices $v_{i,j-1}$ and $v_{i,j}$ for $j=2,\dots,\ell_i-1$.  
When $j=1$ or $j=\ell_i$, $\sigma_{i,j}$ is supported on one isolated vertex and on the minimum or maximum fixed surface 
$\Sigma_0$ and $\Sigma_\infty$ respectively.
In those cases, the $\nexists$ in the table indicates when there is no isolated maximum or minimum fixed point.

\renewcommand{\arraystretch}{1}

\begin{equation} \labell{Table-rest-2}
\begin{array}{c|c|c|c|c|}
& F=\Sigma_{\infty} & F=\Sigma_{0} &F=v_{i,j}&
F=v_{i,j-1} \\ \hline\hline
 \varepsilon_F|_F & -1\otimes t - e_{\max}[\Sigma]\otimes 1 & -1\otimes t + e_{\min}[\Sigma]\otimes 1 & - m_{i,j}m_{i,j+1}\cdot t^2
 &  - m_{i,j-1}m_{i,j}\cdot t^2 \\ \hline
 \varepsilon_F^{-1}|_F & -1\otimes \frac1t + e_{\max}[\Sigma]\otimes \frac{1}{t^2} & -1\otimes \frac1t - e_{\min}[\Sigma]\otimes \frac{1}{t^2} & - \frac{1}{m_{i,j}m_{i,j+1}\cdot t^2}
 &  -\frac{1}{m_{i,j-1}m_{i,j}\cdot t^2} \\ \hline
 \tau_0|_F & 0 & -1\otimes t + e_{\min}[\Sigma]\otimes 1 & 0& 0\\ \hline
 \tau_\infty|_F &  1\otimes t + e_{\max}[\Sigma]\otimes 1 & 0 & 0& 0\\ \hline
 \tau_h|_F & [\Sigma]\otimes 1 & [\Sigma]\otimes 1 & 0& 0\\ \hline
\mathrm{\begin{tabular}{c}$\sigma_{i,1}|_F$\\if $1 <\ell_i$ \end{tabular}} & 0 & [\Sigma]\otimes 1 & -m_{i,2}\cdot t & \nexists \\ \hline
\mathrm{\begin{tabular}{c}$\sigma_{i,\ell_i}|_F$\\if $1 <\ell_i$ \end{tabular}}& [\Sigma]\otimes 1 & 0 &  \nexists & m_{i,\ell_i-1}\cdot t \\ \hline
\mathrm{\begin{tabular}{c}$\sigma_{i,1}|_F$\\if $1 = \ell_i$ \end{tabular}} & [\Sigma]\otimes 1 & [\Sigma]\otimes 1 & 0& 0\\ \hline
\mathrm{\begin{tabular}{c}all other\\ $\sigma_{i,j}|_F$\end{tabular}} & 0 & 0 & -m_{i,j+1}\cdot t & m_{i,j-1}\cdot t \\ \hline
\end{array}
\end{equation}

\newpage

\subsection*{The case of no fixed surfaces.}
Here, $\sigma_{i,j}$ is supported on the isolated vertices $v_{i,j-1}$ and $v_{i,j}$ for $j=2,\dots,\ell_i-1$.  
When $j=1$ or $j=\ell_i$, $\sigma_{i,j}$ is supported on one interior isolated vertex and on the minimum or maximum isolated fixed 
point $v_0$ or $v_\infty$ respectively.
In those cases, the $\mathrm{n.a.}$  in the table indicates when the support includes $v_0$ or $v_\infty$.

\begin{equation} \labell{Table-rest-0}
\begin{array}{c|c|c|c|c|}
& F=v_\infty & F=v_0 & F=v_{i,j} & F=v_{i,j-1} \\ \hline\hline
\phantom{\mbox{\Large I}} \varepsilon_F|_F \phantom{\mbox{\Large I}} & m_{1,\ell_1}m_{2,\ell_2}\cdot t^2 & m_{1,1}m_{2,1}\cdot t^2& - m_{i,j}m_{i,j+1}\cdot t^2
 &  - m_{i,j-1}m_{i,j}\cdot t^2 \\ \hline
\phantom{\mbox{\Large I}} \varepsilon_F^{-1}|_F  \phantom{\mbox{\Large I}}& \frac{1}{ m_{1,\ell_1}m_{2,\ell_2}\cdot t^2} & \frac{1}{ m_{1,1}m_{2,1}\cdot t^2} & - \frac{1}{m_{i,j}m_{i,j+1}\cdot t^2}
 &  -\frac{1}{m_{i,j-1}m_{i,j}\cdot t^2} \\ \hline
\tau_h|_F & m_{1,\ell_1}m_{2,\ell_2} \cdot t &  - m_{1,1}m_{2,1} \cdot t & \mathrm{n.a.} & \mathrm{n.a.} \\ \hline
\mathrm{\begin{tabular}{c}$\sigma_{1,1}|_F$\\if $1 <\ell_1$ \end{tabular}}  & 0 &  -m_{2,1}\cdot t & -m_{1,2}\cdot t & \mathrm{n.a.} \\ \hline
\mathrm{\begin{tabular}{c}$\sigma_{1,\ell_1}|_F$\\if $1 <\ell_1$ \end{tabular}}  & m_{2,\ell_2}\cdot t  & 0 & \mathrm{n.a.} & m_{1,\ell_1-1}\cdot t \\ \hline
\mathrm{\begin{tabular}{c}$\sigma_{2,1}|_F$\\if $1 <\ell_2$ \end{tabular}}  & 0& -m_{1,1}\cdot t  & -m_{2,2}\cdot t & \mathrm{n.a.} \\ \hline
 \mathrm{\begin{tabular}{c}$\sigma_{2,\ell_2}|_F$\\if $1 <\ell_2$ \end{tabular}}   &  m_{1,\ell_1}\cdot t &0& \mathrm{n.a.} & m_{2,\ell_2-1}\cdot t \\ \hline
 \mathrm{\begin{tabular}{c}$\sigma_{1,1}|_F$\\if $1 =\ell_1$ \end{tabular}}  & m_{2,\ell_2}\cdot t  & -m_{2,1}\cdot t & \mathrm{n.a.} & \mathrm{n.a.} \\ \hline
  \mathrm{\begin{tabular}{c}$\sigma_{2,1}|_F$\\if $1 =\ell_2$ \end{tabular}}  & m_{1,\ell_1}\cdot t  & -m_{1,1}\cdot t & \mathrm{n.a.} & \mathrm{n.a.} \\ \hline
 \mathrm{\begin{tabular}{c}all other\\ $\sigma_{i,j}|_F$\end{tabular}} & 0 & 0  & -m_{i,j+1}\cdot t & m_{i,j-1}\cdot t \\ \hline
\end{array}
\end{equation}

\newpage

\subsection*{The case of one fixed surface.}
Here, $\sigma_{i,j}$ is supported on the isolated vertices $v_{i,j-1}$ and $v_{i,j}$ for $j=2,\dots,\ell_i-1$.  
The minimum vertex of the labeled graph is denoted $v_0$, and for $\sigma_{i,1}$, its support is $v_0$ and $v_{i,1}$; we say that ``$v_{i,j-1}$ is not applicable" (denoted $\mathrm{n.a.}$ in the table).  The support of $\sigma_{i,\ell_i}$ is $v_{i,\ell_i-1}$ and $\Sigma_\infty$; in this case,
the isolated vertex $v_{i,\ell_i}$ does not exist (denoted $\nexists$ in the table).

\begin{equation} \labell{Table-rest-1}
\begin{array}{c|c|c|c|c|}
& F=\Sigma_{\infty} & F=v_0 &  F=v_{i,j-1} & F=v_{i,j}  \\ \hline\hline
\phantom{\mbox{\Large I}} \varepsilon_F|_F \phantom{\mbox{\Large I}} & -1\otimes t - e_{\max}[\Sigma]\otimes 1 & m_{1,1}m_{2,1}\cdot t^2 & - m_{i,j}m_{i,j+1}\cdot t^2
 &  - m_{i,j-1}m_{i,j}\cdot t^2 \\ \hline
\phantom{\mbox{\Large I}}  \tau_\infty|_F \phantom{\mbox{\Large I}} &  1\otimes t + e_{\max}[\Sigma]\otimes 1 & 0&  0& 0\\ \hline
\phantom{\mbox{\Large I}}  \tau_h|_F \phantom{\mbox{\Large I}} & [\Sigma]\otimes 1 & -m_{1,1}m_{2,1}\cdot t & 0 & 0\\ \hline
\mathrm{\begin{tabular}{c}$\sigma_{1,1}|_F$\\if $1 <\ell_1$ \end{tabular}} & 0 & -m_{2,1}\cdot t & \mathrm{n.a.} & -m_{1,2}\cdot t\\ \hline
\mathrm{\begin{tabular}{c}$\sigma_{2,1}|_F$\\if $1 <\ell_2$ \end{tabular}}& 0 & -m_{1,1}\cdot t & \mathrm{n.a.} & -m_{2,2}\cdot t \\ \hline
\mathrm{\begin{tabular}{c}$\sigma_{i,1}|_F$\\if $i>2$ \end{tabular}}  & 0 &  -m_{1,1}m_{2,1}\cdot t  &\mathrm{n.a.} &  -m_{i,2}\cdot t  \\ \hline
\mathrm{\begin{tabular}{c}$\sigma_{i,\ell_i}|_F$\\if $1 <\ell_i$ \end{tabular}}  & [\Sigma]\otimes 1 & 0 & m_{i,\ell_i-1}\cdot t &  \nexists \\ \hline
\mathrm{\begin{tabular}{c}$\sigma_{1,1}$\\if $1 =\ell_1$ \end{tabular}}  & [\Sigma]\otimes 1 &  -m_{2,1}\cdot t  & \mathrm{n.a.}  &  \nexists \\ \hline
\mathrm{\begin{tabular}{c}$\sigma_{2,1}|_F$\\if $1 =\ell_2$ \end{tabular}}  & [\Sigma]\otimes 1 &  -m_{1,1}\cdot t  & \mathrm{n.a.}  &  \nexists \\ \hline
 \mathrm{\begin{tabular}{c}all other\\ $\sigma_{i,j}|_F$\end{tabular}} & 0&   0 & m_{i,j-1}\cdot t & -m_{i,j+1}\cdot t \\ \hline
\end{array}
\end{equation}

\newpage

\subsection{Ranks of  annihilators in degrees $2$ and $4$}
We list linearly independent generators of the annihilators in degrees $2$ and $4$, as algebras over $\Z$,  for classes in $H_{S^1}^{2}(M)$ and in $H_{S^1}^{4}(M)$. To determine if a class is in the annihilator, we use 
 Tables \ref{Table-rest-2}, \ref{Table-rest-0} and \ref{Table-rest-1}.
 In particular, in determining the annihilators of linear combinations of the $\sigma_{i,j}$s, we use 
 \eqref{eq:labelproducts}.

To determine linear dependence we rely on the fact that the linear relations in $H_{S^1}^{2*}(M)$ are given by 
$\tau_h=\sum_{j=1}^{\ell_i} m_{i,j}\sigma_{i,j},$
where the index $i$ varies over the chains in the extended decorated graph of the Hamiltonian $S^1 \acts M$.

\renewcommand{\arraystretch}{1}

\subsubsection*{The case $\fat=2$.}

\begin{equation} \labell{Table-ann-2-1a}
\begin{array}{c|c|c}
\alpha & \text{ generators for  }\Ann^2(\alpha) & \rank \Ann^2(\alpha) \\  \hline\hline

\tau_0, \, \, \tau_0 \cup \tau_0 &\mathrm{\begin{tabular}{c} $\tau_\infty, \, \sigma_{i,j} \, j>1$
\end{tabular}} &\iso+1 \\ \hline

 \tau_\infty, \, \, \tau_\infty \cup \tau_\infty &\tau_0,\,\sigma_{i,j} \, j<\ell_i &\iso+1\\ \hline

\tau_h  & \tau_h, \, \sigma_{i,j} \, \, j<\ell_i &\iso+1 \\ \hline

\tau_h \cup \tau_0 & \tau_\infty,\, \tau_h, \,\sigma_{i,j} \,\, j>1 &\iso+2 \\ \hline

\tau_h \cup \tau_\infty &  \tau_0, \,\tau_h, \,\sigma_{i,j} \,\, j<\ell_i &\iso+2 \\ \hline

 \sigma_{i,j}, \, \, \sigma_{i,j} \cup \sigma_{i,j} \,\,\, 1<j<\ell_i  &\mathrm{\begin{tabular}{c}$\tau_0,\,\tau_\infty,\,\tau_h$, \\ $\sigma_{k,m} \,\, k \neq i \text{ and }m<\ell_k$,\\ $\sigma_{i,m} \,\, m\neq j-1,j,j+1$ \end{tabular}} &\iso+1 \\ \hline
 
 \sigma_{i,1} \,\,\, \ell_i\neq 1  &\mathrm{\begin{tabular}{c}$\tau_\infty,\,\tau_h$, \\ $\sigma_{k,m} \,\, k \neq i \text{ and }m<\ell_k$,\\ $\sigma_{i,m} \,\, m\neq 1,2$ \end{tabular}}&\iso+1\\ \hline

  \sigma_{i,\ell_i}  \,\,\, \ell_i \neq 1 &\mathrm{\begin{tabular}{c}$\tau_0,\, \tau_h$, \\ $\sigma_{k,m} \,\, k \neq i \text{ and }m<\ell_k$,\\ $\sigma_{i,m} \,\, m\neq \ell_i-1,\ell_i$ \end{tabular}}&\iso+1 \\ \hline  
  
    \sigma_{i,j} \cup \sigma_{i,j+1} \, \,\, j<\ell_i   &\mathrm{\begin{tabular}{c}$\tau_0,\, \tau_\infty,\, \tau_h$, \\ $\sigma_{k,m} \,\, k \neq i \text{ and }m<\ell_k$,\\ $\sigma_{i,m} \,\, m\neq j,j+1$ \end{tabular}} &\iso+2 \\ \hline

 a_0 \tau_0 + a_h \tau_h \,\,\, a_0, \, a_h \neq 0&  \sigma_{i,j} \,\, j>1
&\iso \\ \hline

a_\infty \tau_\infty + a_h \tau_h \,\,\, a_\infty, \, a_h \neq 0 &\sigma_{i,j} \, \, j<\ell_i&\iso \\ \hline

\mathrm{\begin{tabular}{c} $a_0 \tau_0 + a_\infty \tau_\infty \,\,\, a_0, \, a_\infty \neq 0$ or \\
$a_0\tau_0 +a_h \tau_h+a_\infty \tau_\infty \,\,\, a_0, \, a_h, \, a_\infty \neq 0$
\end{tabular}}& \mathrm{\begin{tabular}{c}$\sigma_{i,j} \,\, 1< j<\ell_i,$\\
 $ \sigma_{i^*,1}-\sigma_{i,1}$ for $i^* \neq i$ \\
 with $\ell_{i^*} \neq 1 \neq \ell_i$\end{tabular}}
& \iso-1 \\ \hline
\end{array}
\end{equation}

\begin{equation} \labell{Table-ann-2-1b}
\begin{array}{c|c|c}
\alpha & \text{ generators for  }\Ann^4(\alpha)  & \rank \Ann^4(\alpha)
\\  \hline\hline

\tau_0, \, \, \tau_0 \cup \tau_0 &
\mathrm{\begin{tabular}{c} $\tau_\infty^2,\,\tau_h \tau_\infty$\\$\sigma_{i,j}^2 \,\, j>1, \sigma_{i,j}  \sigma_{i,j+1} \,\, j<\ell_i$
\end{tabular}} &2 \iso+2 \\ \hline

 \tau_\infty, \, \, \tau_\infty \cup \tau_\infty & 
 \mathrm{\begin{tabular}{c} $\tau_0^2,\,\tau_h \tau_0$\\$\sigma_{i,j}^2 \,\, j<\ell_i, \sigma_{i,j}  \sigma_{i,j+1} \,\, j<\ell_i$ \end{tabular}} 
 & 2\iso+2\\ \hline

\tau_h  & \mathrm{\begin{tabular}{c}$\tau_h \tau_0, \,\tau_h \tau_\infty$ \\$\sigma_{i,j}^2 \,\, j<\ell_i, \sigma_{i,j}  \sigma_{i,j+1} \,\, j<\ell_i$ \end{tabular}} &2 \iso+2 \\ \hline

 \sigma_{i,j}, \, \, \sigma_{i,j} \cup \sigma_{i,j} \,\,\, 1<j<\ell_i  & \mathrm{\begin{tabular}{c}$\tau_0^2,\, \tau_\infty^2,\, \tau_h \tau_0, \, \tau_h \tau_\infty$, \\ $\sigma_{k,m}^2 \,\, k \neq i \text{ and }m<\ell_k$,\\ $\sigma_{i,m}^2 \,\, m\neq j-1,j,j+1$, \\ $\sigma_{k,m} \sigma_{k,m+1} \,\, (k,m) \neq (i,j), (i,j-1) \,\,m<\ell_k$ \end{tabular}} &2\iso \\ \hline
 
 \sigma_{i,1} \,\,\, \ell_i\neq 1  & \mathrm{\begin{tabular}{c}$\tau_\infty^2,\, \tau_h \tau_0, \, \tau_h \tau_\infty$, \\ $\sigma_{k,m}^2 \,\, k \neq i \text{ and }m<\ell_k$,\\ $\sigma_{i,m}^2 \,\, m\neq 1,2$, \\ $\sigma_{k,m} \sigma_{k,m+1} \,\, (k,m) \neq (i,1) \,\, m<\ell_k$ \end{tabular}} &2\iso+1\\ \hline
 
  \sigma_{i,\ell_i}  \,\,\, \ell_i \neq 1 & \mathrm{\begin{tabular}{c}$\tau_0^2,\, \tau_h \tau_0, \, \tau_h \tau_\infty$, \\ $\sigma_{k,m}^2 \,\, k \neq i \text{ and }m<\ell_k$,\\ $\sigma_{i,m}^2 \,\, m\neq \ell_i-1,\ell_i$, \\ $\sigma_{k,m} \sigma_{k,m+1} \,\, (k,m) \neq (i,\ell_i-1)\,\,m<\ell_k$ \end{tabular}}&2\iso+1 \\ \hline  
 
\end{array}
\end{equation}

\subsubsection*{The case $\fat=1$.}
\begin{equation} \labell{Table-ann-1-1a}
\begin{array}{c|c|c}
\alpha &  \text{ generators for  }\Ann^2(\alpha) & \rank \Ann^2(\alpha) 
\\  \hline\hline

 \tau_\infty, \, \, \tau_\infty \cup \tau_\infty &\sigma_{i,j} \,\, j<\ell_i &\iso-1\\ \hline

\tau_h  & \sigma_{i,j} \, \, j>1&\iso-1\\ \hline

\tau_h \cup \tau_\infty&  \tau_h, \,\sigma_{i,j} \,\,j>1 & \iso\\ \hline

\tau_h \cup \tau_h =\sigma_{1,1} \cup \sigma_{2,1} &  \tau_\infty, \,\sigma_{i,j} \,\, j>1 & \iso\\ \hline

 \sigma_{i,j}, \,\, \sigma_{i,j} \cup \sigma_{i,j} \,\,\, 1<j<\ell_i  &\mathrm{\begin{tabular}{c}$\tau_\infty,\,\tau_h$, \\ $\sigma_{k,m} \,\, k \neq i \text{ and }m<\ell_k$,\\ $\sigma_{i,m} \,\, m\neq j-1,j,j+1$ \end{tabular}} &\iso-1 \\ \hline
 
 \sigma_{i,1} \,\, \ell_i \neq 1  &\mathrm{\begin{tabular}{c}$\tau_\infty$, \\ $\sigma_{k,m} \,\, k \neq i \text{ and }m>1$,\\ $\sigma_{i,m} \,\, m\neq 1,2$ \end{tabular}}&\iso-1 \\ \hline

   \sigma_{i,\ell_i}  \, \,\, \ell_i \neq 1&\mathrm{\begin{tabular}{c}$ \tau_h$, \\ $\sigma_{k,m} \,\, k \neq i \text{ and }m<\ell_k$,\\ $\sigma_{i,m} \,\, m\neq \ell_i-1,\ell_i$ \end{tabular}}&\iso-1\\ \hline

  \sigma_{i,j} \cup \sigma_{i,j+1} \, \,\, j<\ell_i   &\mathrm{\begin{tabular}{c}$\tau_\infty,\, \tau_h$, \\ $\sigma_{k,m} \,\, k \neq i \text{ and }m<\ell_k$,\\ $\sigma_{i,m} \,\, m\neq j,j+1$ \end{tabular}}  & \iso\\ \hline

\end{array}
\end{equation}

\newpage

\begin{equation} \labell{Table-ann-1-1b}
\begin{array}{c|c|c}
\alpha & \text{ generators for  }\Ann^4(\alpha)   & \rank \Ann^4(\alpha)
\\  \hline\hline

 \tau_\infty, \, \, \tau_\infty \cup \tau_\infty & \mathrm{\begin{tabular}{c} $\tau_h^2$\\$\sigma_{i,j}^2 \,\, j<\ell_i, \sigma_{i,j}  \sigma_{i,j+1} \,\, j<\ell_i$ \end{tabular}} &2\iso-1\\ \hline

\tau_h  & \mathrm{\begin{tabular}{c}$\tau_h \tau_\infty$ \\$\sigma_{i,j}^2 \,\, j>1, \sigma_{i,j}  \sigma_{i,j+1} \,\, j<\ell_i$ \end{tabular}} &2\iso-1\\ \hline

 \sigma_{i,j}, \,\, \sigma_{i,j} \cup \sigma_{i,j} \,\,\, 1<j<\ell_i  & \mathrm{\begin{tabular}{c}$ \tau_\infty^2,\,  \tau_h^2, \, \tau_h \tau_\infty$, \\ $\sigma_{k,m}^2 \,\, k \neq i \text{ and }m<\ell_k$,\\ $\sigma_{i,m}^2 \,\, m\neq j-1,j,j+1$, \\ $\sigma_{k,m} \sigma_{k,m+1} \,\, (k,m) \neq (i,j), (i,j-1) \,\, m<\ell_k$ \end{tabular}} &2\iso-3\\ \hline
 
 \sigma_{i,1} \,\, \ell_i \neq 1  & \mathrm{\begin{tabular}{c}$\tau_\infty^2,\, \tau_h \tau_\infty$, \\ $\sigma_{k,m}^2 \,\, k \neq i \text{ and }m>1$,\\ $\sigma_{i,m}^2 \,\, m\neq 1,2$, \\ $\sigma_{k,m} \sigma_{k,m+1} \,\, (k,m) \neq (i,1) \,\, m<\ell_k$ \end{tabular}} &2\iso-2 \\ \hline

  \sigma_{i,\ell_i}  \, \,\, \ell_i \neq 1& \mathrm{\begin{tabular}{c}$ \tau_h^2, \, \tau_h \tau_\infty$, \\ $\sigma_{k,m}^2 \,\, k \neq i \text{ and }m<\ell_k$,\\ $\sigma_{i,m}^2 \,\, m\neq \ell_i-1,\ell_i$, \\ $\sigma_{k,m} \sigma_{k,m+1} \,\, (k,m) \neq (i,\ell_i-1)\,\,m<\ell_k$ \end{tabular}} &2\iso-2\\ \hline

\end{array}
\end{equation}
\newpage

\subsubsection*{The case $\fat=0$.}
\begin{equation} \labell{Table-ann-0}
\begin{array}{c|c|c}
\alpha & \text{ generators for  }\Ann^2(\alpha) & \rank \Ann^2(\alpha)\\  \hline\hline

  \sigma_{i,j} \cup \sigma_{i,j+1} \, \,\, j<\ell_i   &\mathrm{\begin{tabular}{c} $\sigma_{k,m} \,\, k \neq i$,\\ $\sigma_{i,m} \,\, m\neq j,j+1$
   \end{tabular}} &\iso-2 \\ \hline

\sigma_{1,1} \cup \sigma_{2,1}  &\mathrm{\begin{tabular}{c}$\sigma_{1,j} \,\,j>1$,\\ $\sigma_{2,j} \,\,j>1$ \end{tabular}} &\iso-2 \\ \hline

  \sigma_{1,\ell_i} \cup \sigma_{2,\ell_i}  &\mathrm{\begin{tabular}{c}$\sigma_{1,j} \,\,j<\ell_1$,\\ $\sigma_{2,j} \,\,j<\ell_2$ \end{tabular}} &\iso-2\\ \hline

   \sigma_{i,j} \cup \sigma_{i,j} \,\,\, 1<j<\ell_i &\mathrm{\begin{tabular}{c} $\sigma_{k,m} \,\, k \neq i$,\\ $\sigma_{i,m} \,\, m\neq j-1,j,j+1$\end{tabular}} &\iso-3\\ \hline

\sigma_{i,1} \cup \sigma_{i,1}  &\mathrm{\begin{tabular}{c}$\sigma_{k,m} \,\, k \neq i \text{ and } m>1$,\\ $\sigma_{i,m} \,\, m\neq 1,2$ \end{tabular}} &\iso-3\\ \hline

  \sigma_{i,\ell_i} \cup \sigma_{i,\ell_i} &\mathrm{\begin{tabular}{c}$\sigma_{k,m} \,\, k \neq i \text{ and }m<\ell_k$,\\ $\sigma_{i,m} \,\, m\neq \ell_i-1,\ell_i$ \end{tabular}} &\iso-3\\ \hline
\end{array}
\end{equation}

\newpage

\subsection{Intersection form and zero lengths}

\begin{Definition}
For a class $\eta\in H^{2*}_{S^1}(M)$, we define
$$Z(\eta)=\left\{F \text{ connected component of } M^{S^1} \, \big| \, \eta|_{F}=0\right\}.$$ 
We call $|Z(\eta)|$ the \emph{zero length} of $\eta$.  Note that the zero-length is related as follows
to the sets from Notations~\ref{not:z012} and \ref{Not:zieta}:
$$
|Z(\eta)| = |Z_0(\eta)|+ |Z_1(\eta)| = |Z_1(\eta)|+ |Z_2(\eta)|.
$$
\end{Definition}

Note, for example, that a component Euler class $e_{S^1}(\nu(F\subset M))$
has zero length $\iso + \fat -1$;
 the component Euler class
is non-zero on precisely one component of the fixed point locus: $F$. 
The ABBV formula \eqref{eq:abbv} implies that any class supported on a single fixed component
must be a multiple of the component Euler class.

In \cite[Appendix A]{KesslerHolm2}, 
we show that 
if $\dim M=4$, and 
$\alpha=\iota_{S_{a}}^{!}(\mathbb{1})$, $\beta=\iota_{S_{b}}^{!}(\mathbb{1}) $, with $S_a, \, S_b$ invariant embedded spheres, then 
\begin{equation} \labell{eq:inter3}
\alpha \cdot \beta =[S_a] \cdot [S_b],
\end{equation}
 where the intersection form in the left-hand side is in equivariant cohomology and  in the right-hand side is in standard homology. 
For the right-hand, the spheres should be positively oriented with respect to the orientation induced by the symplectic form; the intersection will then be non-negative when $S_a\neq S_b$.
We may use \eqref{eq:inter3} and the ABBV formula \eqref{eq:abbv} to show that
for $S_{i,j}$ and $S_{i',j'}$, invariant embedded symplectic spheres whose images are the $(i,j)$ and $(i',j')$ edge in the extended decorated graph, we can read the intersection number from the graph:
\begin{equation}\label{rem:int}
[S_{i,j}] \cdot [S_{i',j'}]=\begin{cases}
1 & \text{if }(i,j)=(i',j' \pm 1) \\
1 & \text{if }\Sigma_0  \text{ does not exist and }i \neq i' \text{ and }j=1=j'\\
1 & \text{if }\Sigma_\infty \text{ does not exist and }i \neq i' \text{ and }j=\ell_i, \,j'=\ell_{i'}\\
\frac{-m_{i,j-1}-m_{i,j+1}}{m_{i,j}} & \text{if }(i,j)=(i',j')\\
    0 & \text{ otherwise}\\
\end{cases}.
\end{equation}
Note that for these calculations, we do not assume that if $\fat=1$ then $\tau_\infty \neq 0$.
The labels $m_{i,0}$ and $m_{i,\ell_i+1}$ depend on the number of fixed surfaces and are defined 
as follows.
We set
$$
\sigma_{i,0}=\begin{cases}
\tau_0 & \text{ if }\fat=2\\
\sigma_{i^*,1}  & \text{ if } \fat=1, 0 \\
   \end{cases};
$$
\begin{equation} \labell{eq:label0}
m_{i,0}=\begin{cases}
0 & \text{ if }\fat=2 \\
-m_{i^*,1} & \text{ if } \fat=1,0 \\
\end{cases},
\end{equation}
and
$$
\sigma_{i,\ell_i+1}=\begin{cases}
\tau_\infty & \text{ if }\fat=2,1 \\
    \sigma_{i^*,\ell_{i^*}} & \text{ if } \fat=0 \\ 
\end{cases};
$$
\begin{equation} \labell{eq:label+1}
m_{i,\ell_i+1}=\begin{cases}
0 & \text{ if }\fat=2,1 \\
    -m_{i^*,\ell_{i^*}} & \text{ if } \fat=0 \\ 
\end{cases}.
\end{equation}
Using this notation, the result \eqref{rem:int} is consistent with \eqref{eq:labelproducts}.
Similarly
$$[\Sigma_{0}] \cdot [S_{i,j}]=\begin{cases}
1 & \text{ if }j=1 \text{ and }\Sigma_0 \text{ exists}\\
0 & \text{ otherwise}
\end{cases},
$$
$$[\Sigma_{\infty}] \cdot [S_{i,j}]=\begin{cases}
1 & \text{ if }j=\ell_i \text{ and }\Sigma_\infty \text{ exists}\\
0 & \text{ otherwise}
\end{cases}.
$$

In the following three tables we list the intersection form and zero lengths. The zero lengths are computed using 
Tables~\ref{Table-rest-2}, \ref{Table-rest-0} and \ref{Table-rest-1}. The intersection numbers are justified in \S
\ref{rem:int}. Here $m_{i,0}$ and $m_{i,\ell_i+1}$ are as above \eqref{eq:label0} and \eqref{eq:label+1}.

\subsubsection*{The case $\fat=2$.}
\begin{equation} \labell{Table-inter-2}
\begin{array}{c|c|c|c|c|c|c|c}
& \cdot \tau_0 & \cdot \tau_{\infty} & \cdot \tau_h & \cdot \sigma_{i,j} 
 & \mathrm{\begin{tabular}{c}
zero \\ length \end{tabular}} \\  \hline\hline
\tau_0 & e_{\min} & 0 & 1 & \mathrm{\begin{tabular}{c}$1$ if $j=1$ \\ $0$ if $j \neq 1$ \end{tabular}} & \iso+1 \\ \hline
\tau_{\infty} & 0 &e_{\max} &1&  \mathrm{\begin{tabular}{c}$1$ if $j=\ell_i$ \\ $0$ if $j \neq \ell_i$ \end{tabular}} & \iso+1 \\ \hline
\tau_h & 1 & 1 & 0& 0& \iso \\ \hline
\mathrm{\begin{tabular}{c}$\sigma_{i',j'}$\\if $1<\ell_{i'}$ \end{tabular}} &  \mathrm{\begin{tabular}{c}$1$ if $j'=1$ \\ $0$ if $j '\neq 1$ \end{tabular}} & \mathrm{\begin{tabular}{c}$1$ if $j'=\ell_{i'}$ \\ $0$ if $j' \neq \ell_{i'}$ \end{tabular}} & 0 & \mathrm{\begin{tabular}{c}$1$ if $(i,j)=(i',j'\pm1)$\\$-\frac{m_{i,j-1}+m_{i,j+1}}{m_{i,j}}$ if $(i,j)=(i',j')$ \\ 
$0$ otherwise
 \end{tabular}} & \iso \\ \hline
\end{array}
\end{equation}

\subsubsection*{The case $\fat=1$.} Assume without loss of generality that the extremal fixed surface is a 
maximum, and that the $k$ chains are indexed such that $m_{1,1} \geq m_{2,1} \geq \ldots m_{k,1}$. 
(For $3 \leq i \leq k$, we have $m_{i,1}=1$, see Proposition \ref{claim:eph}.) Here,  $i^{*} =1+( i \text{ mod }2)$.

\begin{equation} \labell{Table-int-1}
\begin{array}{c|c|c|c|c|c|c|c|c}
& \cdot \tau_\infty & \cdot \tau_h & \cdot \sigma_{i,j} 
 & \mathrm{\begin{tabular}{c}
zero \\ length \end{tabular}} \\  \hline\hline
\tau_\infty & e_{\max} & 1  & \mathrm{\begin{tabular}{c}$1$ if $j=\ell_i$ \\ $0$ if $j \neq \ell_i$ \end{tabular}} & \iso \\ \hline
\tau_h & 1 &
m_{1,1} m_{2,1}
 &  \mathrm{\begin{tabular}{c}$m_{i^*,1}$ if $j=1$ and $i \in \{1,2\}$ \\ 
$m_{1,1}m_{2,1}$ if $j=1$ and $i \geq 3$\\
$0$ if $1<j$ \end{tabular}}   & \iso-1 \\ \hline
\sigma_{i',j'} &  &
 &
  \mathrm{\begin{tabular}{c} $1$ if $(i,j)=(i',j'\pm1)$;\\
$1$  if $i=1$, $i'=2$ and $j=1=j'$;\\
$m_{i^*,1}$ if $i \in\{1,2\}$, $i' \geq 3$ and $j=1=j'$\\
$m_{1,1}m_{2,1}$ if $i \neq i'$, $i,i' \geq 3$ and $j=1=j'$;\\
$-\frac{m_{i,j-1}+m_{i,j+1}}{m_{i,j}}$ if $(i,j)=(i',j')$\\
$0$ otherwise 
\end{tabular}}
 & \iso-1 \\ \hline
\end{array}
\end{equation}

\subsubsection*{The case $\fat=0$.}
\begin{equation} \labell{Table-int-0}
\begin{array}{c|c|c|c|c|c|c|c}
 & \cdot \tau_h& \cdot \sigma_{i,j} 
 & \mathrm{\begin{tabular}{c}
zero \\ length \end{tabular}} \\  \hline\hline
\tau_h & m_{1,1}m_{2,1}+m_{1,\ell_1}m_{2,\ell_{2}}
 & \mathrm{\begin{tabular}{c} $0$ if $j \neq 1,\ell_i$ \\$m_{i^*,\ell_{i^*}}$  if $j=1<\ell_i$ 
 \\$m_{i^*,1}$  if $j=\ell_i>1$\\
 $m_{i^*,\ell_{i^*}}+m_{i^*,1}$  if $j=1=\ell_i$\\
 where $i^* \neq i$\\  
 \end{tabular}} & \iso-2 \\ \hline
\sigma_{i',j'} &  \mathrm{\begin{tabular}{c} $0$ if $j' \neq 1,\ell_{i'}$ \\$m_{i^*,\ell_{i^*}}$  if $j'=1<\ell_{i'}$ 
 \\$m_{i^*,1}$  if $j'=\ell_{i'}>1$\\
 $m_{i^*,\ell_{i^*}}+m_{i^*,1}$  if $j'=1=\ell_{i'}$\\
 where $i^* \neq i'$\\  
 \end{tabular}} & \mathrm{\begin{tabular}{c}$1$ if $(i,j)=(i',j'\pm1)$\\
$1$  if $i\neq i'$ and $j=\ell_i, \, j'=\ell_{i'}$\\
$1$ if $i\neq i'$ and $j=j'=1$\\
$-\frac{m_{i,j-1}+m_{i,j+1}}{m_{i,j}}$ if $(i,j)=(i',j')$ \\ 
$0$ otherwise \end{tabular}} & \iso-2 \\ \hline
\end{array}
\end{equation}

\newpage

\begin{noTitle} \labell{Other}
{\bf Other classes with zero length $\iso+\fat-2$.}
\begin{itemize}
\item[a)] Any non-zero integer multiple of each of the classes $\sigma_{i,j}$ and $\tau_h$. 
\item[b)] $a_0 \tau_0+a_h \tau_h+a_\infty \tau_\infty$ with $a_0,a_h,a_{\infty} \in \Z$ and at least two of the coefficients of non-zero elements are not zero. 
\item[c)]  $a_0 \tau_0 +\gamma \sum_{j=1}^{\beta}m_{i,j}\sigma_{i,j}$ for $a_0, \gamma \in \Z \smallsetminus \{0\}$ and $1\leq \beta <\ell_i$; $a_{\infty} \tau_{\infty} + \gamma \sum_{j=\alpha}^{\ell_i}m_{i,j}\sigma_{i,j}$ for $a_{\infty}, \gamma \in \Z \smallsetminus \{0\}$ and $1<\alpha \leq \ell_i$.

\item[d)] For $\gamma$ a non-zero integer and $m_{i,0}, \, m_{i,\ell_i+1}$ as in \eqref{eq:label0} and \eqref{eq:label+1}, according to $\fat$, we have the following classes.
\begin{equation} \labell{Table-zl-2}
\begin{array}{c|c|c|c|c|c|c|clclclc}
\sigma & \sigma \cdot \sigma & \sigma \cdot \tau_0 & \sigma \cdot \tau_{\infty} & \sigma \cdot \tau_h 
\\  \hline\hline

 \mathrm{\begin{tabular}{c}$\displaystyle{\gamma \sum_{j=\alpha}^{\beta}m_{i,j}\sigma_{i,j}}$ \\  with $1 \leq \alpha < \beta \leq \ell_i$\end{tabular}} &  \mathrm{\begin{tabular}{c}$ -\gamma^2(m_{i,\beta} m_{i,\beta+1}$\\$+m_{i,\alpha}m_{i,\alpha-1})$ 
  \end{tabular}} &  \mathrm{
 \begin{tabular}{c}$\left\{ \begin{array}{ll} \gamma & 
 \mbox{if (*)}
 \\ 0 & \mbox{else}\end{array}\right.$.\end{tabular} 
}
  & \mathrm{
 \begin{tabular}{c}$\left\{ \begin{array}{ll} \gamma & 
 \mbox{if (**)}
 \\ 0 & \mbox{else}\end{array}\right.$.\end{tabular} 
}& \mathrm{\begin{tabular}{c}$\gamma(\delta_{\tau_0=0}\delta_{\alpha=1} m_{1,1}m_{2,1}$\\ $+ \delta_{\tau_\infty=0}\delta_{\beta=\ell_i}m_{1,\ell_1}m_{2,\ell_2})$ \end{tabular}}
   \\ \hline

       \mathrm{\begin{tabular}{c}$\displaystyle{\gamma\left(\sum_{j=1}^{\beta_i}m_{i,j}\sigma_{i,j}-\sum_{j=1}^{\beta_{i'}} m_{i',j}\sigma_{i',j}\right)}$  \\  with $ i\neq i', \, \beta_i < \ell_i, \, \beta_{i'} < \ell_{i'}$\end{tabular}} &\mathrm{\begin{tabular}{c} {$ -\gamma^2(m_{i,\beta_{i}}m_{i,\beta_{i}+1}$}\\{$+m_{i',\beta_{i'}}m_{i',\beta_{i'}+1})$} \\
       \end{tabular}} & 0 &0 &0 
       \\ \hline
       
 \mathrm{\begin{tabular}{c}$\displaystyle{\gamma\left(\sum_{j=\alpha_i}^{\ell_i}m_{i,j}\sigma_{i,j}-\sum_{j=\alpha_{i'}}^{\ell_{i'}}m_{i',j}\sigma_{i',j}\right)}$ \\  with $i \neq i',  \, 1<\alpha_i,\,1< \alpha_{i'}$\end{tabular}}&\mathrm{\begin{tabular}{c} { $-\gamma^2 (m_{i,\alpha_{i}}m_{i,\alpha_{i}-1}$}\\{$+ m_{i',\alpha_{i'}}m_{i',\alpha_{i'}-1})$}\end{tabular}} &0& 0&0 
  \\ \hline
\end{array}
\end{equation}
The condition (*) in Table~\ref{Table-zl-2} is that
$\alpha=1$ and $\tau_0 \neq 0$; in this case, we have $m_{i,\alpha}=1$.  The condition (**) is that
$\beta=\ell_i$ and $\tau_\infty \neq 0$; in this case $m_{i,\beta}=1$.

\item[e)] For every non-zero class in $H_{S^1}^{2}$ that is not 
one of the above classes, the zero length is strictly smaller than $\iso+\fat-2$.
\end{itemize}
\end{noTitle}

\newpage

\begin{Lemma}\label{lem:intersums}
Let $\sigma=\sum_{j=\alpha}^{\beta}m_{i,j}\sigma_{i,j}$, with 
$1\leq \alpha < \beta \leq \ell_i$.
\begin{enumerate}
\item $\sigma \cdot  \sigma=-m_{i,\alpha}m_{i,\alpha-1}-m_{i,\beta}m_{i,\beta+1}.$
\item Let $\gamma, \delta \in \Z$. For $\sigma_{r,s}$ with $1 \leq s \leq \ell_r$, 
\begin{equation*}
\gamma \sigma \cdot \delta \sigma_{r,s}=\begin{cases}
-\gamma \delta m_{i,\alpha-1} &  \text{ if $r=i$ and $s=\alpha$}\\
-\gamma \delta m_{i,\beta+1} &  \text{ if $r=i$ and $s=\beta$}\\
\gamma \delta m_{i,\alpha}& \text{ if $(r,s)=(i,\alpha-1)$ or ($r \neq i$, $s=1=\alpha$, $\tau_0=0$ and $\ell_r >1$)} \\
\gamma \delta m_{i,\beta} & \text{ if $(r,s)=(i,\beta+1)$ or ( $r \neq i$, $s=\ell_r$, $\beta=\ell_i$, $\tau_\infty=0$ and $\ell_r >1$)}\\
\gamma \delta m_{i,\alpha}+\gamma \delta m_{i,\beta} & \text{ if  $r \neq i$, $s=\ell_r=1$, $\alpha=1$, $\beta=\ell_i$, and $\tau_0=0=\tau_\infty$}\\
0 & \text{ otherwise}
\end{cases}.
\end{equation*}
For $1\leq c<d \leq \ell_r$, 
\begin{equation*}
\gamma \sigma \cdot \delta \sum_{j=c}^{d}m_{r,j}\sigma_{r,j}=
\begin{cases}
\gamma \delta(-m_{i,\alpha}m_{i,\alpha-1}-m_{i,\beta}m_{i,\beta+1}) & \text{ if $r=i$, $c=\alpha$ and $d=\beta$}\\
\gamma \delta(-m_{i,\alpha}m_{i,\alpha-1})& \text{ if ($r=i$, $c=\alpha$ and $d\neq \beta$) or ($r=i$ and $d=\alpha$)}\\
\gamma \delta(-m_{i,\beta}m_{i,\beta+1})& \text{ if ($r=i$, $c\neq\alpha$ and $d=\beta$) or ($r=i$ and $c=\beta$)}\\
\gamma \delta m_{i,\beta}m_{i,\beta+1}& \text{ if $r=i$ and $c=\beta+1$}\\
\gamma \delta m_{i,\alpha}m_{i,\alpha-1}& \text{ if $r=i$ and $d=\alpha-1$}\\
\gamma \delta m_{i,1} m_{r,1}& \text{ if  $r \neq i$, $c=1=\alpha$, $\tau_0=0$ and $d \neq \ell_r$ or $\beta \neq \ell_i$ or $\tau_\infty \neq 0$}\\
\gamma \delta m_{i,\ell_i}m_{r,\ell_r}& \text{ if  $r \neq i$, $d=\ell_r$, $\beta=\ell_i$, $\tau_\infty=0$ and $c \neq 1$ or $\alpha \neq 1$ or $\tau_0 \neq 0$}\\
\gamma \delta m_{i,1}m_{r,1}+\gamma \delta m_{i,\ell_i} m_{r,\ell_r}& \text{ if  $r \neq i$, $c=1=\alpha$, $d=\ell_r$, $\beta=\ell_i$ and $\tau_0=0=\tau_\infty$}\\
0 & \text{ otherwise}
\end{cases}.
\end{equation*}

For $r\neq r', \, d_r < \ell_r, \, d_{r'} < \ell_{r'}$, 
\begin{equation*}
\gamma \sigma \cdot \delta (\sum_{j=1}^{d_r}m_{r,j}\sigma_{r,j}-\sum_{j=1}^{d_{r'}} m_{r',j}\sigma_{r',j})=
\begin{cases}
\gamma \delta(-m_{i,\beta}m_{i,\beta+1}) & \text{ if $(r,d_r)=(i,\beta)$}\\
\gamma \delta(m_{i,\beta}m_{i,\beta+1}) & \text{ if $(r',d_{r'})=(i,\beta)$}\\
\gamma \delta(-m_{i,\alpha}m_{i,\alpha-1}) & \text{ if $(r,d_r)=(i,\alpha)$}\\
 \gamma \delta(m_{i,\alpha}m_{i,\alpha-1}) & \text{ if $(r',d_{r'})=(i,\alpha)$}\\
\gamma \delta(m_{i,\alpha}m_{i,\alpha-1}) & \text{ if $(r,d_r)=(i,\alpha-1)$}\\
\gamma \delta(-m_{i,\alpha}m_{i,\alpha-1}) & \text{ if $(r',d_{r'})=(i,\alpha-1)$}\\
0 & \text{ otherwise}
\end{cases}.
\end{equation*}

For $r \neq r',  \, 1<c_r,\,1< c_{r'}$, 
\begin{equation*}
\gamma \sigma \cdot \delta (\sum_{j={c_r}}^{\ell_r}m_{r,j}\sigma_{r,j}-\sum_{j=c_{r'}}^{\ell_{r'}} m_{r',j}\sigma_{r',j})=
\begin{cases}
\gamma \delta(-m_{i,\alpha}m_{i,\alpha-1}) & \text{ if $(r,c_r)=(i,\alpha)$}\\
\gamma \delta(m_{i,\alpha}m_{i,\alpha-1}) & \text{ if $(r',c_{r'})=(i,\alpha)$}\\
\gamma \delta(-m_{i,\beta}m_{i,\beta+1}) & \text{ if $(r,c_r)=(i,\beta)$}\\
 \gamma \delta(m_{i,\beta}m_{i,\beta+1})& \text{ if $(r',c_{r'})=(i,\beta)$}\\
 \gamma \delta(m_{i,\beta}m_{i,\beta+1})& \text{ if $(r,c_r)=(i,\beta+1)$}\\
\gamma \delta(-m_{i,\beta}m_{i,\beta+1}) & \text{ if $(r',c_{r'})=(i,\beta+1)$}\\

0 & \text{ otherwise}
\end{cases}.
\end{equation*}

\item The intersection of $\gamma \sigma$ and $\delta \sigma_{r,s}$ equals $1$ exactly in the following cases:
\begin{itemize}
\item[i.] $r=i$, $s=2=\alpha$, $m_{i,1}=1$ and $\gamma \delta=-1$;
\item[ii.] $r=i$, $s=\ell_i-1=\beta$, $m_{i,\ell_i}=1$ and $\gamma \delta=-1$;
\item[iii.] $r \neq i$, $s=1=\alpha$, $m_{i,1}=1$, $\tau_0=0$, $\ell_r>1$ and $\gamma \delta=1$;
\item[iv.] $r \neq i$, $s=\ell_r$, $\beta=\ell_i$, $m_{i,\ell_i}=1$, $\tau_\infty=0$, $\ell_r>1$ and $\gamma \delta=1$.
\end{itemize}

\bigskip

\noindent The intersection of $\gamma \sigma$ and  $\delta \sum_{j=c}^{d}m_{r,j}\sigma_{r,j}$ equals $1$ exactly in the following cases:
\begin{itemize}
\item[i.]    $r \neq i$, $c=1=\alpha$, $m_{i,1}=1=m_{r,1}$, $\gamma \delta=1$, $\tau_0=0$ and $d \neq \ell_r$ or $\beta \neq \ell_i$ or $\tau_\infty \neq 0$;
\item[ii.]  $r \neq i$, $d=\ell_r$, $\beta=\ell_i$, $m_{i,\ell_i}=1=m_{r,\ell_r}$, $\gamma \delta=1$, $\tau_\infty=0$ and $c \neq 1$ or $\alpha \neq 1$ or $\tau_0 \neq 0$.
\end{itemize}

\bigskip

\noindent  Neither the intersection of $\gamma \sigma$ and $\delta (\sum_{j=1}^{d_r}m_{r,j}\sigma_{r,j}-\sum_{j=1}^{d_{r'}} m_{r',j}\sigma_{r',j})$; nor the intersection of $\gamma \sigma$ with 
$$ \delta (\sum_{j={c_r}}^{\ell_r}m_{r,j}\sigma_{r,j}-\sum_{j=c_{r'}}^{\ell_{r'}} m_{r',j}\sigma_{r',j})$$ can equal $1$.

\bigskip

\noindent The intersection of  $\gamma \sigma_{i,s}$ and $\delta (\sum_{j=1}^{d_r}m_{r,j}\sigma_{r,j}-\sum_{j=1}^{d_{r'}} m_{r',j}\sigma_{r',j})$ is $1$ only if either 
\begin{itemize}
\item[i.]  $i=r$, $s=\ell_i-1=d_r$, $m_{i,\ell_i}=1$ and $\gamma \delta=-1$, or
\item[ii.] $i=r'$, $s=\ell_i-1=d_{r'}$, $m_{i,\ell_i}=1$ and $\gamma \delta=1$.
\end{itemize}

\bigskip

\noindent The intersection of  $\gamma \sigma_{i,s}$ and  $ \delta (\sum_{j={c_r}}^{\ell_r}m_{r,j}\sigma_{r,j}-\sum_{j=c_{r'}}^{\ell_{r'}} m_{r',j}\sigma_{r',j})$ is $1$ only if 
\begin{itemize}
\item[i.]  $i=r$, $s=2=c_r$, $m_{i,1}=1$ and $\gamma \delta=-1$, or
\item[ii.] $i=r'$, $s=2=c_{r'}$, $m_{i,1}=1$ and $\gamma \delta=1$.
\end{itemize}
\end{enumerate}
\end{Lemma}

\noindent Here the labels $m_{i,0}$ and $m_{i,\ell_i+1}$ are defined in \eqref{eq:label0} and \eqref{eq:label+1}.

\begin{proof}
By the intersection form listed in Tables \ref{Table-inter-2}, \ref{Table-int-1}, and \ref{Table-int-0},
\begin{eqnarray*}
 \sum_{j=\alpha}^{\beta}m_{i,j}\sigma_{i,j} \cdot \sum_{j=\alpha}^{\beta}m_{i,j}\sigma_{i,j}&=&\sum_{j=\alpha}^{\beta}m_{i,j}^2 \sigma_{i,j} \cdot \sigma_{i,j}+\sum_{j=\alpha+1}^{\beta}m_{i,j-1}m_{i,j}\sigma_{i,j-1} \cdot \sigma_{i,j}\\ & &+\sum_{j=\alpha}^{\beta-1}m_{i,j+1}m_{i,j} \sigma_{i,j+1} \cdot \sigma_{i,j}\\
&=&m_{i,\alpha}^2 \frac{-m_{i,\alpha-1}-m_{i,\alpha+1}}{m_{i,\alpha}}+m_{i,\beta}^2 \frac{-m_{i,\beta-1}-m_{i,\beta+1}}{m_{i,\beta}}\\& &+\sum_{j=\alpha+1}^{\beta-1}(m_{i,j}^2(-\frac{m_{i,j-1}+m_{i,j+1}}{m_{i,j}})+m_{i,j-1}m_{i,j}+m_{i,j+1}m_{i,j})\\& & + \, m_{i,\beta-1}m_{i,\beta}+m_{i,\alpha+1}m_{i,\alpha}\\
&=&-m_{i,\alpha}m_{i,\alpha-1}-m_{i,\beta}m_{i,\beta+1}.\\
\end{eqnarray*}
Explicitly, $\sum_{j=\alpha+1}^{\beta-1} m_{i,j}\sigma_{i,j} \cdot  \sum_{j=\alpha}^{\beta}m_{i,j}\sigma_{i,j} =0$, $m_{i,\beta}\sigma_{i,\beta} \cdot  \sum_{j=\alpha}^{\beta}m_{i,j}\sigma_{i,j}=-m_{i,\beta}m_{i,\beta+1}$, and $m_{i,\alpha}\sigma_{i,\alpha} \cdot  \sum_{j=\alpha}^{\beta}m_{i,j}\sigma_{i,j}=-m_{i,\alpha}m_{i,\alpha-1}$. 
This proves item (1). Similar calculations, and the intersection tables, will allow the reader to verify item (2). 
Item (3) follows from item (2), since for $1<j<\ell_i$, we have $m_{i,j}>1$, by
Proposition \ref{label1}.
\end{proof}

\begin{Remark}
By similar calculations, we get that
or $\sigma$ as in Table \eqref{Table-zl-2}, the numbers $\sigma \cdot \sigma$, $\sigma \cdot \tau_0$, $\sigma \cdot \tau_{\infty}$, $\sigma \cdot \tau_h$ 
 are as listed in the table.
\end{Remark}

\end{landscape}

\newpage

\pagestyle{empty}

\end{document}